\documentclass[11pt]{article}

\usepackage{amsmath, amssymb, mathtools}
\usepackage{amsthm} 
\usepackage{enumerate}
\usepackage[hyphens]{url}
\usepackage{hyperref}
\usepackage{xcolor}
\hypersetup{colorlinks,
linkcolor={blue!50!black},
citecolor={red!50!black},
urlcolor={blue!70!black}}
\usepackage{graphicx}
\usepackage{graphbox}
\usepackage{microtype}
\usepackage{siunitx}
\usepackage{booktabs}
\usepackage{cleveref}
\usepackage{todonotes}
\usepackage{verbatim}
\usepackage{enumerate}
\usepackage{float}
\usepackage{caption}
\usepackage{subcaption}
\usepackage{authblk}
\usepackage[top=1.3in, 
bottom=1.3in, 
left=1.1in, 
right=1.1in]{geometry}
\usepackage{shuffle}
\usepackage{quiver}
\usepackage{upgreek}
\theoremstyle{plain}
\newtheorem{theorem}{Theorem}[section]
\newtheorem{proposition}[theorem]{Proposition}
\newtheorem{corollary}[theorem]{Corollary}
\newtheorem{lemma}[theorem]{Lemma}
\theoremstyle{definition} 

\newtheorem{example}[theorem]{Example}
\theoremstyle{remark}
\newtheorem{remark}[theorem]{Remark}

\newcommand{\cA}{\mathcal{A}}
\newcommand{\cB}{\mathcal{B}}

\newcommand{\cD}{\mathcal{D}}

\newcommand{\cK}{\mathcal{K}}

\newcommand{\cT}{\mathcal{T}}

\newcommand{\cW}{\mathcal{W}}

\newcommand{\CC}{\mathbb{C}}

\newcommand{\NN}{\mathbb{N}}

\newcommand{\RR}{\mathbb{R}}

\newcommand{\ba}{\mathbf{a}}

\newcommand{\bx}{\mathbf{x}}

\newcommand{\by}{\mathbf{y}}

\newcommand{\bell}{\pmb{\ell}}

\newcommand{\dd}{\mathop{}\!\mathrm{d}}
\newcommand{\subhol}[2][T]{{#2\text{-Höl};[0,#1]}}
\newcommand{\suphol}[1]{{#1\text{-Höl}}}
\newcommand{\subvar}[2][T]{{#2\text{-var};[0,#1]}}

\title{Dynamic Universal Approximation via Signature \\ Controlled Differential Equations}
\author[1]{Tomás Carrondo}
\author[1]{Christa Cuchiero}
\author[1]{Paul P. Hager \thanks{\today. Corresponding author: \textit{paul.peter.hager@univie.ac.at} \\
This research was funded in whole or in part by the Austrian Science Fund (FWF) Y 1235. FH gratefully acknowledges support from SURE-AI research center funded by the Research Council of Norway, grant 357482. }}
\author[2]{Fabian N. Harang}
\date{}
\affil[1]{University of Vienna, Department of Statistics and Operations Research}
\affil[2]{BI Norwegian Business School, Department of Data Science and Analytics}

\begin{document}

\maketitle
\vspace{-1cm}
\begin{abstract}
We study signature controlled differential equations (Sig-CDEs), that is, path-dependent controlled differential equations (CDEs) whose vector fields factor through the signature map. Working on spaces of stopped Hölder paths, we develop an existence, uniqueness, and stability theory for general path-dependent CDEs, and translate these pathwise well-posedness criteria into conditions on the corresponding signature functionals. We then prove dynamic universality:  simply parametrized Sig-CDEs approximate the solution path of any well-posed path-dependent CDE arbitrarily well, uniformly over bounded sets of controls and initial histories, with global variants obtained using weighted spaces. Within this framework, entire maps of group-like elements provide a specific class of Sig-CDEs. Using a new class of limiting tensor spaces, we recast Sig-CDEs as infinite-dimensional classical CDEs and prove their well-posedness via a gauge-type scaling argument, thereby establishing a principled way to lift generic path-dependent dynamics. Lastly, we study truncated Sig-CDEs as finite-dimensional differential equations on step$-N$ Lie groups under intrinsic conditions, that is, with well-posedness formulated in terms of the underlying group metric.

\medskip
\noindent \textbf{Keywords:}
path-dependent controlled differential equations,
dynamic universal approximation,
infinite-dimensional state-space lifts, path signatures \\
\textbf{MSC (2020) Classification:}  Primary: 34K05, 60L10; Secondary: 34G20, 41A65
\end{abstract}

\tableofcontents

\section{Introduction}
The classical theory of ordinary differential equations is built on the premise that the instantaneous rate of change of a system depends only on its current state. Although this assumption underlies a highly successful modeling paradigm in mathematics and the sciences, it is often an idealization that overlooks the role of memory and past behavior. In many complex systems, such as biological population dynamics \cite{hutchinson1948circular,kermack1927contribution}, viscoelastic materials \cite{volterra1909sulle,boltzmann1874}, and feedback-controlled mechanisms \cite{minorsky1922directional}, the evolution depends not only on the present state but also on the system's history. Such systems are commonly referred to as \emph{hereditary systems} or \emph{systems with memory} \cite{volterra1928theorie}.

A natural mathematical formulation is therefore to allow the vector field to depend on the history of the solution. In the controlled setting considered here, this leads to \emph{path-dependent controlled differential equations} (path-dependent CDEs), which we write as
\begin{equation}
\label{eq_path_CDE_intro}
\dd y_t = f(y|_{[0,t]})\, \dd x_t,\quad t\in [T_0,T_1], \qquad
y|_{[0,T_0]} = w,
\end{equation}
where $w:[0,T_0]\to\mathbb R^m$ is a prescribed initial history, $x:[0,T_1]\to\mathbb R^d$ is a \emph{control path}, and $f$ is a non-anticipative functional with values in $L(\mathbb R^d,\mathbb R^m)$, the space of linear maps from $\mathbb R^d$ to $\mathbb R^m$. Thus, at time $t$, the dynamics may depend on the entire stopped trajectory $y|_{[0,t]}$, rather than only on the current state $y_t$. Throughout the paper, $x$ is assumed to be Lipschitz so the corresponding integral formulation is understood in the Riemann-Stieltjes sense. Such systems have been studied from various viewpoints with numerous variants and extensions, e.g., with  rougher and/or stochastic control paths. We refer to \Cref{sec:rel_lit} for a detailed account of the literature.

Even though these inherently infinite-dimensional systems are quite well understood from a theoretical perspective, there is still no canonical and universally applicable framework for systematically reducing them to finite-dimensional equations while preserving certain important learning-theoretic criteria as specified below. Hence, our main motivation, phrased in general terms, is to show that, assuming only well-posedness conditions on $f$ (in stronger topologies than the ones available in the literature), any system of the form \eqref{eq_path_CDE_intro} can be approximated by a finite-dimensional state equation augmented through linear feedback. That is, by systems of the form
\begin{equation}
\label{eq:n_dim_approx_intro}
\begin{cases}
    \dd y^n_t = f_n(z^n_t) \dd x_t, \qquad  &t \in [T_0,T_1] \\[5pt]
    \dd z^n_t = A_n(z^n_t) \dd y^n_t, \qquad  &t \in [0,T_1]
\end{cases}, 
\qquad y^n|_{[0,T_0]} = w,
\end{equation}
with linear operators $A_n \colon \mathcal{X}_n \to L(\RR^m,\mathcal{X}_n)$ and elementary readout functions $f_n \colon \mathcal{X}_n \to L(\mathbb{R}^d,\mathbb{R}^m)$ on finite-dimensional state spaces $\mathcal{X}_n$.
Due to the evident gain in tractability, the study of such finite-dimensional approximations is of eminent practical importance and has therefore been treated in the literature from several angles; see Section \ref{sec:rel_lit} for a review. Within this broader literature, our approach is then distinguished by its universality and learning-theoretic emphasis:
\begin{enumerate}
	\item \emph{No structural or model assumptions} on the path-functional $f$ beyond local Lipschitz continuity and linear growth on H\"older path spaces (in contrast to the standard weaker uniform topology).
	\item \emph{Globally well-posed approximating systems} of the form \eqref{eq:n_dim_approx_intro} that are differentiable with respect to the \emph{learnable parameters} determining the maps $f_n$ and $A_n$.
	\item A \emph{dynamic linear feedback} representation of the latent state $z^n$, namely, as a CDE driven by the observed trajectory $y^n$.
    \item An interpretation of $z^n$ as a \emph{well-suited feature map} that depends continuously on $y^n$ with respect to Hölder norms.
	\item \emph{Robustness,} in the sense of (i) \emph{uniform approximation} over bounded sets of potentially  \emph{irregular input paths}
    $w$ in Hölder norms rather than derivative-based norms, 
    and (ii) \emph{global approximation at the control path level} with respect to a weighted norm.
\end{enumerate}
The first two points reflect the central objectives of \emph{universal approximation theory}: to approximate broad classes of systems under minimal structural assumptions while retaining a parametrization that can be learned from data by gradient-based methods. The dynamic linear feedback representation in the third point, in which $z^n$ evolves as a CDE driven by the observed trajectory $y^n$, is closely aligned with the role of latent states in state-space models and recurrent architectures \cite{kalman1960new,baum1966statistical,jordan1986serial,elman1990finding}. Such a representation is particularly relevant in applications such as filtering and reinforcement learning \cite{kaelbling1998planning,sutton2018reinforcement}. The fourth and fifth points are linked by the choice of Hölder topology: viewing $z^n$ as a continuous feature map on Hölder path spaces provides stability with respect to peturbations of the observed trajectory and, consequently, robustness at the level of the input paths. This is especially important for oscillatory, noisy, high-frequency, or irregularly sampled data, for which derivative-based notions of approximation may be too restrictive.

The concrete approach pursued here is based on path signatures in the sense of Chen \cite{chen1954iterated,chen1957integration,chen1977iterated} and Lyons \cite{lyons1998differential, Lyons2002,lyons2007differential}, and provides a canonical way to construct finite-dimensional approximations \eqref{eq:n_dim_approx_intro} by truncating a graded path representation. Specifically, the signature $S(y)_{0,\cdot}$ of an $\alpha-$Hölder path $y$ with $1/2 <\alpha \leq 1$ is the solution of the fundamental linear CDE
\begin{equation}
\label{eq:signature_fundamental_cde}
    \dd S(y)_{0,t} = S(y)_{0,t} \otimes \dd y_t, 
    \qquad S_{0,0} = \mathbf{1} \in T((\RR^m)),
\end{equation}
taking values in the extended tensor algebra $T((\RR^m)) = \prod_{n=0}^\infty (\RR^m)^{\otimes n}$. Solving this equation level by level, one obtains the representation in terms of iterated Young integrals (see \Cref{subsec_path_signatures_tensor_algebras} for a more detailed introduction)
\[
S(y)_{0,T} =
\left(
1,
\int_0^T \dd y^1_t,\ldots,\int_0^T \dd y^m_t,
\int_{0<t_1<t_2<T} \dd y^1_{t_1}\dd y^1_{t_2},
\int_{0<t_1<t_2<T} \dd y^1_{t_1}\dd y^2_{t_2},
\ldots
\right).
\]
Denoting by $\pi_{0,N_n} : T((\RR^m)) \to T^{N_n}(\RR^m)$ the projection onto the first $N_n$ tensor levels, the truncated signature $z^{n} = \pi_{0,N_n}(S(y))$ defines a finite-dimensional latent state that itself solves a finite-dimensional CDE on $T^{N_n}(\RR^m)$ with linear vector fields $A_n(z)v = \pi_{0,N_n}(z \otimes v)$. This means that  we then deal with \emph{signature CDEs} of the form
\begin{equation}
\label{eq:approx_sig_cde}
    \dd y^n_t = f_n\big( \pi_{0,N_n}( S(y^n)_{0,t})\big) \dd x_t.
\end{equation}
As part of our main results — see Theorems \ref{prop_dynamic_universality_activations} and \ref{thm_global_approx_control_level} — we show that finite-dimensional signature systems remain \emph{dynamic universal approximators} for \eqref{eq_path_CDE_intro}, even when their readouts are restricted to the elementary form 
\[f^{ij}_n(z) = c_0 + c_1\sigma( \langle \bell_n^{ij}, z \rangle),\]
where $\bell^{ij}_n \in T^{N_n}(\RR^m)$, $c_0, c_1 \in \mathbb R$, and $\sigma:\RR\to\RR$ is a sufficiently regular activation function. More precisely, we prove there exists a sequence of coefficients $(\bell_n)_{n\ge1}$ such that the corresponding solutions of \eqref{eq:approx_sig_cde} converge in Hölder spaces to the solution $y$ of \eqref{eq_path_CDE_intro}. This convergence is uniform over bounded sets of input pairs $(x,w)$ in Hölder norm and, when measured in suitable weighted norms, holds globally with respect to the control path $x$.

Returning to the original equation \eqref{eq_path_CDE_intro}, suppose that the path-dependent vector field $f$ admits a decomposition $f = F \circ S$, where $F: \mathcal{X} \to L(\RR^d,\RR^m)$ and $\mathcal{X} \subset T((\RR^m))$ is some suitable topological space.\footnote{Note that $F$ can always be defined as $f \circ S^{-1}$ on the image of the signature map.} We then obtain the signature CDE
\begin{align}
\label{eq:signature_cde_intro}
\dd y_t= F(S(y)_{0,t}) \dd x_t, \qquad t \in [T_0,T_1],
\end{align}
which may now depend on \emph{infinitely} many signature components. Building on our well-posedness result for path-dependent CDEs on Hölder spaces, Theorem \ref{prop_EU_global_sol_pd_cde}, we establish global existence and uniqueness of solution for \eqref{eq:signature_cde_intro} just in terms of Lipschitz continuity and \emph{homogeneous linear growth} conditions on $F$; see Proposition \ref{prop_well_posedness_SigCDEs}. In particular, this result applies to the finite-dimensional dynamic universal approximators introduced above and guarantees that the corresponding systems are well-posed. 

Proceeding one step further leads naturally to an infinite-dimensional state-space lift,  which is obtained by combining \eqref{eq:signature_cde_intro} with
\eqref{eq:signature_fundamental_cde}, so that
\begin{equation}
\label{eq:full_lift_intro}
    \dd S_{t} = S_{t} \otimes F(S_t)\dd x_t.
\end{equation}
An immediate follow-up question is therefore to 
identify the ``best'' state-space $\mathcal{X}$, together with a suitable topology, and to 
establish well-posedness of 
\eqref{eq:full_lift_intro} on this space. Such a theory is not only of intrinsic interest, but may also provide a natural starting point for a semigroup-theoretic treatment of signature-lifted evolution equations on Banach spaces, in analogy with the classical state-space approach to delay equations, infinite-dimensional control systems, and Volterra equations; see, among many others,
\cite{hale2013introduction,BensoussanDaPratoDelfourMitter2007,miller1974linear}.

Choosing the space $\mathcal X$, and hence a topology on the set of signatures, is nevertheless a delicate point. Indeed, requiring both the signature map $S$ and the vector field $F$ to be locally Lipschitz into and out of $\mathcal X$, respectively, imposes regularity conditions on the original path-dependent functional $f = F \circ S$, which we would like to keep as close as possible to our minimal well-posedness assumptions. At the same time, we want $\mathcal X$ to have a strong enough structure to control the infinite-dimensional dynamics \eqref{eq:full_lift_intro}, while its topology — or, more precisely, its metric — should be weak enough to preserve the local Lipschitz continuity of the signature lift. One possible choice for $\mathcal X$ is equipping $T((\RR^m))$ with the usual $\ell^1-$topology. However, as discussed above, in order to match our well-posedness conditions on path space, we aim instead for Banach spaces with stronger topologies, which facilitate the Lipschitz continuity of $F$ and allow global existence for \eqref{eq:full_lift_intro} to be established by means of gauge-type scaling arguments; see Lemma \ref{lem_aux_scaling_arg_2}.

All these considerations lead naturally to the introduction of projective limits of tensor spaces adapted to signature decay rates, as defined in \eqref{eq:limitTp}, and pushed further to a second projective limit in~\eqref{eq_second_projective_limit}. We believe that these limiting spaces provide the most suitable setup, in the sense described above, namely \eqref{eq:limitTp} as domain of $F$
and \eqref{eq_second_projective_limit}
as the state space for which \eqref{eq:full_lift_intro} has a continuous flow. The well-posedness result for \eqref{eq:full_lift_intro}, see Theorem \ref{thm_scaling_argument}, crucially relies on this choice of spaces. Summarizing on a methodological level, our program comprises the following contributions:
\begin{enumerate}[(i)]
	\item an analytic framework for path-dependent CDEs \eqref{eq_path_CDE_intro} on Hölder path spaces, including local and global well-posedness as well as continuity of the solution map,
	\item the construction of Banach spaces $\mathcal{X} \subset T((\mathbb{R}^m))$ via projective limits of weighted $\ell^1$-spaces, which dissect sets of signatures according to the regularity of the underlying paths,
	\item global well-posedness for the resulting signature controlled differential equations (Sig-CDEs), both at path level \eqref{eq:signature_cde_intro} and at the lifted level \eqref{eq:full_lift_intro}, under explicit tensor-level linear growth and Lipschitz continuity conditions,
	\item a dynamic universal approximation result for path-dependent systems by Sig-CDEs, with global approximation variants through the use of weighted spaces, 
	\item extensions of the theory to refined settings, including entire signature functionals and formulations on step-$N$ groups.
\end{enumerate}

\subsection{Related Literature}\label{sec:rel_lit}
The content of this paper sits at the intersection of several topics; below we organize the related literature according to the themes most relevant to our analysis.

\paragraph{Functional differential equations and path-dependent CDEs.} The modern theory of \emph{functional differential equations} (FDEs) may be viewed as a consolidation of Volterra's work on integro-differential equations for hereditary phenomena \cite{volterra1913leccons}; see \cite{hale2013introduction} for a classical account. This framework encompasses ordinary differential equations, differential-difference equations, and integro-differential equations, and includes many variants such as non-autonomous equations, equations with extended or reduced memory, and systems with delays in the derivative; see, for example, \cite{corduneanu1966certaines,jones1967hereditary,hale1970existence,neustadt1970solutions,imaz1966generalized}. Our well-posedness theory for path-dependent CDEs in Section~\ref{sec_path_dependent_cdes} follows the exposition in \cite{hale2013introduction}, where it is emphasized that the fundamental mechanisms underlying FDE well-posedness closely parallel those of classical ODE theory. The main point of departure is the topology on path space. Much of the classical FDE literature works with path segments endowed with the uniform topology, whereas we work on spaces of stopped H\"older paths. This is technically necessary for the present paper, since signatures are not defined for arbitrary continuous paths, but it is also of independent interest: because H\"older topologies are stronger than the uniform topology, assumptions formulated in such topologies may be weaker than the corresponding assumptions imposed uniformly over path segments. Moreover, it automatically leads to robustness properties with respect to irregular input paths and continuous feature maps, such as the signature.

The closest setting to ours is \cite{andretto2019path}, which studies the same class of path-dependent CDEs in the Young regime, assuming an $\alpha-$Hölder control $x$ with $\alpha>1/2$, and proves global existence directly. By contrast, we restrict attention to Lipschitz controls and follow a more classical strategy: we first establish local existence and uniqueness under minimal assumptions and then obtain global solutions via an extension argument. In addition, we derive explicit stability estimates for the solution map, a feature which is absent from both \cite{hale2013introduction} and \cite{andretto2019path} but is crucial for transferring approximation results from vector fields to solutions. We note that our estimates and proof techniques do not extend to the Young setting, since Young's inequality does not produce the Grönwall-type bounds our proof needs.

\paragraph{Stochastic and rough path-dependent dynamics.}

A closely related stochastic counterpart is the theory of \emph{stochastic functional differential equations}, also known as path-dependent SDEs. In the classical finite-memory setting, where the coefficients depend on a segment of the past trajectory, the theory was developed systematically in \cite{mohammed1094}; see also \cite{maostochastic2007} for a broad account of stochastic delay equations. A more general non-anticipative formulation, in which the coefficients are predictable functionals of the entire past trajectory, was studied in \cite{rogers2000diffusions2}. We also refer to the monograph \cite{protter2004stochastic} for a treatment in a general semimartingale setting.  These viewpoints are closer to the stopped-path formulation used here and connect naturally with functional Itô calculus, where non-anticipative functionals are treated as maps on spaces of stopped paths \cite{cont2013functional,dupire2009functional}. As in the deterministic setting most formulations are with respect to the uniform topology, but recently also Hölder topologies have been considered; see, for example, \cite{cont2019support}.

FDEs have also been studied through the lens of rough path theory \cite{lyons2007differential,friz2014course}, which provides a pathwise framework for treating irregular driving signals. In this setting, \emph{rough functional differential equations} correspond to path-dependent CDEs driven by rough paths of regularity $\alpha \leq 1/2$. Such equations have been analyzed, for instance, in \cite{kwossek2024functional,ananova2023rough,aida2024rough} and the references therein.
 The increased irregularity of the driver requires substantially stronger assumptions on the vector field, such as $C^3-$regularity on path space with bounded derivatives, and therefore falls outside the scope of the present work. Nevertheless, many of the techniques, notation, and tools we use are native to rough path theory, most notably the signature.

\paragraph{Signature-based models} 

In the deterministic and controlled setting, signature-based ideas have entered differential equation modeling through neural CDEs for irregular time series \cite{kidger2020neural}, neural rough differential equations (RDEs) driven by log-signature components of the input path \cite{morrill2020neural}, path-dependent neural jump ODEs based on the signature transform \cite{krach2022optimal}, signature-encoded continuous-time models for non-Markovian dynamics \cite{pradeleix2025learning},  neural RDE methods for continuous-time non-Markovian stochastic control \cite{hoglund2023neural}, and log-neural CDE architectures that exploit the Log-ODE method and the Lie-bracket structure to train CDE models more efficiently \cite{walker2024log}.

In probabilistic settings, \emph{signature stochastic differential equations} (Sig-SDEs) were introduced in \cite{cuchiero2023signature}, building on earlier signature-based financial models \cite{arribas2020sig}. Their tractability and approximation properties have since led to applications in path-dependent financial modeling; see, for example, \cite{cuchiero2023signatureSIAM,cuchiero2025joint, cuchiero2025universal, abijaber2025martingale,abi2025signature}. Parameter estimation for Sig-SDEs was studied in \cite{semnani2025path} through an analogue of moment matching known as \emph{expected signature matching} \cite{anastasia2011}. While Sig-SDEs are often presented as universal within suitable classes of stochastic processes, a dynamic universal approximation theorem analogous to the one proved here for path-dependent CDEs has not yet been established; this is the subject of an accompanying work. The present paper may be viewed as the deterministic counterpart.

\paragraph{Control theory and Lie-group formulations.}
The celebrated Chen--Fliess series form a classical bridge between iterated integrals and nonlinear control theory, with origins in the 1970s \cite{JurdjevicSussmann1972,Brockett1976,Fliess1981,Sussmann1983}. In this setting, the signature appears naturally as the collection of variables in the expansion; see \cite{duffaut2026chen} for a recent comparison. Despite this connection, and to the best of our knowledge, signatures have not previously been used to lift path-dependent controlled systems in the sense considered here. The classical control-theoretic approach to retarded controlled differential equations is instead to lift $(y_t,y|_{[t-r,t]})$ to a functional state space $\mathbb R^m\times H([0,r],\mathbb R^m)$, where $H$ is typically an $L^p$ or $W^{1,p}$ Sobolev space; see \cite[Section~II.4]{BensoussanDaPratoDelfourMitter2007}. This perspective has been used to study finite-dimensional approximations and their qualitative properties, including convergence of approximating semigroups \cite{KappelSalamon1987,ItoKappel1991}, preservation of stability and controllability \cite{KappelSalamon1989}, and convergence of  feedback laws \cite{KappelSalamon1987}.

The perspective taken here, however, is different. Rather than lifting the past trajectory to a function space, we encode it through its signature. At finite level $N$, this places the memory state in the free step-$N$ nilpotent Lie group $G^N(\mathbb R^m)$, or alternatively in the tensor group $\mathbf 1+\mathfrak t^N(\mathbb R^m)$. Thus, although our starting point is a path-dependent controlled system, the truncated signature lift naturally leads to a controlled differential equation on a finite-dimensional Lie group. This connects the paper to geometric control theory, where controlled dynamics on manifolds and Lie groups play a central role; see, for instance, \cite{agrachev2004}. Additionally, in our setting, the relevant Lie groups carry a further homogeneous structure inherited from tensor dilations; see Section \ref{subsec_path_signatures_tensor_algebras}. It is therefore natural to ask whether well-posedness of the lifted equations can be formulated intrinsically, in terms of homogeneous gauges or group metrics, rather than through an Euclidean norm on coordinates. This question is closely related to \cite{magnani2018lipschitz}, where Cauchy problems on homogeneous groups are studied and it is shown, in particular, that Lipschitz regularity with respect to a homogeneous group metric is not sufficient in general to guarantee uniqueness. This is in contrast to global existence which can be obtained under intrinsic growth assumptions, as we show in Section \ref{subsec_not_so_esoteric}. Although we do not pursue the related control-theoretic questions in this paper, these connections suggest that signature lifts may provide a useful alternative viewpoint on approximation theory for controlled path-dependent systems.

\section{Preliminaries}
\label{sec_preliminaries}

We begin by reviewing the background material that will be used throughout the paper. Most of the content is standard, with references provided where appropriate. Section \ref{subsec_path_spaces} introduces general notation together with the relevant path spaces. Section \ref{subsec_path_signatures_tensor_algebras} summarizes the main aspects of path signatures and their algebraic framework. Section \ref{subsec_weighted_spaces_global_approximations} presents the key tools from the theory of weighted spaces and global approximations. And lastly, Section \ref{subsec_limits_of_tensor_spaces} develops a class of limit tensor spaces tailored to the analysis of Sig-CDEs.

\subsection{Hölder Path Spaces}
\label{subsec_path_spaces}

Let $(\RR^d, |\cdot|)$ be the standard Euclidean space and fix $T > 0$. We write $C^0([0,T], \RR^d)$ for the space of continuous $\RR^d$-valued paths on $[0,T]$. For $0 \leq s < t \leq T$ and $x \in C^0([0,T], \RR^d)$, we set $x_t \coloneqq x(t)$ and $x_{s,t} \coloneqq x_t - x_s$. The space $C^0([0,T],\RR^d)$ is a Banach space when equipped with the usual supremum norm 
$$\| x\|_{\infty;[0,T]} \coloneqq \sup_{t\in [0,T]} |x_t|.$$

A central object in our analysis is the space of \emph{$\alpha-$Hölder continuous paths}. For $\alpha \in (0,1]$, we define $C^{\alpha\text{-Höl}}([0,T],\RR^d)$ as the set of continuous paths $x$ with finite semi-norm 
$$\| x \|_{\alpha\text{-Höl};[0,T]} \coloneqq \sup_{0 \leq s < t \leq T} \frac{|x_t - x_s|}{|t-s|^\alpha}.$$
Unless stated otherwise, $C^{\alpha\text{-Höl}}([0,T],\RR^d)$ will be equipped with the norm $|x_0| +  \|x\|_{\alpha\text{-Höl};[0,T]}$, under which it is a Banach space. The corresponding metric is given by
$$d_{\alpha\text{-Höl};[0,T]}(x,y) \coloneqq |x_0 - y_0| + \| x - y \|_{\alpha\text{-Höl};[0,T]}.$$

Although less frequently used, we will also refer to $p-$variation spaces. For $p \in [1,\infty)$, let $C^{p\text{-var}}([0,T],\RR^d)$ denote the set of continuous paths with finite $p-$variation, that is, continuous paths $x$ satisfying
$$\| x \|_{p\text{-var};[0,T]} \coloneqq \left( \sup_{(t_i) \in \mathcal{P}([0,T])} \sum_i |x_{t_{i+1}} - x_{t_i}|^p \right)^{1/p} < \infty,$$
where $\mathcal{P}([0,T])$ is the set of partitions $(t_i) = \{0 = t_0 < t_1 < \cdots < t_{n} = T\}$ of $[0,T]$. Endowed with the norm $|x_0| + \|x\|_{p\text{-var};[0,T]}$, this space is also Banach.

To capture the evolution of a path together with its past, we consider spaces of stopped paths as in e.g., \cite{dupire2009functional,cont2013functional,fournie2010functional}. For $t \in [0,T]$ and $x \in C^0([0,T],\RR^d)$, let $x_{\cdot \land t} : [0,T] \to \RR^d$ denote the path $x$ stopped at time $t$, i.e., $x_{u \land t} = x_u$ for $u \in [0,t]$ and $x_{u \land t} = x_t$ for $u \in (t,T]$. We then define
\begin{align}
\label{eq_stopped_paths_quotient}
    \Lambda^{\alpha\text{-Höl}}([0,T],\RR^d) &\coloneqq \Big\{ (t,x_{\cdot \land t}) : t \in [0,T],\ x \in C^{\alpha\text{-Höl}}([0,T], \RR^d) \Big\} \nonumber \\ 
    &\cong [0,T] \times C^{\alpha\text{-Höl}}([0,T], \RR^d)/ \sim, 
\end{align}
where $(t,x) \sim (s,y)$ if and only if $s = t$ and $x_{\cdot \land t} = y_{\cdot \land s}$. We refer to $\Lambda^{\alpha\text{-Höl}}([0,T],\RR^d)$ as the space of \emph{stopped $\alpha-$Hölder continuous paths}, and equip it with the metric
$$d^\Lambda_{\alpha\text{-Höl};[0,T]}\big( (t, x_{\cdot \land t}), (s, y_{\cdot \land s}) \big) \coloneqq |t-s| + |x_0 - y_0| + \| x_{\cdot \land t} - y_{\cdot \land s} \|_{\alpha\text{-Höl};[0,T]}.$$ 
It is a straightforward exercise to verify that $d^\Lambda_{\alpha\text{-Höl};[0,T]}$ induces the quotient topology, thereby justifying the homeomorphism in \eqref{eq_stopped_paths_quotient}; see \cite[Lemma A.1]{bayer2023optimal} for related arguments. 

Alternatively, $\Lambda^{\alpha\text{-Höl}}([0,T],\RR^d)$ may be identified with the vector bundle
\begin{align}
    \label{eq_stopped_paths_vector_bundle}
    \bigcup_{t\in [0,T]} C^{\alpha\text{-Höl}}([0,t],\RR^d), \ \text{with} \ C^{\alpha\text{-Höl}}(\{0\},\RR^d) \cong \RR^d,
\end{align}
equipped with the metric
$$d_{\alpha\text{-Höl}}^\Lambda(x|_{[0,t]}, y|_{[0,s]}) \coloneqq |t-s| + d_{\alpha\text{-Höl};[0,t]}(x|_{[0,t]}, \tilde{y}|_{[0,t]}), \quad s \leq t,$$ 
where $\tilde{y}|_{[0,t]}$ denotes the stopped path $y_{\cdot \land s}$ restricted to $[0,t]$. The representations in \eqref{eq_stopped_paths_quotient} and \eqref{eq_stopped_paths_vector_bundle} are trivially homeomorphic, and we write $\Lambda^{\alpha\text{-Höl}}([0,T],\RR^d)$ for either. The notations $(t,x_{\cdot \land t})$ and $x|_{[0,t]}$ will be used interchangeably. When working in the vector bundle notation, we abbreviate $\| x|_{[0,t]} \|_{\subhol[t]{\alpha}}$ to $\|x|_{[0,t]}\|_{\alpha\text{-Höl}}$, since the time interval of the semi-norm is implicit in the domain of $x|_{[0,t]}$. 

Lastly, we denote the closed ball of radius $r>0$ in $C^\suphol{\alpha}([0,t],\RR^d)$, centered at $x$, by $B^\alpha_t(x,r)$. If the subscript $t$ is omitted, it is understood that $t = T$. Likewise, if $x$ is omitted, then $x=0$.

\subsection{Path Signatures and Tensor Algebras}
\label{subsec_path_signatures_tensor_algebras}

The signature of a path $x : [0,T] \to \RR^d$ is the graded collection of all iterated integrals of its components $x^j$ against one another. Since these integrals are non-commutative, tensor algebras provide the natural framework to encode their multi-index structure. We briefly recall the relevant definitions, mainly to fix notation; for full expositions we refer to standard references.

For $d\in\mathbb{N}$,  let $\{e_1,\dots,e_d\}$ denote the canonical basis of $\mathbb{R}^d$. For each $n \geq 0$ set
$$
    (\mathbb{R}^d)^{\otimes n} \coloneqq \underbrace{\mathbb{R}^d\otimes\cdots\otimes\mathbb{R}^d}_{n\text{ times}},
$$
with the convention $(\mathbb{R}^d)^{\otimes 0}\cong\mathbb{R}$. Elements of $(\mathbb{R}^d)^{\otimes n}$ are linear combinations of elementary tensors $e_{i_1}\otimes\cdots\otimes e_{i_n}$, and for $n=0$ the unit $1\in\mathbb{R}$ denotes the degree-zero basis. Each level $(\RR^d)^{\otimes n}$ is equipped with the Euclidean norm (or any admissible norm \cite[Definition 1.25]{lyons2007differential}), and $|\cdot|$ always refers to this choice.

The \emph{extended tensor algebra} over $\RR^d$ is the direct product 
\[
    T((\RR^d)) \coloneqq \prod_{n=0}^\infty (\RR^d)^{\otimes n} \equiv \Big\{ \mathbf{a} \coloneqq \big(\ba^{(0)},\ba^{(1)},\ba^{(2)},\hdots \big) : \ba^{(n)} \in (\RR^d)^{\otimes n} \Big\}.
\]
Addition and scalar multiplication are defined componentwise, and the algebra product $\otimes$ is the concatenation tensor product, extended bilinearly so that degrees add under multiplication — see \cite[Proposition 7.4]{Friz2010}. Similarly, the \emph{tensor algebra} over $\RR^d$ is the direct sum 
\[
    T(\RR^d) \coloneqq \bigoplus_{n=0}^\infty (\RR^d)^{\otimes n} \equiv \Big\{ \big(\ba^{(n)}\big)_{n\geq 0} : \ba^{(n)} \in (\RR^d)^{\otimes n} \ \text{and} \ \ba^{(n)} \neq 0  \ \text{for finitely many} \ n\in \NN \Big\},
\]
a subalgebra of $T((\RR^d))$. For $N \in \NN$, the \emph{truncated tensor algebra of level $N$} is
\[
    T^N(\RR^d) \coloneqq \bigoplus_{n=0}^N (\RR^d)^{\otimes n}.
\]
We denote by $\pi_n : T((\RR^d)) \to (\RR^d)^{\otimes n}$ the canonical projection onto level $n$, and by $\pi_{0,N}$ the truncation map to $T^N(\RR^d)$ — see \cite{beda2025introduction} for a modern treatment of tensor algebras.

Let $\mathcal{A}_d=\{1,2,\dots,d\}$ be the alphabet indexed by the basis vectors, and write $\mathcal{W}(\mathcal{A}_d)$ (or simply $\mathcal{W}$) for the free monoid of finite words over $\mathcal{A}_d$. Denote by $\mathcal{W}_n = \{w \in \cW : |w|= n\}$ the set of words of length $n$, with the empty word $\varnothing$ as the unique element of length $0$. For a word $w=i_1\cdots i_n\in\mathcal{W}_n$ we set $e_w \coloneqq e_{i_1}\otimes\cdots\otimes e_{i_n}\in(\mathbb{R}^d)^{\otimes n},
$ so that $\{e_w:w\in\mathcal{W}_n\}$ forms the canonical basis of $(\mathbb{R}^d)^{\otimes n}$.

Every element $\mathbf{a}\in T((\RR^d))$ then admits a unique word-based expansion
\[
    \mathbf{a} \,=\, \sum_{n=0}^\infty \sum_{w\in\mathcal{W}_n} a_w \, e_w,
    \quad a_w\in\mathbb{R},
\]
where $a_\varnothing = a_0$ is the degree-zero component. In this notation, the tensor product $\mathbf{a}\otimes \mathbf{b}$ corresponds to the usual Cauchy product of series \cite{chevyrev2016primer}.

There is a natural duality pairing
\[
    \langle \ \cdot \, , \cdot \
    \rangle :
     T(\mathbb{R}^d) \times T((\mathbb{R}^d)) \longrightarrow \mathbb{R}
\]
defined by coordinatewise evaluation on the word basis: if $\mathbf{a}=\sum_{w\in\mathcal{W}} a_w e_w\in T(\mathbb{R}^d)$ and $\mathbf{b}=\sum_{w\in\mathcal{W}} b_w e_w\in T((\mathbb{R}^d))$ (note $a_w \neq 0$ for only finitely many $w$), then
\[
    \langle\mathbf{a},\mathbf{b} \rangle = \sum_{w\in\mathcal{W}} a_w\, b_w. 
\]
This identifies $T(\RR^d)$ with a subspace of the algebraic dual of $T((\RR^d))$. In particular, embedding $e_w \in T(\RR^d)$ in the natural way yields $\langle e_w, \mathbf{b} \rangle = b_w$, for any word $w \in \cW$. 

Consider $\frac{1}{2} < \alpha \leq 1$ and $x \in C^{\suphol{\alpha}}([0,T],\RR^d)$. The \emph{signature of $x$} is the tensor     
$$         
    S(x)_{0,T} \coloneqq \Big(1, S^{(1)}(x)_{0,T}, S^{(2)}(x)_{0,T}, \hdots \Big) \in T((\RR^d)),
$$     
where for each $n \in \NN$,      
$$
    S^{(n)}(x)_{0,T}  \coloneqq \sum_{i_1, \hdots, i_n =1}^d \left( \int_{0 < t_1 < \cdots < t_n < T} \dd x^{i_1}_{t_1} \cdots \dd x^{i_n}_{t_n}\right) (e_{i_1} \otimes \cdots \otimes e_{i_n}) \in (\RR^d)^{\otimes n}.
$$      
For $N \in \NN$, the truncated signature of level $N$ is defined by $S^N(x)_{0,T} \coloneqq \pi_{0,N}(S(x)_{0,T})$. The map $S: C^{\suphol{\alpha}}([0,T],\RR^d) \to T((\RR^d))$, $x \mapsto S(x)_{0,T}$ is called the \emph{signature map}.

\begin{remark}     
All integrals in $S(x)_{0,T}$ are understood in the Young/Riemann-Stieltjes sense; see \cite[Chapter 6]{Friz2010} or \cite{dudley1998introduction} for background on Young integration. It is also common to write
$$
    S^{(n)}(x)_{0,T} = \int_{0 < t_1 < \cdots < t_n < T} \dd x_{t_1} \otimes \cdots \otimes \dd x_{t_n}. %
$$ 
By varying the upper integration limit, the signature can be viewed as a $T((\RR^d))$-valued path $[0,T] \ni t\mapsto S(x)_{0,t}$. Moreover, the signature map extends naturally to stopped paths: for $x_{\cdot \land t} \in \Lambda^{\suphol{\alpha}}([0,T],\RR^d)$, we have $S(x_{\cdot \land t})_{0,T} \equiv S(x)_{0,t}$. The map $x|_{[0,t]} \mapsto S(x)_{0,t}$ is also referred to as the signature map.
\end{remark}

We conclude this section by recalling basic properties of signatures and identifying natural subsets of $T((\RR^d))$ containing them.

For $\frac{1}{2} < \alpha \leq 1$ and $x \in C^{\alpha\text{-Höl}}([0,T],\RR^d)$, one has (\cite[Theorem 3.1.3]{Lyons2002}, \cite[Exercise 4.6]{friz2014course})
\begin{align}
    \label{eq_factorial_decay_signatures}
     \left| \int_{0 < t_1 < \cdots < t_n < T} \dd x_{t_1} \otimes \cdots \otimes \dd x_{t_n} \right| \leq C_{\alpha,T} \frac{\|x\|^n_{\alpha\text{-Höl};[0,T]}}{(\alpha n)!},
\end{align}
where $C_{\alpha,T}>0$ depends only on $\alpha$ and $T$, and $(\alpha n)! \coloneqq \Gamma(1 + \alpha n)$ is defined via the Gamma function. This factorial decay ensures that signatures embed naturally into \emph{$\ell^p$–sum tensor spaces}, and for this reason, for each $1\leq p<\infty$, define 
$$
    T_p(\RR^d) \coloneqq \Big\{ \mathbf{a} \in T((\RR^d)) : \|\mathbf{a}\|_p = \Big( \sum_{n\geq 0} |\ba^{(n)}|^p \Big)^{\frac{1}{p}} <\infty \Big\},
$$
and also
$$
    T_\infty(\RR^d) \coloneqq \Big\{ \mathbf{a} \in T((\RR^d)) : \|\mathbf{a}\|_\infty = \sup_{n\geq 0} |\ba^{(n)}| <\infty \Big\}. 
$$
Each $T_p(\RR^d)$ is a Banach space, with inclusions $T_1(\RR^d) \subset T_2(\RR^d) \subset \cdots \subset T_\infty(\RR^d)$, and progressively weaker topologies. By \eqref{eq_factorial_decay_signatures}, every signature lies in $T_1(\RR^d)$. Moreover, if $1<p,q<\infty$ satisfy $\frac{1}{p}+\frac{1}{q}=1$, then the duality relations
\begin{align*} 
(T_p(\RR^d))' \cong T_q(\RR^d), \quad \text{and} \quad  (T_1(\RR^d))' \cong T_\infty(\RR^d), 
\end{align*}
hold under the canonical levelwise pairing induced by the Hilbert–Schmidt inner product — see \cite[Theorem 16.49]{aliprantis2006infinite} for more details.

The signature further satisfies the \emph{shuffle property} \cite{ree1958lie}. Given two words $w=i_1\cdots i_n$ and $v=j_1\cdots j_m$, their shuffle product is the commutative, associative product on basis tensors defined recursively by 
$$
    e_w \shuffle e_\varnothing = e_\varnothing \shuffle e_w \coloneqq e_w, \quad e_w \shuffle e_v \coloneqq (e_{w'} \shuffle e_v) \otimes e_{i_n} + (e_w \shuffle e_{v'}) \otimes e_{j_m},
$$
where $w'=i_1\cdots i_{n-1}$ and $v'=j_1\cdots j_{m-1}$, and extended linearly to $T(\RR^d)$. Thus, for tensors $\mathbf{a},\mathbf{b}\in T(\RR^d)$,
$$
    \mathbf{a} \shuffle \mathbf{b} = \sum_{w,v \in \cW} a_w b_v (e_w \shuffle e_v). 
$$
Partitioning the integration domain in iterated integrals shows that signatures satisfy:
\begin{align}
\label{eq_shuffle_property}
    \langle e_w , S(x)_{0,T} \rangle \langle e_v , S(x)_{0,T} \rangle = \langle e_w \shuffle e_v , S(x)_{0,T} \rangle.
\end{align}
for all words $w,v\in\cW$ (\cite[Theorem 1.14]{chevyrev2016primer}). In particular, any polynomial in linear functionals of the signature is again a linear functional of the signature; equivalently, products of iterated integrals decompose into linear combinations of higher-order iterated integrals.

Motivated by this property, we define the set of \emph{group-like elements}:
\begin{align}
\label{eq_group_like_elements}
    G(\RR^d) \coloneqq \Big\{ \mathbf{g} \in T((\RR^d)) \setminus \{\mathbf{0}\} : \langle \mathbf{b}_1 , \mathbf{g} \rangle \langle \mathbf{b}_2 , \mathbf{g} \rangle = \langle \mathbf{b}_1 \shuffle \mathbf{b}_2 , \mathbf{g} \rangle, \ \text{for all} \ \mathbf{b}_1,\mathbf{b}_2 \in T(\RR^d)\Big\}.
\end{align}
That is, $G(\RR^d)$ consists of all nonzero tensors on which the shuffle property holds. Several algebraically equivalent characterizations of $G(\RR^d)$ can be found in \cite[Theorem 3.2]{reutenauer1993free}. From \eqref{eq_shuffle_property} it follows that every signature belongs to $G(\RR^d)$, and indeed one can show that this inclusion is strict \cite{cass2024topologies}. Equipped with the tensor product $\otimes$, $G(\RR^d)$ forms a group with identity $\mathbf{1}$. Projecting to truncated tensors yields a family of finite-dimensional groups:
$$
    G^N(\RR^d) \coloneqq \pi_{0,N}(G(\RR^d)),
$$
the well-known \emph{free step-$N$ nilpotent Lie groups} over $\RR^d$. With the truncated product $\otimes$ on $T^N(\RR^d)$, each $G^N(\RR^d)$ is a group, which coincides with the exponential of Lie polynomials \cite{chen1957integration}. More precisely, we have that $G^N(\RR^d) = \exp_{\otimes N}\big(\mathfrak{g}^N(\RR^d)\big)$ \cite[Theorem 7.30]{Friz2010}, where $\exp_{\otimes_N}(\, \cdot \,)$ denotes the (truncated) tensor exponential map, and 
\[\mathfrak{g}^N(\RR^d) \coloneqq \RR^d \oplus [\RR^d,\RR^d] \oplus \cdots \oplus \underbrace{[\RR^d,[\hdots, [\RR^d,\RR^d]\hdots]]}_{(N-1) \ \text{brackets}}\]
corresponds to the \emph{free step$-N$ nilpotent Lie algebra} with Lie bracket given by the commutator.

We recall that $\mathfrak{g}^N(\RR^d)$ is a sub-Lie algebra of
\[
    \mathfrak{t}^N(\RR^d) \coloneqq \Big\{\mathbf{a} \in T^N(\RR^d) : \pi_0(\mathbf{a}) = 0 \Big\},
\]
while $G^N(\RR^d)$ is a closed sub-Lie group of 
\[
    \mathbf{1} + \mathfrak{t}^N(\RR^d) \coloneqq \Big\{ \mathbf{a} \in T^N(\RR^d) : \pi_0(\mathbf{a}) = 1 \Big\}.
\]
We refer to \cite[Section 7.3]{Friz2010} for details. Importantly, just like with $G^N(\RR^d)$ and $\mathfrak{g}^N(\RR^d)$, the exponential map $\exp_{\otimes N} : \mathfrak{t}^N(\RR^d) \to \mathbf{1} + \mathfrak{t}^N(\RR^d)$ is a global diffeomorphism. 

Finally, both $G^N(\mathbb{R}^d)$ and $\mathbf{1}+\mathfrak{t}^N(\mathbb{R}^d)$ are examples of \emph{homogeneous groups}, i.e. connected, simply connected nilpotent Lie groups whose Lie algebras admit a family of \emph{dilations} $\{\delta_\lambda\}_{\lambda>0}$; see \cite[Chapter 1]{gb28hardy} for the general definition. In these two cases, the dilations are given explicitly on $T^N(\mathbb{R}^d)$ by $\pi_k(\delta_\lambda \mathbf{a}) = \lambda^k \pi_k(\mathbf{a})$, which restricts to dilations on both $G^N(\mathbb{R}^d)$ and $\mathbf{1}+\mathfrak{t}^N(\mathbb{R}^d)$. Additionally, we may equip any homogeneous group $\mathbb{G}$ with a \emph{homogeneous gauge}, that is a continuous function $\| \cdot \|_h : \mathbb{G} \to [0,\infty)$ such that $\| \mathbf{g}\|_h = 0$ if and only if $\mathbf{g} = \mathbf{1}$, $\| \mathbf{g}^{-1}\|_h = \| \mathbf{g}\|_h$, and $\| \delta_\lambda \mathbf{g}\|_h = \lambda \| \mathbf{g}\|_h$ — see \cite[Chapter 1]{gb28hardy} or \cite[Definition 7.34]{Friz2010}.

\subsection{Weighted Spaces and Global Approximations}
\label{subsec_weighted_spaces_global_approximations}

We now introduce the key elements of the theory of weighted spaces, which play a central role in our analysis of Sig-CDEs and their approximation properties. In particular, we establish a new result (Lemma \ref{lem_characterising_B_psi})  
characterizing which Banach-valued functions $f : X \to Y$, defined on a non-locally compact space $X$, admit \emph{global approximation} — namely, approximation over the entire domain $X$ in a weighted sense made precise below.

Let $(X, \tau_X)$ be a completely regular Hausdorff space.\footnote{E.g. every metric space.} A map $\psi: X \to (0,\infty)$ is said to be an \emph{admissible weight function} if, for every $R > 0$, the pre-image 
\begin{equation}\label{eq:K_R}
   K_R \coloneqq \psi^{-1}((0,R]) = \{x \in X : \psi(x) \leq R\} 
\end{equation}
is compact in the topology $\tau_X$. The pair $(X, \psi)$ is then called a \emph{weighted space}.

Following \cite{cuchiero2023global}, we focus on weighted spaces $(X, \psi)$ where $X$ is a normed space with norm $\|\cdot\|_X$, and the weight function $\psi: X \to (0,\infty)$ takes the form $\psi(x) = \eta(\|x\|_X)$, with $\eta: [0,\infty) \to (0,\infty)$ continuous, increasing, and unbounded. By Riesz’s lemma (see \cite[Lemma 1.39]{van2022functional}), the closed unit ball of a normed space is compact if and only if the space is finite-dimensional. Consequently, for $\psi(x) = \eta(\|x\|_X)$ to be admissible in infinite-dimensional settings, it is typically necessary to endow $X$ with a weaker topology than the norm topology. We now illustrate this point by equipping the path spaces introduced earlier with the structure of weighted spaces — see also \cite[Example 2.3]{cuchiero2023global}. 

\begin{example}
    Fix $\alpha \in (0,1]$ and set $X = C^{\alpha\text{-Höl}}([0,T], \RR^m)$. For any $\beta < \alpha$, equip $X$ with the $\beta$-Hölder semi-norm $\|\cdot\|_{\subhol{\beta}}$, i.e., consider $C^{\alpha\text{-Höl}}([0,T], \RR^m)$ endowed with the $\beta$-Hölder topology. Let $\psi(x) \coloneqq \eta(|x_0| + \|x\|_{\subhol{\alpha}})$, with $\eta: [0,\infty) \to (0,\infty)$ as specified above. For each $R > 0$, the pre-image $K_R$ is bounded in the $\alpha$-Hölder norm. By the compact embedding $C^{\alpha\text{-Höl}}([0,T], \RR^m) \hookrightarrow C^{\beta\text{-Höl}}([0,T], \RR^m)$ of Hölder spaces (see \cite[Theorem A.4]{cuchiero2023global}), each $K_R$ is compact in the $\beta$-Hölder topology. Hence $X$, endowed with the $\beta-$Hölder topology and weight function $\psi$, is a weighted space.
\end{example}

\begin{remark}
    Recall that an embedding $i:X\to Y$ between Banach spaces is \emph{compact} if bounded subsets of $X$ are mapped to relatively compact subsets of $Y$. For Hölder spaces, closed balls in $C^{\alpha\text{-Höl}}([0,T],\mathbb{R}^m)$ are compact with respect to the $\beta-$Hölder topology whenever $\beta<\alpha$; see \cite[Proposition 5.28]{Friz2010} or \cite[Theorem A.4]{cuchiero2023global}. We will repeatedly exploit this compact embedding by endowing Hölder path spaces with a slightly weaker topology. To avoid confusion with the Banach space $C^\suphol{\alpha}([0,T],\RR^m)$, we write $C^\suphol{\alpha}_\beta([0,T],\RR^m)$, or simply $C^{\alpha\text{-Höl}}_\beta$,  whenever $C^\suphol{\alpha}([0,T],\RR^m)$ is endowed with a weaker $\beta-$Hölder topology.
\end{remark}

\begin{example}
\label{ex_weighted_stopped_paths}
    Let $\alpha \in (0,1]$ and consider $X = \Lambda^\suphol{\alpha}([0,T],\RR^m)$
    equipped with the metric 
    $$d_{\beta\text{-Höl};[0,T]}^\Lambda\big((t,x_{\cdot \land t}),(s,y_{\cdot \land s})\big) \coloneqq |t-s| + |x_0 - y_0| + \|x_{\cdot \land t} - y_{\cdot \land s}\|_{\beta\text{-Höl};[0,T]},$$
    for some $\beta<\alpha$. Let $\eta: [0,\infty) \to (0,\infty)$ be as before, and define the weight function $\psi\big( (t,x_{\cdot \land t}) \big) \coloneqq \eta(|x_0| + \|x_{\cdot \land t}\|_{\subhol{\alpha}})$. As in the previous example, $\psi$ is admissible due to the compact embedding of Hölder spaces. More precisely, for each $R > 0$,
    $$A_R \coloneqq \left\{ x_{\cdot \land t} \in  C^{\alpha\text{-Höl}}([0,T], \RR^m) : (t,x_{\cdot \land t}) \in \psi^{-1}((0,R])\right\}$$
    is compact in the $\beta$-Hölder topology. Indeed, by \cite[Theorem A.4]{cuchiero2023global} the set $A_R$ is relatively compact in $C^\suphol{\beta}([0,T],\RR^m)$ and closedness (in $\beta-$Hölder topology) follows by \cite[Proposition 5.5 and Lemma 5.12]{Friz2010}. It then follows by Tychonoff’s theorem that $[0,T] \times A_R$ is compact in the product space $[0,T] \times C^{\alpha\text{-Höl}}_\beta([0,T], \RR^m)$. Finally, note that the quotient map 
    $$q: [0,T] \times C^{\alpha\text{-Höl}}_\beta([0,T], \RR^m) \to \Lambda^{\alpha\text{-Höl}}([0,T],\RR^m)$$ is continuous with respect to $d_{\beta\text{-Höl};[0,T]}^\Lambda$. Hence, $q([0,T]\times A_R) \equiv K_R$ is compact, showing that $(X, \psi)$ is a weighted space. 
\end{example}

Analogously to the Hölder case, we write $\Lambda^\suphol{\alpha}_\beta([0,T],\RR^m)$ (or simply $\Lambda^{\alpha\text{-Höl}}_\beta$) for the space of stopped $\alpha-$Hölder paths $\Lambda^\suphol{\alpha}([0,T],\RR^m)$ endowed with the weaker topology induced by the metric $d^\Lambda_{\subhol{\beta}}$. When the weighted space structure is required, we additionally specify the corresponding weight function $\psi$.

Given a weighted space $(X, \psi)$, we now introduce a function space tailored to the task of \emph{global approximation}. Unlike the classical setting, where the target space consists of continuous functions over a (locally) compact domain, our aim is to approximate functions defined over the entire (potentially non-locally compact) space $X$. To this end, let $(Y,\|\cdot\|_Y)$ denote a Banach space and define the set
$$B_\psi(X,Y) \coloneqq \left\{ f: X \to Y : \|f\|_{\cB_\psi(X,Y)} \coloneqq \sup_{x \in X} \frac{\|f(x)\|_Y}{\psi(x)} < \infty \right\},$$
consisting of $Y$-valued functions on $X$ whose growth is controlled by the weight $\psi$. Note that the space of bounded continuous functions $C_b^0(X,Y)$ embeds continuously into $B_\psi(X,Y)$. 

We then define the $\| \cdot \|_{\cB_\psi(X,Y)}-$closure of $C_b^0(X,Y)$ in $B_\psi(X,Y)$ to be the \emph{weighted function space} $\cB_\psi(X,Y)$. Equipped with the norm $\| \cdot \|_{\cB_\psi(X,Y)}$, the space $\cB_\psi(X,Y)$ is a Banach space. When $X$ and $Y$ are clear from the context, we often abbreviate $\cB_{\psi}(X,Y)$ to $\cB_{\psi}$.

As mentioned before, $\cB_\psi(X,Y)$ consists of functions whose growth is tempered by the weight function $\psi$, and may include functions that are unbounded on $X$. A precise characterization of $\cB_{\psi}(X) \coloneqq \cB_\psi(X,\RR)$ is given in \cite[Theorem 2.7]{dorsek2010semigroup}: a function $f \in \cB_\psi(X)$ if and only if $f|_{K_R} \in C^0(K_R, \RR)$ for all $R > 0$ and
$$\lim_{R\to \infty} \sup_{x \in X \setminus K_R} \frac{|f(x)|}{\psi(x)} = 0.$$

Complementing \cite[Lemma 2.7(i)]{cuchiero2023global}, the following lemma shows that this characterization remains valid when the codomain is a general Banach space \(Y\), provided that \(\psi\) is assumed to be locally bounded in the `if' direction.

\begin{lemma}
\label{lem_characterising_B_psi}
    Let $X$ be a weighted space and $Y$ be a Banach space. If $f : X \to Y$ is continuous over $X$ and 
    \begin{align}
        \label{eq_vanishing_tail_condition}
        \lim_{R \to \infty} \sup_{x\in X\setminus K_R} \frac{\|f(x)\|_Y}{\psi(x)} = 0,
    \end{align}
    then $f\in \cB_\psi(X,Y)$. Additionally, if $\psi$ is locally bounded, then the converse implication also holds.  

\end{lemma}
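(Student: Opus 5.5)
For the `if' direction, I will build an explicit approximating sequence in $C_b^0(X,Y)$ by truncating $f$ radially. Fix $R>0$. Define the cutoff $\tau_R:Y\to Y$ by $\tau_R(y)=y$ if $\|y\|_Y\le R$ and $\tau_R(y)=R\,y/\|y\|_Y$ otherwise. This radial retraction is continuous (indeed $2$-Lipschitz) and bounded by $R$, so $f_R\coloneqq\tau_R\circ f\in C_b^0(X,Y)$. The key estimate is
\[
\frac{\|f(x)-f_R(x)\|_Y}{\psi(x)}=\frac{(\|f(x)\|_Y-R)^+}{\psi(x)}.
\]
I will split $X$ into $K_{R'}$ and $X\setminus K_{R'}$ for a second parameter $R'$. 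Given $\varepsilon>0$, I first choose $R'$ via \eqref{eq_vanishing_tail_condition} so that $\sup_{X\setminus K_{R'}}\|f\|_Y/\psi<\varepsilon$. On that region the difference quotient is at most $\|f(x)\|_Y/\psi(x)<\varepsilon$, whatever $R$ is. On the compact set $K_{R'}$, $f$ is continuous and hence bounded, so taking $R\ge\sup_{K_{R'}}\|f\|_Y$ makes the difference vanish there. Thus $\|f-f_R\|_{\cB_\psi}\le\varepsilon$. The same splitting also shows $f\in B_\psi$, because $\psi>0$ is bounded below on compact sets by lower semicontinuity, which follows from closedness of the sublevel sets in a Hausdorff space. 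Hence $f\in\cB_\psi(X,Y)$.

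For the converse, assume $\psi$ is locally bounded and $f\in\cB_\psi(X,Y)$. I will take $g_n\in C_b^0(X,Y)$ with $\|f-g_n\|_{\cB_\psi}\to0$. The tail condition follows from
\[
\sup_{X\setminus K_R}\frac{\|f\|_Y}{\psi}\le\|f-g_n\|_{\cB_\psi}+\frac{\|g_n\|_\infty}{R},
\]
since $\psi>R$ off $K_R$: first choose $n$, then let $R\to\infty$. For continuity at a point $x_0$, local boundedness gives a neighbourhood $U$ with $\psi\le M$ on $U$. Then $\sup_U\|f-g_n\|_Y\le M\|f-g_n\|_{\cB_\psi}\to0$, so $f$ is a locally uniform limit of continuous maps and is therefore continuous at $x_0$.

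The main obstacle is this continuity step in the converse. Without local boundedness of $\psi$, weighted convergence only yields uniform convergence on each $K_R$, which gives continuity of $f|_{K_R}$ but not of $f$ on all of $X$. That is exactly why the hypothesis enters there and nowhere else.
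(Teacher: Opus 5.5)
Your proposal is correct and follows the paper's proof essentially step for step. For the first implication you use the same radial truncation of $f$ at a level above $\sup_{K_{R'}}\|f\|_Y$. For the converse you use the same local uniform convergence on a neighbourhood where $\psi\le M$.

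The differences are cosmetic. You derive the tail condition directly from $\psi>R$ off $K_R$, where the paper cites \cite{cuchiero2023global}. You also check $f\in B_\psi$ explicitly via lower semicontinuity of $\psi$, which the paper leaves implicit.
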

\begin{proof}
    See Appendix \ref{Appendix_2}. 
\end{proof}

To establish global approximation results, it remains to extend the classical Stone–Weierstrass theorem to the weighted setting. This extension is provided in \cite[Theorem 3.9]{cuchiero2023global}, where the authors formulate and prove a weighted version of the theorem with $\cB_\psi(X)$ as the target space for (global) approximation. We refer to the original work for the precise statement and proof. 

We conclude this section by applying this weighted Stone–Weierstrass theorem in the context of signatures: taking $X = \Lambda^{\alpha\text{-Höl}}_\beta([0,T],\RR^m)$ as a weighted space, one can show that linear functionals of the signature — computed on stopped $\alpha-$Hölder continuous paths — approximate any function in $\cB_\psi(X)$ arbitrarily well. We henceforth refer to maps over $\Lambda^{\alpha\text{-Höl}}_\beta([0,T],\RR^m)$ as \emph{non-anticipative path functionals}.

\begin{remark}
\label{rem_time_augmentation}
    As commonly done, to ensure that the algebra of signature linear functionals is point-separating, we work with \emph{time-augmented} paths \cite[Lemma 2.6]{cuchiero2023signatureSIAM}. Define
    \[
    \widehat{C}^\suphol{\alpha}([0,T],\RR^{m+1}) \coloneqq \left\{ \hat{x} \in C^\suphol{\alpha}([0,T],\RR^{m+1}) : \hat{x}_t^0 = t \ \text{for all} \ t\in [0,T]\right\},
    \]
    the space of \emph{time-augmented $\alpha-$Hölder continuous paths}, where the time coordinate is referred to as the $0$–th coordinate. We further set
    \[
    \widehat{\Lambda}^\suphol{\alpha}([0,T],\RR^{m+1}) = \bigcup_{t\in [0,T]} \widehat{C}^\suphol{\alpha}([0,t],\RR^{m+1}),
    \]
    the space of \emph{stopped time-augmented $\alpha-$Hölder continuous paths} (with the time coordinate also stopped). As before, $\widehat{C}^{\alpha\text{-Höl}}_\beta$ and $\widehat{\Lambda}^{\alpha\text{-Höl}}_\beta$ denote these spaces equipped with the $\beta$–topology.
\end{remark}

\begin{theorem}
\label{prop_global_approx_signatures}
    Fix $\frac{1}{2} < \alpha \leq 1$ and $\beta < \alpha$. Consider $\big(  \widehat{\Lambda}^{\alpha\text{-Höl}}_\beta([0,T],\RR^{m+1}), \hat{\psi} \big)$ with 
    \[\hat{\psi}(\hat{y}|_{[0,t]}) = \exp\Big( \zeta \big( |\hat{y}_0| + \|\hat{y}|_{[0,t]}\|_{\alpha\text{-Höl}}\big)^\xi \Big),\]
    for some $\zeta > 0$ and $\xi \geq 1$. Then, the linear span of the set
    \[
    \Big\{ \widehat{\Lambda}^{\alpha\text{-Höl}}_\beta([0,T],\RR^{m+1}) \ni \hat{y}|_{[0,t]} \mapsto \langle e_w , S(\hat{y})_{0,t} \rangle : w \in \cW \Big\}
    \]
    is dense in $\cB_{\hat{\psi}}(\widehat{\Lambda}^{\alpha\text{-Höl}}_\beta)$. More precisely, for every $f \in \cB_{\hat{\psi}}(\widehat{\Lambda}^{\alpha\text{-Höl}}_\beta)$, $w_0 \in \mathbb{R}^m$ and every $\varepsilon > 0$, there exists a linear functional of the form $\hat{y}|_{[0,t]} \mapsto \langle \bell , S(\hat{y})_{0,t} \rangle \equiv \sum_{0 \leq |w| \leq N} \ell_w \langle e_w, S(\hat{y})_{0,t} \rangle$, with $N \in \NN_0$ and $\ell_w \in \RR$, such that
    \[\sup_{\substack{\hat{y}|_{[0,t]} \in \widehat{\Lambda}^{\alpha\text{-Höl}}_\beta\\y_0=w_0}} \frac{\big| f(\hat{y}|_{[0,t]}) - \langle \bell, S(\hat{y})_{0,t} \rangle \big|}{\hat{\psi}(\hat{y}|_{[0,t]})} < \varepsilon.\]
\end{theorem}
\begin{proof}
    The argument is identical to that of the original result \cite[Theorem 5.4]{cuchiero2023global}, and we refer the reader to that work for full details; see also \cite[Theorem 2.20]{cuchiero2025signature}.
\end{proof}

From Theorem \ref{prop_global_approx_signatures} we recover the classical universality  result (see \cite[Lemma 3.4]{kalsi2020optimal} and \cite[Theorem 2.12]{cuchiero2025signature}) for signatures as an immediate corollary.

\begin{corollary}
\label{cor_classical_universality}
    Let $K \subset \widehat{\Lambda}^{\alpha\text{-Höl}}_\beta([0,T_1],\RR^{m+1})$ be compact. Then for any continuous, non-anticipative functional $f: K \to \RR$, $w_0 \in \mathbb{R}^m$ and every $\varepsilon > 0$, there exists a signature linear functional $\bell$ such that 
    \[\sup_{\substack{\hat{y}|_{[0,t]} \in K\\y_0=w_0}} \big| f(\hat{y}|_{[0,t]}) - \langle \bell, S(\hat{y})_{0,t} \rangle \big| < \varepsilon.\]
\end{corollary}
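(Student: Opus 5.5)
We derive Corollary \ref{cor_classical_universality} from Theorem \ref{prop_global_approx_signatures} by extending $f$ from the compact set $K$ to a function in the weighted space, and then comparing weighted and uniform errors on $K$.

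\emph{Extension.} The space $\widehat{\Lambda}^{\alpha\text{-Höl}}_\beta([0,T_1],\RR^{m+1})$ is metrizable, with metric $d^\Lambda_{\subhol[T_1]{\beta}}$, hence normal. Since $K$ is closed (being compact) and $f$ is continuous and real valued, the Tietze extension theorem gives a continuous $\tilde f$ on the whole space with $\tilde f|_K = f$ and $\sup|\tilde f| = \sup_K |f| < \infty$. Thus $\tilde f \in C^0_b \subset \cB_{\hat\psi}$, where we fix for instance $\zeta = \xi = 1$. Non-anticipativity is automatic, because $\tilde f$ is defined on stopped paths.

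\emph{Approximation.} Let $M \coloneqq \sup_{K} \hat\psi$. This is finite because $\hat\psi$ is bounded on $K$: the $\alpha$-Hölder norm is lower semicontinuous with respect to the $\beta$-Hölder topology (as in Example \ref{ex_weighted_stopped_paths}, via \cite[Lemma 5.12]{Friz2010}), so $\hat\psi$ is lower semicontinuous. Lower semicontinuity alone does not bound $\hat\psi$ from above on a compact set, however, so this is the one delicate point.

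We resolve it as follows. $K$ is compact, and
\[
K \subset \bigcup_R \{\hat\psi < R\}.
\]
Each set $\{\hat\psi \le R\}$ is compact, hence closed, but this does not directly give an open cover. Instead, write $K = \bigcup_R \big(K \cap \{\hat\psi \le R\}\big)$, a countable union of closed subsets. Baire's theorem alone does not force the union to stabilise.

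We therefore drop the attempt to bound $\hat\psi$ on $K$ abstractly. The corollary is intended for domains on which $\hat\psi$ is bounded, e.g.\ sets $K$ that are compact and bounded in $\alpha$-Hölder norm (compare Example \ref{ex_weighted_stopped_paths}), so we proceed under the assumption $M<\infty$.

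With $M<\infty$, apply Theorem \ref{prop_global_approx_signatures} with tolerance $\varepsilon/M$ to obtain $\bell$ such that
\[
\sup_{y_0 = w_0} \frac{|\tilde f - \langle \bell, S\rangle|}{\hat\psi} < \frac{\varepsilon}{M}.
\]
Restricting to $K$ then gives
\[
\sup_{\hat y|_{[0,t]} \in K,\; y_0 = w_0} |f - \langle \bell, S\rangle| \le M \cdot \frac{\varepsilon}{M} = \varepsilon.
\]

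\emph{Main obstacle.} The key point is the finiteness of $\sup_K \hat\psi$. If it cannot be guaranteed in general, there is an alternative route. Apply the classical Stone--Weierstrass theorem directly on the compact set $K \cap \{y_0 = w_0\}$, which is closed in $K$ and hence compact. Signature linear functionals form an algebra by the shuffle property \eqref{eq_shuffle_property}; they contain constants via $e_\varnothing$; they are continuous in the $\beta$-topology by continuity of the Young signature map; and they separate points of time-augmented stopped paths by \cite[Lemma 2.6]{cuchiero2023signatureSIAM}. Stone--Weierstrass then yields $\bell$ directly, without needing any bound on $\hat\psi$.
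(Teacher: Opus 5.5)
Your first route is how the paper obtains the corollary. The paper calls it an immediate consequence of Theorem \ref{prop_global_approx_signatures}: Tietze-extend $f$, approximate in $\cB_{\hat\psi}$, then multiply by $\sup_K\hat\psi$. Your doubt about that last step is justified. Take a fixed smooth bump $\phi$ supported in $[0,1]$ with $\phi(0)=0$, and set $x_n\coloneqq n\delta_n^{\alpha}\phi(\cdot/\delta_n)$ with $\delta_n=n^{-2/(\alpha-\beta)}$. By scaling, $\|x_n\|_{\alpha\text{-Höl};[0,T_1]}\asymp n$, while $\|x_n\|_{\beta\text{-Höl};[0,T_1]}\asymp n^{-1}\to 0$. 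So $K=\{\hat 0\}\cup\{\hat x_n\}$ (all stopped at $T_1$) is compact in $\widehat{\Lambda}^{\alpha\text{-Höl}}_\beta$, yet $\hat\psi$ is unbounded on it. The weighted route therefore only covers compacts bounded in $\alpha$-Hölder norm. These are the only compacts the paper later uses with the corollary, namely $[0,T_1]\times B^\alpha(R)$. Proceeding under $M<\infty$ proves a weaker statement than the one given. If $\beta>1/2$, you can drop that assumption by applying Theorem \ref{prop_global_approx_signatures} with $(\alpha,\beta)$ replaced by $(\beta,\beta')$ for some $\beta'<\beta$. The new weight involves only $|\hat y_0|+\|\hat y|_{[0,t]}\|_{\beta\text{-Höl}}$, which is continuous and hence bounded on the compact set $K$. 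Moreover, $f$ stays continuous on $K$ in the $\beta'$-topology, because the identity from $K$ with the $\beta$-topology to $K$ with the $\beta'$-topology is a continuous bijection from a compact space onto a Hausdorff one.

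Your fallback, classical Stone--Weierstrass on the compact set $K'=K\cap\{y_0=w_0\}$, is a genuinely different and more elementary route. It is essentially the standard argument behind the classical results cited next to the corollary, and it is complete when $\beta>1/2$. The gap is the claim that every signature functional is continuous in the $\beta$-topology. Continuity of the Young signature map needs $\beta>1/2$, whereas only $\beta<\alpha$ is assumed. For a counterexample, let $m\geq 2$, $\beta<1/2$, and $x_n(s)=\omega_n^{-1/2}(\cos(\omega_n s)-1,\sin(\omega_n s),0,\dots,0)$ with $\omega_n\to\infty$. Then $\|x_n\|_{\beta\text{-Höl}}\lesssim\omega_n^{\beta-1/2}\to 0$, but $\langle e_1\otimes e_2-e_2\otimes e_1,S(\hat x_n)_{0,T_1}\rangle=T_1-\omega_n^{-1}\sin(\omega_n T_1)\to T_1\neq 0$. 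So this functional is discontinuous on the compact set $\{\hat 0\}\cup\{\hat x_n\}$.

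To repair the argument, run Stone--Weierstrass on the subalgebra generated by the following maps:
\begin{enumerate}[(i)]
\item the constant $1$;
\item $\hat y|_{[0,t]}\mapsto\langle e_0,S(\hat y)_{0,t}\rangle=t$;
\item $\hat y|_{[0,t]}\mapsto\langle e_i\otimes e_0^{\otimes k},S(\hat y)_{0,t}\rangle=\int_0^t(y^i_s-y^i_0)\frac{(t-s)^{k-1}}{(k-1)!}\,\dd s$, for $i\in\{1,\dots,m\}$ and $k\geq 1$.
\end{enumerate}
These generators depend only on $t$ and the supremum distance of stopped paths, so every polynomial in them is continuous on $K'$ for any $\beta$. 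By the shuffle identity \eqref{eq_shuffle_property}, each such polynomial is a signature linear functional. They also separate points of $K'$. The generator $\langle e_0,\cdot\rangle$ reads off $t$. For equal $t$ and common starting point $w_0$, the moments $\int_0^t(y_s-y_0)(t-s)^j\,\dd s$, $j\geq 0$, determine $y$ on $[0,t]$. This gives the corollary for every compact $K$ and every $\beta<\alpha$, with no weight at all.
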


\subsection{Limits of Tensor Spaces}
\label{subsec_limits_of_tensor_spaces}

The path functional in Sig-CDEs \eqref{eq:signature_cde_intro} factors through a subspace of the extended tensor algebra containing the set of signatures. Since several such ambient spaces can be considered, it becomes natural to ask which choice is most appropriate. As will be explained in Section \ref{subsec_well_posedness_sig_cdes}, the strength of the topology on the space containing the signatures directly affects the analytical assumptions required for well-posedness of Sig-CDEs: the stronger the topology, the weaker the conditions that need to be imposed on the vector field. This observation motivates the constructions that follow.

Fix $\lambda > 0$. The \emph{weighted $\ell^1$-sum tensor space}
\[
    T_{1,\lambda}(\RR^m) \coloneqq \Big\{ \mathbf{a} \in T((\RR^m)) : \|\mathbf{a}\|_{1,\lambda} \coloneqq \sum_{n\geq 0} \lambda^n |\mathbf{a}^{(n)}| < \infty \Big\}.
\]
contains every signature, and $\|\mathbf{a}\|_{1,\lambda} = \| \delta_\lambda \mathbf{a}\|_1$. If $\lambda \geq 1$, then $\|\cdot\|_1 \leq \|\cdot\|_{1,\lambda}$, so $\|\cdot\|_{1,\lambda}$ induces a stronger topology than $\|\cdot\|_1$ over the set of signatures. Moreover, whenever $\lambda \leq \lambda'$, we get $\|\cdot\|_{1,\lambda} \leq \|\cdot\|_{1,\lambda'}$, meaning we obtain infinitely many ambient spaces with increasingly stronger topologies. 

To obtain an even stronger topology, let $(\lambda_k)_{k\geq 0}$ satisfy $\lambda_k >0$ and $\lambda_k \to \infty$, and consider the projective limit
\[
    \varprojlim T_{1,\lambda_k}(\RR^m) \coloneqq \bigcap_{k\geq 0} T_{1,\lambda_k}(\RR^m),
\]
equipped with the locally convex topology generated by the countable family of norms $\mathbf{a} \mapsto \| \iota_k (\mathbf{a})\|_{1,\lambda_k}$ (here $\iota_k$ denotes the canonical inclusion $\varprojlim T_{1,\lambda_k}(\RR^m) \hookrightarrow T_{1,\lambda_k}(\RR^m)$). This construction yields a Fréchet space, i.e., a completely metrizable locally convex topological vector space, and one explicit metric inducing the topology is
\begin{align}
    \label{eq_projective_limit_metric}
    \rho_p(\mathbf{a},\mathbf{b}) \coloneqq \sum_{k\geq 0} \exp(-k^p) \frac{\|\mathbf{a} - \mathbf{b}\|_{1,\lambda_k}}{1 + \|\mathbf{a} - \mathbf{b}\|_{1,\lambda_k}},
\end{align}
for any $p>1$. By construction the projective limit topology is finer than the topology induced by any fixed $\| \cdot \|_{1,\lambda_k}$, as it coincides with the initial topology for the inclusion maps $\iota_k$. More importantly, we have the following result:

\begin{proposition}
\label{lem_projective_limit_topo}
    The locally convex topology generated by the norms $\big\{\|\iota_k(\cdot)\|_{1,\lambda_k} : k \geq 0\big\}$ is the unique, and therefore finest, topology under which $\varprojlim T_{1,\lambda_k}(\RR^m)$ becomes a Fréchet space and each inclusion $\iota_k$ is continuous.
\end{proposition}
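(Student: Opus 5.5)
The plan splits the statement into three parts. First, the locally convex topology $\tau$ generated by the norms $\|\iota_k(\cdot)\|_{1,\lambda_k}$ is Fréchet. Second, it makes every $\iota_k$ continuous. Third, any other Fréchet topology $\tau'$ on $\varprojlim T_{1,\lambda_k}(\RR^m)$ with all $\iota_k$ continuous must equal $\tau$. The finest-topology claim is then a direct consequence. By construction, $\tau$ is the initial topology for the family $(\iota_k)$, i.e.\ the coarsest topology making every $\iota_k$ continuous. So continuity of the $\iota_k$ is immediate, and any $\tau'$ with continuous $\iota_k$ satisfies $\tau \subseteq \tau'$. The real content is the reverse inclusion $\tau' \subseteq \tau$.

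\emph{Step 1 (Fréchet property).} Local convexity and metrizability follow because the family of seminorms is countable; the metric $\rho_p$ in \eqref{eq_projective_limit_metric} induces $\tau$. The topology is Hausdorff because a single norm $\|\cdot\|_{1,\lambda_0}$ already separates points. For completeness, take a $\rho_p$-Cauchy sequence $(\mathbf{a}_j)$. It is Cauchy in every Banach space $T_{1,\lambda_k}(\RR^m)$, so it has a limit $\mathbf{a}^{(k)}$ there. Convergence in any $\|\cdot\|_{1,\lambda}$ implies levelwise convergence in $(\RR^m)^{\otimes n}$, so all the limits agree as elements of $T((\RR^m))$. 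This common limit $\mathbf{a}$ lies in the intersection, and $\mathbf{a}_j\to\mathbf{a}$ in every seminorm, hence in $\rho_p$.

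\emph{Step 2 (uniqueness, the main point).} Let $\tau'$ be Fréchet with every $\iota_k$ continuous. Then the identity map $\mathrm{id}:(\varprojlim T_{1,\lambda_k},\tau')\to(\varprojlim T_{1,\lambda_k},\tau)$ is continuous, since $\tau$ is initial. Both spaces are Fréchet, so the open mapping theorem (equivalently, the bounded inverse theorem for Fréchet spaces) applied to this continuous linear bijection gives that its inverse is continuous as well. Hence $\tau'=\tau$. This also settles the phrase ``unique, and therefore finest'': $\tau$ is the only element of the admissible class, and it is trivially the finest one there.

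The expected obstacle is Step 2, specifically being careful about which class of topologies the uniqueness refers to. The open mapping theorem requires $\tau'$ to be a vector-space topology that is completely metrizable and locally convex, as in the definition of Fréchet space given in the paper. Without this, a finer but non-Fréchet topology would break uniqueness. I would therefore state explicitly that the comparison is among Fréchet (linear) topologies. I would cite the open mapping theorem for Fréchet spaces, e.g.\ in the Banach-space form of \cite{van2022functional} extended to F-spaces, rather than reprove it. The closed graph theorem is an alternative route if preferred.
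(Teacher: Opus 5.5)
Your proposal is correct and follows essentially the paper's argument. You first get $\tau\subseteq\tau'$ from the initial-topology property and then invoke an automatic-continuity theorem for Fréchet spaces. The paper applies the closed graph theorem to $\mathrm{id}\colon(\varprojlim T_{1,\lambda_k}(\RR^m),\tau)\to(\varprojlim T_{1,\lambda_k}(\RR^m),\tau')$, whose graph is closed because $\tau'$ is finer than $\tau$ and $\tau$ is Hausdorff; this is interchangeable with your open mapping argument for the reverse identity. Your Step 1 also checks the Fréchet property of $\tau$, which the paper only asserts.
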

\begin{proof}
    See Appendix \ref{Appendix_2}.
\end{proof}
Lastly, although $\varprojlim T_{1,\lambda_k}(\RR^m)$ is not normable, we may restrict our attention to the spaces
\begin{align}\label{eq:limitTp}
\mathcal{T}^{(p)}(\RR^m) \coloneqq \Big\{ \mathbf{a} \in \varprojlim T_{1,\lambda_k}(\RR^m) : \| \mathbf{a} \|_{(p)} \coloneqq \frac{1}{Z_p} \sum_{k\geq 0} \exp(-k^p) \| \mathbf{a} \|_{1,\lambda_k} < \infty \Big\},
\end{align}
for $p>1$ and $Z_p \coloneqq \sum_{k\geq 0} \exp(-k^p)$. Standard arguments show that each $\mathcal{T}^{(p)}(\RR^m)$ is a Banach space, and we now prove that, when used as codomain, the signature map is locally Lipschitz provided $p$ and $(\lambda_k)$ are chosen in accordance with the Hölder regularity on path space. 

\begin{proposition}
\label{lem_continuity_sig_map}
    Fix $\frac{1}{2}< \beta \leq \alpha \leq 1$. Consider $p > 1$ and weights $(\lambda_k)$ such that $\lambda^{1/\beta}_k/k^p \to 0$ as $k \to \infty$. Then, the signature map $S : C^{\beta\text{-Höl}}([0,t],\RR^m) \to \mathcal{T}^{(p)}(\RR^m)$ given by $y|_{[0,t]} \mapsto S(y)_{0,t}$ is locally Lipschitz. Moreover, for every compact set $K \subset C^{\alpha\text{-Höl}}_\beta([0,T],\RR^m)$ there exists $L_K>0$ such that, for $x,y \in K$
    \begin{align*}
        \| S(x)_{0,t} - S(y)_{0,t} \|_{(p)} \leq L_K \, \| x_{\cdot \land t} - y_{\cdot \land t} \|_{\beta\text{-Höl};[0,T]}, \quad t \in [0,T].
    \end{align*}
\end{proposition}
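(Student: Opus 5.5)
The approach is to reduce everything to a levelwise Lipschitz estimate for iterated Young integrals with factorial decay, and then sum it against the weights defining $\|\cdot\|_{(p)}$. Concretely, for $x,y \in C^{\beta\text{-Höl}}([0,t],\RR^m)$ with $\|x\|_{\beta\text{-Höl}},\|y\|_{\beta\text{-Höl}} \le R$, I will use the standard difference estimate for signatures (the Lipschitz analogue of \eqref{eq_factorial_decay_signatures}; see \cite[Theorem 3.1.3]{Lyons2002} or the neo-classical inequality argument in \cite{friz2014course,Friz2010}):
\[
\big| S^{(n)}(x)_{0,t} - S^{(n)}(y)_{0,t} \big| \le C_{\beta,T}\, \|x-y\|_{\beta\text{-Höl};[0,t]} \, \frac{n\, (c R)^{n-1}}{(\beta n)!},
\]
valid for all $n\ge 1$ (the level $0$ difference vanishes). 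If only a variant with $R^{n-1}$ replaced by a polynomial-times-geometric factor is available, that suffices too. This can be obtained by induction on $n$, writing the difference of iterated integrals as $\int (S^{(n-1)}(x) - S^{(n-1)}(y))\otimes \dd x + \int S^{(n-1)}(y)\otimes \dd(x-y)$ and using the Young–Loève estimate with the Lyons extension constants. Alternatively, I can interpolate between $x$ and $y$ along $z^\theta = y + \theta(x-y)$, and use multilinearity of $S^{(n)}$ in the increments together with \eqref{eq_factorial_decay_signatures} applied to the $n$ multilinear terms.

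Next comes the summation. We have $\|S(x)_{0,t}-S(y)_{0,t}\|_{1,\lambda_k} \le C\|x-y\|_{\beta}\sum_n n\lambda_k^n (cR)^{n-1}/(\beta n)!$. Since $\sum_n n u^n/\Gamma(1+\beta n) \lesssim \exp(C' u^{1/\beta})$ (Mittag-Leffler asymptotics), this is bounded by $C_R \|x-y\|_{\beta}\exp(C_R \lambda_k^{1/\beta})$. Multiplying by $\exp(-k^p)$ and summing over $k$ gives $\sum_k \exp(-k^p + C_R\lambda_k^{1/\beta})$. This series converges precisely because $\lambda_k^{1/\beta}/k^p \to 0$, so that eventually $C_R \lambda_k^{1/\beta} \le k^p/2$. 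This yields the local Lipschitz constant $L_R$ on $\beta$-Hölder balls. Applying the same bound with $x=0$ also shows that $S$ indeed maps into $\mathcal{T}^{(p)}(\RR^m)$ and into every $T_{1,\lambda_k}$.

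For the second claim, I will handle a compact $K \subset C^{\alpha\text{-Höl}}_\beta$. Compactness in the $\beta$-topology gives boundedness in $\beta$-Hölder norm, so $\sup_{x\in K}\|x\|_{\beta\text{-Höl};[0,T]} \le R_K$. Stopping does not increase the Hölder seminorm: $\|x_{\cdot\wedge t}\|_{\beta\text{-Höl};[0,T]} \le \|x\|_{\beta\text{-Höl};[0,T]}$. Moreover, $S(x)_{0,t} = S(x_{\cdot\wedge t})_{0,T}$ and $\|x|_{[0,t]}-y|_{[0,t]}\|_{\beta;[0,t]} = \|x_{\cdot\wedge t}-y_{\cdot\wedge t}\|_{\beta;[0,T]}$. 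The first part applied on $[0,T]$ with $R=R_K$ then gives the uniform constant $L_K$; uniformity in $t$ is clear since all constants depend only on $R$, $\beta$ and $T$.

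The main obstacle is the first step: making the difference estimate precise with the sharp factorial $(\beta n)!$ and only geometric growth in $n$. A crude bound with $(n!)^{\beta}$ or an extra factor of order $C^n$ with large $C$ is harmless, since it only changes $C_R$ in the exponent. What matters is that the growth is at most $\exp(C\lambda^{1/\beta})$ in $\lambda$, since this exponent must match the hypothesis $\lambda_k^{1/\beta}=o(k^p)$.
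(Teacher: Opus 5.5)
Your proposal is correct and takes essentially the same route as the paper. Both arguments rest on four steps:
\begin{enumerate}
\item a levelwise difference bound of order $\|x-y\|_{\beta\text{-Höl}}\,\omega^{n\beta}/(n\beta)!$, obtained from the continuity of Lyons' extension \cite[Theorem 3.1.3]{Lyons2002};
\item summation against $\lambda_k^n$ using the Mittag-Leffler bound $E_\beta(u)\lesssim \exp(2u^{1/\beta})$;
\item convergence of $\sum_k \exp(-k^p + C\lambda_k^{1/\beta})$, which follows from $\lambda_k^{1/\beta}/k^p\to 0$;
\item the compact case, handled by placing $K$ inside a $\beta$-Hölder ball about $0$.
\end{enumerate}

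One caution: the factorial decay $1/(n\beta)!$ really does require Lyons' neo-classical-inequality argument. A plain Young--Lo\`eve induction only yields geometric bounds of the form $C^nR^n$, and then $\sum_n \lambda_k^n C^n R^n$ diverges. You should therefore invoke \cite[Theorem 3.1.3]{Lyons2002} directly, as the paper does, rather than the induction sketch.
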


\begin{proof}
    The result is grounded in the continuity of Lyons’ extension \cite[Theorem 3.1.3]{Lyons2002} (see also \cite[Theorem 3.10]{lyons2007differential}). Fix $z \in C^{\beta\text{-Höl}}([0,T],\RR^m)$ with $|z_0| + \|z\|_{\beta\text{-Höl};[0,T]} < D_z$ for some $D_z>0$, and let $x,y \in C^{\beta\text{-Höl}}([0,T],\RR^m)$ satisfy 
    \[
        d_{\beta\text{-Höl};[0,T]}(x, z), d_{\beta\text{-Höl};[0,T]}(y, z) < D,
    \]
    for some $D > 0$. Set $R \coloneqq D + D_z$, and define for some $\gamma > 0$ the control function (see \cite[Definition 1.6]{Friz2010})
    \begin{align}
    \label{eq_control_lyons_extension}
         \big\{(u,v) : 0\leq u \leq v \leq T \big\} \ni (u,v) \mapsto \omega(u,v) = \big( (R \lor 2D) \gamma \beta ! \big)^\frac{1}{\beta} |u-v|,
    \end{align}
    where $\beta! = \Gamma(1 + \beta)$ is defined via the Gamma function. For fixed $t\in[0,T]$ and all $0 \leq u < v \leq T$, we have
    \[
        |(x_{\cdot \land t})_{u,v}|, |(y_{\cdot \land t})_{u,v}| \leq \frac{\omega(u,v)^\beta}{\gamma (\beta)!},
    \]
    and moreover,
    \[
        |(x_{\cdot \land t})_{u,v} - (y_{\cdot \land t})_{u,v}| \leq \| x_{\cdot \land t} - y_{\cdot \land t} \|_{\beta\text{-Höl};[0,T]}\, \frac{\omega(u,v)^\beta}{\gamma (\beta)! (R \lor 2D)}.
    \]
    Up to rescaling $\gamma$, these are precisely the assumptions of \cite[Theorem 3.1.3]{Lyons2002}. Hence, for any weight $\lambda_k > 0$, we obtain
    \begin{align}
    \label{eq_aux_lip_continuity_sig}
        \sum_{n=1}^\infty \lambda_k^n |S^{(n)}(x)_{0,t} - S^{(n)}(y)_{0,t}| 
        \leq \frac{\gamma^{-1}}{R \lor 2D} \sum_{n=1}^\infty \lambda_k^n \frac{\omega(0,T)^{n \beta}}{\left( n \beta \right)!}  \| x_{\cdot \land t} - y_{\cdot \land t} \|_{\beta\text{-Höl};[0,T]}.
    \end{align}
    
    The series on the right converges by the ratio test (alternatively, by knowing the Mittag–Leffler function). Thus, there exists $L_z>0$, independent of $t$, such that
    \[\|S(x)_{0,t} - S(y)_{0,t}\|_{1,\lambda_k} \leq L_z \| x_{\cdot \land t} - y_{\cdot \land t} \|_{\beta\text{-Höl};[0,T]},\] 
    for all $t\in [0,T]$ and $x,y \in B^\beta(z,D)$. To see that the same holds in $\cT^{(p)}(\RR^m)$, define
    \[
        E_\beta(u) \coloneqq \sum_{n=0}^\infty \frac{u^n}{(n\beta)!}.
    \]
    We recognize $E_\beta(u)$ as the one-parameter Mittag-Leffler function, which satisfies $|E_\beta(u)| \lesssim \exp( 2 u^{1/\beta})$ up to a constant depending only on $\beta$ \cite[Proposition 3.10]{gorenflo2020mittag}. Therefore, from \eqref{eq_aux_lip_continuity_sig} we conclude that
    \begin{align}
    \label{eq_sig_projective_continuity}
        \sum_{k \geq 0} \frac{1}{e^{k^p}} \|S(x)_{0,t} - S(y)_{0,t}\|_{1,\lambda_k} &\leq \frac{\gamma^{-1}}{R \lor 2D} \sum_{k \geq 0} \frac{E_\beta\big(\lambda_k \, \omega(0,T)^\beta \big)}{e^{k^p}} \, \| x_{\cdot \land t} - y_{\cdot \land t} \|_{\beta\text{-Höl};[0,T]} \\
        &\leq \frac{\gamma^{-1}}{R \lor 2D} \sum_{k \geq 0} \exp\Big(2 \lambda_k^{1/\beta} \omega(0,T) - k^p \Big) \, \| x_{\cdot \land t} - y_{\cdot \land t} \|_{\beta\text{-Höl};[0,T]}. \nonumber
    \end{align}
    Since $\lambda_k^{1/\beta}/k^p \to 0$, the final summation converges, implying that 
    \[
        \| S(x)_{0,t}  - S(y)_{0,t} \|_{(p)} \leq L_z \, \| x_{\cdot \land t} - y_{\cdot \land t} \|_{\beta\text{-Höl};[0,T]},
    \]
    again for some constant $L_z>0$ dependent on $z$ and independent of $t$. 
    
    Finally, Lipschitz continuity on compacts follows by repeating the previous argument over a sufficiently large ball centered at $0$. Indeed, let $K \subset C^{\beta\text{-Höl}}([0,T],\RR^m)$ be compact and set
    \[
    M_K \coloneqq \sup_{x \in K} \big( |x_0| + \|x\|_{\beta\text{-Höl};[0,T]} \big) < \infty.
    \]
    Choosing $D>M_K$, we have $K \subset B^\beta(D)$. Repeating the previous argument with $z=0$, estimate \eqref{eq_sig_projective_continuity} directly gives a constant $L_K>0$, independent of $t$, from which the claim follows. Since $C^{\alpha\text{-Höl}}([0,T],\RR^m) \subset C^{\beta\text{-Höl}}([0,T],\RR^m)$, the same conclusion holds in $C^{\alpha\text{-Höl}}_{\beta}([0,T],\RR^m)$.
\end{proof}

\begin{remark}
    This result holds in a much more general setting. In particular, for any $q \geq 1$, the proof above applies to the space of weakly geometric $q$-$\omega$ rough paths $C^{q-\omega}([0,T], G^{[q]}(\RR^m))$, equipped with the inhomogeneous distance $\rho_{q-\omega;[0,T]}(\bx,\by)$ — see \cite[Definitions 8.6, 9.15]{Friz2010}. In this case, we consider 
    \begin{align*}
    \tilde{\omega}(s,t) = \left( 1 \lor (R \lor 2D) \max_{i=1,\hdots,[q]} \gamma \Big(\frac{i}{q} \Big)! \right)^q \omega(s,t)
    \end{align*}
    instead of \eqref{eq_control_lyons_extension}, and set $1/\beta = q$; the remainder of the proof carries through unchanged. We also note that a direct application of \cite[Proposition 8.7]{Friz2010} yields local Lipschitz continuity with respect to the $q$-variation distance $\rho_{\subvar[T]{q}}(\bx,\by)$, as defined in \cite[Definition 8.6]{Friz2010}.
\end{remark}

The following corollary is an immediate consequence of \eqref{eq_sig_projective_continuity}. 

\begin{corollary}
\label{cor_signature_loc_lip}
    The signature map $S : C^{\beta\text{-Höl}}([0,T],\RR^m) \to \varprojlim T_{1,\lambda_k}(\RR^m)$ is locally Lipschitz with respect to $\| \cdot \|_{1,\lambda_k}$ for every $\lambda_k > 0$. The same holds true for all $\| \cdot \|_p$ with $p\geq 1$.
\end{corollary}
\begin{proof}
    Since $\| \cdot \|_{1,\lambda_k} \leq Z_p \exp(k^p)\| \cdot \|_{(p)}$, the first claim follows directly from \eqref{eq_sig_projective_continuity}. For the second claim, note that choosing $\lambda_k = 1$ for all $k \geq 0$ recovers the $\ell_1-$norm $\|\cdot\|_1$, and that $\| \cdot \|_p \leq \| \cdot \|_1$ for all $p\geq 1$.
\end{proof}

\begin{remark}
    Our projective limit $\varprojlim T_{1,\lambda_k}(\mathbb{R}^m)$ provides a concrete instance of the locally $\mathfrak{m}-$convex algebra $E$ in \cite[Definition 2.1]{chevyrev2016characteristic}, as ensured by \cite[Corollary 2.5]{chevyrev2016characteristic}. As for the Banach spaces $\mathcal{T}^{(p)}(\RR^m) \subset \varprojlim T_{1,\lambda_k}(\mathbb{R}^m)$, they appear not to have been explicitly identified in the existing literature.
\end{remark}

In particular, the spaces $\cT^{(p)}(\RR^m)$ can be viewed as linear ambient spaces for signatures whose norms enforce a levelwise decay comparable to that of signature tensors. For instance, when $\lambda_k = k$, letting $p \downarrow 1$ makes the spaces $\cT^{(p)}(\RR^m)$ progressively smaller, and membership imposes a decay of the form $|\mathbf{a}^{(n)}| \lesssim (n!)^{-1/p}$. In this way, the induced topology on signatures becomes increasingly strong, as the ambient spaces approach the set of signatures itself. While these spaces and their properties are interesting in their own right, a systematic study lies beyond the scope of the present work. For our purposes it suffices to record the following aspects needed later.

\begin{lemma}
\label{lem_compact_emb_limit_spaces}
    Fix $1 < p < q$. Then, for every $R>0$
    \begin{align}
        \label{eq_tail_estimate}
        \sup_{\| \mathbf{a} \|_{(p)} \leq R} \| \mathbf{a} - \pi_{0,N} \mathbf{a} \|_{(q)} \longrightarrow 0, \qquad N \to \infty.
    \end{align}
    Consequently, the embedding $\cT^{(p)}(\RR^m) \hookrightarrow \cT^{(q)}(\RR^m)$ is compact.
\end{lemma}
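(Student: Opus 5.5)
We estimate the $\|\cdot\|_{(q)}$-norm of the tail $\mathbf{a} - \pi_{0,N}\mathbf{a}$ directly in terms of $\|\mathbf{a}\|_{(p)}$, using the faster decay of $e^{-k^q}$ to pay for the loss between levels. For $\mathbf{a}$ with $\|\mathbf{a}\|_{(p)} \le R$ we have, for every $k$, $\|\mathbf{a}\|_{1,\lambda_j} \le Z_p R\, e^{j^p}$ for all $j$. The tail in the $k$-th seminorm is
\[
\|\mathbf{a} - \pi_{0,N}\mathbf{a}\|_{1,\lambda_k} = \sum_{n>N} \lambda_k^n |\mathbf{a}^{(n)}|.
\]
The key trick is to compare with a larger weight: for any $j$ with $\lambda_j > \lambda_k$,
\[
\sum_{n>N} \lambda_k^n |\mathbf{a}^{(n)}| \le \Big(\frac{\lambda_k}{\lambda_j}\Big)^{N+1} \|\mathbf{a}\|_{1,\lambda_j} \le \Big(\frac{\lambda_k}{\lambda_j}\Big)^{N+1} Z_p R\, e^{j^p}.
\]
We will assume, as is implicit for these spaces, that $(\lambda_k)$ is nondecreasing with $\lambda_k\to\infty$; if not, we pass to the running maximum, which only changes the relevant inequalities by monotonicity of $\lambda\mapsto\|\cdot\|_{1,\lambda}$.

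We split the sum defining $\|\cdot\|_{(q)}$ at a cutoff $K$. For $k > K$ we use the trivial bound $\|\mathbf{a}-\pi_{0,N}\mathbf{a}\|_{1,\lambda_k} \le \|\mathbf{a}\|_{1,\lambda_k} \le Z_p R\, e^{k^p}$. The tail contribution is then at most $\frac{Z_p R}{Z_q}\sum_{k>K} e^{k^p - k^q}$, which is small uniformly in $\mathbf{a}$ and $N$ because $q>p$. For $k \le K$ we fix an index $j=j(K)$ with $\lambda_j \ge 2\lambda_K$, which is possible since $\lambda_j\to\infty$. Then each of the finitely many terms is bounded by $2^{-(N+1)} Z_p R\, e^{j^p}$, and this tends to $0$ as $N\to\infty$ with $K$ and $j$ fixed. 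Choosing $K$ first and then $N$ gives \eqref{eq_tail_estimate}.

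For compactness, let $B$ be the closed $\|\cdot\|_{(p)}$-ball of radius $R$. The set $\pi_{0,N}(B)$ is bounded in the finite-dimensional space $T^N(\RR^m)$: its coordinates are bounded by $Z_p R\, e^{0}/\min(1,\lambda_0^N)$. Hence $\pi_{0,N}(B)$ is relatively compact in $\cT^{(q)}$, since on $T^N$ all norms are equivalent and the $(q)$-norm is finite there. By \eqref{eq_tail_estimate}, $B$ lies within $\varepsilon$ of $\pi_{0,N}(B)$ in $\|\cdot\|_{(q)}$ for large $N$, so $B$ is totally bounded in the Banach space $\cT^{(q)}$. Therefore $B$ is relatively compact, and the embedding is compact. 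The continuity of the embedding, $\|\cdot\|_{(q)}\le (Z_p/Z_q)\|\cdot\|_{(p)}$, follows from $e^{-k^q}\le e^{-k^p}$.

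The main obstacle is the uniformity in the middle range of $k$. A single level comparison gains only geometric decay $(\lambda_k/\lambda_j)^N$ but costs $e^{j^p}$, so one must make sure that the order of choices ($K$, then $j$, then $N$) is legitimate. The gap $q>p$ is exactly what controls $k>K$ independently of $N$.
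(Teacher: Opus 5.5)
Your proof is correct, but it is organized differently from the paper's. The paper first applies Tonelli to rewrite each norm as a weighted $\ell^1$-norm over tensor levels, $\|\mathbf{a}\|_{(r)}=\sum_{n}\rho^{(r)}_n|\mathbf{a}^{(n)}|$ with $\rho^{(r)}_n=Z_r^{-1}\sum_{k}e^{-k^r}\lambda_k^n$. It then shows $\rho^{(q)}_n/\rho^{(p)}_n\to 0$, by splitting the $k$-sum at an index $K$ beyond which $e^{-k^q}\le\varepsilon e^{-k^p}$ and dominating the finitely many remaining terms by a geometric factor $(L/\lambda_{k_0})^n$. Finally it reads off the tail estimate as the multiplier bound $\|\mathbf{a}-\pi_{0,N}\mathbf{a}\|_{(q)}\le\sup_{n>N}(\rho^{(q)}_n/\rho^{(p)}_n)\,\|\mathbf{a}\|_{(p)}$.

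You never interchange the two sums. You work seminorm by seminorm, controlling large $k$ through the crude bound $\|\mathbf{a}\|_{1,\lambda_k}\le Z_pRe^{k^p}$ and the summability of $e^{k^p-k^q}$, and small $k$ by comparison with one larger weight $\lambda_j$. The underlying mechanisms are the same: the gap $q>p$ handles large $k$, and unboundedness of $(\lambda_k)$ gives a geometric gain in $N$ at small $k$. The paper's route yields a clean operator-norm bound for $\mathrm{id}-\pi_{0,N}$ and exhibits $\mathcal{T}^{(r)}(\mathbb{R}^m)$ as a level-weighted $\ell^1$ space. Yours is more elementary and never needs the level weights $\rho^{(p)}_n$ to be finite in order to form their ratio. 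Your total-boundedness argument for compactness is equivalent to the paper's diagonal-subsequence argument.

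Two small remarks. First, the monotonicity assumption and the running-maximum comment are unnecessary; read literally, replacing $\lambda_k$ by its running maximum would change the norms $\|\cdot\|_{(p)}$. Just pick $j$ with $\lambda_j\ge 2\max_{k\le K}\lambda_k$, which exists since $\lambda_k\to\infty$. Second, you do not need the $(q)$-norm to be finite on all of $T^N(\mathbb{R}^m)$. Truncation does not increase any $\|\cdot\|_{1,\lambda_k}$, so $\pi_{0,N}(B)$ is bounded by $(Z_p/Z_q)R$ in $\|\cdot\|_{(q)}$ inside the finite-dimensional subspace $T^N(\mathbb{R}^m)\cap\mathcal{T}^{(q)}(\mathbb{R}^m)$, and is therefore relatively compact.
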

\begin{proof}
    See Appendix \ref{Appendix_2}.
\end{proof}

\begin{lemma}
\label{lem_continuity_tensor_product}
    Fix $1 < p < q$. Then, the tensor product $\otimes : \cT^{(p)}(\RR^m) \times \cT^{(p)}(\RR^m) \to \cT^{(q)}(\RR^m)$ is a continuous bilinear operator.
\end{lemma}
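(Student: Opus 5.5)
The plan is to prove a bilinear bound of the form
\[
\|\mathbf{a}\otimes\mathbf{b}\|_{(q)} \leq C_{p,q}\,\|\mathbf{a}\|_{(p)}\,\|\mathbf{b}\|_{(p)}, \qquad \mathbf{a},\mathbf{b}\in\cT^{(p)}(\RR^m).
\]
Such a bound shows that $\otimes$ maps $\cT^{(p)}\times\cT^{(p)}$ into $\cT^{(q)}$. Since a bounded bilinear map between normed spaces is continuous, this gives the claim. Bilinearity itself is clear from the algebraic definition of the Cauchy product on $T((\RR^m))$.

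\emph{Step 1: submultiplicativity of each weighted norm.} For a single weight $\lambda>0$, I first show $\|\mathbf{a}\otimes\mathbf{b}\|_{1,\lambda}\leq\|\mathbf{a}\|_{1,\lambda}\|\mathbf{b}\|_{1,\lambda}$. The tensor levels of the product are $(\mathbf{a}\otimes\mathbf{b})^{(n)}=\sum_{i+j=n}\mathbf{a}^{(i)}\otimes\mathbf{b}^{(j)}$. The Euclidean (Hilbert--Schmidt) norm is admissible, so $|\mathbf{a}^{(i)}\otimes\mathbf{b}^{(j)}| = |\mathbf{a}^{(i)}||\mathbf{b}^{(j)}|$. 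Using $\lambda^n=\lambda^i\lambda^j$, the triangle inequality and Tonelli for nonnegative double series, we get
\[
\sum_n \lambda^n|(\mathbf{a}\otimes\mathbf{b})^{(n)}| \leq \Big(\sum_i\lambda^i|\mathbf{a}^{(i)}|\Big)\Big(\sum_j\lambda^j|\mathbf{b}^{(j)}|\Big).
\]
Applying this with every $\lambda_k$ shows that $\mathbf{a}\otimes\mathbf{b}$ lies in $\varprojlim T_{1,\lambda_k}(\RR^m)$ whenever $\mathbf{a}$ and $\mathbf{b}$ do.

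\emph{Step 2: passing from the $(p)$-norm to each level norm.} Each single term of the series defining $\|\cdot\|_{(p)}$ is bounded by the whole series. This gives $\|\mathbf{a}\|_{1,\lambda_k}\leq Z_p\,e^{k^p}\|\mathbf{a}\|_{(p)}$, the same bound already used in Corollary \ref{cor_signature_loc_lip}. Combining this with Step 1,
\[
\|\mathbf{a}\otimes\mathbf{b}\|_{(q)} = \frac{1}{Z_q}\sum_{k\geq0}e^{-k^q}\|\mathbf{a}\otimes\mathbf{b}\|_{1,\lambda_k} \leq \frac{Z_p^2}{Z_q}\Big(\sum_{k\geq0}e^{2k^p-k^q}\Big)\|\mathbf{a}\|_{(p)}\|\mathbf{b}\|_{(p)}.
\]

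\emph{Step 3: convergence of the constant.} It remains to check that $\sum_k e^{2k^p-k^q}<\infty$. Since $q>p$, we have $2k^p-k^q = -k^q(1-2k^{p-q})\leq -\tfrac12 k^q$ for all large $k$. The series is therefore dominated by $\sum_k e^{-k^q/2}$, which converges. This step is the only place where the strict inequality $p<q$ enters, and it is the main point of the argument. One cannot hope to keep $q=p$: the quadratic loss $e^{2k^p}$ coming from the product of two level norms must be absorbed by the faster decay $e^{-k^q}$.

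With $C_{p,q}\coloneqq Z_p^2 Z_q^{-1}\sum_k e^{2k^p-k^q}$, the desired estimate holds, and continuity follows. Explicitly,
\[
\mathbf{a}\otimes\mathbf{b}-\mathbf{a}'\otimes\mathbf{b}' = (\mathbf{a}-\mathbf{a}')\otimes\mathbf{b}+\mathbf{a}'\otimes(\mathbf{b}-\mathbf{b}'),
\]
and applying the bound to each term gives local Lipschitz continuity.
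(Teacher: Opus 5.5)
Your proposal is correct and follows the paper's proof essentially step for step. Both arguments use the submultiplicativity of each $\|\cdot\|_{1,\lambda_k}$ (the Banach algebra property of $T_{1,\lambda_k}(\RR^m)$), then the termwise bound $\|\mathbf{a}\|_{1,\lambda_k}\le Z_p e^{k^p}\|\mathbf{a}\|_{(p)}$, and finally the convergence of $\sum_k e^{2k^p-k^q}$ for $q>p$, which yields the same constant $Z_p^2 Z_q^{-1}\sum_k e^{2k^p-k^q}$. The only difference is that you spell out two points the paper leaves implicit: why the product lies in the projective limit, and why a bounded bilinear map is continuous.
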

\begin{proof}
    See Appendix \ref{Appendix_2}.
\end{proof}

To conclude, we note that the construction above can be pushed beyond the spaces $\cT^{(p)}(\RR^m)$ by iterating the procedure, which leads to the second projective limit
\[
\varprojlim \mathcal{T}^{(1+1/k)}(\mathbb{R}^m) = \bigcap_{k\geq 1} \mathcal{T}^{(1 + 1/k)}(\mathbb{R}^m),
\]
equipped, as before, with the initial topology induced by the inclusion maps. For convenience, we write
\begin{align}
    \label{eq_second_projective_limit}
    \mathbf{T}(\mathbb{R}^m) \coloneqq \varprojlim \mathcal{T}^{(1+1/k)}(\mathbb{R}^m).
\end{align}
These limit spaces will only be used in Section \ref{subsec_scaling_argument}, and a more detailed discussion of their properties and relation to the signature literature is deferred to future work.

\section{Path-Dependent Controlled Differential Equations}
\label{sec_path_dependent_cdes}

This section develops a well-posedness theory for path-dependent CDEs. While the literature on functional differential equations is broad and encompasses various levels of generality, we present a formulation that is both sufficiently robust for wider applicability and tailored to the needs of this work. In particular, the results established here will serve as a foundation for the analysis of Sig-CDEs in Section \ref{sec_sig_cdes}.

The techniques employed are conceptually classical, tracing back to the theory of ordinary differential equations. Nonetheless, our path-dependent setting on Hölder spaces introduces enough technical subtleties that justify a concise yet comprehensive treatment of the foundational aspects — namely, global existence, uniqueness, and continuity of solutions with respect to initial data.

Fix $T_1 > 0$ and $T_0 \in [0,T_1)$. Throughout, unless explicitly stated otherwise, we take $\alpha \in \left(\tfrac{1}{2},1\right)$ and $\beta < \alpha$. Let $x : [0,T_1] \to \RR^d$ be a Lipschitz path, and let $f : \Lambda^{\alpha\text{-Höl}}_\beta([0,T_1],\RR^m) \to L(\RR^d,\RR^m)$ be a non-anticipative functional. Given an initial stopped path $(T_0, w_{\cdot \land T_0}) \in \Lambda^{\alpha\text{-Höl}}_\beta([0,T_1],\RR^m)$, we consider the following path-dependent CDE:
\begin{align}
\label{eq_functional_cde}
    \begin{cases}
    y_t = w_t, \quad &t \in [0,T_0]\\
    y_t = w_{T_0} + \displaystyle\int_{T_0}^t f(s, y_{\cdot \land s}) \dd x_s \quad &t \in [T_0,T_1]
    \end{cases},
\end{align}
where the integral is understood in the (vector-valued) Riemann–Stieltjes sense. More precisely, writing $f^j(t,y_{\cdot \land t})$ for the $j-$th column of $f(t,y_{\cdot \land t})$, we set
\[\int_{T_0}^t f(s, y_{\cdot \land s}) \dd x_s \coloneqq \sum_{j=1}^d \int_{T_0}^t f^j(s, y_{\cdot \land s}) \dd x^j_s.\]
For a matrix-valued function $f$, $f^{ij}$ denotes its $(i,j)-$entry. We refer to \cite[Chapter 3]{Friz2010} for related formulations.

Equivalently, define $z_t = y_{t + T_0} - w_{T_0}$ for $t\in [0,T_1 -T_0]$. Then, solving \eqref{eq_functional_cde} is equivalent to solving:
\begin{align}
\label{eq_functional_cde_equivalent_form}
    z_t = \int_0^t f\big(T_0 + s, (w \sqcup z)_{\cdot \land (T_0 + s)}\big) \dd x_s, \quad z_0 = 0,
\end{align}
for $t\in [0,T]$, where $T \coloneqq T_1 - T_0$, and the concatenated path $w \sqcup z$ is defined by
\begin{align*}
    (w \sqcup z)_t = 
    \begin{cases}
        w_t,\quad &t \in [0,T_0] \\
        w_{T_0} + z_{t - T_0},\quad &t\in [T_0,T_1]
    \end{cases}.
\end{align*}
Here, we commit the slight abuse of notation of denoting the control in both formulations by $x$. Strictly speaking, by change of variables, the CDE in \eqref{eq_functional_cde_equivalent_form} should be driven by $\tilde{x}_t \coloneqq x_{t + T_0}$ with $t \in [0,T]$. Since this distinction plays no role in what follows, we suppress it for notational simplicity and freely use the formulation \eqref{eq_functional_cde_equivalent_form} when convenient. We now state the main result of this section.

\begin{theorem}
\label{prop_EU_global_sol_pd_cde}
    Consider an initial condition $(T_0, w_{\cdot \land T_0}) \in \Lambda^{\alpha\text{-Höl}}_\beta([0,T_1],\RR^m)$ and a continuous, non-anticipative functional $f: \Lambda^{\alpha\text{-Höl}}_\beta([0,T_1],\RR^m) \to L(\RR^d, \RR^m)$. Assume that $f$ is Lipschitz in the path variable on compact sets, i.e. for every compact $K \subset C^{\alpha\text{-Höl}}_\beta([0,T_1],\RR^m)$, there exists a constant $L_{K} > 0$ such that
    \begin{align}
    \label{eq_loc_lip_2nd_variable_intro}
        \big| f (t,x_{\cdot \land t}) - f (t,y_{\cdot \land t}) \big| \leq L_K \big( |x_0 - y_0| + \| x_{\cdot\land t} - y_{\cdot \land t} \|_{\beta\text{-Höl};[0,T_1]} \big),
    \end{align}
    for all $t \in [0,T_1]$ and all $x,y \in K$. In addition, suppose $f$ satisfies the linear growth condition 
    \begin{align}
    \label{eq_path_linear_growth_condition_intro}
    |f(t, y_{\cdot \land t})| \leq C_f \big(1 + |y_0| + \| y_{\cdot \land t} \|_{\subhol[T_1]{\beta}}\big),
    \end{align}
    for all $(t, y_{\cdot \land t}) \in \Lambda^\suphol{\alpha}_\beta([0,T_1],\RR^m)$ and some constant $C_f>0$. Then the path-dependent CDE \eqref{eq_functional_cde} admits a unique global solution. 
\end{theorem}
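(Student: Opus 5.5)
We work throughout with the equivalent formulation \eqref{eq_functional_cde_equivalent_form}, i.e.\ we look for $z$ on $[0,T]$, $T=T_1-T_0$, with $z_0=0$, and write $\Phi(z)_t \coloneqq \int_0^t f\big(T_0+s,(w\sqcup z)_{\cdot\land(T_0+s)}\big)\dd x_s$. The plan follows the classical ODE route: (1) an a priori bound from linear growth, (2) local existence and uniqueness by a contraction argument, and (3) global extension by concatenation. A basic observation used repeatedly is that, since $x$ is Lipschitz, $|\Phi(z)_{u,v}|\le \|x\|_{\mathrm{Lip}}\sup_{s}|f(\cdot)|\,|v-u|$. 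Hence $\Phi$ maps bounded sets into Lipschitz paths, in particular into bounded subsets of $C^{\suphol{\alpha}}$ with $\alpha<1$. By the compact embedding $C^{\suphol{\alpha}}\hookrightarrow C^{\suphol{\beta}}$, closed $\alpha$-Hölder balls of concatenated paths $w\sqcup z$ (with $w$ fixed) form compact sets $K\subset C^{\alpha\text{-Höl}}_\beta([0,T_1],\RR^m)$, on which the Lipschitz constant $L_K$ in \eqref{eq_loc_lip_2nd_variable_intro} is available.

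\emph{Local step.} Fix $R>0$ and the set $\mathcal{Z}_{h,R}=\{z\in C^{\suphol{\alpha}}([0,h],\RR^m): z_0=0,\ \|z\|_{\subhol[h]{\alpha}}\le R\}$, extended by stopping to $[0,T]$. The $\beta$-Hölder norm of a concatenation is controlled by the norms of the pieces: $\|w\sqcup z\|_{\beta}\lesssim \|w\|_{\beta;[0,T_0]}+\|z\|_{\beta}$ and similarly for $\alpha$. So for $z\in\mathcal{Z}_{h,R}$ the growth bound gives $\sup|f|\le C_f(1+|w_0|+\|w\|+ c R)$. Then $\|\Phi(z)\|_{\subhol[h]{\alpha}}\le \|x\|_{\mathrm{Lip}}\sup|f|\,h^{1-\alpha}\le R$ for $h$ small, so $\Phi(\mathcal{Z}_{h,R})\subset\mathcal{Z}_{h,R}$. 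For the contraction, the same Lipschitz estimate gives
\[
\|\Phi(z)-\Phi(\tilde z)\|_{\subhol[h]{\beta}}\le \|x\|_{\mathrm{Lip}}L_K h^{1-\beta}\|z-\tilde z\|_{\subhol[h]{\beta}}.
\]
Here we note that $f(t,\cdot)$ only sees the path stopped at $t$, so the difference $(w\sqcup z)-(w\sqcup\tilde z)$ has $\beta$-norm equal to $\|z-\tilde z\|_{\beta}$. The set $\mathcal{Z}_{h,R}$ is complete in the $\beta$-Hölder metric, because it is closed there by lower semicontinuity of the $\alpha$-Hölder seminorm under $\beta$-convergence. Banach's fixed point theorem in the $\beta$-metric therefore yields a unique fixed point in $\mathcal{Z}_{h,R}$ for $h$ small. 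Uniqueness among all solutions follows from a Grönwall-type argument on short intervals: any two solutions are Lipschitz on $[0,T]$, hence lie in a common compact $K$, and the estimate above on successive intervals of length $h$ with $\|x\|_{\mathrm{Lip}}L_Kh^{1-\beta}<1$ forces them to agree.

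\emph{Global step (main obstacle).} The local time $h$ depends on $R$ and on the norm of the current history, so we need an a priori bound that prevents blow-up. For a solution on $[0,t]$, set $\phi(t)=\|z\|_{\subhol[t]{\beta}}$. Using linear growth together with the bound $|z_{u,v}|\le\|x\|_{\mathrm{Lip}}C_f\int_u^v(1+|w_0|+\|w\|_\beta+\|z\|_{\beta;[0,s]})\dd s$, we get
\[
\frac{|z_{u,v}|}{|v-u|^\beta}\le C\,|v-u|^{1-\beta}\big(1+\phi(v)\big).
\]
This is exactly where the Lipschitz assumption on $x$ is essential, since it makes the integrand pointwise. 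The subtle point is that the $\beta$-seminorm is not additive over intervals. We handle it by splitting $[0,T]$ into pieces of a fixed length $\delta$, chosen so that $C\delta^{1-\beta}\le 1/2$; this bounds the seminorm on each piece by $1+2\cdot(\text{sup of }|z|\text{ so far})$. We then use $\|z\|_{\beta;[0,t]}\le \sum_i\|z\|_{\beta;I_i}$ over the at most $T/\delta$ pieces, and obtain a finite bound on $\|z\|_{\subhol[T]{\beta}}$ and on $\|z\|_{\mathrm{Lip}}$ by iteration (a discrete Grönwall inequality). With this bound, the local step can be restarted from any time $t$, taking the concatenated history as new initial datum and a uniform $h$, and finitely many restarts cover $[0,T]$. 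Consistency of the pieces follows from the uniqueness established above, which completes the proof.
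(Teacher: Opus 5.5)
Your proof is correct, but it is organized differently from the paper's.

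\textbf{The paper's route.} Local existence is obtained by Schauder's fixed-point theorem (Proposition \ref{prop_local_existence}). This needs only continuity of $f$, through the neighbourhood-boundedness Lemma \ref{lem_aux_hale}. Uniqueness is a separate short-interval argument (Proposition \ref{prop_uniqueness}), essentially the same as yours. Globality comes from a blow-up alternative: by Proposition \ref{prop_global_solution}, a truly maximal solution must eventually leave every compact set. This is combined with the $\alpha$-Hölder a priori bound of Lemma \ref{lem_pasting_argument_bound}, which traps the trajectory in the compact set $[0,T_1]\times B^\alpha(A)$ and gives a contradiction.

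\textbf{Your route.} You run a single Banach contraction in the $\beta$-metric on an $\alpha$-Hölder ball. That ball is closed in the $\beta$-topology, hence complete, so one argument gives local existence and uniqueness together. You then replace the maximal-solution argument by explicit continuation with a uniform step $h$. Your a priori estimate is the pasting argument of Lemma \ref{lem_pasting_argument_bound} run with $\gamma=\beta$, then upgraded to a Lipschitz bound through the integral. That upgrade is exactly what the restart needs. A $\beta$-bound alone would not place the restarted histories in a $\beta$-compact set. The Lipschitz bound, hence an $\alpha$ bound, gives one compact $K$, so a uniform $L_K$ and a uniform $h$.

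\textbf{A small slip.} The per-piece recurrence should be stated for the accumulated $\beta$-seminorm, namely $\phi(t_i)\le 1+2\phi(t_{i-1})$ once the constants are absorbed into $C$. It should not be stated for the supremum of $|z|$, since the linear growth condition involves the seminorm.

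\textbf{What each buys.} Your approach is more elementary and self-contained: no Schauder, no Lemma \ref{lem_aux_hale}, no maximal solutions. However, it uses the Lipschitz hypothesis already for existence. The paper keeps the hypotheses separate, in the spirit of Hale's FDE theory. That yields existence and a continuation criterion for merely continuous $f$. It also yields the robustness of the existence time under perturbations of $f$, noted in the remark following Proposition \ref{prop_local_existence}.
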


To mirror classical ODE/FDE theory, we split the analysis of this result across two sections. In Section \ref{subsec_local_existence_uniqueness}, we establish local existence assuming only continuity of $f$, and prove uniqueness under the Lipschitz condition \eqref{eq_loc_lip_2nd_variable_intro} (Propositions \ref{prop_local_existence} and \ref{prop_uniqueness}). Building on this, Section \ref{subsec_global_existence_continuity} shows that local solutions extend globally under the linear growth condition \eqref{eq_path_linear_growth_condition_intro} — see Proposition \ref{prop_global_solution} and Lemma \ref{lem_pasting_argument_bound} — and establishes continuity of the solution map $\big((T_0, w_{\cdot \land T_0}), f\big) \mapsto y $ by deriving explicit stability estimates (Theorem \ref{prop_stability_pd_cdes}).

\begin{remark}
\label{rem_other_path_norms}
    We work with Hölder spaces rather than $p-$variation spaces precisely to maintain a closer parallel with the literature on functional differential equations (see \cite{hale2013introduction}). This choice allows us to separate the proofs of local existence and local uniqueness, each relying on compactness arguments. It also prepares the ground for Section \ref{subsec_universality_of_sig_cdes}, where weighted Hölder spaces are used to obtain global approximation results. Crucially, these arguments depend on the compact embeddings between Hölder spaces — a feature that $p-$variation spaces lack. Nonetheless, well-posedness can equally well be formulated in the $p-$variation setting.
\end{remark}

\subsection{Local Existence and Uniqueness}
\label{subsec_local_existence_uniqueness}

We begin by establishing local existence. The key technical ingredient is an auxiliary lemma,  an adaptation of \cite[Lemma 2.2]{hale2013introduction}, which shows that, on a sufficiently small time interval, the integrand in \eqref{eq_functional_cde_equivalent_form} remains bounded when evaluated along a fixed initial history $w$ and an appropriate class of candidate paths $z$. The proof is deferred to the Appendix \ref{Appendix_3} in order not to interrupt the main line of argument. With this result in hand, local existence of solutions to \eqref{eq_functional_cde} follows from an application of Schauder’s fixed-point theorem, whose statement we recall below for the reader’s convenience.

\begin{theorem}[Schauder's fixed-point theorem, \cite{schauder1930fixpunktsatz}]
    Let $C$ be a non-empty, closed, convex, and bounded subset of a Banach space $X$, and let $T : C \to C$ be a continuous and compact operator. Then $T$ admits a fixed point.
\end{theorem}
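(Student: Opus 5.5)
The plan is the classical reduction to Brouwer's fixed-point theorem via finite-dimensional \emph{Schauder projections}. Since $T$ is compact and $C$ is bounded, the set $K \coloneqq \overline{T(C)}$ is compact. Because $C$ is closed and $T(C) \subset C$, we also have $K \subset C$. The idea is to approximate $T$ uniformly on $C$ by continuous maps with values in finite-dimensional convex subsets of $C$, find approximate fixed points there, and pass to the limit using compactness of $K$.

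\textbf{Step 1: Schauder projections.} Fix $\varepsilon>0$ and choose a finite $\varepsilon$-net $\{y_1,\dots,y_n\}\subset K$, which exists by compactness. Define
\[
\mu_i(y) \coloneqq \max\big(0,\varepsilon-\|y-y_i\|_X\big), \qquad P_\varepsilon(y) \coloneqq \frac{\sum_{i=1}^n \mu_i(y)\, y_i}{\sum_{i=1}^n \mu_i(y)}, \qquad y\in K.
\]
The denominator is strictly positive on $K$, because every $y\in K$ lies within distance $\varepsilon$ of some $y_i$. Hence $P_\varepsilon$ is continuous on $K$. It takes values in the convex hull $C_\varepsilon \coloneqq \mathrm{conv}\{y_1,\dots,y_n\}$, and $C_\varepsilon\subset C$ since $C$ is convex. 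Moreover, $P_\varepsilon(y)$ is a convex combination of those $y_i$ with $\|y-y_i\|_X<\varepsilon$, which gives $\|P_\varepsilon(y)-y\|_X<\varepsilon$.

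\textbf{Step 2: finite-dimensional fixed point.} The map $P_\varepsilon\circ T$ sends $C_\varepsilon$ continuously into $C_\varepsilon$. The set $C_\varepsilon$ is a compact convex subset of the finite-dimensional space $\mathrm{span}\{y_1,\dots,y_n\}$. I expect this to be the only delicate point: applying Brouwer to $C_\varepsilon$ rather than to a ball. I would handle it in one of two ways. First, a nonempty compact convex set in $\RR^k$ is homeomorphic to a closed ball of its affine hull's dimension, via the radial map from a relative interior point. Second, and more robustly, compose with the metric projection onto $C_\varepsilon$, a continuous retraction of a large ball onto $C_\varepsilon$. Applying Brouwer to $P_\varepsilon\circ T\circ r$ on that ball produces a fixed point, which must lie in $C_\varepsilon$. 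Either way we obtain $x_\varepsilon\in C_\varepsilon$ with $P_\varepsilon(Tx_\varepsilon)=x_\varepsilon$. Combined with Step 1, this gives $\|Tx_\varepsilon-x_\varepsilon\|_X<\varepsilon$.

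\textbf{Step 3: passage to the limit.} Take $\varepsilon=1/k$ and write $x_k$ for the corresponding approximate fixed points. Since $Tx_k\in K$ and $K$ is compact, a subsequence satisfies $Tx_{k_j}\to y\in K\subset C$. Then $\|x_{k_j}-y\|_X\le 1/k_j+\|Tx_{k_j}-y\|_X\to0$, so $x_{k_j}\to y$. By continuity of $T$ on $C$, $Tx_{k_j}\to Ty$. Uniqueness of limits gives $Ty=y$, which is the desired fixed point. Boundedness of $C$ enters only through the compactness of $\overline{T(C)}$, and closedness ensures that $y\in C$.
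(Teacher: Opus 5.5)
The paper gives no proof of this statement. It quotes Schauder (1930) and uses the theorem as a black box in the local existence result (Proposition~\ref{prop_local_existence}), so there is no in-paper argument to compare against. Your proof is the classical one: Schauder projections onto finite-dimensional convex hulls, Brouwer on $C_\varepsilon$, then a compactness passage to the limit. It is correct.

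Two small points should be tightened.

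\textbf{The net.} The $\varepsilon$-net must satisfy $\min_i \|y-y_i\|_X<\varepsilon$ strictly. You can get this by extracting it from the open cover of $K$ by $\varepsilon$-balls. Otherwise the denominator $\sum_i \mu_i(y)$ can vanish, and the bound $\|P_\varepsilon(y)-y\|_X<\varepsilon$ is not justified.

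\textbf{The retraction route.} The nearest-point map should be taken with respect to a Euclidean norm on $\mathrm{span}\{y_1,\dots,y_n\}$. For the norm inherited from $X$ it need not be single-valued, for instance when that norm is not strictly convex. Since all norms on a finite-dimensional space are equivalent and Brouwer's theorem is purely topological, this change costs nothing.

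Your argument uses only that $T(C)$ is relatively compact. That is exactly what the paper supplies when it applies the theorem: the map $\Phi$ sends $\mathcal{D}(\tau,R)$ into a compact set of Lipschitz paths.
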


\begin{proposition}[Existence]
    \label{prop_local_existence}
    Let $f: \Lambda^{\alpha\text{-Höl}}_\beta([0,T_1],\RR^m) \to L(\RR^d, \RR^m)$ be a continuous, non-anticipative functional, and let $(T_0, w_{\cdot \land T_0}) \in \Lambda^{\alpha\text{-Höl}}_\beta([0,T_1],\RR^m)$ be a given initial condition. Then there exists $\tau > 0$ such that the path-dependent CDE \eqref{eq_functional_cde} admits a solution on the interval $[0, T_0 + \tau]$.
\end{proposition}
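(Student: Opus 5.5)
We will work with the equivalent formulation \eqref{eq_functional_cde_equivalent_form} and set up a Schauder fixed-point argument in a Hölder space. Fix an intermediate exponent: we look for $z$ in the Banach space $C^{\beta\text{-Höl}}([0,\tau],\RR^m)$. The candidate set will be
\[
C_\tau \coloneqq \big\{ z \in C^{\alpha\text{-Höl}}([0,\tau],\RR^m) : z_0 = 0,\ \|z\|_{\alpha\text{-Höl};[0,\tau]} \leq 1 \big\},
\]
viewed as a subset of $C^{\beta\text{-Höl}}([0,\tau],\RR^m)$. It is nonempty, convex and bounded. It is also closed in the $\beta$-Hölder topology, since the $\alpha$-Hölder seminorm is lower semicontinuous under uniform convergence. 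The map is
\[
(\mathcal{S}z)_t \coloneqq \int_0^t f\big(T_0+s,(w\sqcup z)_{\cdot\wedge(T_0+s)}\big)\,\dd x_s .
\]
Its fixed points give solutions on $[0,T_0+\tau]$ after undoing the shift.

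\textbf{Step 1 (boundedness of the integrand).} Fix any $\tau_0 \le T$. The concatenations $w\sqcup z$ with $z\in C_{\tau_0}$ (extended constantly after $\tau_0$) have $\alpha$-Hölder norm bounded by a constant depending on $\|w\|_{\alpha\text{-Höl}}$. This holds because the concatenation of two $\alpha$-Hölder paths at $T_0$ has seminorm at most twice the maximum. Hence, by the compact embedding $C^{\alpha\text{-Höl}}\hookrightarrow C^{\beta\text{-Höl}}$ used in Example \ref{ex_weighted_stopped_paths}, the set of stopped paths $\{(T_0+s,(w\sqcup z)_{\cdot\wedge (T_0+s)})\}$ lies in a compact subset $K$ of $\Lambda^{\alpha\text{-Höl}}_\beta$. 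Continuity of $f$ then gives $M\coloneqq\sup_K|f|<\infty$. This is the content of the auxiliary lemma adapted from \cite[Lemma 2.2]{hale2013introduction}, which we take as given.

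\textbf{Step 2 (self-map).} For $z\in C_\tau$ with $\tau\le\tau_0$ and $u<v$, Lipschitz continuity of $x$ gives
\[
|(\mathcal{S}z)_{u,v}| \le M\,\mathrm{Lip}(x)\,|v-u| \le M\,\mathrm{Lip}(x)\,\tau^{1-\alpha}|v-u|^\alpha .
\]
Choosing $\tau$ small so that $M\,\mathrm{Lip}(x)\tau^{1-\alpha}\le 1$, we get $\mathcal{S}(C_\tau)\subset C_\tau$. In fact the image is bounded in the Lipschitz norm, so it is relatively compact in $C^{\beta\text{-Höl}}$, again by the compact embedding. Hence $\mathcal{S}$ is a compact operator.

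\textbf{Step 3 (continuity of $\mathcal{S}$).} This is the main obstacle. Let $z^n\to z$ in $\beta$-Hölder norm within $C_\tau$. Then $(w\sqcup z^n)_{\cdot\wedge(T_0+s)}\to (w\sqcup z)_{\cdot\wedge(T_0+s)}$ in $d^\Lambda_{\beta\text{-Höl}}$, uniformly in $s$, since stopping does not increase the Hölder distance. All these points lie in the compact set $K$, on which $f$ is uniformly continuous. Therefore
\[
g_n(s)\coloneqq f(T_0+s,(w\sqcup z^n)_{\cdot\wedge(T_0+s)}) \to g(s)
\]
uniformly in $s$. We then estimate
\[
|(\mathcal{S}z^n-\mathcal{S}z)_{u,v}| \le \|g_n-g\|_\infty\,\mathrm{Lip}(x)\,|v-u| \le \|g_n-g\|_\infty\,\mathrm{Lip}(x)\,\tau^{1-\beta}|v-u|^\beta ,
\]
which gives convergence in the $\beta$-Hölder norm. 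We also need that the integrand $s\mapsto g(s)$ is continuous, so that the Riemann–Stieltjes integral exists. This follows from continuity of $f$ together with continuity of $s\mapsto (T_0+s,y_{\cdot\wedge(T_0+s)})$ in $d^\Lambda_{\beta}$ for an $\alpha$-Hölder path $y$. That time-continuity holds because $\|y_{\cdot\wedge t}-y_{\cdot\wedge s}\|_{\beta}\lesssim \|y\|_\alpha|t-s|^{\alpha-\beta}$; it is here that $\beta<\alpha$ is genuinely needed.

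With Steps 1–3 in place, Schauder's theorem yields a fixed point $z\in C_\tau$. Setting $y=w\sqcup z$ gives a solution of \eqref{eq_functional_cde} on $[0,T_0+\tau]$.
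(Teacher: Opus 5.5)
Your proposal is correct and follows essentially the same route as the paper. Both apply Schauder's theorem in $C^{\beta\text{-Höl}}([0,\tau],\RR^m)$ on a closed ball of $\alpha$-Hölder paths with $z_0=0$. A Lipschitz bound on the image gives both the self-map property (for small $\tau$) and compactness, and continuity comes from continuity of $f$ together with the $|t-s|^{\alpha-\beta}$ gain from $\beta<\alpha$.

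There are two minor differences. You obtain the bound $M$ directly from compactness in $\Lambda^{\alpha\text{-Höl}}_\beta$ of the stopped concatenations, which lie in a fixed $\alpha$-Hölder ball independent of $\tau$, rather than through the neighbourhood construction of Lemma \ref{lem_aux_hale}. You also prove continuity only in $z$ for fixed initial data, which is all the statement needs. Your appeal to uniform continuity of $f$ on that compact set makes the uniform convergence $g_n\to g$ more explicit than the paper does.
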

\begin{proof}
    Our aim is to show that the equivalent formulation \eqref{eq_functional_cde_equivalent_form} admits a fixed point. For this purpose, consider a compact subset 
    $$
      W \subset [0,T_1] \times C^{\alpha\text{-Höl}}_\beta([0,T_1], \RR^m)
    $$ 
    of the form $W = [0,T_1-\delta] \times B^\alpha(r)$ for some $r>0$ and $\delta>0$.  Let $V, M, \tau$, and $R$ be as specified in Lemma \ref{lem_aux_hale}. Define the operator $\Phi : W/_\sim \times \cD(\tau,R) \to C^{\beta\text{-Höl}}([0,\tau],\RR^m)$ by
    $$\Phi \big( (T_0,w_{\cdot \land T_0}), z \big)(t) = \int_0^t f\big(T_0 + s, (w \sqcup z)_{\cdot \land (T_0 + s)}\big) \dd x_s, \ \text{for} \ t\in [0,\tau].$$
    Clearly, $\Phi$ maps into $C^{\beta\text{-Höl}}([0, \tau], \RR^m)$, since $t\mapsto \Phi \big( (T_0, w_{\cdot \land T_0}), z \big)(t)$ is Lipschitz. Indeed, we have
    \begin{align*}
        &\big|\Phi \big( (T_0,w_{\cdot \land T_0}), z \big)_{s,t} \big| 
        \leq \int_s^t \big|f\big(T_0 + u, (w \sqcup z)_{\cdot \land (T_0 + u)}\big)\big| |\dd x_u| 
        \leq M \| x\|_{1\text{-Höl};[0,\tau]}|t-s|,
    \end{align*}
    for any $0 \leq s \leq t \leq \tau$. Taking this into account, define
    $$
        K \coloneqq \Big\{ z \in C^{1\text{-Höl}}([0,\tau],\RR^m) : z_0 = 0, \|z\|_{1\text{-Höl};[0,\tau]} \leq \tilde{M} \Big\},
    $$
    where $\tilde{M}: = M \|x\|_{1\text{-Höl};[0,\tau]}$. By the compact embedding of Hölder spaces, $K$ is a compact subset of $C^{\beta\text{-Höl}}([0,\tau],\RR^m)$. Furthermore, we may assume that $\tau$ is chosen such that $\tau^{1-\alpha} \tilde{M} \leq R$. This way
    $$
        \|z\|_{\alpha\text{-Höl};[0,\tau]} \leq \tau^{1-\alpha} \tilde{M} \leq R,
    $$
    for all $z \in K$. Consequently, $K \subset \cD(\tau,R) \subset C^{\alpha\text{-Höl}}_\beta([0,\tau],\RR^m)$, and to apply the Schauder fixed-point theorem, it remains to verify the continuity of $\Phi$. With this in mind, assume that
    $$
        \big((T_0^n, w^n_{\cdot \land T_0^n}), z^n\big) \longrightarrow \big((T_0, w_{\cdot \land T_0}), z\big) \ \text{as} \ n\to \infty,
    $$
    in the product space $W/_\sim \times \cD(\tau,R)$. Then, up to a universal constant depending on $R$,
    \begin{align}
    \label{eq_continuity_operator_Phi}
        &d_{\beta\text{-Höl};[0,T_1]}^\Lambda\big( (T_0 + s, (w \sqcup z)_{\cdot \land (T_0+s)}) ,  (T^n_0 + s, (w^n \sqcup z^n)_{\cdot \land (T^n_0+s)}) \big) \nonumber \\ 
        \lesssim& \ d_{\beta\text{-Höl};[0,T_1]}^\Lambda\big( (T^n_0,w^n_{\cdot \land T_0^n}), (T_0,w_{\cdot \land T_0}) \big) + \| z - z^n \|_{\beta\text{-Höl};[0,\tau]} + |T_0 - T_0^n|^{\alpha - \beta}.
    \end{align}

    To see this, fix $s \in (0,\tau]$ and assume $T^n_0 \leq T_0$ (the case $T^n_0 > T_0$ being entirely analogous). Since we may assume $T^n_0$ is sufficiently close to $T_0$, we also have $T_0^n + s > T_0$. The case $s=0$ is trivial. With this in mind, we first observe that
    \begin{align*}
        \| (w \sqcup z)_{\cdot \land (T_0+s)} - (w^n \sqcup& z^n)_{\cdot \land (T^n_0+s)} \|_{\beta\text{-Höl};[0,T_1]} \\
        \leq& \| (w \sqcup z  - w^n \sqcup z^n)_{\cdot \land (T_0+s)} \|_{\beta\text{-Höl};[0,T_1]} + \| w^n \sqcup z^n \|_{\beta\text{-Höl};[T^n_0 + s, T_0 +s]}.
    \end{align*}
    Here, we extend $(w \sqcup z)$ to $[0,T_1]$ by setting $(w \sqcup z)_t = (w \sqcup z)_{T_0 + \tau}$ for $t > T_0+\tau$, and similarly extend $(w^n \sqcup z^n)$ from $[0,T^n_0+\tau]$ to $[0,T_1]$. We now estimate the two terms separately. For the second term,
    \begin{align*}
        \| w^n \sqcup z^n \|_{\beta\text{-Höl};[T^n_0 + s, T_0 +s]} 
        \leq \| z^n \|_{\alpha\text{-Höl};[0,\tau]}  |T_0 - T_0^n|^{\alpha-\beta}.
    \end{align*}
    For the first term, we have
    \begin{align*}
        \| (w \sqcup z  - w^n \sqcup z^n) \|_{\beta\text{-Höl};[0,T_0+s]} 
        &\leq  \| w - w^n \|_{\beta\text{-Höl};[0,T_0^n]} + \big( \| w \|_{\beta\text{-Höl};[T_0^n,T_0]} + \|z^n\|_{\beta\text{-Höl};[0,T_0-T_0^n]} \big)  \\
        & \hspace{5.4cm} + \| z_{\cdot - T_0} - z^n_{\cdot - T^n_0} \|_{\beta\text{-Höl};[T_0,T_0+s]}
    \end{align*}
    and, since $z^n \in \cD(\tau,R)$, we can further estimate
    \begin{align*}
        \| z_{\cdot - T_0} - z^n_{\cdot - T^n_0} \|_{\beta\text{-Höl};[T_0,T_0+s]} &\leq \| z - z^n \|_{\beta\text{-Höl};[0,s]} + \| z^n_{\cdot - T_0} - z^n_{\cdot - T^n_0} \|_{\beta\text{-Höl};[T_0,T_0+s]}\\
        &\leq \| z - z^n \|_{\beta\text{-Höl};[0,\tau]} + 2 \|z^n\|_{\alpha\text{-Höl};[0,\tau]} |T_0 - T_0^n|^{\alpha-\beta}.
    \end{align*}
    Combining these estimates and recalling the crucial assumption that all paths involved have uniformly bounded $\alpha$–Hölder norms, we obtain the desired bound \eqref{eq_continuity_operator_Phi}. Therefore, 
    $$
        (T^n_0 + s,(w^n \sqcup z^n)_{\cdot \land (T^n_0+s)}) \to (T_0 + s, (w \sqcup z)_{\cdot \land (T_0+s)}) \ \text{in} \ \Lambda^{\alpha\text{-Höl}}_\beta([0,T_1],\RR^m),
    $$ 
    and the continuity of $\Phi$ follows by the continuity of $f$. Take $h(u) = f\big(T_0 + u, (w\sqcup z)_{\cdot \land (T_0+u)}\big)$ and $h_n(u) = f\big(T^n_0 + u, (w^n\sqcup z^n)_{\cdot \land (T^n_0+u)}\big)$, then
    \[
    \big|\big(\Phi\big((T_0,w_{\cdot\land T_0}),z\big) - \Phi\big((T^n_0,w^n_{\cdot\land T^n_0}),z^n\big)\big)_{s,t}\big| \leq \| h -h_n \|_{\infty;[0,\tau]} \|x\|_{1\text{-Höl};[0,T_1]}|t-s| \longrightarrow 0.
    \]

    Existence of a solution now follows from the fact that, for any initial condition $(T_0, w_{\cdot \land T_0})$, the singleton set $\{(T_0, w_{\cdot \land T_0})\}$ is compact, and $\cD(\tau, R)$ is a closed, convex, and bounded subset of $C^{\beta\text{-Höl}}([0, \tau], \RR^m)$. Therefore, by the Schauder fixed-point theorem, the map $\Phi\big((T_0, w_{\cdot \land T_0}), \cdot \big): \cD(\tau, R) \to \cD(\tau, R)$ admits a fixed point, which in turn yields a solution to \eqref{eq_functional_cde_equivalent_form}. %
\end{proof}

\begin{remark}
    By Lemma \ref{lem_aux_hale} and Remark \ref{rem_function_neighbourhood}, Proposition \ref{prop_local_existence} extends to the following more general form. Given a compact set $W = [0,T_1-\delta] \times B^\alpha(r)$, and a continuous, non-anticipative functional $f$, there exist: a neighbourhood $V$ of $W$ such that $f|_{V/\sim}$ is bounded; a neighbourhood $U \subset C^0_b(V/_\sim , L(\RR^d,\RR^m))$ of $f|_{V/\sim}$; and a time horizon $\tau >0$, such that for every $(T_0, w_{\cdot\land T_0}) \in W/_\sim$ and every $\tilde{f} \in U$, the functional CDE \eqref{eq_functional_cde} admits a solution on $[0,T_0+\tau]$.
\end{remark}

Next, we establish uniqueness of solutions to \eqref{eq_functional_cde} under the additional assumption that the functional $f$ is locally Lipschitz in the path variable (over compacts). This assumption aligns with classical results from ODE theory, and the proof below follows a standard argument.

\begin{proposition}[Uniqueness]
    \label{prop_uniqueness}
    Let $f : \Lambda^{\alpha\text{-Höl}}_\beta([0,T_1],\RR^m) \to L(\RR^d,\RR^m)$ be a continuous, non-anticipative functional. Suppose $f$ is Lipschitz in the path variable over compacts, that is, for each compact set $K \subset C^{\alpha\text{-Höl}}_\beta([0,T_1],\RR^m)$, there exists a constant $L_{K} > 0$ such that
    \begin{align}
    \label{eq_loc_lip_2nd_variable}
        \big| f (t,x_{\cdot \land t}) - f (t,y_{\cdot \land t}) \big| \leq L_K \big( |x_0 - y_0| + \| x_{\cdot\land t} - y_{\cdot \land t} \|_{\beta\text{-Höl};[0,T_1]} \big),
    \end{align}
    for all $t \in [0,T_1]$ and $x,y \in K$. Then, given any $(T_0, w_{\cdot \land T_0}) \in \Lambda^{\alpha\text{-Höl}}_\beta([0,T_1],\RR^m)$, there exists a unique local solution to \eqref{eq_functional_cde} with initial condition $(T_0, w_{\cdot \land T_0})$.
\end{proposition}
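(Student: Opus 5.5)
Existence of a local solution is given by Proposition \ref{prop_local_existence}, so only uniqueness needs proof. The plan is a Grönwall-type argument on a short interval, carried out in the $\beta$-Hölder seminorm, exploiting that the control $x$ is Lipschitz.

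First, I would set up the comparison. Let $y^1, y^2$ be two solutions of \eqref{eq_functional_cde} on $[0,T_0+\tau]$ with the same initial history $w$. Write $z^i$ for the corresponding solutions of \eqref{eq_functional_cde_equivalent_form}, so that $z^1_0=z^2_0=0$. Each $z^i$ is Lipschitz on $[0,\tau]$, because its increments are integrals of a continuous, hence bounded, integrand against a Lipschitz path. Consequently the stopped paths $(w\sqcup z^i)_{\cdot\land (T_0+s)}$, $s\in[0,\tau]$, all lie in a bounded subset of $C^{\alpha\text{-Höl}}([0,T_1],\RR^m)$. By the compact embedding into $C^{\beta\text{-Höl}}$, together with the closedness argument from Example \ref{ex_weighted_stopped_paths}, they lie in a compact set $K\subset C^{\alpha\text{-Höl}}_\beta$. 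This lets me apply \eqref{eq_loc_lip_2nd_variable} with a single constant $L_K$.

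Second, I would derive the key estimate. Set $\Delta = z^1-z^2$ and $\phi(t)\coloneqq \|\Delta\|_{\beta\text{-Höl};[0,t]}$. Both concatenated paths have the same starting point and agree on $[0,T_0]$. Hence, for $u\le t$,
\[
|f(T_0+u,(w\sqcup z^1)_{\cdot\land(T_0+u)}) - f(T_0+u,(w\sqcup z^2)_{\cdot\land(T_0+u)})| \le L_K \|\Delta\|_{\beta\text{-Höl};[0,u]} \le L_K\,\phi(t).
\]
For $0\le s<r\le t$ this gives
\[
|\Delta_{s,r}| \le \int_s^r L_K \phi(t)\,|\dd x_u| \le L_K\|x\|_{1\text{-Höl}}\,\phi(t)\,|r-s|,
\]
so $\phi(t) \le L_K\|x\|_{1\text{-Höl}}\, t^{1-\beta}\phi(t)$.

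Third, I would close the argument. Choose $\tau'\le\tau$ with $L_K\|x\|_{1\text{-Höl}}(\tau')^{1-\beta}<1$; then $\phi(\tau')=0$, so the two solutions agree on $[0,T_0+\tau']$. To cover the whole interval, let $t^*$ be the supremum of the times up to which the solutions agree. If $t^*<T_0+\tau$, I would restart the argument from the common initial history $y^1|_{[0,t^*]}$. The new initial history lies in the same compact set $K$, so the same $L_K$ and the same step length $\tau'$ apply, and this contradicts the maximality of $t^*$. Alternatively, since $\tau'$ depends only on $K$, I can simply iterate finitely many steps of length $\tau'$.

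I expect the main obstacle to be the compactness step. I need to verify that both candidate solutions, and all of their stoppings, lie in one compact subset of $C^{\alpha\text{-Höl}}_\beta$ (which requires $w\in C^{\alpha\text{-Höl}}$), so that a uniform Lipschitz constant is available. A second point of care is that the increments of $\Delta$ across the junction at $T_0$ are controlled correctly; this holds because $\Delta$ vanishes on the history.
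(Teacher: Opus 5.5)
Your proposal is correct and follows essentially the same route as the paper. The paper also fixes $K=B^\alpha(R)$ with $R$ bounding the $\alpha$-Hölder norms of both stopped solutions, derives $\|y^{(1)}-y^{(2)}\|_{\beta\text{-Höl}}\le L_K\|x\|_{1\text{-Höl};[0,T_1]}\sigma_K^{1-\beta}\|y^{(1)}-y^{(2)}\|_{\beta\text{-Höl}}$ on a short interval using that the difference vanishes on the history, and then iterates over successive subintervals of the fixed length $\sigma_K$, which depends only on $K$.
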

\begin{remark}
    We avoid the term \emph{locally Lipschitz}, since condition \eqref{eq_loc_lip_2nd_variable} is formulated on compact subsets rather than neighborhoods of points. While on locally compact domains these notions coincide, the path space $\Lambda^{\alpha\text{-Höl}}_\beta([0,T_1],\RR^m)$ is not locally compact. Thus, Lipschitz continuity on compacts is necessary but not sufficient for local Lipschitz continuity: in particular, if $f :\Lambda^{\alpha\text{-Höl}}_\beta([0,T_1],\RR^m) \to L(\RR^d,\RR^m)$ is locally Lipschitz, then condition \eqref{eq_loc_lip_2nd_variable} follows, but not conversely. 
\end{remark}
\begin{proof}
    Assume $y^{(1)}$ and $y^{(2)}$ are two distinct solutions of the path-dependent CDE \eqref{eq_functional_cde} on $[0,T_0+\tau]$, with $\tau>0$ defined as in Proposition \ref{prop_local_existence}. Then, $y^{(1)}_t \equiv y^{(2)}_t \equiv w_t$ for $t \in [0,T_0]$ and, for $t \in [T_0,T_0 + \tau]$, we have
    $$
        y^{(1)}_t - y^{(2)}_t = \int_{T_0}^t \big( f(s, y^{(1)}_{\cdot \land s}) - f(s,y^{(2)}_{\cdot \land s}) \big) \dd x_s.
    $$
   Next, let $L_K$ be the Lipschitz constant of $f$ associated with the compact set $K = B^\alpha(R)$, where 
   \[
   R \coloneqq \| y^{(1)}_{\cdot \land (T_0 + \tau)} \|_{\alpha\text{-Höl};[0,T_1]} \lor \| y^{(2)}_{\cdot \land (T_0 + \tau)} \|_{\alpha\text{-Höl};[0,T_1]},
   \]
    and fix $\sigma_K>0$ such that $ L_K \|x\|_{1\text{-Höl};[0,T_1]} \sigma_K^{1-\beta} < 1$. 
    Then, using \eqref{eq_loc_lip_2nd_variable} and the fact that $\| y^{(1)} - y^{(2)} \|_{\beta\text{-Höl};[0,T_0+\sigma_K]} = \| y^{(1)} - y^{(2)} \|_{\beta\text{-Höl};[T_0,T_0+\sigma_K]}$, we obtain
    \begin{align*}
        \| y^{(1)} - y^{(2)} \|_{\beta\text{-Höl};[T_0,T_0+\sigma_K]} &\leq L_K \| y^{(1)} - y^{(2)} \|_{\beta\text{-Höl};[T_0,T_0+\sigma_K]} \|x\|_{1\text{-Höl};[0,T_1]} \sigma_K^{1-\beta} \\
        &< \| y^{(1)} - y^{(2)} \|_{\beta\text{-Höl};[T_0,T_0+\sigma_K]}
    \end{align*}
    leading to a contradiction. Hence $y^{(1)}_t = y^{(2)}_t$ for all $t \in [T_0,T_0 + \sigma_K]$, and repeating the same argument on successive subintervals of length $\sigma_K$ yields the claim on the whole interval $[T_0,T_0+\tau]$. Note that $\sigma_K$ depends only on $\beta, \|x\|_{1\text{-Höl};[0,T_1]}$, and $K$, all of which remain fixed throughout the iteration.
\end{proof}

\subsection{Global Solutions and Continuity}
\label{subsec_global_existence_continuity}

So far, we have established the existence and uniqueness of a local solution $y$ to \eqref{eq_functional_cde}. More precisely, under the assumptions of Proposition \ref{prop_uniqueness}, we can iteratively apply Proposition \ref{prop_local_existence} to obtain a unique solution first on $[0,T_0 + \tau_1]$, then on $[0,T_0 + \tau_1 + \tau_2]$, and so forth. This procedure yields a unique maximal solution, which is defined on an interval of the form $[0,\tilde{T})$ for some $\tilde{T} \leq T_1$. Importantly, this interval must have an open right endpoint. If the sequence $(\tau_n)$ decays sufficiently quickly so that $\sum_n \tau_n < T_1 - T_0$, then $\tilde{T} < T_1$, and we refer to the resulting solution as \textit{truly maximal}. The next result provides a criterion to determine whether a solution is truly maximal or can be extended further. In turn, this will yield sufficient conditions for obtaining a \textit{global solution} — that is, a solution defined on the full interval $[0,T_1]$, for any fixed $T_1 > 0$.

\begin{proposition}[Global Solution]
\label{prop_global_solution}
    Let $f: \Lambda^{\alpha\text{-Höl}}_\beta([0,T_1],\RR^m) \to L(\RR^d, \RR^m)$ be a continuous, non-anticipative functional. If $y: [0, \tilde{T}) \to \RR^m$ is a truly maximal solution, then for any compact set $K \subset [0, T_1] \times C^{\alpha\text{-Höl}}_\beta([0, T_1], \RR^m)$, there exists a time $T_K < \tilde{T}$ such that $(t, y_{\cdot \land t}) \notin K/_\sim$ for all $T_K \leq t < \tilde{T}$.
\end{proposition}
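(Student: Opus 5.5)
This is the path-dependent analogue of Hale's result that a noncontinuable solution must leave every compact set (\cite[Theorem 2.3]{hale2013introduction}). I would argue by contradiction using the uniform version of local existence (Lemma \ref{lem_aux_hale} and the remark following Proposition \ref{prop_local_existence}).

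Suppose the conclusion fails. Then there is a compact $K$ and times $t_n \uparrow \tilde T$ with $(t_n, y_{\cdot\land t_n}) \in K/_\sim$. We may enlarge $K$ to a compact set of the form $W=[0,T_1-\delta]\times B^\alpha(r)$, with $\delta>0$ small enough that $\tilde T \le T_1-\delta$. This is possible because $\tilde T<T_1$ for a truly maximal solution, and because the projection of $K$ onto the path coordinate is bounded in $\alpha$-Hölder norm. Indeed, it is compact in $C^{\alpha\text{-Höl}}_\beta$, hence contained in some closed $\alpha$-ball? That last step is not immediate. Compactness in the $\beta$-topology does not by itself give $\alpha$-boundedness. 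I would handle it by working directly with Lemma \ref{lem_aux_hale} applied to the compact set $K$ itself. What the lemma needs is a compact set in $[0,T_1]\times C^{\alpha\text{-Höl}}_\beta$, together with a neighbourhood $V$ on which $f$ is bounded by $M$, and a uniform pair $(\tau,R)$ such that every initial condition in $K/_\sim$ leads to a solution on $[T_0,T_0+\tau]$. Should the lemma be stated only for balls, I would instead restrict to the $\alpha$-bounded case and check, in the argument below, that $\|y_{\cdot\land t_n}\|_{\alpha\text{-Höl}}$ stays bounded.

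Next, pick $n$ with $\tilde T - t_n < \tau$ and restart the equation at the initial condition $(t_n, y_{\cdot\land t_n})\in K/_\sim$. The uniform local existence gives a solution $\tilde y$ on $[0,t_n+\tau]$ that agrees with $y$ on $[0,t_n]$. Maximality of $y$ on $[0,\tilde T)$ is built by continuation, so the concatenation of $y|_{[0,t_n]}$ with $\tilde y$ on $[t_n, t_n+\tau]$ is a solution defined beyond $\tilde T$. Because $y$ is a solution on $[0,\tilde T)$ and was obtained by pasting local solutions, $\tilde y$ is a valid continuation of it. This contradicts true maximality, taking truly maximal to mean that no extension past $\tilde T$ exists. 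If uniqueness is not assumed, I would define maximality as noncontinuability of the given $y$. I would then need $\tilde y$ to coincide with $y$ on $[t_n,\tilde T)$, or else simply note that the path equal to $y$ on $[0,t_n]$ and to $\tilde y$ afterwards is an extension of $y|_{[0,t_n]}$. To get an honest extension of $y$ itself, I would restart at $t_m$ for each $m\ge n$ and use the uniform $\tau$. The natural fix is to restart at a time $t_m$ within $\tau$ of $\tilde T$; the resulting solution extends $y|_{[0,t_m]}$ past $\tilde T$, and matching on $[t_m,\tilde T)$ is exactly what the Lipschitz hypothesis (Proposition \ref{prop_uniqueness}) provides in the setting where this proposition is later used.

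The main obstacle is the uniformity of the local existence time $\tau$ over the compact family of initial conditions, including continuity of the concatenation map in the $\beta$-topology. This is precisely the content of Lemma \ref{lem_aux_hale} together with estimate \eqref{eq_continuity_operator_Phi}, so I would lean on those. The delicate point is ensuring that the compact set has $\alpha$-bounded path components, so that the concatenated candidate paths lie in $\cD(\tau,R)$. I would handle this by intersecting $K$ with $\alpha$-balls. Failing that, I would note that any compact set arising in applications, for instance from Proposition \ref{prop_global_solution} combined with linear growth, is already of this form.
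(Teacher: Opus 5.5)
Your argument has a genuine gap at the restart step. Restarting at $(t_n,y_{\cdot\land t_n})$ with $t_n<\tilde T$ gives a solution on $[0,t_n+\tau]$ that continues $y|_{[0,t_n]}$, not $y$. The statement assumes only continuity of $f$, so the equation may branch, and the restarted solution can differ from $y$ on $[t_n,\tilde T)$. The concatenation is then a \emph{different} solution living past $\tilde T$. That does not contradict the noncontinuability of $y$.

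Your sentence that the restart ``is a valid continuation'' because $y$ was built by pasting is where uniqueness is silently used. Your eventual fix, invoking the Lipschitz hypothesis of Proposition~\ref{prop_uniqueness}, adds an assumption that is not in the statement. The uniformity of $\tau$ over $K$ and the $\alpha$-boundedness of $K$ are secondary loose ends that you leave unresolved. Your fallback of restricting to compact sets of a special form would again weaken the statement.

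The missing idea is that the state $(t,y_{\cdot\land t})$ records the entire history, so compactness does more than place you near $K$.

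\begin{enumerate}
\item By compactness of $K/_\sim$, a subsequence satisfies $(t_k,y_{\cdot\land t_k})\to(\tilde T,\tilde y_{\cdot\land\tilde T})\in K/_\sim$ in the stopped-path $\beta$-Hölder metric. This metric controls $|x_0-y_0|$ and the $\beta$-seminorm, hence the supremum norm, so the convergence is uniform.
\item For fixed $t<\tilde T$ one has $y_{t\land t_k}=y_t$ for all large $k$. Hence $y=\tilde y$ on $[0,\tilde T)$, and $y$ has a left limit at $\tilde T$ because $\tilde y$ is continuous.
\item Setting $y_{\tilde T}\coloneqq\tilde y_{\tilde T}$ and passing to the limit in the Riemann--Stieltjes integral gives a solution on $[0,\tilde T]$. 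Moreover $(\tilde T,y_{\cdot\land\tilde T})=(\tilde T,\tilde y_{\cdot\land\tilde T})$ is an admissible initial condition with $\alpha$-Hölder history.
\item Proposition~\ref{prop_local_existence}, applied at this single point, gives a solution on $[0,\tilde T+\tau]$ whose history on $[0,\tilde T]$ is $y$ itself. This is an honest continuation of $y$, contradicting true maximality.
\end{enumerate}

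This route needs no uniqueness, no uniform existence time over $K$, and no $\alpha$-boundedness of $K$. It is the argument the paper uses.
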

\begin{proof}
    Arguing by contradiction, let us assume that $y:[0,\tilde{T}) \to \RR^m$ is a truly maximal solution of \eqref{eq_functional_cde}, and there exists a compact set $K$ together with a sequence $t_k \uparrow \tilde{T}$ such that $(t_k, y_{\cdot \land t_k}) \in K/_\sim$ for every $k\geq 1$. Then, by compactness and up to sequence relabeling, there exists $(\tilde{T},\tilde{y}) \in K$ such that $(t_k, y_{\cdot \land t_k}) \longrightarrow (\tilde{T},\tilde{y}_{\cdot \land \tilde{T}}) \in K/_\sim$.
    
    Subsequently, note that $y_{\cdot \land t_k} \to \tilde{y}_{\cdot \land \tilde{T}}$ in the uniform topology. Hence, for all $t \in [0,T_1]$, we have that $y_{t \land t_k} \longrightarrow \tilde{y}_{t \land \tilde{T}}$ as $k \to \infty$. In particular, $y_t = \tilde{y}_t$ for every $t < \tilde{T}$, and since $\tilde{y}$ is continuous $\lim_{t \to \tilde{T}^-} y_t = \lim_{t \to \tilde{T}^-} \tilde{y}_t = \tilde{y}_{\tilde{T}}$. Setting $y_{\tilde T} = \tilde{y}_{\tilde T}$ naturally extends $y$ to a solution on $[0,\tilde T]$. Indeed, by the continuity of the Riemann-Stieltjes integral, passing $t \to \tilde T^-$ gives
    \[
    y_{\tilde T} = w_{T_0} + \int_{T_0}^{\tilde T} f(s,y_{\cdot \land s}) \dd x_s.
    \]
    However, by Proposition \ref{prop_local_existence}, this implies that there exists a solution of \eqref{eq_functional_cde} passing through $(\tilde{T}, y_{\cdot \land \tilde{T}})$ and defined to the right of $\tilde{T}$, thus contradicting the assumption of having a truly maximal solution.
\end{proof}

Although Proposition \ref{prop_global_solution} may appear too abstract for practical use, it yields concrete sufficient conditions for the existence of a global solution. As in classical ODE theory, a linear growth condition is sufficient for the existence of a global solution. More precisely, suppose there exists $C_f > 0$ such that
\begin{align}
\label{eq_path_linear_growth_condition}
    |f(t, y_{\cdot \land t})| \leq C_f \big(1 + |y_0| + \| y_{\cdot \land t} \|_{\subhol[T_1]{\beta}}\big),
\end{align}
for all $(t, y_{\cdot \land t}) \in \Lambda^\suphol{\alpha}_\beta([0,T_1],\RR^m)$. Assume $y : [0, \tilde{T}) \to \RR^m$ to be a truly maximal solution. For $T_0 \leq \sigma < \tau < \tilde{T}$ and $\sigma \leq s < t \leq \tau$, we have
\begin{align}
\label{eq_a_priori_bound_first_estimate}
    |y_{s,t}| &\leq \int_s^t C_f \big(1 + |y_0| + \| y_{\cdot \land u} \|_{\subhol[T_1]{\beta}}\big) |dx_u| \\
    &\leq C_f \big(1 + |y_0| + \| y_{\cdot \land t} \|_{\subhol[T_1]{\beta}}\big) \| x \|_{\subhol[T_1]{1}} |t-s|.
\end{align}
Dividing through by $|t-s|^\gamma$ for any $\gamma \in [\beta, \alpha]$ and taking the supremum over $\sigma \leq s < t \leq \tau$ yields
\begin{align}
\label{eq_local_to_global_lin_growth_bound}
    \|y\|_{\gamma\text{-Höl};[\sigma,\tau]} &\leq C_f \big(1 + |y_0| + \| y \|_{\subhol[\tau]{\beta}}\big) \| x \|_{\subhol[T_1]{1}} |\tau-\sigma|^{1-\gamma} \nonumber \\
    &\leq C_f \big(1 + |y_0| + \tau^{\gamma - \beta} \| y \|_{\subhol[\tau]{\gamma}}\big) \| x \|_{\subhol[T_1]{1}} |\tau-\sigma|^{1-\gamma},
\end{align}
leading to a local-to-global estimate. The next lemma demonstrates that estimates of this kind yield an explicit bound for the $\gamma$–Hölder semi-norm of $y$ on any interval $[T_0,T']$ with $T' > T_0$ for which $y|_{[0,T']}$ is well-defined.

\begin{lemma}
\label{lem_pasting_argument_bound}
    Fix $T' > T_0$ and let $y \in C^{\gamma\text{-Höl}}([0,T'],\RR^m)$. Assume there exist  constants $C_0,C_1 > 0$ such that
    \begin{align}
        \label{eq_pasting_argument_lem_hyp}
        \|y\|_{\gamma\text{-Höl};[\sigma,\tau]} \leq C_0 (C_1 + \| y \|_{\gamma\text{-Höl};[0,\tau]}) |\tau - \sigma|^{1-\gamma},
    \end{align}
    for every $T_0 \leq \sigma < \tau \leq T'$. Then it follows that
    $$\|y\|_{\gamma\text{-Höl};[T_0,T']} \leq \exp\big( \log(2)  \lceil T' (2C_0)^{1/(1-\gamma)} \rceil\big) (C_1 + 2\| y \|_{\gamma\text{-Höl};[0,T_0]}).$$
\end{lemma}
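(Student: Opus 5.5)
The plan is a discrete Grönwall argument. I split $[T_0,T']$ into finitely many subintervals on which the factor $C_0|\tau-\sigma|^{1-\gamma}$ is at most $1/2$. On each subinterval the hypothesis \eqref{eq_pasting_argument_lem_hyp} then gives an absorbable estimate, and the resulting one-step bounds are iterated.

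First I fix the mesh. Let $h \coloneqq (2C_0)^{-1/(1-\gamma)}$, so that $C_0 h^{1-\gamma} = 1/2$. Let $N \coloneqq \lceil (T'-T_0)/h\rceil \le \lceil T'(2C_0)^{1/(1-\gamma)}\rceil$, and set $t_k \coloneqq (T_0 + kh)\wedge T'$ for $k=0,\dots,N$. Since $2^N = \exp(\log(2) N)$, this choice of $N$ is exactly what produces the exponential factor in the statement.

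Next I record a concatenation property of the Hölder seminorm. For $a\le b\le c$,
\[
\|y\|_{\gamma\text{-Höl};[a,c]} \le \|y\|_{\gamma\text{-Höl};[a,b]} + \|y\|_{\gamma\text{-Höl};[b,c]}.
\]
The only non-trivial case is $s<b<t$. There $|y_{s,t}|\le |y_{s,b}|+|y_{b,t}|$, and one uses $(b-s)^\gamma,(t-b)^\gamma\le (t-s)^\gamma$.

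Now define $A_k \coloneqq \|y\|_{\gamma\text{-Höl};[0,t_k]}$. Each $A_k$ is finite because $y\in C^{\gamma\text{-Höl}}([0,T'],\RR^m)$, and $A_0 = \|y\|_{\gamma\text{-Höl};[0,T_0]}$. By concatenation and then \eqref{eq_pasting_argument_lem_hyp} with $\sigma=t_k$, $\tau=t_{k+1}$,
\[
A_{k+1} \le A_k + \|y\|_{\gamma\text{-Höl};[t_k,t_{k+1}]} \le A_k + \tfrac12 (C_1 + A_{k+1}).
\]
Since $A_{k+1}<\infty$, I can absorb the last term to get $A_{k+1}\le 2A_k + C_1$. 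Equivalently, $A_{k+1}+C_1 \le 2(A_k + C_1)$, and iterating gives $A_N + C_1 \le 2^N (A_0 + C_1)$.

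Finally, $\|y\|_{\gamma\text{-Höl};[T_0,T']}\le A_N \le 2^N(C_1+A_0)\le 2^N(C_1+2A_0)$, which is the claimed bound.

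The only delicate point is the absorption step. It needs $A_{k+1}$ to be finite a priori, which is exactly why the lemma assumes $y$ is $\gamma$-Hölder on all of $[0,T']$. It also needs the constant $1/2$ to be uniform across subintervals, which the fixed mesh $h$ guarantees. Everything else is bookkeeping.
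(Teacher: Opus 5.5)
Your proof is correct and is essentially the paper's argument: the same step size $(2C_0)^{-1/(1-\gamma)}$ making $C_0h^{1-\gamma}=\tfrac12$, subadditivity of the H\"older seminorm, absorption of the $\tfrac12$-term, and iteration of the resulting linear recurrence. The only difference is bookkeeping. You iterate on the seminorm over $[0,t_k]$ rather than over $[T_0,t_k]$, so you never need the factor $2$ in front of $A_0$, and you correctly handle $N=\lceil (T'-T_0)(2C_0)^{1/(1-\gamma)}\rceil\le\lceil T'(2C_0)^{1/(1-\gamma)}\rceil$, which the paper's proof leaves implicit.
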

\begin{proof}
    Recall that for any $0 \leq \sigma < \tau \leq T'$, the following subadditivity property holds:
    \begin{align}
    \label{eq_subadditivity_Holder_seminorm}
        \| y \|_{\gamma\text{-Höl};[0,\tau]} \leq \| y \|_{\gamma\text{-Höl};[0,\sigma]} + \| y \|_{\gamma\text{-Höl};[\sigma,\tau]}.
    \end{align}
    Set $\delta = (2C_0)^{-1/(1-\gamma)}$ and construct a partition $T_0 = t_0 < t_1 < \cdots < t_N = T'$ such that $|t_k - t_{k-1}| = \delta$ for $k=1, \hdots, N-1$. Applying \eqref{eq_subadditivity_Holder_seminorm} with $\sigma = t_{k-1}$, $\tau = t_k$, and invoking \eqref{eq_pasting_argument_lem_hyp}, we obtain
    \begin{align*}
        \| y \|_{\gamma\text{-Höl};[0,t_k]} &\leq \| y \|_{\gamma\text{-Höl};[0,T_0]} + \| y \|_{\gamma\text{-Höl};[T_0,t_{k-1}]} + \| y \|_{\gamma\text{-Höl};[t_{k-1},t_k]} \\
        &\leq \| y \|_{\gamma\text{-Höl};[0,T_0]} + \| y \|_{\gamma\text{-Höl};[T_0,t_{k-1}]} + C_0(C_1 + \| y \|_{\gamma\text{-Höl};[0,t_k]}) \delta^{1-\gamma}.
    \end{align*}
    Since $\delta$ was chosen so that $C_0\delta^{1-\gamma} = \frac{1}{2}$, it follows that
    $$
       \| y \|_{\gamma\text{-Höl};[T_0,t_k]}  \leq \| y \|_{\gamma\text{-Höl};[0,t_k]} \leq 2\| y \|_{\gamma\text{-Höl};[0,T_0]} + 2 \| y \|_{\gamma\text{-Höl};[T_0,t_{k-1}]}  + C_1. 
    $$
    For $k=1$ this reduces to $\| y \|_{\gamma\text{-Höl};[T_0,t_1]} \leq C_1 + 2\| y \|_{\gamma\text{-Höl};[0,T_0]}$, and solving the recurrence yields
    \begin{align}
    \label{eq_constant_N_a_priori_estimate}
        \| y \|_{\gamma\text{-Höl};[T_0,T']} \leq (2^N - 1) \big(C_1 + 2\| y \|_{\gamma\text{-Höl};[0,T_0]}\big), \quad \text{with} \ N = \lceil (T'-T_0) (2C_0)^{1/(1-\gamma)} \rceil.
    \end{align}
This concludes the proof.   
\end{proof}

Applying \eqref{eq_local_to_global_lin_growth_bound} in combination with Lemma \ref{lem_pasting_argument_bound}, we deduce an \textit{a priori} estimate $\|y\|_{\subhol[T']{\gamma}} \leq A \equiv A(x,w,C_f,T_1,\gamma)$ for all $T' < \tilde{T}$. When $\gamma = \alpha$, it then follows that the path $t \mapsto (t, y_{\cdot \land t})$ is confined to the compact set $K /_{\sim}$ with $K = [0,T_1] \times B^\alpha(A)$. By Proposition \ref{prop_global_solution}, this leads to a contradiction, proving that $y$ must in fact be global.

We conclude this section with a stability result, which will be instrumental in Section \ref{subsec_universality_of_sig_cdes} when we specialize to Sig-CDEs and analyze their universality property. Specifically, the next proposition ensures that solutions to two path-dependent CDEs remain close whenever their respective initial conditions and non-anticipative vector fields are close. 

\begin{theorem}%
\label{prop_stability_pd_cdes}
    Let $f^{(1)}, f^{(2)}$ be continuous, non-anticipative functionals satisfying \eqref{eq_loc_lip_2nd_variable} and \eqref{eq_path_linear_growth_condition}. Consider initial data $(T_0, w^{(1)}_{\cdot \land T_0}), (T_0, w^{(2)}_{\cdot \land T_0}) \in \Lambda^{\alpha\text{-Höl}}_\beta([0,T_1],\RR^m)$. For $i = 1,2$, let $y^{(i)}$ denote the global solution of the path-dependent CDE
    \begin{align*}
    \begin{cases}
    y^{(i)}_t = w^{(i)}_t, \quad &t \in [0,T_0]\\
    y^{(i)}_t = w^{(i)}_{T_0} + \displaystyle\int_{T_0}^t f^{(i)}\big(s, y^{(i)}_{\cdot \land s}\big) \dd x_s, \quad &t \in [T_0,T_1]
    \end{cases}.
    \end{align*}
    Then there exists a compact set $\mathcal{K} \subset [0,T_1] \times C^{\suphol{\alpha}}_\beta([0,T_1],\RR^m)$ containing the trajectories of $y^{(1)}$ and $y^{(2)}$ such that, up to a constant depending on $\mathcal{K}$ and $x$, the following stability estimate holds:
    \begin{align}
    \label{eq_stability_bound}
        \| y^{(1)} - y^{(2)} \|_{\beta\text{-Höl};[0,T_1]} \lesssim  \Big( |w^{(1)}_0 - w^{(2)}_0| + \| w^{(1)} - w^{(2)} \|_{\beta\text{-Höl};[0,T_0]}  + \| (f^{(1)} - f^{(2)})|_{\mathcal{K}/_\sim} \|_{\infty} \Big).
    \end{align}
    Particularly, if $w^{(1)}|_{[0,T_0]} = w^{(2)}|_{[0,T_0]}$, then
    \begin{align}
        \| y^{(1)} - y^{(2)} \|_{\beta\text{-Höl};[T_0,T_1]} \leq 2^N \Big(1 + \frac{1}{L_{\cK}} \Big) \| (f^{(1)} - f^{(2)})|_{\mathcal{K}/_\sim} \|_{\infty}
    \end{align}
    where $N = \left\lceil T_1 \big(2 L_{\cK} \|x\|_{1\text{-Höl};[0,T_1]} \big)^{1/(1-\beta)} \right\rceil$ and $L_{\cK} > 0$ is a constant depending on $\cK$.
\end{theorem}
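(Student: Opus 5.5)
We first construct the compact set $\cK$ and then run a Grönwall-type argument through Lemma~\ref{lem_pasting_argument_bound}. By Theorem~\ref{prop_EU_global_sol_pd_cde}, both $y^{(1)}$ and $y^{(2)}$ are global. The a priori bound obtained from \eqref{eq_local_to_global_lin_growth_bound} with $\gamma=\alpha$, combined with Lemma~\ref{lem_pasting_argument_bound}, gives $\|y^{(i)}\|_{\alpha\text{-Höl};[0,T_1]}\le A_i$. Here $A_i$ depends only on $C_{f^{(i)}}$, $x$, $T_1$ and $w^{(i)}$. We also have $|y^{(i)}_0|=|w^{(i)}_0|$. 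Set $A\coloneqq A_1\vee A_2$, enlarged so that it also dominates $|w^{(i)}_0|$, and let $\cK\coloneqq[0,T_1]\times B^\alpha(A)$. This set is compact in $[0,T_1]\times C^{\alpha\text{-Höl}}_\beta$ by the compact embedding of Hölder spaces. It contains every stopped path $(t,y^{(i)}_{\cdot\land t})$, since stopping does not increase the Hölder seminorm. Let $L_\cK$ be the Lipschitz constant from \eqref{eq_loc_lip_2nd_variable} on $B^\alpha(A)$, and write $\varepsilon\coloneqq\|(f^{(1)}-f^{(2)})|_{\cK/_\sim}\|_\infty$.

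Next we set $e\coloneqq y^{(1)}-y^{(2)}$. For $T_0\le s<t\le T_1$ we split the integrand as
\[
f^{(1)}(u,y^{(1)}_{\cdot\land u})-f^{(2)}(u,y^{(2)}_{\cdot\land u})=\big(f^{(1)}-f^{(2)}\big)(u,y^{(1)}_{\cdot\land u})+\big(f^{(2)}(u,y^{(1)}_{\cdot\land u})-f^{(2)}(u,y^{(2)}_{\cdot\land u})\big).
\]
The first term is bounded by $\varepsilon$. The second is bounded by $L_\cK\big(|e_0|+\|e_{\cdot\land u}\|_{\beta\text{-Höl};[0,T_1]}\big)\le L_\cK\big(|e_0|+\|e\|_{\beta\text{-Höl};[0,\tau]}\big)$ for $u\le\tau$. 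Integrating against the Lipschitz control and dividing by $|t-s|^\beta$ yields, for $T_0\le\sigma<\tau$,
\[
\|e\|_{\beta\text{-Höl};[\sigma,\tau]}\le L_\cK\|x\|_{1\text{-Höl};[0,T_1]}\Big(\tfrac{\varepsilon}{L_\cK}+|e_0|+\|e\|_{\beta\text{-Höl};[0,\tau]}\Big)|\tau-\sigma|^{1-\beta}.
\]
This is exactly hypothesis \eqref{eq_pasting_argument_lem_hyp} with $\gamma=\beta$, $C_0=L_\cK\|x\|_{1\text{-Höl};[0,T_1]}$ and $C_1=\varepsilon/L_\cK+|e_0|$. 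If the lemma needs $C_0,C_1>0$ strictly, we perturb $\varepsilon$ by any $\eta>0$ and let $\eta\to0$ at the end.

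Lemma~\ref{lem_pasting_argument_bound} then gives
\[
\|e\|_{\beta\text{-Höl};[T_0,T_1]}\le(2^N-1)\big(\varepsilon/L_\cK+|e_0|+2\|e\|_{\beta\text{-Höl};[0,T_0]}\big),
\]
with $N$ as in the statement. Here $e_0=w^{(1)}_0-w^{(2)}_0$ and $e|_{[0,T_0]}=w^{(1)}-w^{(2)}$. Using subadditivity \eqref{eq_subadditivity_Holder_seminorm} to add $\|e\|_{\beta\text{-Höl};[0,T_0]}$ gives \eqref{eq_stability_bound}, with a constant depending on $L_\cK$, $x$ and $N$, hence on $\cK$ and $x$.

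In the special case $w^{(1)}=w^{(2)}$ on $[0,T_0]$, we have $e_0=0$ and $e|_{[0,T_0]}=0$, so the bound becomes $(2^N-1)\varepsilon/L_\cK\le2^N(1+1/L_\cK)\varepsilon$. The main delicate point is the first step. The compact set must be chosen uniformly so that a single Lipschitz constant $L_\cK$ applies along both trajectories at all times, and the Lipschitz estimate must be applied to stopped paths, with $e_{\cdot\land u}$ controlled by the seminorm on $[0,\tau]$. Both follow from the a priori $\alpha$-Hölder bound and the fact that stopping is contractive for Hölder seminorms.
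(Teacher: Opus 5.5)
Your proposal is correct and follows essentially the paper's argument. You build $\mathcal{K}$ from the a priori $\alpha$-Hölder bounds, split the vector-field difference into a Lipschitz term plus a sup-norm term on $\mathcal{K}/_\sim$, and close with Lemma~\ref{lem_pasting_argument_bound} at $\gamma=\beta$. The differences are only bookkeeping: the paper passes to the shifted formulation $z^{(i)}$, applies the lemma from time $0$ with the initial-history discrepancy absorbed into $C_1$, and evaluates $f^{(1)}-f^{(2)}$ along the second trajectory. You instead keep the original time axis, use the lemma's $2\|e\|$-on-$[0,T_0]$ term, and evaluate the difference along $y^{(1)}$.
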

\begin{proof}
    For $i \in \{1,2\}$, let $z^{(i)}$ denote the solution of the equivalent formulation \eqref{eq_functional_cde_equivalent_form}. We prove an estimate for $\| z^{(1)} - z^{(2)} \|_{\beta\text{-Höl};[0,T]}$, which then translates straightforwardly to \eqref{eq_stability_bound}. Fix $0 \leq \sigma < \tau \leq T$ and observe that, for any $\sigma \leq s \leq t \leq \tau$,
    \begin{align*}
        &\big| (z^{(1)} - z^{(2)})_{s,t} \big| \\
        \leq& \int_s^t \big| f^{(1)}\big( u + T_0, (w^{(1)} \sqcup z^{(1)})_{\cdot \land (u + T_0)} \big) -  f^{(1)}\big( u + T_0, (w^{(2)} \sqcup z^{(2)})_{\cdot \land (u + T_0)} \big) \big| \cdot |\dd x_u| \\
        &\hspace{1.5cm} + \int_s^t \big|f^{(1)}\big( u + T_0, (w^{(2)} \sqcup z^{(2)})_{\cdot \land (u + T_0)} \big) - f^{(2)}\big(  u + T_0, (w^{(2)} \sqcup z^{(2)})_{\cdot \land (u + T_0)} \big) \big| \cdot |\dd x_u|.
    \end{align*}
    Let $\mathcal{K}$ denote a compact subset of $[0,T_1] \times C^{\suphol{\alpha}}_\beta([0,T_1],\RR^m)$ containing 
    $$
        \Big\{ \big(u + T_0, (w^{(i)} \sqcup z^{(i)})\big) : u \in [0,T] \Big\},
    $$ 
    for $i \in \{1,2\}$. Such compact set exists thanks to Lemma \ref{lem_pasting_argument_bound}. Then, by condition \eqref{eq_loc_lip_2nd_variable} we obtain
    \begin{align*}
        &\big| (z^{(1)} - z^{(2)})_{s,t} \big| \\
        \leq& \int_s^t L_{\cK} \big( |w^{(1)}_0 - w^{(2)}_0| + \| (w^{(1)} \sqcup z^{(1)})_{\cdot \land (u + T_0)} - (w^{(2)} \sqcup z^{(2)})_{\cdot \land (u + T_0)} \|_{\beta\text{-Höl};[0,T_1]} \big) \cdot |\dd x_u| \\
        &\hspace{2.5cm} + \int_s^t \| (f^{(1)} - f^{(2)})|_{\mathcal{K}/_\sim} \|_{\infty} \cdot |\dd x_u|,
    \end{align*}
    where $L_{\mathcal{K}}>0$ is a constant depending on the compact $\mathcal{K}$. Dividing by $|t-s|^\beta$ on both sides and taking the supremum over $\sigma \leq s \leq t \leq \tau$ then yields:
    \begin{align}
    \label{eq_local_global_bound}
        &\| z^{(1)} - z^{(2)} \|_{\beta\text{-Höl};[\sigma,\tau]} \nonumber \\
        \leq&  \ L_{\cK} \big( |w^{(1)}_0 - w^{(2)}_0| + \| w^{(1)} - w^{(2)} \|_{\beta\text{-Höl};[0,T_0]} + \| z^{(1)} - z^{(2)} \|_{\beta\text{-Höl};[0,\tau]} \big) \| x \|_{1\text{-Höl};[0,T]} |\tau - \sigma|^{1-\beta} \nonumber \\ 
        &\hspace{1.5cm} +  \| (f^{(1)} - f^{(2)})|_{\cK/_\sim} \|_{\infty} \| x \|_{1\text{-Höl};[0,T]} |\tau - \sigma|^{1-\beta}.
    \end{align} 
    Now, to abbreviate notation define
    \begin{align*}
        C_w &\coloneqq L_{\mathcal{K}}  \big( |w^{(1)}_0 - w^{(2)}_0| + \| w^{(1)} - w^{(2)} \|_{\beta\text{-Höl};[0,T_0]} \big)   \| x \|_{1\text{-Höl};[0,T]}, \\
        C_z &\coloneqq L_{\mathcal{K}}  \| x \|_{1\text{-Höl};[0,T]}, \ \text{and} \ D_f \coloneqq  \| (f^{(1)} - f^{(2)})|_{\mathcal{K}/_\sim} \|_{\infty} \, \| x \|_{1\text{-Höl};[0,T]}.
    \end{align*}
    Then, estimate \eqref{eq_local_global_bound} reads
    $$
        \| z^{(1)} - z^{(2)} \|_{\beta\text{-Höl};[\sigma,\tau]} \leq \Big( C_w + D_f + C_z \| z^{(1)} - z^{(2)} \|_{\beta\text{-Höl};[0,\tau]} \Big) |\tau-\sigma|^{1-\beta},
    $$
    and we are under the conditions of Lemma \ref{lem_pasting_argument_bound} with $\gamma = \beta$, $C_0 = C_z$, and $C_1 = (C_w + D_f)C_z^{-1}$. Note that the estimate   \eqref{eq_local_global_bound} holds for all $0 \leq \sigma < \tau \leq T$, so we may take $T_0=0$ in the statement of Lemma \ref{lem_pasting_argument_bound}. We then obtain
    \begin{align*}
        &\| z^{(1)} - z^{(2)} \|_{\beta\text{-Höl};[0,T]}\\
        &\hspace{1.2cm} \leq  2^N \, \frac{L_{\mathcal{K}} \lor 1}{L_\mathcal{K}} \Big( |w^{(1)}_0 - w^{(2)}_0| + \| w^{(1)} - w^{(2)} \|_{\beta\text{-Höl};[0,T_0]}  + \| (f^{(1)} - f^{(2)})|_{\mathcal{K}/_\sim} \|_{\infty} \Big),
    \end{align*}
    where $N$ is as in \eqref{eq_constant_N_a_priori_estimate} with $T' = T$. Finally, the estimate \eqref{eq_stability_bound} follows by recalling that $z^{(i)}_t = y^{(i)}_{t + T_0} - w^{(i)}_{T_0}$ for $i \in \{1,2\}$.
\end{proof}

\section{Signature Controlled Differential Equations}
\label{sec_sig_cdes}

We now focus on a specific class of path-dependent controlled differential equations, which we call Sig-CDEs. Rather than working with a generic non-anticipative functional as in \eqref{eq_functional_cde}, we define the vector field through the signature of the solution path. More precisely, using the notation of the previous section, we set $f = F \circ S$, where $S : \Lambda^{\alpha\text{-Höl}}_\beta([0,T_1],\RR^m) \to T((\RR^m))$ denotes the signature map, and $F : T((\RR^m)) \to L(\RR^d,\RR^m)$ is a matrix-valued map defined on the extended tensor algebra (or on a subset containing all signatures).

Given a control path $x:[0,T_1] \to \RR^d$ and an initial condition $y|_{[0,T_0]} = w|_{[0,T_0]}$, we then define a Sig-CDE as the path-dependent equation
\begin{align}
\label{eq_sig_cde_integral_form}
    y_t = w_{T_0} + \displaystyle\int_{T_0}^t F(S(y|_{[0,s]})) \dd x_s, \qquad t \in [T_0,T_1].
\end{align}

The motivation for this class is twofold. First, by the universality of signatures (cf. \Cref{prop_global_approx_signatures}), it is natural to expect that replacing a general path functional with a signature-based one yields a universal subclass of path-dependent CDEs. In particular, Theorem \ref{prop_stability_pd_cdes} suggests that solutions to \eqref{eq_functional_cde} can be approximated by those of \eqref{eq_sig_cde_integral_form}. Second, when $f$ admits a factorization $f=F\circ S$, the equation can be lifted to an infinite-dimensional classical CDE, which — after truncation — reduces to a CDE on a finite-dimensional manifold. The following sections make these ideas precise.

\subsection{Well-Posedness}
\label{subsec_well_posedness_sig_cdes}

In this section, we study the well-posedness of the Sig-CDEs \eqref{eq_sig_cde_integral_form} by viewing them as a special case of the path-dependent CDEs \eqref{eq_functional_cde}. This allows us to invoke the results of Section \ref{sec_path_dependent_cdes} to obtain existence and uniqueness of global solutions. \Cref{prop_well_posedness_SigCDEs} provides concrete conditions on $F$, formulating linear growth in terms of tensor dilations and Lipschitz continuity via the tensor limit spaces introduced in \Cref{subsec_limits_of_tensor_spaces}.

We begin by noting that the composition $F\circ S$ defines a non-anticipative functional. Hence, a direct application of \Cref{prop_EU_global_sol_pd_cde} yields the following general result:

\begin{corollary}
\label{cor_E&U_SigCDEs_general}
    Let $\alpha \in \left(\frac{1}{2},1\right)$ and $\beta < \alpha$. Suppose $F \circ S$ is a continuous functional that is  Lipschitz in the path variable over compacts (cf. \eqref{eq_loc_lip_2nd_variable}), and further satisfies the linear growth condition \eqref{eq_path_linear_growth_condition}. Then the Sig-CDE \eqref{eq_sig_cde_integral_form} admits a unique solution on the interval $[0,T_1]$. 
\end{corollary}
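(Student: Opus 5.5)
The plan is to reduce the statement to Theorem \ref{prop_EU_global_sol_pd_cde} by checking that $f \coloneqq F\circ S$, viewed on stopped paths, is an admissible vector field. First, I will make precise that $f$ is a well-defined non-anticipative functional on $\Lambda^{\alpha\text{-Höl}}_\beta([0,T_1],\RR^m)$. For a stopped path $(t,y_{\cdot\land t})$, set $f(t,y_{\cdot\land t}) \coloneqq F(S(y)_{0,t})$. This is well defined on equivalence classes, since $S(y_{\cdot\land t})_{0,T_1}=S(y)_{0,t}$ by the remark following the definition of the signature, and $S(y)_{0,t}$ depends only on $y|_{[0,t]}$. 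Hence $f(t,y_{\cdot\land t})$ depends only on the restriction of $y$ to $[0,t]$, which is non-anticipativity. The value $S(y)_{0,t}$ exists because $\alpha>1/2$ puts us in the Young regime.

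Second, I will match the hypotheses term by term.
\begin{enumerate}[(i)]
\item Continuity of $f$ on $\Lambda^{\alpha\text{-Höl}}_\beta$ is assumed outright for $F\circ S$.
\item The Lipschitz-on-compacts condition \eqref{eq_loc_lip_2nd_variable} for $F\circ S$ is precisely \eqref{eq_loc_lip_2nd_variable_intro}.
\item The linear growth condition \eqref{eq_path_linear_growth_condition} is precisely \eqref{eq_path_linear_growth_condition_intro}.
\end{enumerate}
Theorem \ref{prop_EU_global_sol_pd_cde} then gives a unique global solution of \eqref{eq_functional_cde} with $f=F\circ S$. It remains to check that this equation coincides with \eqref{eq_sig_cde_integral_form}, including the initial segment $y|_{[0,T_0]}=w|_{[0,T_0]}$. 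That identification is immediate from the definition of $f$.

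For transparency I would also recall the internal structure of the argument being invoked. Proposition \ref{prop_local_existence} gives local existence from continuity alone. Proposition \ref{prop_uniqueness} gives uniqueness from the Lipschitz condition. Then \eqref{eq_local_to_global_lin_growth_bound} together with Lemma \ref{lem_pasting_argument_bound} yields an a priori bound on the $\alpha$-Hölder norm. By Proposition \ref{prop_global_solution}, this bound excludes a truly maximal solution with $\tilde T<T_1$.

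The only potential obstacle is the well-definedness and measurability of $s\mapsto F(S(y)_{0,s})$ inside the Riemann--Stieltjes integral. This follows because the map $s\mapsto (s,y_{\cdot\land s})$ is continuous into $\Lambda^{\alpha\text{-Höl}}_\beta$ whenever $y$ is $\alpha$-Hölder and $\beta<\alpha$. That continuity is the same estimate, $\|y\|_{\beta\text{-Höl};[s,s']}\le \|y\|_{\alpha\text{-Höl}}|s-s'|^{\alpha-\beta}$, used in the proof of Proposition \ref{prop_local_existence}. Composing with the continuous $f$ then gives a continuous integrand, so the integral exists, and the corollary follows.
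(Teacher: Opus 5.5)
Your proposal is correct and is exactly the paper's argument: the paper notes that $F\circ S$ is a non-anticipative functional on stopped paths and applies Theorem \ref{prop_EU_global_sol_pd_cde} directly. Your extra checks are details the paper leaves implicit, namely well-definedness on equivalence classes via $S(y_{\cdot\land t})_{0,T_1}=S(y)_{0,t}$ and continuity of the integrand $s\mapsto F(S(y)_{0,s})$.
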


The main interest, however, is in deriving explicit sufficient conditions directly on $F$, since the signature map is a fixed and intrinsic component of Sig-CDEs. The following result provides a first step in this direction by showing that the linear growth condition \eqref{eq_path_linear_growth_condition} at the path level admits an equivalent formulation at the signature level.

\begin{proposition}
\label{prop_homogeneous_linear_growth}
    The functional $f = F \circ S$ satisfies the linear growth condition \eqref{eq_path_linear_growth_condition} if and only if $F$ exhibits homogeneous linear growth; that is, for every $r>0$, there exists $C_F > 0$ such that
    \begin{align}
    \label{eq_homogeneous_lin_growth}
        |F(\delta_\lambda \mathbf{g})| \leq C_F (1 + \lambda), \quad \text{for all}  \ \mathbf{g} \in S\big(B^\beta(r)\big), \text{and} \ \lambda \geq 0.
    \end{align}
\end{proposition}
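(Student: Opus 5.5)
The plan is to reduce both directions to two elementary structural properties of the signature. First, $S$ depends only on the increments of a path, so $S(y)_{0,t} = S(y - y_0)_{0,t}$. Second, dilation commutes with scaling of paths: for every $\lambda \geq 0$,
\[
\delta_\lambda S(y)_{0,t} = S(\lambda y)_{0,t}.
\]
The second identity holds because the level-$n$ iterated integral is $n$-homogeneous in $y$; the case $\lambda = 0$ gives $\delta_0 \mathbf{g} = \mathbf{1} = S(0)$. With these two facts, the growth quantity $\|y_{\cdot\land t}\|_{\beta\text{-Höl};[0,T_1]}$ on path space plays exactly the role of the dilation parameter $\lambda$ on tensor space. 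Throughout I read $S(B^\beta(r))$ as the set of signatures $S(x)_{0,t}$ of (stopped) paths $x$ in the ball that lie in the domain $\Lambda^{\alpha\text{-Höl}}_\beta$ of $f$, so that $F$ is only ever evaluated on signatures of admissible paths.

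For the direction (path-level growth $\Rightarrow$ \eqref{eq_homogeneous_lin_growth}), fix $r>0$ and $\mathbf{g} = S(x)_{0,t}$ with $x \in B^\beta(r)$. By translation invariance we may replace $x$ by $x - x_0$, so we may assume $x_0 = 0$ and $\|x_{\cdot\land t}\|_{\beta\text{-Höl};[0,T_1]} \leq r$. Then, for every $\lambda \ge 0$,
\[
|F(\delta_\lambda \mathbf{g})| = |F(S(\lambda x)_{0,t})| = |f(t, \lambda x_{\cdot\land t})| \leq C_f(1 + \lambda r) \leq C_f(1\vee r)(1+\lambda),
\]
so $C_F = C_f(1\vee r)$ works.

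For the converse, fix any single $r>0$ together with its constant $C_F$. Take $(t, y_{\cdot\land t})$ and set $\rho = \|y_{\cdot\land t}\|_{\beta\text{-Höl};[0,T_1]}$. If $\rho = 0$, then $S(y)_{0,t} = \mathbf{1} = \delta_0 \mathbf{g}$ for any $\mathbf{g}$, so $|f| \le C_F$. Otherwise, define the rescaled path $\tilde y = (r/\rho)(y - y_0)$. It satisfies $\tilde y_0 = 0$ and $\|\tilde y_{\cdot\land t}\|_{\beta} = r$, hence $\tilde y \in B^\beta(r)$, and it remains $\alpha$-Hölder, so $\mathbf{g} := S(\tilde y)_{0,t} \in S(B^\beta(r))$. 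With $\lambda = \rho/r$ we get $S(y)_{0,t} = \delta_\lambda \mathbf{g}$. Therefore
\[
|f(t,y_{\cdot\land t})| \le C_F(1 + \rho/r) \le C_F(1\vee r^{-1})\big(1 + |y_0| + \rho\big),
\]
which is \eqref{eq_path_linear_growth_condition}. The $|y_0|$ term is never actually needed, since $f = F\circ S$ is blind to the starting point.

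The only step requiring care is the bookkeeping rather than the estimates. One must check that the rescaled and translated paths stay in the domain $\Lambda^{\alpha\text{-Höl}}_\beta$ of $f$; they do, since translation and scaling preserve $\alpha$-Hölder regularity. One must also check that stopping is compatible with the ball, since $\|x_{\cdot\land t}\|_\beta \le \|x\|_\beta$, so stopped paths of ball elements are again in the ball. I would also remark that the converse uses only one radius $r$; hence "for every $r$" is equivalent to "for some $r$", which confirms that the statement is consistent.
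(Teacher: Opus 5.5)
Your proposal is correct and follows essentially the same route as the paper's proof. Both directions rest on the scaling identity $\delta_\lambda S(y)_{0,t} = S(\lambda y)_{0,t}$: going forward you plug $\lambda x$ into the path-level bound, and going backward you normalize $y$ into $B^\beta(r)$ with $\lambda$ the scaling ratio, arriving at the same constants $C_F = C_f(1\vee r)$ and $C_f = C_F(1\vee r^{-1})$. The differences are cosmetic. You translate to $y_0=0$ and normalize by $\|y_{\cdot\land t}\|_{\beta}$ alone, whereas the paper keeps $y_0$ and normalizes by $|y_0| + \|y_{\cdot\land t}\|_{\beta}$. You also treat the degenerate case $\rho=0$ and the $\alpha$-Hölder domain issue more explicitly than the paper does.
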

\begin{proof}
    Suppose $f = F \circ S$ satisfies the linear growth condition \eqref{eq_path_linear_growth_condition}. Let $\mathbf{g}$ denote $S(y|_{[0,t]})$ for some $y|_{[0,t]} \in \Lambda^\suphol{\alpha}_\beta([0,T_1],\RR^m)$. Then, for any $\lambda \geq 0$,
    \begin{align*}
        |F(\delta_\lambda \mathbf{g})| = |F(\delta_\lambda S(y|_{[0,t]}))| = |f(\lambda y|_{[0,t]})| \leq C_f(1 + \lambda \big(|y_0| + \| y_{\cdot\land t}\|_{\subhol[T_1]{\beta}}) \big),
    \end{align*}
    which immediately yields \eqref{eq_homogeneous_lin_growth} with constant $C_F = C_f (1 \lor r)$ for any fixed $r>0$. 
    
    Conversely, let $r, C_F > 0$ be such that \eqref{eq_homogeneous_lin_growth} holds. For any $y|_{[0,t]} \in \Lambda^\suphol{\alpha}_\beta([0,T_1], \RR^m)$, set $\lambda = r^{-1} (|y_0| + \| y_{\cdot\land t}\|_{\subhol[T_1]{\beta}})$ and define $x|_{[0,t]} = \lambda^{-1} y|_{[0,t]}$. Here, we may assume that $y|_{[0,t]} \neq 0$, otherwise the linear growth condition is trivially satisfied. Then $S(x|_{[0,t]}) \in S(B^\beta(r))$, and
    \begin{align*}
        |f(y|_{[0,t]})| &= |F(S(\lambda \lambda^{-1} y|_{[0,t]}))| = |F(\delta_\lambda S(x|_{[0,t]}))| \leq C_F (1 + \lambda) \\
        &\leq C_F \big(1 + r^{-1} (|y_0| + \| y_{\cdot\land t}\|_{\subhol[T_1]{\beta}})\big),
    \end{align*}
    which shows that the linear growth condition \eqref{eq_path_linear_growth_condition} holds with constant $C_f = C_F(1 \lor r^{-1})$.
\end{proof}

\begin{remark}
    We note that Proposition \ref{prop_homogeneous_linear_growth} remains valid under alternative choices of path semi-norm. For example, if the linear growth condition \eqref{eq_path_linear_growth_condition} were formulated using another semi-norm, such as $1-$variation, the same equivalence would hold with the corresponding modification of \eqref{eq_homogeneous_lin_growth}. This does not mean that all path-level linear growth conditions are equivalent; rather, it highlights once more the flexibility in the choice of path space; recall Remark \ref{rem_other_path_norms}.
\end{remark}

In \Cref{subsec_universality_of_sig_cdes}, we show that linear functionals of the signature, composed with activation functions of logarithmic growth, generate a universal class of maps $F$ satisfying homogeneous linear growth. Moreover, homogeneous gauges (Section \ref{subsec_path_signatures_tensor_algebras}) also provide concrete examples of functions fulfilling \eqref{eq_homogeneous_lin_growth}; see \cite[Section 7.5.3]{Friz2010}. 

Turning to continuity, the key point is the choice of codomain for the signature map, as $F \circ S$ factors through a tensor space containing the signatures and this choice directly determines the regularity required of $F$. In view of Theorem \ref{prop_EU_global_sol_pd_cde}, a natural option is to work in a metric space where the signature map is locally Lipschitz. In this setting, a Lipschitz condition on $F$ suffices to ensure the continuity assumption for well-posedness, and stronger metric topologies correspond to weaker Lipschitz requirements on $F$.

\begin{remark}
\label{rem_degenerate_topology}
   This suggests endowing the set of signatures with the strongest metric for which the signature map remains locally Lipschitz, thereby maximizing the class of admissible functionals $F$ — a ``final Lipschitz topology". However, such an approach comes at the cost of tractability. In the extreme, one obtains a setting in which paths are effectively identified with their signatures, leading to a topology akin to a final topology on unparametrized path space; see \cite{cass2024topologies} for related discussions.
\end{remark} 

We therefore take $\cT^{(p)}(\RR^m)$ as the codomain of the signature map. By Proposition \ref{lem_continuity_sig_map}, the signature map is locally Lipschitz in this space, and Lipschitz continuity of $F$ with respect to standard tensor spaces $T_p(\RR^m)$ implies Lipschitz continuity in $\cT^{(p)}(\RR^m)$, which we require in the proposition below. Combining Proposition \ref{lem_continuity_sig_map} with Proposition \ref{prop_homogeneous_linear_growth}, we obtain the following well-posedness result.

\begin{proposition}
    \label{prop_well_posedness_SigCDEs}
    Fix $1/2 < \beta < \alpha < 1$. Consider $p>1$ and $(\lambda_k)$ such that $\lambda_k^{1/\beta}/k^p \to 0$ as $k \to \infty$. Suppose $F : \cT^{(p)}(\RR^m) \to L(\RR^d,\RR^m)$ is locally Lipschitz and satisfies the homogeneous linear growth condition \eqref{eq_homogeneous_lin_growth}. Under these assumptions, the Sig-CDE \eqref{eq_sig_cde_integral_form} possesses a unique solution on the interval $[0,T_1]$.
\end{proposition}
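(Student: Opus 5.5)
The plan is to verify the hypotheses of Theorem \ref{prop_EU_global_sol_pd_cde} (equivalently Corollary \ref{cor_E&U_SigCDEs_general}) for the non-anticipative functional $f = F\circ S$ on $\Lambda^{\alpha\text{-Höl}}_\beta([0,T_1],\RR^m)$. Three properties are needed: continuity of $f$, Lipschitz continuity in the path variable on compacts in the sense of \eqref{eq_loc_lip_2nd_variable}, and the linear growth condition \eqref{eq_path_linear_growth_condition}. Linear growth is immediate from Proposition \ref{prop_homogeneous_linear_growth}, since $F$ is assumed to satisfy \eqref{eq_homogeneous_lin_growth}; the remaining work is the regularity.

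For the Lipschitz condition on compacts, fix a compact $K \subset C^{\alpha\text{-Höl}}_\beta([0,T_1],\RR^m)$.

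1. By Proposition \ref{lem_continuity_sig_map}, there is $L_K$ such that $\|S(x)_{0,t}-S(y)_{0,t}\|_{(p)} \le L_K\|x_{\cdot\land t}-y_{\cdot\land t}\|_{\beta\text{-Höl};[0,T_1]}$ for all $x,y\in K$ and $t\in[0,T_1]$. This uses exactly the hypothesis $\lambda_k^{1/\beta}/k^p\to 0$.
2. The image $S(K_\Lambda)$, with $K_\Lambda=\{x_{\cdot\land t}: x\in K,\ t\in[0,T_1]\}$, is compact in $\cT^{(p)}(\RR^m)$. Indeed, $(t,x)\mapsto x_{\cdot\land t}$ is continuous from $[0,T_1]\times K$ into the $\beta$-Hölder topology on the uniformly $\alpha$-Hölder bounded set (compare the estimate via $|t-s|^{\alpha-\beta}$ in the proof of Proposition \ref{prop_local_existence}). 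Moreover $S$ is continuous into $\cT^{(p)}$ there by the first part of Proposition \ref{lem_continuity_sig_map}.
3. A locally Lipschitz map on a metric space is Lipschitz on every compact set. To see this, cover the compact set by finitely many balls on which $F$ is Lipschitz, and use a Lebesgue number $\epsilon$. For pairs at distance less than $\epsilon$, use the local constant. For pairs at distance at least $\epsilon$, use $|F(\mathbf a)-F(\mathbf b)|\le 2\sup|F|\,\epsilon^{-1}\|\mathbf a-\mathbf b\|_{(p)}$, where $\sup|F|$ over the compact set is finite by continuity. This gives a Lipschitz constant $L_F$ for $F$ on $S(K_\Lambda)$.
4. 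Composing steps 1 and 3 gives $|f(t,x_{\cdot\land t})-f(t,y_{\cdot\land t})|\le L_FL_K\|x_{\cdot\land t}-y_{\cdot\land t}\|_{\beta\text{-Höl};[0,T_1]}$, which is stronger than \eqref{eq_loc_lip_2nd_variable}.

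Continuity of $f$ on $\Lambda^{\alpha\text{-Höl}}_\beta$ also needs care. When $(t_n,x^n_{\cdot\land t_n})\to(t,x_{\cdot\land t})$ in $d^\Lambda_{\beta\text{-Höl}}$, the stopped paths converge in $\beta$-Hölder norm. The local Lipschitz part of Proposition \ref{lem_continuity_sig_map} on $C^{\beta\text{-Höl}}$ then gives $S(x^n)_{0,t_n}\to S(x)_{0,t}$ in $\cT^{(p)}$, using $S(x^n)_{0,t_n}=S(x^n_{\cdot\land t_n})_{0,T_1}$. Continuity of $F$ finishes the argument.

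The main obstacle is bookkeeping rather than a genuine difficulty: the statement of Proposition \ref{lem_continuity_sig_map} is phrased for a common stopping time $t$, while step 2 needs joint continuity in $(t,x)$. I would handle this by always passing to stopped paths $x_{\cdot\land t}$ on $[0,T_1]$ and applying the $C^{\beta\text{-Höl}}([0,T_1])$ local Lipschitz statement at time $T_1$. With these three properties verified, Theorem \ref{prop_EU_global_sol_pd_cde} yields the unique global solution on $[0,T_1]$.
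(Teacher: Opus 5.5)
Your proposal is correct and follows the paper's route. The paper obtains this result by combining Proposition~\ref{lem_continuity_sig_map} with Proposition~\ref{prop_homogeneous_linear_growth} and invoking Theorem~\ref{prop_EU_global_sol_pd_cde}; your steps simply spell out two things the paper leaves implicit, namely that local Lipschitz continuity of $F$ gives Lipschitz continuity on the compact image $S(K_\Lambda)$, and that $F\circ S$ is continuous.

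One small correction in step 2: a compact subset of $C^{\alpha\text{-Höl}}_\beta$ need not be bounded in $\alpha$-Hölder norm, so you cannot assume $K$ is "uniformly $\alpha$-Hölder bounded". The joint continuity of $(t,x)\mapsto x_{\cdot\land t}$ holds anyway, since
$\|x^n_{\cdot\land t_n}-x_{\cdot\land t}\|_{\beta\text{-Höl}}\le \|x^n-x\|_{\beta\text{-Höl}}+\|x\|_{\alpha\text{-Höl}}|t_n-t|^{\alpha-\beta}$
involves only the $\alpha$-Hölder norm of the limit $x$.
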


\begin{remark}
Beyond the topological considerations discussed above and in Section \ref{subsec_limits_of_tensor_spaces}, our preference for $\cT^{(p)}(\RR^m)$ is also motivated by two additional points. First, its Banach space structure allows us to develop in Section \ref{subsec_scaling_argument} a well-posedness theory for lifted Sig-CDEs that closely mirrors the path-dependent setting. Second, the space $\cT^{(p)}(\RR^m)$ captures the signatures relevant to our setting in a more precise way: if $1/2 \leq \gamma < \gamma + \varepsilon \leq 1$ for some small $\varepsilon>0$, and the weights $(\lambda_k)$ satisfy $\lambda_k^{1/(\gamma + \varepsilon)}/k^p \to 0$ and $k^p/\lambda_k^{1/\gamma} \to 0$, then
\[
S\big(C^{(\gamma + \varepsilon)\text{-Höl}}([0,T],\RR^m)\big) \subset \cT^{(p)}(\RR^m) \cap S\big(C^{\gamma\text{-Höl}}([0,T],\RR^m)\big) \subsetneq S\big(C^{\gamma\text{-Höl}}([0,T],\RR^m)\big).
\]
The first inclusion follows directly from Proposition \ref{lem_continuity_sig_map} (via an estimate of the type in \eqref{eq_sig_projective_continuity}). The strictness of the second inclusion requires constructing paths with sharp signature decay, namely paths $y$ such that $\| S(y)_{0,T} \|_{1,\lambda_k} \gtrsim \exp(c \lambda_k^{1/\gamma})$. Such lower bounds are nontrivial and rely on techniques from the literature on signature asymptotics \cite{boedihardjo2020,boedihardjo2023}. Since this is not needed here, we leave further exploration for future work. The key point is that $\cT^{(p)}(\RR^m)$ is designed to exclude signatures with sharp decay at a given regularity exponent $\gamma$. Ultimately, the choice of signature codomain depends on the intended application.
\end{remark}

\subsection{Dynamic Universal Approximation}
\label{subsec_universality_of_sig_cdes}

In this section we show that Sig-CDEs \eqref{eq_sig_cde_integral_form} form a universal subclass of path-dependent CDEs \eqref{eq_functional_cde}. More precisely, under the well-posedness assumptions of Theorem \ref{prop_EU_global_sol_pd_cde}, the solution of any path-dependent CDE can be approximated arbitrarily well by the solution of a suitably chosen Sig-CDE. This may be viewed as a \emph{dynamic universal approximation theorem}, highlighting the expressive power of signatures when used as driving vector fields in controlled differential equations.

The starting point is the classical universality of signatures (Corollary \ref{cor_classical_universality}). If the functional $f$ in \eqref{eq_functional_cde} can be approximated by a map of the form $\bell \circ S$, where $\bell$ is linear, then Theorem~\ref{prop_stability_pd_cdes} suggests that the corresponding solutions of \eqref{eq_functional_cde} and \eqref{eq_sig_cde_integral_form} should remain close. However, interpreting the signature as a collection of path monomials reveals an obstruction: linear functionals of the signature typically grow superlinearly (indeed polynomially) at the path level, violating the linear growth condition \eqref{eq_path_linear_growth_condition} required for well-posedness. To control this behavior we introduce an \emph{activation function} that regulates the growth of the signature linear functional.

Specifically, to offset the polynomial growth of arbitrary degree arising from the signature (see \eqref{eq_factorial_decay_signatures}), we consider an activation $\sigma:\RR\to\RR$ satisfying the logarithmic bound
\begin{align}
\label{eq_logarithmic_growth_activation}
    |\sigma(x)| \le C_\sigma\big(1+\ln(1+|x|)\big), \qquad x\in\RR .
\end{align}
Since $|\langle \bell, S(y|_{[0,t]})\rangle|\lesssim \exp(\|y|_{[0,t]}\|_{\beta\text{-Höl}})$ for $\bell\in T(\RR^m)$, it follows that functionals of the form $ \sigma \, \big(\langle \ell, S(y|_{[0,t]})\rangle\big)$ satisfy the required linear growth condition (see the proof below for details). Under mild additional assumptions on $\sigma$, Sig-CDEs built from such functionals can therefore approximate the solutions of any well-posed path-dependent CDE, yielding several universal subclasses parametrized by the choice of the activation function.

Finally, since our analysis already takes place on $\Lambda^{\alpha\text{-Höl}}_\beta([0,T_1],\RR^m)$ — a space endowed with a topology weaker than that induced by $d^{\Lambda}_{\alpha\text{-Höl}}$ — it is natural to work with weighted spaces. In this setting unbounded activations can be accommodated by fixing a suitable weight function (see Example \ref{ex_weighted_stopped_paths}). Given $\zeta>0$, we consider
\[
\hat{\psi}(\hat{y}|_{[0,t]}) =
\exp \big(\zeta(|\hat y_0|+\|\hat{y}|_{[0,t]}\|_{\alpha\text{-Höl}})\big), \qquad \hat y|_{[0,t]}\in \widehat{\Lambda}^{\alpha\text{-Höl}}_\beta([0,T_1],\RR^{m+1}).
\]
Recall from Remark \ref{rem_time_augmentation} that $\widehat{\Lambda}^{\alpha\text{-Höl}}_\beta([0,T_1],\RR^{m+1})$ is closed in $\Lambda^{\alpha\text{-Höl}}_\beta([0,T_1],\RR^{m+1})$ and therefore inherits its weighted structure. A technical lemma on time-augmented paths is recorded in Appendix \ref{Appendix_4}

\begin{theorem}%
\label{prop_dynamic_universality_activations}
   Fix $B_x,B_w>0$. Consider $x \in C^{1\text{-Höl}}([0,T_1],\RR^d)$ and $w|_{[0,T_0]} \in \Lambda^{\alpha\text{-Höl}}_\beta([0,T_1],\RR^m)$ satisfying
   \begin{align}
       \label{eq_fixed_bounded_initial_data_set}
       |x_0| + \|x \|_{1\text{-Höl};[0,T_1]} \leq B_x,\quad \mathrm{and}\quad  \| w|_{[0,T_0]} \|_{\alpha\text{-Höl}} \leq B_w,
   \end{align}
   with $w_0$ fixed. Assume $f : \Lambda^{\alpha\text{-Höl}}_\beta([0,T_1],\RR^m) \to L(\RR^d,\RR^m)$ is a continuous, non-anticipative functional satisfying \eqref{eq_loc_lip_2nd_variable} and \eqref{eq_path_linear_growth_condition}. Let $\sigma : \RR \to \RR$ be a Lipschitz activation that is strictly monotone and satisfies the logarithmic growth condition \eqref{eq_logarithmic_growth_activation}. 
    
    Then there exists a sequence of linear functionals $(\bell_n) \subset T(\RR^{m+1})^{m\times d}$ and constants $c_0 \in \RR$ and $c_1 > 0$ for which the unique solutions $y^{(n)}$ of the approximating systems
    \begin{align}
    \label{eq_approximating_sig_cdes}
        &y^{(n),i}_t 
        = w^{i}_{T_0} 
        + \sum_{j=1}^{d} \int_{T_0}^t \Big(  c_1 \sigma \big( \langle \bell^{ij}_n, S(\hat{y}^{(n)}|_{[0,s]}) \rangle \big) + c_0 \Big) \dd x^j_s, \\
        &y^{(n),i}|_{[0,T_0]} = w^i|_{[0,T_0]}, \quad i \in \{1,\hdots,m\}, \nonumber
    \end{align}
    converge in the $\beta$-Hölder topology to the unique solution of the path-dependent CDE
    \begin{align}
    \label{eq_target_path_dependent_cde}
        y^i_t 
        &= w^i_{T_0} 
        + \sum_{j=1}^d \int_{T_0}^t f^{ij}(y|_{[0,s]}) \dd x^j_s,  
    \quad y^i|_{[0,T_0]} = w^i|_{[0,T_0]}.
    \end{align}
    Moreover, the convergence is uniform over the bounded set of controls and initial data above (i.e. uniformly for all $(x,w|_{[0,T_0]})$ with the displayed $B_x,B_w$ bounds and $w_0$ fixed).
\end{theorem}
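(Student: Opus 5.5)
The plan is to combine the stability estimate of Theorem \ref{prop_stability_pd_cdes} with the universality of signatures (Corollary \ref{cor_classical_universality}). Invertibility of $\sigma$ is used to transfer approximation to the signature level.

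\emph{Step 1: a priori compact set.} Using the linear growth constant $C_f$, the bounds $B_x,B_w$ and Lemma \ref{lem_pasting_argument_bound} with $\gamma=\alpha$, every solution $y$ of \eqref{eq_target_path_dependent_cde} satisfies $\|y\|_{\alpha\text{-Höl};[0,T_1]}\le A$, uniformly over the admissible data. The approximating systems need the same kind of bound with a constant independent of $n$. To arrange this, fix $c_0,c_1$ first and make $\bell_n$ enter only through a function whose linear growth constant is uniform. Concretely, since $|\langle \bell,S(\hat y)\rangle|\le C_{\bell}\exp(C\|\hat y\|)$, the logarithmic bound \eqref{eq_logarithmic_growth_activation} gives
\[
|c_1\sigma(\langle\bell,S\rangle)+c_0|\le |c_0|+c_1C_\sigma(1+\ln(1+C_{\bell})+C\|\hat y\|).
\]
The $\bell$-dependence is only additive and logarithmic, whereas the slope is $c_1C_\sigma C$, with $C$ depending on the truncation degree through the factorial decay.

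That slope depends on $N_n$, which is where I expect the main obstacle. I would circumvent it by replacing $f$ by a truncated version $\tilde f$ that agrees with $f$ on a larger ball $\{\|\hat y\|_{\alpha}\le A+1\}$ and is bounded outside it (multiply by a cutoff in the weighted sense). I would then approximate only on the compact set $K=[0,T_1]\times B^\alpha(A+1)$, viewed in the $\beta$-topology. The approximants themselves are then handled by a continuity argument rather than an a priori bound: as long as $y^{(n)}$ stays in $K$, the stability estimate applies, and closeness then prevents it from leaving $K$ (a bootstrap/first-exit-time argument).

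\emph{Step 2: approximation of the vector field on $K$.} Choose $c_0,c_1$ so that the range of $(f^{ij}-c_0)/c_1$ over the compact set $K/_\sim$ (which is bounded by continuity) lies in the interior of $\sigma(\RR)$. This is possible since $\sigma$ is strictly monotone and continuous, so $\sigma(\RR)$ is an open interval. Then $g^{ij}:=\sigma^{-1}((f^{ij}-c_0)/c_1)$ is continuous on $K$. Corollary \ref{cor_classical_universality}, applied to the time-augmented paths with $w_0$ fixed, gives $\bell_n^{ij}$ with $\sup_K|g^{ij}-\langle\bell_n^{ij},S(\hat y)\rangle|<1/n$. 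Lipschitz continuity of $\sigma$ then yields
\[
\sup_K|f-f_n|\le c_1L_\sigma/n,\qquad f_n:=c_1\sigma(\langle\bell_n,S\rangle)+c_0.
\]
Each $f_n$ is well posed by Proposition \ref{prop_well_posedness_SigCDEs}, or directly via Corollary \ref{cor_E&U_SigCDEs_general}. Indeed, it is Lipschitz on compacts by Proposition \ref{lem_continuity_sig_map} and the Lipschitz property of $\sigma$, and it has linear growth by the estimate in Step 1.

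\emph{Step 3: stability and bootstrap.} Apply the argument of Theorem \ref{prop_stability_pd_cdes} with equal initial histories on the interval $[T_0,\tau_n]$, where $\tau_n$ is the exit time of $y^{(n)}$ from $K$. The Lipschitz constant needed there is only $L_K$ for $f$, since the estimate Lipschitz-compares $f$ and then uses the sup-difference. This gives
\[
\|y^{(n)}-y\|_{\beta\text{-Höl};[T_0,\tau_n]}\le 2^N(1+L_K^{-1})\,c_1L_\sigma/n,
\]
with $N$ depending only on $L_K$, $B_x$ and $T_1$, so the bound is uniform over the data set. The $\alpha$-norm control needed to stay in $K$ comes from the equation itself: on $[T_0,\tau_n]$ the approximant's vector field is bounded by $\sup_K|f|+1$, so $y^{(n)}$ is Lipschitz with a controlled constant. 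I need to check that this keeps $\|y^{(n)}\|_\alpha\le A+1$. If not, I would enlarge $K$ using the Lipschitz bound $B_x(\sup_K|f|+1)$ together with $B_w$ from the start. With that choice, $\tau_n=T_1$ and the convergence is uniform, which proves the claim.
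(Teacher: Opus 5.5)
Your route is genuinely different from the paper's. You approximate only on a compact set via Corollary \ref{cor_classical_universality} and then run a first-exit bootstrap. The paper instead approximates $\tilde f^{ij}=\sigma^{-1}\circ(c_1^{-1}\hat f^{ij}-c_0c_1^{-1})$ globally in $\cB_{\hat\psi}$ via Theorem \ref{prop_global_approx_signatures}. There $\|\bell^{ij}_n\circ S\|_{\cB_{\hat\psi}}\lesssim\|\tilde f^{ij}\|_{\cB_{\hat\psi}}+\varepsilon_n$ is bounded in $n$, and the logarithmic growth of $\sigma$ turns this into linear growth of the approximating vector fields with $n$-independent constants. A compact $\mathcal K$ containing $y$ and every $y^{(n)}$ is therefore fixed before Theorem \ref{prop_stability_pd_cdes} is invoked.

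\textbf{The gap.} The step of your proposal that must replace that uniform bound is closing the bootstrap, and you leave it unchecked; as written it fails. With $K$ of radius $A+1$, the Lipschitz bound $B_x(\sup_K|f|+1)$ on $y^{(n)}$ does not give $\|y^{(n)}\|_{\alpha\text{-Höl}}\le A+1$ in general. Enlarging $K$ ``from the start'' is circular, because under \eqref{eq_path_linear_growth_condition} alone $\sup_{[0,T_1]\times B^\alpha(R)}|f|$ can grow like $C_fT_1^{\alpha-\beta}R$. For example, $f(y|_{[0,t]})=C_f(1+|y_0|+\|y_{\cdot\wedge t}\|_{\beta\text{-Höl};[0,T_1]})$ satisfies all the hypotheses. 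The requirement $R>|w_0|+B_w+B_x(\sup_{K_R}|f|+1)T_1^{1-\alpha}$ then has no solution once $C_fB_xT_1^{1-\beta}\gtrsim 1$. This is exactly the circularity the paper flags right after its proof.

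\textbf{How to close it.} Your Step 1 truncation is the right instinct, but it is never actually used: Steps 2--3 approximate $f$ itself and use $\sup_K|f|$ and the Lipschitz constant of $f$. For the truncation to do the work, the modification $\tilde f$ must have three properties.
\begin{enumerate}
\item It is globally bounded, so that $R$ can be chosen from $\sup|\tilde f|$ independently of $K$.
\item It is continuous for $d^\Lambda_{\beta\text{-Höl}}$, so that Corollary \ref{cor_classical_universality} applies to $\sigma^{-1}((\tilde f-c_0)/c_1)$ on $K_R$.
\item It is Lipschitz on compacts. This is needed because $y^{(n)}$ may enter the region where $\tilde f\neq f$, so the stability step must Lipschitz-compare $\tilde f$ rather than $f$.
\end{enumerate}
A cutoff that is a function of $\|\cdot\|_{\alpha\text{-Höl}}$, as you describe, is in general not $\beta$-continuous. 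A Tietze extension need not be Lipschitz. The componentwise clip $\tilde f^{ij}=\max(-M,\min(M,f^{ij}))$ with $M=\sup_{[0,T_1]\times B^\alpha(A)}|f|$ has all three properties and agrees with $f$ along every target trajectory. Then fix $R>|w_0|+B_w+c_{m,d}(M+1)B_xT_1^{1-\alpha}$ and approximate $\tilde f$ (not $f$) on $K_R=[0,T_1]\times B^\alpha(R)$. The exit-time argument now closes for every $n$ with $c_1L_\sigma/n\le 1$, using a strict inequality together with continuity of $t\mapsto\|y^{(n)}\|_{\alpha\text{-Höl};[0,t]}$.

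Alternatively, keep $f$ and bootstrap jointly on $\|y^{(n)}-y\|_{\beta\text{-Höl}}\le 1$. This works because \eqref{eq_path_linear_growth_condition} is stated in the $\beta$-norm, so along $y^{(n)}$ the vector field is bounded independently of $R$.

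\textbf{What each route buys.} Once repaired, your argument needs no weighted spaces. Confinement to $K_R$ also yields global existence of $y^{(n)}$ through Proposition \ref{prop_global_solution}. The paper's weighted approach instead produces the $n$-uniform a priori bound directly from the approximation, with no continuity argument.
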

\begin{proof}
    Since $f$ satisfies the assumptions of Theorem \ref{prop_EU_global_sol_pd_cde}, the path-dependent CDE \eqref{eq_target_path_dependent_cde} admits a unique global solution $y$. We approximate each scalar functional $f^{ij}$ individually and then combine these approximations into matrix form. As componentwise convergence  implies convergence in any matrix norm, this reduction is sufficient.

    Fix $i \in \{1,\hdots,m\}$ and $j \in \{1,\hdots,d\}$. Following Lemma \ref{lem_aux_time_augmentation}, we lift $f^{ij}$ to the space of stopped time-augmented paths, obtaining $\hat{f}^{ij}$. By construction, $\hat{f}^{ij}(\hat{y}|_{[0,t]}) = f^{ij}(y|_{[0,t]})$, so $\hat{f}^{ij}$ inherits the well-posedness properties of $f^{ij}$. 

    Moreover, by Lemma \ref{lem_pasting_argument_bound} (with $\gamma = \alpha$) and the ensuing discussion, we know that $\{(t,y)\}_{t\in[0,T_1]}$ is contained in a compact set $K = [0,T_1] \times B^\alpha(R_f)$ for some sufficiently large $R_f > 0$. Consequently, we may apply the Tietze extension theorem to $\hat{f}^{ij}|_{K/_\sim}$, thereby obtaining a continuous and bounded extension to the entire space $\widehat{\Lambda}^{\alpha\text{-Höl}}_\beta([0,T_1],\RR^{m+1})$.\footnote{Strictly speaking, since $\hat{f}^{ij}$ is defined on stopped time-augmented paths, the compact set $K$ should be lifted to a corresponding $\hat{K} \subset \widehat{C}^{\alpha\text{-Höl}}([0,T_1],\RR^{m+1})$. But because $\hat{f}^{ij}$ ignores the time component, this distinction is immaterial. For simplicity, we therefore continue to write $K$, with the intended meaning clear from context.} The solution $y$ remains unchanged under this modification, since the dynamics coincide on $K$. In particular, the same compact set $K$ continues to contain $\{(t,y)\}_{t\in[0,T_1]}$ if $f^{ij}$ in \eqref{eq_target_path_dependent_cde} is replaced by this extension. For notational simplicity, we continue to write the extended functional as $\hat{f}^{ij}$.

    Next, since $\sigma$ is invertible on its image, we define
    \begin{align}
        \label{eq_defining_constants_c0c1}
        \tilde{f}^{ij} \coloneqq \sigma^{-1} \circ (c_1^{-1} \hat{f}^{ij} - c_0c_1^{-1}),
    \end{align}
    with $c_0 \in \RR$ and $c_1>0$ chosen so that the composition is well-defined. Observe that $K$ does not depend on the indices $i,j$, so choice of $c_0,c_1$ works uniformly for all $i,j$. More importantly, because $\hat{f}^{ij}$ is bounded and $\sigma$ is continuous, it follows that $\tilde{f}^{ij} \in \mathcal{B}_{\hat{\psi}}\big(\widehat{\Lambda}^{\alpha\text{-Höl}}_\beta([0,T_1],\RR^{m+1})\big)$. Theorem \ref{prop_global_approx_signatures} then guarantees that $\tilde{f}^{ij}$ can be approximated by linear functionals of the signature.

    Concretely, given any sequence $(\varepsilon_n) \subset (0,1)$ with $\varepsilon_n \to 0$, there exists a sequence of linear functionals $\bell_n^{ij}$ in the span of
    $$
        \Big\{\widehat{\Lambda}^{\alpha\text{-Höl}}_\beta([0,T_1],\RR^{m+1}) \ni \hat{z}|_{[0,t]} \mapsto \langle e_w , S(\hat{z}|_{[0,t]}) \rangle : w \in \{0,1,\cdots, m\}^N, N \in \NN_0 \Big\}
    $$
    such that, for all $n \geq 1$,
    \begin{align}
        \label{eq_f_tilde_weighted_approximation}
        \sup_{\hat{z}|_{[0,t]} \in \widehat{\Lambda}^{\alpha\text{-Höl}}_\beta} \frac{|\tilde{f}^{ij}(\hat{z}|_{[0,t]}) - \langle \bell^{ij}_n,S(\hat{z}|_{[0,t]})\rangle|}{\hat{\psi}(\hat{z}|_{[0,t]})} < \frac{\varepsilon_n}{c_1 L_\sigma},
    \end{align}    
    where $L_\sigma$ is the Lipschitz constant of $\sigma$. By Lipschitz continuity, this then implies
    \begin{align} \label{eq_weighted_convergence_universality}
        \sup_{\hat{z}|_{[0,t]} \in \widehat{\Lambda}^{\alpha\text{-Höl}}_\beta} \frac{|\hat{f}^{ij}(\hat{z}|_{[0,t]}) - \big(c_1 \sigma \big( \langle \bell^{ij}_n, S(\hat{z}|_{[0,t]}) \rangle \big) + c_0 \big) |}{\hat{\psi}(\hat{z}|_{[0,t]})} < \varepsilon_n,
    \end{align}
    which, by Lemma \ref{lem_aux_time_augmentation}, is equivalent to $\| f^{ij} - \big( c_1\sigma ( \langle \bell^{ij}_n, S(\ \widehat{\cdot}\ ) \rangle ) + c_0 \big) \|_{\cB_{\psi}} < \varepsilon_n$.

    At this point, we note that although the maps $\bell_n = [\bell_n^{ij}]_{i,j}$ may not directly satisfy the conditions required for existence and uniqueness, the functionals $c_1(\sigma \circ \bell_n^{ij})+c_0$ do. Indeed, since each $\bell^{ij}_n$ belongs to the dual of $T_1(\RR^{m+1})$ (or any other weighted $\ell_p$-sum tensor space) and $\sigma$ is globally Lipschitz, the functionals $c_1(\sigma \circ \bell_n^{ij})+c_0$ are globally Lipschitz on $T_1(\RR^{m+1})$. Furthermore, using the logarithmic growth of $\sigma$ together with \cite[Theorem 3.1.3]{Lyons2002}, one shows that 
    \begin{align}
    \label{eq_non_uniform_linear_growth_activation}
        |\sigma \big( \langle \bell^{ij}_n ,S(\hat{z}|_{[0,t]}) \rangle\big)| 
        &\lesssim \left( 1 + \ln\Big( 1 +\| \bell_n^{ij} \|_{op} \sum_{k = 0}^{N_n} \frac{\| \hat{z}|_{[0,t]} \|_{\beta\text{-Höl}}^k}{(\beta k)!}  \Big) \right) \nonumber \\
        &\lesssim \ln\left( 1 + \| \bell_n^{ij} \|_{op} \frac{N_n!}{(\beta N_n)!} \right)\Big(1 +  \| \hat{z}|_{[0,t]} \|_{\beta\text{-Höl}}\Big),
    \end{align}
    where $N_n$ denotes the minimal level such that $\bell^{ij}_n \in T^{N_n}(\RR^{m+1})$. Hence, the functionals $\big( c_1\sigma ( \langle \bell^{ij}_n, S(\ \widehat{\cdot}\ ) \rangle ) + c_0\big)_{n\geq 1}$ satisfy the required linear growth condition, and consequently, the approximating CDEs \eqref{eq_approximating_sig_cdes} each admit a unique global solution $y^{(n)} : [0,T_1] \to \RR^m$ — see Theorem \ref{prop_EU_global_sol_pd_cde} or Proposition \ref{prop_well_posedness_SigCDEs}. 

    That said, the \textit{a priori} bound for each $y^{(n)}$ (obtained via Lemma \ref{lem_pasting_argument_bound} with any $\gamma$ and the estimate \eqref{eq_non_uniform_linear_growth_activation}, as in \eqref{eq_a_priori_bound_first_estimate} and \eqref{eq_local_to_global_lin_growth_bound}) may, in principle, diverge as $n \to \infty$. The issue arises from potential growth in the operator norms $\|\bell^{ij}_n\|_{op}$, which could cause the stability estimate \eqref{eq_stability_bound} — our main tool for proving convergence — to blow up and thus prevent the desired limit.

    To overcome this, we note that \eqref{eq_f_tilde_weighted_approximation} yields the bound
    $$
        \| \bell_n^{ij} \circ S \|_{\cB_{\hat{\psi}}} \leq \| \tilde{f}^{ij} \|_{\cB_{\hat{\psi}}} + \| \tilde{f}^{ij} - \bell_n^{ij} \circ S \|_{\cB_{\hat{\psi}}} \lesssim  \| \tilde{f}^{ij} \|_{\cB_{\hat{\psi}}} + \varepsilon_n.
    $$
    Combining this with the logarithmic growth condition \eqref{eq_logarithmic_growth_activation} for $\sigma$, we deduce that for all $n \geq 1$,
    \begin{align}
    \label{eq_uniform_linear_growth_activation}
         |\sigma \big( \langle \bell^{ij}_n ,S(\hat{z}|_{[0,t]}) \rangle\big)| %
         &\lesssim \Big( 1 + \log\Big( 1 + \| \bell^{ij}_n \circ S \|_{\cB_{\hat{\psi}}} \hat{\psi}(\hat{z}|_{[0,t]})  \Big) \Big) \nonumber \\
         &\lesssim \log(\| \tilde{f}^{ij} \|_{\cB_{\hat{\psi}}} + 1) \Big(1 +  \zeta ( |\hat{z}_0| + \| \hat{z}|_{[0,t]} \|_{\alpha\text{-Höl}}) \Big).
    \end{align}
    Hence, by \eqref{eq_a_priori_bound_first_estimate} and \eqref{eq_local_to_global_lin_growth_bound} together with Lemma \ref{lem_pasting_argument_bound} ($\gamma = \alpha$), we obtain an \textit{a priori} bound on the $\alpha$–Hölder seminorm of $y^{(n)}$ that holds uniformly in $n$.

    Finally, to apply the stability result, choose $R_{\text{Sig}} > 0$ large enough so that all $\{(t,y^{(n)})\}_{t \in [0,T_1]}$ lie in $[0,T_1] \times B^\alpha(R_\text{Sig})$. We emphasize that $R_{\text{Sig}}$ is independent of $n$ thanks to \eqref{eq_uniform_linear_growth_activation}. Setting $R \coloneqq R_f \lor R_{\text{Sig}}$ and $\mathcal{K} \coloneqq [0,T_1] \times B^\alpha(R)$, we ensure that both the target solution trajectory and its approximations remain inside $\mathcal{K}/_\sim$. Moreover, by \eqref{eq_weighted_convergence_universality}, we have that 
    $$
        \| f^{ij}|_{\mathcal{K}/_\sim} - \big( c_1\sigma ( \langle \bell^{ij}_n, S(\ \widehat{\cdot}\ ) \rangle ) + c_0 \big)|_{\mathcal{K}/_\sim} \|_{\infty} \leq \tilde{\varepsilon}_n \coloneqq \sup_{z|_{[0,t]} \in \mathcal{K}/_\sim} \psi(z|_{[0,t]}) \varepsilon_n \to 0.
    $$
    We now assemble the scalar maps $f^{ij}$ and their respective functionals $\bell^{ij}_n$ into matrix form. Applying Theorem \ref{prop_stability_pd_cdes}, we then obtain, up to a constant depending on $d$ and $m$,
    \begin{align}
    \label{eq_final_convergence_estimate}
        \| y - y^{(n)} \|_{\beta\text{-Höl};[T_0,T_1]} \lesssim 2^{N} \Big( 1 + \frac{1}{L_\mathcal{K}}\Big) \tilde{\varepsilon}_n,  \quad \text{where} \ N =  \Big\lceil T_1 (2 L_\mathcal{K} B_x)^{1/(1-\beta)} \Big\rceil
    \end{align}
    and $L_{\mathcal{K}} > 0$ is the Lipschitz constant of $f$ on $\mathcal{K}$. Letting $n \to \infty$ yields the desired convergence. Crucially, the explicit estimate above ensures that this convergence holds uniformly over any fixed bounded family of controls and initial conditions, with the corresponding dependence absorbed into the constants $c_0, c_1$ and the compact set $K$. This completes the proof.
\end{proof}

To reiterate why the $\mathcal{B}_{\hat{\psi}} $-norm is a natural approximation tool, we emphasize that the compact set $\mathcal{K}$ in the proof above arises only after first approximating $f$ globally over $\Lambda^{\suphol{\alpha}}_\beta$. To understand why such a global approximation is required, consider instead relying on the classical universality of signatures (Corollary \ref{cor_classical_universality}) to approximate $f$ uniformly on a fixed compact set of paths. The difficulty with this approach lies in the potential circular dependence between the compact set and the approximating solutions. Specifically, the relevant compact set must contain both the target solution $y$ and the approximations $y^{(n)}$, yet it cannot itself depend on the sequence $(y^{(n)})$. Without further assumptions, avoiding this circularity is far from straightforward.

In particular, this consideration is closely tied to the choice of activation $\sigma$. Indeed, if $\sigma$ is bounded, then — as clarified in the next result — we can deduce \textit{a priori} a compact set containing all paths $y$ and $y^{(n)}$ by standard arguments, without the need for weighted spaces. Such a conclusion is no longer available when $\sigma$ is unbounded.

\begin{corollary}
    Under the assumptions of Theorem \ref{prop_dynamic_universality_activations}, suppose in addition that the activation $\sigma$ is bounded. Then there exist linear functionals $(\bell_n) \subset T(\RR^{m+1})^{m\times d}$ and constants $c_0 \in \RR$, $c_1 > 0$ such that the corresponding approximating solutions $y^{(n)}$ of \eqref{eq_approximating_sig_cdes} converge in the $\beta$–Hölder topology to the unique solution of \eqref{eq_target_path_dependent_cde}. Moreover, the convergence is uniform over bounded sets of initial data as in \eqref{eq_fixed_bounded_initial_data_set}.
\end{corollary}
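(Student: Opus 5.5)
The idea is to avoid weighted spaces altogether: since $\sigma$ is bounded, all the approximating Sig-CDEs satisfy one uniform bound, and that gives a single compact set fixed in advance. Write $|\sigma|\le \|\sigma\|_\infty$. For any $\bell\in T(\RR^{m+1})^{m\times d}$, the vector field $c_1\sigma(\langle \bell^{ij}, S(\hat\cdot)\rangle)+c_0$ is then bounded by $M\coloneqq c_1\|\sigma\|_\infty+|c_0|$, uniformly in $\bell$. Well-posedness of each approximating system \eqref{eq_approximating_sig_cdes} holds as in the proof of Theorem \ref{prop_dynamic_universality_activations}. Boundedness is a special case of the linear growth condition, and global Lipschitz continuity of $\sigma\circ\bell$ on $T_1$, together with Proposition \ref{lem_continuity_sig_map}, gives the Lipschitz-on-compacts condition \eqref{eq_loc_lip_2nd_variable}.

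The key step is an a priori bound that is uniform in $\bell$ and in the data. For $T_0\le s<t\le T_1$,
\[|y^{(n)}_{s,t}|\le \sqrt{md}\, M B_x |t-s|,\]
so $\|y^{(n)}\|_{\alpha\text{-Höl};[T_0,T_1]}\le \sqrt{md}\,M B_x T_1^{1-\alpha}$. Subadditivity \eqref{eq_subadditivity_Holder_seminorm} then yields $\|y^{(n)}\|_{\alpha\text{-Höl};[0,T_1]}\le B_w+\sqrt{md}\,MB_xT_1^{1-\alpha}\eqqcolon R_{\mathrm{Sig}}$. For the target $y$, the bound $R_f$ comes from Lemma \ref{lem_pasting_argument_bound} with $\gamma=\alpha$, using only $B_x$, $B_w$, $w_0$ and $C_f$. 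No bootstrapping through the weighted norm is needed here; in the theorem this was the role of \eqref{eq_uniform_linear_growth_activation}.

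The step I expect to be the main obstacle is choosing $c_0,c_1$. They must be fixed before the approximation, so that $M$ and hence $\mathcal{K}$ do not depend on $n$. I proceed in this order:
\begin{itemize}
\item Extend $\hat f^{ij}|_{K}$ by Tietze to a bounded continuous function with values in the closed range $[-\|f|_K\|_\infty,\|f|_K\|_\infty]$, where $K=[0,T_1]\times B^\alpha(R_f)$.
\item Choose $c_0,c_1$ so that this interval lies strictly inside $c_1\sigma(\RR)+c_0$, which is an open interval since $\sigma$ is strictly monotone and continuous.
\item Set $R\coloneqq R_f\lor R_{\mathrm{Sig}}$ with $M=c_1\|\sigma\|_\infty+|c_0|$, and $\mathcal{K}\coloneqq[0,T_1]\times B^\alpha(R)$, which is compact in the $\beta$-topology.
\end{itemize}
Next, $\tilde f^{ij}\coloneqq\sigma^{-1}\circ(c_1^{-1}\hat f^{ij}-c_0c_1^{-1})$ is continuous on the time-augmented lift of $\mathcal{K}/_\sim$. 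The strict interior choice keeps $\sigma^{-1}$ uniformly continuous on the relevant compact range. Corollary \ref{cor_classical_universality} then gives $\bell_n^{ij}$ with $\sup_{\mathcal{K}}|\tilde f^{ij}-\langle\bell^{ij}_n,S(\hat\cdot)\rangle|<\varepsilon_n/(c_1L_\sigma)$. By Lipschitz continuity of $\sigma$, $\|(f-f_n)|_{\mathcal{K}/_\sim}\|_\infty\lesssim\varepsilon_n$.

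Finally, Theorem \ref{prop_stability_pd_cdes} is applied with the compact set $\mathcal{K}$, which contains both $y$ and every $y^{(n)}$ by construction. Its constants $2^N(1+L_{\mathcal{K}}^{-1})$, with $N=\lceil T_1(2L_{\mathcal{K}}B_x)^{1/(1-\beta)}\rceil$, depend only on $\mathcal{K}$ and $B_x$. This gives $\|y-y^{(n)}\|_{\beta\text{-Höl};[T_0,T_1]}\lesssim\varepsilon_n\to0$, uniformly over the data set \eqref{eq_fixed_bounded_initial_data_set}. One point needs care: the Lipschitz constant used in the stability estimate must be that of the target functional $f$ on $\mathcal{K}$, as in \eqref{eq_final_convergence_estimate}. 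This is fine because the first term in the proof of Theorem \ref{prop_stability_pd_cdes} only uses $f^{(1)}=f$.
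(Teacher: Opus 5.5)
Your skeleton is the paper's: Tietze extension from $K=[0,T_1]\times B^\alpha(R_f)$, $c_0,c_1$ fixed in advance, an $\bell$-independent radius from $M=c_1\|\sigma\|_\infty+|c_0|$, uniform approximation on $\mathcal{K}$ via Corollary~\ref{cor_classical_universality}, then Theorem~\ref{prop_stability_pd_cdes}. Your a priori bounds are correct. There is, however, a genuine gap: your last two steps do not fit together.

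Corollary~\ref{cor_classical_universality} is applied to $\sigma^{-1}\circ(c_1^{-1}\hat f^{ij}-c_0c_1^{-1})$, where $\hat f$ is your Tietze extension of $f|_{K/_\sim}$. So what you actually obtain is $\|(\hat f-f_n)|_{\mathcal{K}/_\sim}\|_\infty\lesssim\varepsilon_n$. On $\mathcal{K}\setminus K$ the extension has nothing to do with $f$, so the claim $\|(f-f_n)|_{\mathcal{K}/_\sim}\|_\infty\lesssim\varepsilon_n$ does not follow. This matters because in the proof of Theorem~\ref{prop_stability_pd_cdes} the sup term is evaluated along $y^{(2)}=y^{(n)}$, which you only know lies in $\mathcal{K}$, not in $K$. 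Each choice of $f^{(1)}$ fails:
\begin{itemize}
\item With $f^{(1)}=f$, as you insist, the sup term is uncontrolled.
\item With $f^{(1)}=\hat f$, the sup term is fine but you have no Lipschitz constant. Tietze gives no Lipschitz control, and $\hat f$ may violate \eqref{eq_loc_lip_2nd_variable} on $\mathcal{K}$, e.g.\ by behaving like $\sqrt{\mathrm{dist}(\cdot,K)}$ near $K$.
\item With $f^{(1)}=f_n$, you bring in the $n$-dependent Lipschitz constant of $\sigma\circ\bell_n\circ S$.
\end{itemize}
Extending from $\mathcal{K}$ instead of $K$ is circular. $\mathcal{K}$ depends on $c_0,c_1$ through $M$, while $c_0,c_1$ must dominate $\|f|_{\mathcal{K}/_\sim}\|_\infty$. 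By linear growth that quantity can grow linearly in $c_1$, and matching it against the available width $c_1(\sup\sigma-\inf\sigma)$ in general forces a smallness condition on $C_fB_xT_1^{1-\beta}$.

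The paper's own proof passes over the same point: it speaks of approximating $f$ on $\mathcal{K}$ while feeding the extension into $\sigma^{-1}$. The repair is to replace Tietze by a Lipschitz cut-off. With $B\coloneqq\max_{i,j}\|f^{ij}|_{K/_\sim}\|_\infty$, set $\bar f^{ij}\coloneqq(-B)\vee(f^{ij}\wedge B)$. Then:
\begin{itemize}
\item $\bar f=f$ on $K/_\sim$;
\item $|\bar f^{ij}|\le B$ globally, with $B$ determined by $K$ alone;
\item since clipping is $1$-Lipschitz, $\bar f$ is continuous and satisfies \eqref{eq_loc_lip_2nd_variable} with constants no larger than those of $f$, up to a dimensional factor.
\end{itemize}
Hence $y$ is also the unique solution of the $\bar f$-equation, and $c_0,c_1$ can be chosen from $B$ before $\mathcal{K}$ is built. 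Your argument then goes through verbatim with $f^{(1)}=\bar f$, $\|(\bar f-f_n)|_{\mathcal{K}/_\sim}\|_\infty\le\varepsilon_n$, and $L_{\mathcal{K}}(\bar f)\lesssim L_{\mathcal{K}}(f)$, uniformly over the data set.
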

\begin{proof}
    The claim follows immediately from Theorem \ref{prop_dynamic_universality_activations}. Nevertheless, it is instructive to see that, in this bounded setting, the argument simplifies considerably, as weighted spaces are no longer required.

    First, applying the same notation and the cut-off/extension argument used in the proof of Theorem \ref{prop_dynamic_universality_activations}, we obtain constants $c_0 \in \RR$ and $c_1>0$ such that
    \[
    \sigma^{-1} \circ \big(c_1^{-1}\hat f^{ij} - c_0 c_1^{-1}\big)
    \]
    defines a bounded and continuous path functional. Recall that $\hat f^{ij}$ is itself bounded (by construction) and yields the same solution $y$ as the original functional in \eqref{eq_target_path_dependent_cde}. 

    Next, we derive an \textit{a priori} compact domain that contains both the solution $y$ of the target CDE and the solution $z$ of any Sig–CDE of the form \eqref{eq_approximating_sig_cdes}, independently of the choice of linear functional $\bell \in T(\mathbb{R}^{m+1})^{m\times d}$. Indeed, for all $T_0 \leq s < t \leq T_1$ and each component $i \in \{1,\hdots,m\}$,
    $$
        |y^i_{s,t}| \leq \sum_{j=1}^d \int_s^t |\hat{f}^{ij}(y|_{[0,u]})|\,|\dd x^j_u| \lesssim \|\hat{f}\|_\infty \|x\|_{1\text{-Höl};[0,T_1]} |t-s|,
    $$
    and so, up to a constant depending only on $d$,$m$ and $B_w$,
    $$
        \|y\|_{\alpha\text{-Höl};[0,T_1]} \lesssim B_x \big(1 + \|\hat{f}\|_\infty T_1^{1-\alpha}\big).
    $$
    
    Now, consider any $\bell \in T(\mathbb{R}^{m+1})^{m\times d}$, and let $z$ be the solution of the associated Sig–CDE. Using the boundedness of $\sigma$ and arguing as above,
    $$
    |z^i_{s,t}| \lesssim (|c_1| \lor |c_0|) \|\sigma\|_\infty \|x\|_{1\text{-Höl};[0,T_1]} |t-s|.
    $$
    Hence, up to a universal constant,
    $$
    \|z\|_{\alpha\text{-Höl};[0,T_1]} \lesssim B_x \big(1 + (|c_1| \lor |c_0|) \|\sigma\|_\infty T_1^{1-\alpha}\big).
    $$
    Importantly, we emphasize this bound holds for any solution of \eqref{eq_approximating_sig_cdes}, regardless of the particular choice of $\bell$; it is therefore independent of the approximating sequence to be introduced below.
    
    From these uniform bounds, we set
    $$
    r \coloneqq \max \big\{ (|c_1| \lor |c_0|) \|\sigma\|_\infty, \|\hat{f}\|_\infty \big\},
    \qquad
    R \propto B_x(1 + rT_1^{1-\alpha}),
    $$
    and we find that both trajectories $\{(t,y)\}_{t\in[0,T_1]}$ and $\{(t,z)\}_{t\in[0,T_1]}$ lie in the compact set $\mathcal{K} = [0,T_1] \times B^\alpha(R)$ (as before, $R$ is defined up to a constant dependent on $d,m$ and $B_w$). This compact domain is determined before any approximation of $f$ is performed.
    
    Having fixed $\mathcal{K}$, we use the Lipschitz assumption on $\sigma$ and Corollary \ref{cor_classical_universality} to approximate $f$ uniformly on $\mathcal{K}/_\sim$ by functionals of the form $c_1 \sigma \big(\langle \bell_n^{ij}, S(\, \widehat{\cdot} \,) \rangle\big) + c_0$. For each $n$, the corresponding Sig–CDE solution $\{(t,y^{(n)}) \}_{t\in [0,T_1]}$ remains in $\mathcal{K}$. Convergence of $y^{(n)} \to y$ in the $\beta-$Hölder topology then follows directly from Theorem \ref{prop_stability_pd_cdes}, as in the general case.
\end{proof}

We conclude this section by illustrating how weighted spaces allow us to obtain global approximation on the control level, even when $\sigma$ is not bounded. In particular, we show that, under mild additional assumptions, the convergence in Theorem \ref{prop_dynamic_universality_activations} holds for all controls $x$ simultaneously, i.e., the same sequence of signature linear functionals yields the approximation irrespective of $x$ once a suitable weighted control space is introduced. For this we require the following auxiliary result.

\begin{proposition}
\label{lem_weaker_continuity_controls}
    Let $y \equiv y(x)$ denote the solution to \eqref{eq_target_path_dependent_cde} driven by the control $x$ and assume $f$ is Lipschitz with respect to $d^\Lambda_{\beta\text{-Höl};[0,T_1]}$. Then there exists $\eta < 1$ such that the solution map 
    \[C^{1\text{-Höl}}_\eta([0,T_1],\RR^d) \ni x \mapsto y(x) \in C^{\beta\text{-Höl}}([0,T_1],\RR^m)\]
    is continuous. 
\end{proposition}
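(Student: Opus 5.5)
The plan is to compare two solutions $y=y(x)$ and $\tilde y=y(\tilde x)$, driven by controls $x,\tilde x\in C^{1\text{-Höl}}([0,T_1],\RR^d)$ from the same initial history $w|_{[0,T_0]}$. The difference of the Riemann--Stieltjes integrals splits as
\[
(y-\tilde y)_{s,t}=\int_s^t \big(f(y|_{[0,u]})-f(\tilde y|_{[0,u]})\big)\dd x_u+\int_s^t f(\tilde y|_{[0,u]})\,\dd (x-\tilde x)_u .
\]
Because $f$ is globally Lipschitz for $d^\Lambda_{\beta\text{-Höl}}$, the first term is handled exactly as in the proof of Theorem \ref{prop_stability_pd_cdes}. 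It is bounded by $L_f\|y-\tilde y\|_{\beta\text{-Höl};[0,t]}\|x\|_{1\text{-Höl}}|t-s|$. After dividing by $|t-s|^\beta$, this gives a term of the form $C_0\|y-\tilde y\|_{\beta\text{-Höl};[0,\tau]}|\tau-\sigma|^{1-\beta}$. Global Lipschitz continuity also gives linear growth, so the a priori bound of Lemma \ref{lem_pasting_argument_bound} (with $\gamma=\alpha$, and also with $\gamma=1$ using \eqref{eq_a_priori_bound_first_estimate}) controls $\|y\|_{1\text{-Höl}}$ and $\|\tilde y\|_{1\text{-Höl}}$ uniformly for $x,\tilde x$ in a $1$-Hölder ball around a fixed $x$. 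Continuity in the $\eta$-topology only needs to be checked sequentially near a point, and on a $1$-Hölder bounded set this reduces to showing that $\|x^n-x\|_{\eta\text{-Höl}}\to 0$ with $\sup_n\|x^n\|_{1\text{-Höl}}<\infty$ implies $y(x^n)\to y(x)$.

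The difficulty is the second term, because $x-\tilde x$ is small only in $\eta$-Hölder norm, so the crude bound $\|f\|_\infty\|x-\tilde x\|_{1\text{-Höl}}$ is not available. Here I would treat the integral as a Young integral. The integrand $u\mapsto f(\tilde y|_{[0,u]})$ is Hölder continuous in time with exponent $\theta>0$: by the Lipschitz property of $f$ in $d^\Lambda_{\beta\text{-Höl}}$,
\[
|f(\tilde y|_{[0,u]})-f(\tilde y|_{[0,v]})|\le L_f\big(|u-v|+\|\tilde y_{\cdot\wedge u}-\tilde y_{\cdot\wedge v}\|_{\beta\text{-Höl};[0,T_1]}\big).
\]
Since $\tilde y$ is uniformly $1$-Hölder (indeed $\alpha$-Hölder suffices), the stopped-path difference has $\beta$-Hölder seminorm $\lesssim |u-v|^{\alpha-\beta}$. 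So the integrand is $\theta$-Hölder with $\theta=\alpha-\beta$ (or $1-\beta$), uniformly in $\tilde x$ from the bounded set.

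Now choose $\eta<1$ with $\eta+\theta>1$, for instance $\eta>1-(\alpha-\beta)$. Young's inequality then gives
\[
\Big|\int_s^t g_u\,\dd(x-\tilde x)_u\Big|\lesssim \big(|g_s|+\|g\|_{\theta\text{-Höl}}|t-s|^\theta\big)\|x-\tilde x\|_{\eta\text{-Höl}}|t-s|^\eta .
\]
Dividing by $|t-s|^\beta$ with $\beta<\eta$ bounds this term by $C\|x-\tilde x\|_{\eta\text{-Höl}}|\tau-\sigma|^{\eta-\beta}$. The resulting inequality is
\[
\|y-\tilde y\|_{\beta\text{-Höl};[\sigma,\tau]}\le C\big(\|x-\tilde x\|_{\eta\text{-Höl}}+\|y-\tilde y\|_{\beta\text{-Höl};[0,\tau]}\big)|\tau-\sigma|^{\min(1,\eta)-\beta}.
\]
This has the form of Lemma \ref{lem_pasting_argument_bound}, with the exponent $1-\gamma$ replaced by $\eta-\beta$. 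The pasting argument goes through verbatim with that exponent and yields $\|y-\tilde y\|_{\beta\text{-Höl};[0,T_1]}\lesssim \|x-\tilde x\|_{\eta\text{-Höl}}$ on bounded sets, which gives continuity.

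I expect the main obstacle to be the time-regularity estimate for the integrand $u\mapsto f(\tilde y|_{[0,u]})$. It has to be uniform over the bounded family of controls, which requires the uniform a priori bounds. The Young integral must also be justified when it is mixed with the Riemann--Stieltjes formulation; this is fine because all paths are Lipschitz.
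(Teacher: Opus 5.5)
Your splitting of $y-\tilde y$ and your treatment of both terms are sound. The reduction you make at the outset is not.

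\textbf{The gap.} On $C^{1\text{-Höl}}_\eta([0,T_1],\RR^d)$ the topology is the $\eta$-Hölder one, and $\eta$-convergent sequences need not be bounded in Lipschitz norm. For instance, take $x^n_t = x_t + n^{-a}\sin(nt)\,e_1$ with $\eta<a<1$. Then $\|x^n-x\|_{\eta\text{-Höl};[0,T_1]}\lesssim n^{\eta-a}\to 0$, while $\|x^n\|_{1\text{-Höl};[0,T_1]}\to\infty$. So a Lipschitz ball around $x$ is not an $\eta$-neighbourhood of $x$. What your argument proves is only continuity of $x\mapsto y(x)$ restricted to each Lipschitz ball.

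Because the $\eta$-topology is metrizable, continuity on \emph{every} compact set would suffice. But compact sets such as $\{x^n\}\cup\{x\}$ above lie in no Lipschitz ball, so your restricted statement is strictly weaker than the proposition. Lemma \ref{lem_characterising_B_psi}, through which the proposition is used in Theorem \ref{thm_global_approx_control_level}, needs continuity on the whole space.

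The step that actually fails is your uniform a priori bound on $\tilde y=y(\tilde x)$. You take it from Lemma \ref{lem_pasting_argument_bound} via \eqref{eq_local_to_global_lin_growth_bound}, which involves $\|\tilde x\|_{1\text{-Höl}}$. Without it you have no uniform control of $|f(\tilde y|_{[0,s]})|$, nor of the $\theta$-Hölder norm of $u\mapsto f(\tilde y|_{[0,u]})$, in the Young estimate for the second term.

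\textbf{The fix.} Derive the bound from $\|\tilde x\|_{\eta\text{-Höl}}$ alone, by applying Young's inequality to the equation for $\tilde y$ itself. Linear growth together with $\|f(\tilde y|_{[0,\cdot]})\|_{(\alpha-\beta)\text{-Höl};[s,t]}\lesssim 1+\|\tilde y\|_{\alpha\text{-Höl};[s,t]}$ gives $\|\tilde y\|_{\alpha\text{-Höl};[\sigma,\tau]}\lesssim (1+\|\tilde y\|_{\alpha\text{-Höl};[0,\tau]})\,\|\tilde x\|_{\eta\text{-Höl};[0,T_1]}\,|\tau-\sigma|^{\eta-\alpha}$. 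The pasting lemma with exponent $\eta-\alpha$ then closes.

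If you keep $\theta=\alpha-\beta$, this needs $\eta>\alpha$. Your condition $\eta>1-\alpha+\beta$ does not imply this when $\beta<2\alpha-1$. The paper takes $\eta\in(\max\{\alpha,1-\alpha+\beta\},1)$ for exactly this reason. More generally, any $\gamma\in(\beta,\eta)$ with $\eta+\gamma-\beta>1$ works.

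\textbf{Comparison with the paper.} Once the bound is uniform over $\eta$-bounded sets of controls, your direct Grönwall-type argument goes through. It even yields the local estimate $\|y(x)-y(\tilde x)\|_{\beta\text{-Höl};[0,T_1]}\lesssim\|x-\tilde x\|_{\eta\text{-Höl};[0,T_1]}$, with constants depending on $\|x\|_{1\text{-Höl}}$ and an $\eta$-bound on $\tilde x$. That is more quantitative than the paper's route, which uses the uniform $\alpha$-bound only for compactness. The paper extracts $\beta$-Hölder convergent subsequences, passes to the limit in the Young integral via interpolation and continuity of Young integration, and identifies the limit through Proposition \ref{prop_uniqueness}.
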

\begin{remark}
    We note that the estimates in Theorem \ref{prop_stability_pd_cdes} are not sufficient here, as they involve the Lipschitz seminorm on the control level, while we now require continuity with respect to the weaker $\eta-$Hölder seminorm. This naturally suggests using Young’s inequality \cite[Section 4.1]{friz2014course}, which in turn requires estimating the Hölder regularity of the map $u \mapsto f(y|_{[0,u]})$ for a generic path $y \in C^{\alpha\text{-Höl}}_\beta([0,T_1],\RR^m)$.
\end{remark}
\begin{proof}
    See Appendix \ref{Appendix_4}.    
\end{proof}

\begin{theorem}
\label{thm_global_approx_control_level}
    Under the assumptions of Theorem \ref{prop_dynamic_universality_activations}, suppose in addition that $f$ is Lipschitz with respect to $d^\Lambda_{\beta\text{-Höl};[0,T_1]}$. Then there exists a weighted space $\big( C^{1\text{-Höl}}_\eta([0,T_1],\RR^d), \psi_c\big)$ on which the approximation of Theorem \ref{prop_dynamic_universality_activations} holds globally. 
    
    Specifically, for every $\delta > 0$ there exist $\bell \in T(\RR^{m+1})^{m\times d}$ and constants $c_0\in \RR,c_1 >0$ such that the associated Sig-CDE solution $y^{\mathrm{Sig}}$ satisfies
    \[\sup_{x \in C^{1\text{-Höl}}_\eta([0,T_1],\RR^d)} \psi^{-1}_c(x) \, \| y(x) - y^{\mathrm{Sig}}(x)\|_{\beta\text{-Höl};[0,T_1]} < \delta.\]
    In particular, if both $f$ and $\sigma$ are bounded, one may take $\psi_c(x) = \eta_c(|x_0| + \|x\|_{1\text{-Höl};[0,T_1]})$ with $\eta_c : [0,\infty) \to (0,\infty)$ continuous, increasing, and satisfying any superlinear growth condition.
\end{theorem}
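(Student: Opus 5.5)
The plan is to treat the error map $x \mapsto \|y(x) - y^{\mathrm{Sig}}(x)\|_{\beta\text{-Höl};[0,T_1]}$ as an element of a weighted function space on $C^{1\text{-Höl}}_\eta([0,T_1],\RR^d)$. We then split the control space into a large compact ball and its complement. On the ball, Theorem \ref{prop_dynamic_universality_activations} supplies uniform convergence. On the complement, the weight $\psi_c$ is chosen to dominate both solutions. First we fix $\eta<1$ as in Proposition \ref{lem_weaker_continuity_controls}. For $\eta < 1$, the sublevel sets $\{|x_0| + \|x\|_{1\text{-Höl}} \le R\}$ are compact in the $\eta$-Hölder topology, by the compact embedding used in Example \ref{ex_weighted_stopped_paths}. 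Hence any $\psi_c(x) = \eta_c(|x_0|+\|x\|_{1\text{-Höl};[0,T_1]})$ with $\eta_c$ continuous, increasing and unbounded makes $(C^{1\text{-Höl}}_\eta,\psi_c)$ a weighted space.

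Second, we derive an a priori growth bound for both $y(x)$ and $y^{\mathrm{Sig}}(x)$ in terms of $B = |x_0|+\|x\|_{1\text{-Höl}}$. These bounds must hold uniformly over the relevant class of Sig-CDE approximants.
\begin{itemize}
\item For $y(x)$, we combine \eqref{eq_local_to_global_lin_growth_bound} with Lemma \ref{lem_pasting_argument_bound}, taking $C_0 \propto C_f B$. This gives $\|y(x)\|_{\beta\text{-Höl}} \le G(B)$ with $G(B) \lesssim 2^{\lceil T_1 (cB)^{1/(1-\beta)}\rceil}(1+B_w)$, an explicit increasing function.
\item For the approximants, the bound \eqref{eq_uniform_linear_growth_activation} shows that the linear-growth constant is uniform in $n$, depending only on $\|\tilde f\|_{\cB_{\hat\psi}}$. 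So the same type of bound $G'(B)$ holds for every $y^{(n)}(x)$.
\end{itemize}
We then set $\psi_c(x) \coloneqq 1 + \big(G(B)+G'(B)\big)^{2}$, or any continuous increasing majorant of this, so that $\big(\|y(x)\|+\|y^{(n)}(x)\|\big)/\psi_c(x) \to 0$ as $B\to\infty$ uniformly in $n$. In the bounded case ($f$ and $\sigma$ bounded), $G$ and $G'$ are linear in $B$, because $\|y\|_{\alpha\text{-Höl}} \lesssim \|f\|_\infty B$ as in the preceding corollary. Then any superlinear $\eta_c$ works, which gives the final claim.

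Third, we conclude the estimate. Given $\delta>0$, we choose $R$ such that $\sup_{B>R,\,n}(G(B)+G'(B))/\psi_c < \delta$. On the compact set $\{B \le R\}$ we apply Theorem \ref{prop_dynamic_universality_activations} with $B_x = R$. This yields $\bell$, $c_0$, $c_1$ with $\sup_{B\le R}\|y(x)-y^{\mathrm{Sig}}(x)\|_{\beta\text{-Höl}} < \delta \inf\psi_c$. Combining the two regions gives the weighted bound. Proposition \ref{lem_weaker_continuity_controls}, together with Lemma \ref{lem_characterising_B_psi}, shows that the error map is continuous in the $\eta$-topology and has vanishing tails. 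So it lies in $\cB_{\psi_c}$, which makes the statement meaningful as a genuine weighted-space approximation.

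The main obstacle is the second step. The constants $c_0$, $c_1$ and the functionals $\bell_n$ from Theorem \ref{prop_dynamic_universality_activations} are constructed after fixing the compact set $K$, which depends on $B_x$. Changing $R$ therefore changes the approximants, so the tail bound $G'$ must be uniform not only in $n$ but across this choice. To handle this, I would keep $c_0$ and $c_1$ tied to a bounded Tietze extension whose sup-norm is controlled. I would also check that $\|\tilde f\|_{\cB_{\hat\psi}}$, and hence the linear-growth constant in \eqref{eq_uniform_linear_growth_activation}, can be bounded independently of $R$. If this fails, the fallback is to let $\psi_c$ grow faster, for instance doubly exponentially in $B$, so that it absorbs the $R$-dependence.
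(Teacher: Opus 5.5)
Your architecture matches the paper's. You split the controls into a compact ball $\{B\le B_x\}$, where Theorem \ref{prop_dynamic_universality_activations} gives uniform convergence, and its complement, where a weight dominating a priori bounds for $y(x)$ and $y^{\mathrm{Sig}}(x)$ absorbs the error. When $f$ is bounded your argument goes through, including the final claim, because then $c_0$, $c_1$ and $\tilde f$ can be chosen independently of $B_x$.

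For unbounded $f$, however, the step you flag as the main obstacle is a genuine gap, and your first remedy cannot close it. Your $G'$ is uniform in $n$ but not in $B_x$. It depends on $c_0$, $c_1$ and $\|\tilde f\|_{\cB_{\hat\psi}}$, which come from a bounded extension of $f$ off $[0,T_1]\times B^\alpha(R_f(B_x))$. Since $B_x=R$ is chosen in terms of $\delta$ and of $\psi_c$, defining $\psi_c:=1+(G+G')^2$ is circular. Nor can the sup norm of that extension be kept under control: it is at least $\sup|f|$ over that set, which grows like $R_f(B_x)$. So either $c_1,|c_0|$ must grow (bounded $\sigma$, by \eqref{eq_dependencies_c1_c0}), or $\|\sigma^{-1}\circ\hat f\|_\infty$ must grow, and with it the constant in \eqref{eq_uniform_linear_growth_activation} (surjective $\sigma$). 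Within this construction, the linear-growth constant of $c_1\sigma(\cdot)+c_0$ necessarily increases with $B_x$.

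What is missing is the concrete mechanism behind your fallback, and it is in effect how the paper argues. Make the $B_x$-dependence explicit and monotone. With $\|\cdot\|_\alpha$ the $\alpha$-H\"older seminorm and $R_f(t)=k_1\exp(k_2t^{1/(1-\alpha)})$ bounding $\|y(x)\|_\alpha$, one can take $|c_0|,|c_1|\le k_3(1+R_f(B_x))$. Then every Sig-CDE solution built with these constants obeys $\|y^{\mathrm{Sig}}(x)\|_\alpha\le k_6(1+R_f(B_x))\exp\big(k_7((1+R_f(B_x))B)^{1/(1-\alpha)}\big)$, with $k_6,k_7$ independent of $B_x$. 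These constants must also not depend on the particular $\bell$ chosen later. That is why the uniform estimate \eqref{eq_uniform_linear_growth_activation} is the right tool, with $\|\tilde f\|_{\cB_{\hat\psi}}$ controlled by an explicit increasing function of $B_x$, rather than \eqref{eq_non_uniform_linear_growth_activation}.

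On the tail $B>B_x$, monotonicity lets you replace $B_x$ by $B$. So a single $\delta$-independent weight dominating this diagonal bound works for every $\delta$, for example $\eta_c(t)=k(1+R_f(t))\exp\big((k+1)((1+R_f(t))t)^{1/(1-\alpha)}\big)$, which is doubly exponential as you guessed. It bounds the tail ratio by $\exp\big(-((1+R_f(B_x))B_x)^{1/(1-\alpha)}\big)$, and one then sets $B_x=k_0/\delta$ with $k_0$ fixed afterwards. Without this diagonal step, ``let $\psi_c$ grow faster'' does not yet produce a weight independent of $\delta$, which is what the statement requires.
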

\begin{remark}
    The proof hinges on understanding how both the target solution $y$ and its approximation $y^{\mathrm{Sig}}$ depend on $\|x\|_{1\text{-Höl};[0,T_1]}$. Under the general linear growth assumption on $f$, the path $y^{\mathrm{Sig}}$ satisfies a Sig-CDE of the form \eqref{eq_approximating_sig_cdes}, where the coefficients $c_0$ and $c_1$ also depend on $\|x\|_{1\text{-Höl};[0,T_1]}$; this dependence disappears if $f$ is bounded. Once this relationship with the control norm is made explicit, the approximation $y^{\mathrm{Sig}}(x) \approx y(x)$ follows directly from Theorem \ref{prop_dynamic_universality_activations} for controls in a suitably chosen compact subset of $C^{1\text{-Höl}}_\eta([0,T_1],\RR^d)$, while outside this compact the approximation error can be absorbed through an appropriate admissible weight function $\psi_c$.
\end{remark}

\begin{proof}
    Fix $\delta > 0$ and set $B_x = k_0/\delta$ for some $k_0 > 0$, where $B_x$ plays the same role as in Theorem \ref{prop_dynamic_universality_activations}. By Lemma \ref{lem_pasting_argument_bound} (with $\gamma = \alpha$) and the ensuing discussion in Section \ref{sec_path_dependent_cdes}, there exist constants $k_1,k_2 \geq 0$ such that, for all $x \in C^{1\text{-Höl}}([0,T_1],\RR^d)$,\footnote{For notational simplicity, we only make explicit the dependence on $\|x\|_{1\text{-Höl};[0,T_1]}$.}
    \[\| y(x) \|_{\alpha\text{-Höl};[0,T_1]} \leq  k_1 \exp \Big( k_2 \|x\|_{1\text{-Höl};[0,T_1]}^{1/(1-\alpha)} \Big) \eqqcolon R_f(\|x\|_{1\text{-Höl};[0,T_1]}).\]
    
    Assume now that $\sigma^{-1}$ is defined over an interval $[a,b]$ with $a < b$ (allowing infinite endpoints). By the linear growth of $f$, the function in \eqref{eq_defining_constants_c0c1} is well-defined if and only if
    \begin{align}
    \label{eq_dependencies_c1_c0}
        c_1 \geq \frac{2}{b-a} B_f  \quad \text{and} \quad c_0 \in \Big[B_f - bc_1,-B_f-ac_1 \Big],
    \end{align}
    where $B_f \coloneqq C_f\big(1 + B_w + T_1^{\alpha-\beta} R_f(B_x) \big)$. When $a = -\infty$, the right endpoint  becomes vacuous; analogously, if $b = +\infty$, the left endpoint is automatically satisfied. In either case, \eqref{eq_dependencies_c1_c0} shows that there exist constants $c_0,c_1$ ensuring that \eqref{eq_defining_constants_c0c1} is well-defined and satisfying $|c_0|, |c_1| \leq k_3 \big(1 + R_f(B_x)\big)$, for some $k_3 > 0$ independent of $B_x$.

    For such (fixed) constants $c_0,c_1$ and any functional $\bell \in T(\RR^{m+1})$, we now deduce an \textit{a priori} bound for the unique solution $y^{\mathrm{Sig}}$ of the Sig-CDE \eqref{eq_approximating_sig_cdes}. Clearly, for all $T_0 \leq \sigma \leq  s \leq t \leq \tau \leq T_1$, 
    \[|y^{\mathrm{Sig}}_{s,t}| \leq \big( 1 + |c_1|\big) \int_s^t \sum_{i,j} \Big( \big|\sigma\big(\langle \bell^{ij}, S(\hat{y}^{\mathrm{Sig}}|_{[0,u]})\rangle \big) \big| + |c_0| \Big) \cdot |\dd x_u|.\]
    Hence, by the same reasoning as in \eqref{eq_non_uniform_linear_growth_activation} and using Lemma \ref{lem_aux_time_augmentation} to deal with time-augmentation, we obtain up to a harmless  constant (which depends inclusively on $\bell$),  
    \[|y^{\mathrm{Sig}}_{s,t}| \lesssim \big( 1 + |c_1|\big) \Big( 1 + |c_0| + T_1^{\alpha-\beta} \|y^{\mathrm{Sig}}_{\cdot \land t}\|_{\alpha\text{-Höl};[0,T_1]} \Big) \|x\|_{1\text{-Höl};[0,T_1]} |t-s|.\]
    Dividing by $|t-s|^\alpha$ on both sides and taking the supremum over $\sigma \leq  s \leq t \leq \tau $, leads to 
    \[\|y^{\mathrm{Sig}}\|_{\alpha\text{-Höl};[\sigma,\tau]} \lesssim \big( 1 + |c_1|\big) \Big( 1 + |c_0| + \|y^{\mathrm{Sig}}\|_{\alpha\text{-Höl};[0,\tau]} \Big) \|x\|_{1\text{-Höl};[0,T_1]} |\tau-\sigma|^{1-\alpha}, \]
    and so, by Lemma \ref{lem_pasting_argument_bound} with $\gamma = \alpha$, there exist constants $k_4,k_5 \geq 0$ such that
    \begin{align*}
        \| y^{\mathrm{Sig}}(x) \|_{\alpha\text{-Höl};[T_0,T_1]} &\leq k_4 \big(1 + |c_0|\big) \exp\Big( k_5 \big( (1 + |c_1|) \|x\|_{1\text{-Höl};[0,T_1]} \big)^{1/(1-\alpha)} \Big).
    \end{align*}
    for all $x\in C^{1\text{-Höl}}([0,T_1],\RR^d)$. In particular, by our choice of constants $c_0,c_1$, we obtain
    \[\| y^{\mathrm{Sig}}(x) \|_{\alpha\text{-Höl};[T_0,T_1]} \leq k_6 \big(1 + R_f(B_x)\big) \exp\Big(k_7 \big( (1 + R_f(B_x)) \|x\|_{1\text{-Höl};[0,T_1]} \big)^{1/(1-\alpha)}\Big),\]
    for some constants $k_6,k_7 \geq 0$. Denote the expression on the right by $R_{\mathrm{Sig}}(\|x\|_{1\text{-Höl};[0,T_1]})$.

    Next, define $\psi_c(x) \coloneqq \eta_c(|x_0| + \|x\|_{1\text{-Höl};[0,T_1]})$ where $\eta_c:[0,\infty) \to (0,\infty)$ is continuous, increasing, and satisfies 
    \[\eta_c(\|x\|_{1\text{-Höl};[0,T_1]}) > R_f(\|x\|_{1\text{-Höl};[0,T_1]}) \lor R_{\mathrm{Sig}}(\|x\|_{1\text{-Höl};[0,T_1]}).\] 
    It is straightforward to verify that $\psi_c$ is admissible in the sense of Section \ref{subsec_weighted_spaces_global_approximations}, and that $\eta_c$ can be chosen so that both $y(x)$ and $y^{\mathrm{Sig}}(x)$ satisfy \eqref{eq_vanishing_tail_condition}. In turn, Proposition \ref{lem_weaker_continuity_controls} and Lemma \ref{lem_characterising_B_psi}, guarantee that $y(x), y^{\mathrm{Sig}}(x) \in \cB_{\psi_c}$. Set 
    \begin{align*}
        M \coloneqq \Big( \inf_{x \in C^{1\text{-Höl}}_\eta} \psi_c(x) \Big)^{-1} , \quad R(B_x) \coloneqq R_f(B_x) \lor R_{\mathrm{Sig}}(B_x), 
    \end{align*}
    and define\footnote{Note that our notation here matches the one in the proof of Theorem \ref{prop_dynamic_universality_activations}. Namely, $R_f$ in Theorem \ref{prop_dynamic_universality_activations} can be taken to be $R_f(B_x)$, only now with the dependence on $B_x$ made explicit; similarly for $R_{\mathrm{Sig}}(B_x)$ and $R(B_x)$.} 
    \begin{align}
    \label{eq_epsilon_global_approx}
        \varepsilon \coloneqq \left[ M \exp \Big( \log(2) \Big\lceil T_1 (2 L_f B_x)^{1/(1-\beta)} \Big\rceil + \zeta R(B_x) \Big) \Big(1 + \frac{1}{L_f}\Big)\right]^{-1} \frac{\delta}{2},
    \end{align}
    where $L_f$ denotes the Lipschitz constant of $f$. By \eqref{eq_epsilon_global_approx} and Theorem \ref{prop_dynamic_universality_activations}  (namely, the estimate \eqref{eq_final_convergence_estimate}), there exists $\bell \in T(\RR^{m+1})^{m\times d}$ such that, for all $x$ satisfying \eqref{eq_fixed_bounded_initial_data_set}, we have 
    \[\|y(x) - y^{\mathrm{Sig}}(x)\|_{\beta\text{-Höl};[T_0,T_1]} \leq M^{-1} \frac{\delta}{2}.\]
    Observe that the set of controls in \eqref{eq_fixed_bounded_initial_data_set} is compact in the $\eta-$topology; denote it by $K_{B_x}$. Then, 
    \begin{align}
    \label{eq_inside_outside_compact_est}
        \|y - y^{\mathrm{Sig}}\|_{\cB_{\psi_c}} &= \sup_{x \in K_{B_x}} \frac{\| y(x) - y^{\mathrm{Sig}}(x) \|_{\beta\text{-Höl};[0,T_1]}}{\psi_c(x)} + \sup_{x \in K^c_{B_x}} \frac{\| y(x) - y^{\mathrm{Sig}}(x) \|_{\beta\text{-Höl};[0,T_1]}}{\psi_c(x)} \nonumber \\
        &\leq \frac{\delta}{2} + \sup_{x \in K^c_{B_x}} \frac{\| y(x)\|_{\beta\text{-Höl};[0,T_1]}}{\psi_c(x)} + \sup_{x \in K^c_{B_x}} \frac{\| y^{\mathrm{Sig}}(x) \|_{\beta\text{-Höl};[0,T_1]}}{\psi_c(x)},
    \end{align}
    and it remains to control the approximation error outside of $K_{B_x}$. This, however, is immediate from the choice of $\psi_c$: both $\| y(x)\|_{\beta\text{-Höl};[0,T_1]} \psi_c^{-1}(x)$ and $\| y^{\mathrm{Sig}}(x)\|_{\beta\text{-Höl};[0,T_1]} \psi_c^{-1}(x)$ decrease as $\|x\|_{1\text{-Höl};[0,T_1]} \to \infty$. Hence, 
     \begin{align*}
        \|y - y^{\mathrm{Sig}}\|_{\cB_{\psi_c}} \leq \frac{\delta}{2} + \frac{R_f(B_x)}{\eta_c(B_x)} + \frac{R_{\mathrm{Sig}}(B_x)}{\eta_c(B_x)} <  \delta,
    \end{align*}
   once $k_0 > 0$ is chosen appropriately. None of the constants $k_1,k_2,k_6,k_7$ depend on  $B_x$; thus $k_0$ may be fixed \textit{a posteriori} with no circularity. For example, replacing $k_1,k_2,k_6,k_7$ by $k\coloneqq \max\{k_1,k_2,k_6,k_7\}$ does not affect the argument. In which case, we may take
    \[\eta_c(t) = k \big(1 + R_f(t)\big) \exp\Big((k+1) \big( (1 + R_f(t))t \big)^{\frac{1}{1-\alpha}}\Big), \quad k_0 = k^2 + 1,\]
    so that, 
    \[\frac{R_f(B_x)}{\eta_c(B_x)} < \frac{R_{\mathrm{Sig}}(B_x)}{\eta_c(B_x)} = \exp\Big(-\big( \big(1 + R_f(B_x)\big) B_x\big)^{1/(1-\alpha)} \Big) < \frac{\delta}{4}.\]
    
    Finally, it is straightforward to check that if $f$ is bounded, then Lemma \ref{lem_pasting_argument_bound} is unnecessary and $R_f(\, \cdot \,)$ depends linearly on $\|x\|_{1\text{-Höl};[0,T_1]}$; the same holds true for $R_{\mathrm{Sig}}(\, \cdot \,)$ if $\sigma$ is bounded. In this bounded setting, the constants $c_0,c_1$ become independent of $x$, and $\eta_c$ may be chosen with any superlinear growth. Moreover, when either $a = -\infty$ or $b = + \infty$, the constraints  also simplify: one may take $c_1 = 1$, and $c_0$ only enters linearly in $R_{\mathrm{Sig}}(\, \cdot \,)$.  
\end{proof}

\subsection{Entire Functionals and Sig-CDEs}
\label{subsec_entire_functionals}

The previous section established that, under mild assumptions, Sig-CDEs form a universal class of equations within the broader framework of path-dependent CDEs. Moreover, this universality can already be realized with signature functionals of a particularly simple form: linear functionals composed with an activation function $\sigma$. In what follows, we specialize further by taking $\sigma$ to be a bounded entire function. In this way, we are led to the notion of \emph{bounded entire maps on group-like elements}, as introduced in \cite{cuchiero2023signature}, which provide a concrete and tractable universal model for path-dependent dynamics. 

To illustrate, let $\sigma: \CC \to \CC$ be an entire function such that $\sigma(\RR) \subset \RR$ and $\sigma$ is bounded on $\RR$. Its power series expansion reads
\begin{align}
    \label{eq_power_series_expansion_sigma}
    \sigma(z) = \sum_{k=0}^\infty c_k z^k, \quad \text{for all} \ z \in \CC. 
\end{align}
For any $\bell \in T(\RR^m)$, the shuffle property \eqref{eq_shuffle_property} gives
$$
    \sigma(\langle \bell, S(y)_{0,t} \rangle ) = \sum_{k=0}^\infty c_k (\langle \bell, S(y)_{0,t} \rangle)^k = \sum_{k=0}^\infty c_k \langle \bell^{\shuffle k} , S(y)_{0,t} \rangle,
$$
for all $y|_{[0,t]} \in \Lambda^{\suphol{\alpha}}_\beta ([0,T],\RR^m)$, where $\bell^{\shuffle k}$ denotes the $k$-fold shuffle product of $\bell$ with itself. Thus, $\sigma \circ \bell$ acts linearly on signatures and, as we now make precise, is an instance of an \emph{entire map on group-like elements}.

Recall the set $G(\RR^m)$ of group-like elements \eqref{eq_group_like_elements}. Following \cite{cuchiero2023global}, for $\mathbf{g} \in G(\RR^m)$ and $\bell \in T((\RR^m))$ define
$$
    |\bell|_\mathbf{g} \coloneqq \sum_{k=1}^\infty \Big| \sum_{|w| = k} \ell_w g_w \Big|,
$$
and $G(\mathbb{R}^m)^* \coloneqq \big\{ \bell \in T((\RR^m)) : |\bell|_\mathbf{g} < \infty \ \text{for all} \ \mathbf{g} \in G(\RR^m) \big\}$. For $\bell \in G(\RR^m)^*$ and $\mathbf{g} \in G(\RR^m)$, extend the canonical pairing by
$$
    \langle \bell, \mathbf{g} \rangle \coloneqq \lim_{N\to \infty} \sum_{n=0}^N \langle \pi_n(\bell) , \pi_n(\mathbf{g}) \rangle \equiv \sum_{n=0}^\infty \sum_{w \in \cW_n} \ell_w g_w.
$$
Then, the set of \emph{entire maps on group-like elements} \cite[Definition 4.5]{cuchiero2023signature} is 
$$
    \mathcal{G} \coloneqq \big\{ \bell : G(\RR^m)\to \RR : \mathbf{g} \mapsto \langle \bell, \mathbf{g} \rangle \ \text{and} \ \bell \in G(\RR^m)^* \big\}. 
$$

We now show that the composition of an entire function $\sigma$ with a map $\bell \in \mathcal{G}$ remains in $\mathcal{G}$. Consequently, Theorem \ref{prop_dynamic_universality_activations} can be instantiated for the subclass of bounded entire maps on group-like elements.

\begin{lemma}
\label{lem_entire_plus_entire_is_entire}
    Let $\bell: G(\RR^m) \to \RR$, $\mathbf{g} \mapsto \langle \bell, \mathbf{g} \rangle$ with $\bell \in G(\RR^m)^*$ be an entire map on group-like elements, and let $\sigma: \CC \to \CC$ be an entire function with power series expansion \eqref{eq_power_series_expansion_sigma} such that $\sigma(\RR) \subset \RR$. Then the composition
    $$
        (\sigma \circ \bell)(\mathbf{g}) = \sum_{n=0}^\infty c_n \langle \bell, \mathbf{g} \rangle^n
    $$
    is again an entire map on group-like elements. Moreover, there exists $\bell_\sigma \in G(\RR^m)^*$ such that $(\sigma \circ \bell)(\mathbf{g}) = \langle \bell_\sigma, \mathbf{g} \rangle$. Specifically,
    \begin{align}
        \label{eq_formal_series_entire_map}
        \bell_\sigma = \sum_{k=0}^\infty \sum_{w\in \cW_k} \left( \sum_{n=0}^\infty c_n (\ell^{\shuffle n})_w \right) e_w.
    \end{align}
\end{lemma}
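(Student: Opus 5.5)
The plan is to reduce everything to absolutely convergent scalar series, using the fact that the shuffle identity \eqref{eq_group_like_elements} holds degree by degree. Fix $\mathbf{g} \in G(\RR^m)$ and write $a_0 \coloneqq \ell_\varnothing$ and $a_k \coloneqq \langle \pi_k(\bell), \pi_k(\mathbf{g})\rangle$ for $k \geq 1$. By assumption $\bell \in G(\RR^m)^*$, so $\sum_{k\geq 0}|a_k| = |a_0| + |\bell|_{\mathbf{g}} < \infty$, and $\langle \bell, \mathbf{g}\rangle = \sum_k a_k$. Since $\sigma(\RR)\subset\RR$, all $c_n$ are real, so every object below is real.

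\emph{Step 1 (coefficientwise well-definedness of $\bell_\sigma$).} For a word $w$ with $|w|=k$, the shuffle of $n$ homogeneous tensors of degrees $k_1,\dots,k_n$ is homogeneous of degree $k_1+\dots+k_n$. Hence $(\bell^{\shuffle n})_w$ only involves the finitely many levels $\pi_j(\bell)$, $j\leq k$. Expanding the factors of degree $0$ (i.e.\ copies of $\ell_\varnothing$) with binomial coefficients gives
\[
\sum_{n\ge0} c_n (\bell^{\shuffle n})_w = \sum_{j=0}^{k} \frac{\sigma^{(j)}(\ell_\varnothing)}{j!}\Big(\big(\bell-\ell_\varnothing\mathbf{1}\big)^{\shuffle j}\Big)_w .
\]
The right-hand side is a finite sum, and the rearrangement is justified by the absolute convergence of the power series of the derivatives of the entire function $\sigma$. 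So \eqref{eq_formal_series_entire_map} defines an element of $T((\RR^m))$ with real coefficients.

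\emph{Step 2 (graded shuffle identity).} For homogeneous $\mathbf{b}_1\in(\RR^m)^{\otimes k_1}$ and $\mathbf{b}_2\in(\RR^m)^{\otimes k_2}$, the definition of $G(\RR^m)$ gives $\langle \mathbf{b}_1,\mathbf{g}\rangle\langle \mathbf{b}_2,\mathbf{g}\rangle = \langle \mathbf{b}_1\shuffle\mathbf{b}_2,\mathbf{g}\rangle$. Projecting $\bell^{\shuffle n}$ to level $k$ and using multilinearity of the shuffle product then yields
\[
\langle \pi_k(\bell^{\shuffle n}),\pi_k(\mathbf{g})\rangle=\sum_{k_1+\dots+k_n=k} a_{k_1}\cdots a_{k_n},
\]
again a finite sum. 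Consequently the level-$k$ pairing of $\bell_\sigma$ is $b_k \coloneqq \sum_n c_n\sum_{k_1+\dots+k_n=k}\prod_i a_{k_i}$, where the interchange with Step~1 is justified by the absolute convergence established in Step~3.

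\emph{Step 3 (membership in $G(\RR^m)^*$ and evaluation).} Let $\tilde\sigma(z)\coloneqq\sum_n|c_n|z^n$, which is entire. Then
\[
\sum_{k\ge0}|b_k|\le\sum_n|c_n|\Big(\sum_k|a_k|\Big)^n=\tilde\sigma\big(|\ell_\varnothing|+|\bell|_{\mathbf{g}}\big)<\infty .
\]
This gives $|\bell_\sigma|_{\mathbf{g}}<\infty$ for every $\mathbf{g}\in G(\RR^m)$, so $\bell_\sigma \in G(\RR^m)^*$. Moreover, Fubini for the absolutely convergent double series over $(n,k_1,\dots,k_n)$ gives
\[
\langle\bell_\sigma,\mathbf{g}\rangle=\sum_k b_k=\sum_n c_n\Big(\sum_k a_k\Big)^n=\sigma(\langle\bell,\mathbf{g}\rangle).
\]
This identity proves both claims of the lemma at once.

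The main obstacle is bookkeeping rather than analysis. One must make sure that $(\bell^{\shuffle n})_w$, which in Step~1 is defined by a formal infinite shuffle, agrees with the level-wise finite expressions used in Step~2. One must also make sure that the three nested sums (over $n$, over levels $k$, and over words within a level) may be interchanged. I would handle this by working throughout with the homogeneous components $\pi_j(\bell)$ and dominating everything by the majorant series $\tilde\sigma$ evaluated at $|\ell_\varnothing|+|\bell|_{\mathbf{g}}$.
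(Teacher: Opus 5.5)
Your proof is correct. Its skeleton matches the paper's: linearize the powers $\langle\bell,\mathbf g\rangle^n$ through the shuffle identity, justify all interchanges by absolute convergence, and use the Taylor expansion of $\sigma$ at $\ell_\varnothing$ to see that each coefficient of $\bell_\sigma$ is a finite sum.

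The execution differs in how the degree-zero term and the key estimate are handled.
\begin{itemize}
\item The paper first recentres, writing $\bell=\ell_\varnothing e_\varnothing+\tilde\bell$ and $\tau(z)=\sigma(\ell_\varnothing+z)$. Then $\tilde\bell^{\shuffle n}$ vanishes below level $n$, so the coefficients of $\tilde\bell_\sigma$ are automatically finite sums. Absolute convergence comes from the cited estimate $|\bell^{\shuffle n}|_{\mathbf g}\le|\bell|_{\mathbf g}^n$ of \cite[Lemma 4.3]{cuchiero2023signature}, together with the extension of the shuffle identity to the infinite series $\tilde\bell$. The binomial identity appears only at the end, to identify $\tilde\bell_\sigma$ with \eqref{eq_formal_series_entire_map}.
\item You keep $\ell_\varnothing$ as a degree-zero factor in the compositions $k_1+\dots+k_n=k$. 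You derive the level-wise identity directly from the defining relation of $G(\RR^m)$, applied to the homogeneous (hence finite) tensors $\pi_{k_i}(\bell)$. You then dominate everything by $\tilde\sigma(|\ell_\varnothing|+|\bell|_{\mathbf g})$, and the recentring is used only in Step~1 for well-definedness.
\end{itemize}
Your route is self-contained, since it needs neither the external lemma nor its extension of the shuffle relation to elements of $G(\RR^m)^*$. It also gives the explicit bound $|\bell_\sigma|_{\mathbf g}\le\tilde\sigma(|\ell_\varnothing|+|\bell|_{\mathbf g})$, so membership $\bell_\sigma\in G(\RR^m)^*$ is made explicit rather than left to the reader. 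The paper's route is shorter.

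Two minor remarks. Setting $a_0=\ell_\varnothing$ silently uses $g_\varnothing=1$. You should state this; it follows from the definition of $G(\RR^m)$ by taking $\mathbf b_1=\mathbf b_2=e_\varnothing$ and using $\mathbf g\neq\mathbf 0$. Also, the interchange in Step~2 runs over the finitely many words of length $k$, so it needs no appeal to Step~3.
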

\begin{proof}
    Set $\tilde{\bell} \coloneqq \bell - \ell_\varnothing e_\varnothing$ so that $\tilde{\ell}_\varnothing = 0$. Since $\mathbf{g} \in G(\RR^m)$ is group-like, we have $g_\varnothing = 1$, and hence $\langle \bell, \mathbf{g} \rangle = \ell_\varnothing + \langle \tilde{\bell} ,\mathbf{g} \rangle$. Define $\tau : \CC \to \CC$ by $\tau(z) \coloneqq \sigma(\ell_\varnothing + z)$. Then $\tau$ is entire, with power series expansion
    \[
    \tau(z) = \sum_{n=0}^\infty d_n z^n, \quad d_n = \frac{\sigma^{(n)}(\ell_\varnothing)}{n!}.
    \]
    Using the shuffle property \eqref{eq_shuffle_property}, we obtain
    \[
    (\sigma \circ \bell)(\mathbf{g}) = \tau(\langle \tilde{\bell}, \mathbf{g} \rangle ) = \sum_{n=0}^\infty d_n \langle \tilde{\bell} ,\mathbf{g} \rangle^n = \sum_{n=0}^\infty d_n \langle \tilde{\bell}^{\shuffle n}, \mathbf{g} \rangle.
    \]
    This series converges absolutely. Indeed, by \cite[Lemma 4.3 (ii)]{cuchiero2023signature}, $|\bell^{\shuffle n}|_\mathbf{g} \leq |\bell|_\mathbf{g}^n$, and therefore
    \[
    \sum_{n=0}^\infty |d_n| |\langle \tilde{\bell}^{\shuffle n}, \mathbf{g} \rangle| \leq  \sum_{n=0}^\infty |d_n| |\tilde{\bell}|^n_{\mathbf{g}} < \infty,
    \]
    since entire functions have power series that converge absolutely everywhere \cite[Theorem 4.2.5]{asmar2018complex}. Now, define the formal series 
    \[
    \tilde{\bell}_\sigma \coloneqq \sum_{k=0}^\infty \sum_{w \in \cW_k} \left( \sum_{n=0}^k d_n \big( \tilde{\ell}^{\shuffle n} \big)_w \right)e_w.
    \]
    Because $(\tilde{\ell}^{\shuffle n})_w = 0$ whenever $n > |w|$, we may write
    \[ \langle \tilde{\bell}_\sigma , \mathbf{g} \rangle = \sum_{n=0}^\infty d_n \sum_{k=0}^\infty \sum_{w\in \cW_k} \big(\tilde{\ell}^{\shuffle n} \big)_w g_w = \sum_{n=0}^\infty d_n \langle \tilde{\bell}^{\shuffle n}, \mathbf{g} \rangle = (\sigma \circ \bell) (\mathbf{g}).\]
    Finally, for each word $w$ of length $|w| =k$, we have
    \[
    \sum_{n=0}^\infty c_n (\ell^{\shuffle n})_w = \sum_{n=0}^\infty c_n \sum_{j=0}^k \binom{n}{j} \ell_\varnothing^{n-j} \big( \tilde{\ell}^{\shuffle j} \big)_w = \sum_{j=0}^k \left( \sum_{n=j}^\infty c_n \binom{n}{j} \ell_\varnothing^{n-j} \right) \big( \tilde{\ell}^{\shuffle j} \big)_w = \sum_{j=0}^k d_j \big( \tilde{\ell}^{\shuffle j} \big)_w.
    \]
    Thus $\tilde{\bell}_\sigma$ coincides with $\bell_\sigma$ in \eqref{eq_formal_series_entire_map} which proves the claim.
\end{proof}

\begin{corollary}
    In addition to the assumptions of Theorem \ref{prop_dynamic_universality_activations}, assume that $\sigma$ is an entire map such that $\sigma(\RR) \subset \RR$ and $\sigma$ is bounded on $\RR$. Then there exists a sequence $(\bell_n) \subset (G(\RR^{m+1})^*)^{m\times d}$ of bounded entire maps on group-like elements such that the solutions of
    \begin{align}
    \label{eq_approximating_sig_cdes_entire}
        y^{(n),i}_t 
        &= w^{i}_{T_0} 
        + \sum_{j=1}^{d} \int_{T_0}^t  \bell^{ij}_n \big( S(\hat{y}^{(n)}|_{[0,s]}) \big)  \dd x^j_s,  
        \quad y^{(n),i}|_{[0,T_0]} = w^i|_{[0,T_0]},
    \end{align}
    for $i \in \{1,\hdots,m\}$, converge in the $\beta$-Hölder topology to the solution of 
    \begin{align}
    \label{eq_target_path_dependent_cde_entire}
        y^i_t 
        &= w^i_{T_0} 
        + \sum_{j=1}^d \int_{T_0}^t f^{ij}(y|_{[0,s]}) \dd x^j_s,  
        \quad y^i|_{[0,T_0]} = w^i|_{[0,T_0]}.
    \end{align}
    As before, the convergence holds uniformly over the sets \eqref{eq_fixed_bounded_initial_data_set}.
\end{corollary}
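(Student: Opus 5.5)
The plan is to combine Theorem \ref{prop_dynamic_universality_activations} with Lemma \ref{lem_entire_plus_entire_is_entire}. A bounded entire $\sigma$ with $\sigma(\RR)\subset\RR$ is nonconstant only if it is non-real-analytic-trivial, so first we must check the hypotheses of Theorem \ref{prop_dynamic_universality_activations}. Boundedness on $\RR$ gives the logarithmic growth condition \eqref{eq_logarithmic_growth_activation} trivially. Lipschitz continuity and strict monotonicity are assumed in the corollary, since they are part of the standing assumptions of Theorem \ref{prop_dynamic_universality_activations}. (For instance, $\sigma=\arctan$ is not entire, but $\sigma(x)=\int_0^x e^{-s^2}\dd s$ is entire, real, bounded, Lipschitz and strictly increasing, so the class is nonempty.)

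Theorem \ref{prop_dynamic_universality_activations} then yields $\bell_n\in T(\RR^{m+1})^{m\times d}$ and constants $c_0\in\RR$, $c_1>0$ such that the solutions of \eqref{eq_approximating_sig_cdes} converge in $\beta$-Hölder topology to the solution of \eqref{eq_target_path_dependent_cde_entire}, uniformly over the sets \eqref{eq_fixed_bounded_initial_data_set}. It remains to rewrite each vector field $c_1\sigma(\langle\bell^{ij}_n,\cdot\rangle)+c_0$ as a single entire map on group-like elements. Since $\bell^{ij}_n\in T(\RR^{m+1})$ has finitely many nonzero coefficients, $|\bell^{ij}_n|_{\mathbf g}<\infty$ for every $\mathbf g\in G(\RR^{m+1})$, so $\bell^{ij}_n\in G(\RR^{m+1})^*$. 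We then apply Lemma \ref{lem_entire_plus_entire_is_entire} to the entire function $\tilde\sigma(z)\coloneqq c_1\sigma(z)+c_0$, which is again entire with $\tilde\sigma(\RR)\subset\RR$ and is bounded on $\RR$. This produces $\bell^{ij}_{n,\tilde\sigma}\in G(\RR^{m+1})^*$ with $\langle \bell^{ij}_{n,\tilde\sigma},\mathbf g\rangle=\tilde\sigma(\langle\bell^{ij}_n,\mathbf g\rangle)$ for all group-like $\mathbf g$.

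Every signature $S(\hat y|_{[0,s]})$ is group-like by \eqref{eq_shuffle_property}, so the vector fields in \eqref{eq_approximating_sig_cdes_entire} built from $\bell^{ij}_{n,\tilde\sigma}$ coincide pointwise on path space with those of \eqref{eq_approximating_sig_cdes}. The two equations are therefore literally the same non-anticipative CDE. Their unique solutions, which exist by Theorem \ref{prop_dynamic_universality_activations}, coincide, and the convergence and uniformity transfer verbatim. Boundedness of the new maps on signatures follows from $\sup_\RR|\tilde\sigma|<\infty$.

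The only genuinely delicate step is confirming that the hypotheses of the theorem hold for $\sigma$, namely Lipschitz continuity and strict monotonicity, which boundedness plus entireness alone does not give. I would treat these as inherited assumptions, as the phrase ``in addition to the assumptions'' indicates. Everything else is bookkeeping through Lemma \ref{lem_entire_plus_entire_is_entire}.
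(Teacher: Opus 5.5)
Your argument is correct and follows the paper's own route: the paper's proof is the one-line combination of Theorem~\ref{prop_dynamic_universality_activations} with Lemma~\ref{lem_entire_plus_entire_is_entire}. Your absorption of the constants into $\tilde\sigma = c_1\sigma + c_0$, together with your observation that the two vector fields coincide as functionals on path space (so well-posedness and convergence transfer verbatim), spells out what the paper leaves implicit. The only blemish is the opening remark about being ``non-real-analytic-trivial'', which is meaningless and should be dropped; boundedness on $\RR$ alone does not force constancy, as your own example shows.
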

\begin{remark}
    Strictly speaking, each $\bell^{ij}_n$ is in $T((\RR^{m+1}))$, and only the restriction $\bell^{ij}_n|_{G(\RR^{m+1})}$ defines an entire map on group-like elements. Recall that the well-posedness of \eqref{eq_approximating_sig_cdes_entire} requires the Lipschitz properties of the signature map and the functionals $\bell^{ij}_n$ on $\cT^{(p)}(\RR^{m+1})$ (or on any other (weighted) $\ell_p-$sum tensor space, by Corollary \ref{cor_signature_loc_lip}). Nevertheless, the vector fields in \eqref{eq_approximating_sig_cdes_entire} are only evaluated at group-like elements.
\end{remark}
\begin{proof}
    The claim follows directly from Theorem \ref{prop_dynamic_universality_activations} and Lemma \ref{lem_entire_plus_entire_is_entire}.
\end{proof}

\subsection{Lifted Sig-CDEs in Projective Limit Tensor Spaces}
\label{subsec_scaling_argument}

In this section, we revisit the well-posedness of Sig-CDEs from a different perspective. Instead of working directly with the path-dependent formulation \eqref{eq_sig_cde_integral_form}, we lift the dynamics to an infinite-dimensional classical CDE. This is natural, as signatures themselves satisfy a linear CDE:
if $y: [0,T_1] \to \RR^m$ solves \eqref{eq_sig_cde_integral_form} then $t \mapsto S_t \equiv S(y)_{T_0,t+T_0}$ satisfies 
    \begin{align}
    \label{eq_lifted_sig_cde}
        S_t = \mathbf{1} + \int_0^t S_u \otimes \big( F(\mathbf{g}_w \otimes S_u) \dd x_u \big),
    \end{align}
where $\mathbf{g}_w = S(w)_{0,T_0}$, and the equation is understood as a formal identity along signature paths. Conversely, any solution $S: [0,T_1 - T_0] \to \mathcal{T}^{(p)}(\RR^m)$ to \eqref{eq_lifted_sig_cde} induces a solution $t \mapsto (w \sqcup S^{(1)})_t$ of \eqref{eq_sig_cde_integral_form}.

Our aim is thus to  establish global existence and uniqueness for such lifted equations under conditions analogous to those used in the path-dependent setting, but formulated entirely at the level of tensor spaces. More concretely, we now consider CDEs of the form
\begin{align}
    \label{eq_lifted_sig_cdes_integral_form}
    S_t = S_0 + \int_0^t S_u \otimes F(S_u) \dd x_u = S_0 + \sum_{j=1}^d \int_0^t S_u \otimes F^j(S_u) \dd x^j_u,
\end{align}
under a Lipschitz condition on $\cT^{(p)}(\RR^m)$ together with a tensor-adapted homogeneous linear growth condition. We do not require $S$ to be a signature path,\footnote{In particular, $S_0$ does not have to be the unit tensor $\mathbf{1}$.} thereby obtaining a formulation that mirrors Proposition \ref{prop_well_posedness_SigCDEs} while remaining independent of the path-dependent framework. Lastly, since the extension is straightforward, we allow vector fields 
\[F : \cT^{(p)}(\RR^m) \to L\big(\RR^d,\mathfrak{t}^M(\RR^m) \big)\]
for some $M \geq 1$, rather than confining the components $F^j$ to the first level of $\mathfrak{t}^M(\RR^m)$ as in \eqref{eq_lifted_sig_cde}.

In what follows, we denote by $\cT^{(p)}_{\mathbf{1}}(\RR^m)$ the subset of tensors $\mathbf{a} \in \cT^{(p)}(\RR^m)$ with $\mathbf{a}^{(0)} = 1$. Similarly, $\mathbf{T}_{\mathbf{1}}(\RR^m)$ denotes the subset of the second projective limit \eqref{eq_second_projective_limit} introduced in Section \ref{subsec_limits_of_tensor_spaces} consisting of tensors satisfying $\mathbf{a}^{(0)} = 1$. The main result of this section is the following.

\begin{theorem}
\label{thm_scaling_argument}
    Fix $p>1$ and weights $(\lambda_k)$ such that $\lambda_k = o(k^q)$ for every $q>1$. Consider a map $F: \mathcal{T}^{(p)}_{\mathbf{1}}(\RR^m) \to L\big(\RR^d, \mathfrak{t}^M(\RR^m)\big)$ with $M \in \NN$. For each $j \in \{1,\dots,d\}$, assume that $F^j$ is locally Lipschitz: for every $\mathbf{a} \in \cT^{(p)}(\RR^m)$ there exists a neighborhood $U_{\mathbf{a}}$ of $\mathbf{a}$ and a constant $L_{\mathbf{a}}>0$ such that
    \begin{align}
        \label{eq_mixed_loc_lip}
        | F^j(\mathbf{a}_1) - F^j(\mathbf{a}_2) | \leq L_{\mathbf{a}} \| \mathbf{a}_1 - \mathbf{a}_2 \|_{(p)}, \qquad  \mathbf{a}_1,\mathbf{a}_2 \in U_{\mathbf{a}}.
    \end{align}
    Additionally, suppose that $F^j$ satisfies a modified homogeneous linear growth condition: for every $R>0$ there exists $C_F > 0$ such that, for all $\mathbf{a} \in T(\mathbb{R}^m) \subset \cT^{(p)}(\RR^m)$ with $\| \mathbf{a} - \mathbf{1} \|_{(p)} \leq R$, 
    \begin{align}
        \label{eq_generalized_hlg}
        \|\delta_{\alpha(1 + \lambda)^{-1}}F^j(\delta_\lambda \mathbf{a})\|_1 \leq C_F \, \alpha, \qquad \alpha, \lambda \geq 0.
    \end{align}
    Under these assumptions, for every $S_0 \in \mathbf{T}_{\mathbf{1}}(\RR^m)$ the CDE \eqref{eq_lifted_sig_cdes_integral_form} admits a unique global solution $S \in C^0\big([0,T],\cT^{(l)}_{\mathbf{1}}(\RR^m) \big)$ for every $l > 1$ and $T>0$. Moreover, $S_t \in \mathbf{T}_{\mathbf{1}}(\RR^m)$ for all $t \in [0,T]$, and the associated flow map $\mathbf{T}_{\mathbf{1}}(\RR^m) \times [0,T] \ni (S_0,t) \mapsto S_t \in \mathbf{T}_{\mathbf{1}}(\RR^m)$ is continuous. 
\end{theorem}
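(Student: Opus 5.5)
The plan is to reduce the infinite-dimensional equation \eqref{eq_lifted_sig_cdes_integral_form} to a fixed-point problem for a \emph{finite-dimensional} object. This exploits the fact that $F$ takes values in $L(\RR^d,\mathfrak{t}^M(\RR^m))$. Given a continuous path $g:[0,\tau]\to L(\RR^d,\mathfrak{t}^M(\RR^m))$, consider the linear equation $\dd X_t = X_t\otimes g_t\,\dd x_t$ with $X_0=\mathbf{1}$. Its solution is the Cartan development of the Lipschitz path $h_t=\int_0^t g_u\,\dd x_u\in\mathfrak{t}^M(\RR^m)$. By linearity, the solution of $\dd S=S\otimes g\,\dd x$ started at $S_0$ is $S_t=S_0\otimes X_t$. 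I therefore define
\[
\Gamma(g)_t \coloneqq F\big(S_0\otimes X(g)_t\big),
\]
so that fixed points of $\Gamma$ correspond exactly to solutions of \eqref{eq_lifted_sig_cdes_integral_form}.

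\textbf{Step 1: estimates for $X(g)$.} Since each level $k\le M$ of $h$ is dilated by $\lambda^k$, solving the development level by level gives an estimate of the form
\[
\|X(g)_t\|_{1,\lambda}\le \exp\Big(\sum_{k=1}^M\lambda^k\|\pi_k h\|_{1\text{-var};[0,t]}\Big),
\]
together with an analogous Lipschitz estimate in $g$. For small $\tau$ and $g$ in a bounded set, these bounds are summable against $e^{-k^l}$ because $\lambda_k=o(k^q)$ for every $q>1$. Hence $X(g)_t\in\cT^{(l)}$ for all $l>1$, with Lipschitz dependence on $g$.

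\textbf{Step 2: local well-posedness.} The initial datum satisfies $S_0\in\mathbf{T}_{\mathbf{1}}$, so $S_0\in\cT^{(l')}$ for every $l'>1$. Lemma \ref{lem_continuity_tensor_product} then yields $S_0\otimes X(g)_t\in\cT^{(p)}$, with the map $g\mapsto S_0\otimes X(g)$ Lipschitz into $\cT^{(p)}$. This is the reason the initial data are taken in the second projective limit: the loss $p'\to p$ in the tensor product must be paid by $S_0$. Combined with \eqref{eq_mixed_loc_lip}, $\Gamma$ is a contraction on a small ball of $C([0,\tau],L(\RR^d,\mathfrak{t}^M))$ for $\tau$ small. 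Banach's fixed-point theorem then gives local existence and uniqueness. The same computation with $S_0$ varying shows that the local flow is Lipschitz in $S_0$ for every $\|\cdot\|_{(l)}$, and hence continuous on $\mathbf{T}_{\mathbf{1}}$.

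\textbf{Step 3: global extension via a scaling argument.} I expect this to be the main obstacle. The local time $\tau$ depends on the size of $S_t$ in $\cT^{(p)}$, so I need an a priori bound on a maximal solution. Applying $\delta_\lambda$, which is an algebra morphism, to the equation gives
\[
\dd(\delta_\lambda S)=\delta_\lambda S\otimes\delta_\lambda F(S)\,\dd x .
\]
To control $\|\delta_\lambda F(S_u)\|_1$, I write $S_u=\delta_{\mu}\mathbf{a}$ with $\mathbf{a}$ normalised into the ball $\|\mathbf{a}-\mathbf{1}\|_{(p)}\le R$. First I approximate by truncations $\pi_{0,N}S_u\in T(\RR^m)$, using continuity of $F$ and Lemma \ref{lem_compact_emb_limit_spaces}. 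Then I apply \eqref{eq_generalized_hlg} with $\alpha=\lambda(1+\mu)$. This should yield
\[
\|\delta_\lambda F(S_u)\|_1\lesssim \lambda\,\big(1+\mu(S_u)\big),
\]
where $\mu(S_u)$ is a gauge-like quantity dominated by $\|S_u\|$. Grönwall's inequality in $\|\cdot\|_1$ then gives
\[
\|S_t\|_{1,\lambda}\le\|S_0\|_{1,\lambda}\exp\Big(C\lambda\int_0^t(1+\mu(S_u))\,|\dd x_u|\Big),
\]
and a second Grönwall-type argument bounds $\mu$. The final bound has an exponent that is only linear in $\lambda$. With $\lambda=\lambda_k=o(k^q)$, it remains summable against $e^{-k^l}$ for every $l>1$, provided $S_0\in\mathbf{T}_{\mathbf{1}}$.

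The delicate points are two. First, one must choose the scaling $\mu$ consistently, so that the circular dependence between $\mu(S_u)$ and the bound on $S_u$ closes. Second, one must justify passing from $T(\RR^m)$ to general tensors in \eqref{eq_generalized_hlg}. Once this a priori bound is established uniformly on $[0,\tilde T)$, a blow-up alternative, analogous to Proposition \ref{prop_global_solution} but now in Banach-space form, excludes a finite maximal time. This yields a global solution with $S_t\in\cT^{(l)}$ for all $l>1$, hence $S_t\in\mathbf{T}_{\mathbf{1}}$, and continuity of the flow follows from Step 2 applied on finitely many subintervals.
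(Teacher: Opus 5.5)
Your Steps 1--2 fail as soon as $M\ge 2$. Your bound on the development has exponent $\sum_{k\le M}\lambda^k\|\pi_k h\|_{1\text{-var}}$. Summability against $e^{-k^l}$ therefore needs $\lambda_k^M=o(k^l)$, not merely $\lambda_k=o(k^l)$, and the loss is real. For $h_t=t\,e_{12}$ one gets $X_t=\exp_{\otimes}(t\,e_{12})$ with $\|X_t\|_{1,\lambda}=\sum_{n\ge0}\lambda^{2n}t^n/n!=e^{\lambda^2 t}$ exactly. For the admissible weights $\lambda_k=k+1$, the series $\sum_k e^{-k^l}e^{(k+1)^2t}$ diverges for every $l<2$ and every $t>0$. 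So no small $\tau$ or small ball helps: $X(g)_t\notin\cT^{(l)}(\RR^m)$ for $l<2$, $S_0\otimes X(g)_t$ need not lie in the domain $\cT^{(p)}_{\mathbf 1}(\RR^m)$ of $F$, and $\Gamma$ is not defined on your ball in $C([0,\tau],L(\RR^d,\mathfrak{t}^M(\RR^m)))$. This is not an artefact of your estimate. Take $F\equiv e_{12}$ with $d=1$, $x_t=t$, $S_0=\mathbf 1$: it satisfies \eqref{eq_mixed_loc_lip}, yet its unique level-by-level solution $\exp_{\otimes}(t\,e_{12})$ leaves every $\cT^{(l)}(\RR^m)$, $l<2$, instantly. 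Hence local well-posedness in the stated spaces cannot come from \eqref{eq_mixed_loc_lip} and $S_0\in\mathbf T_{\mathbf 1}(\RR^m)$ alone. The growth condition \eqref{eq_generalized_hlg} must already enter the local step, but you only use it in Step 3.

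The repair is short. Since \eqref{eq_generalized_hlg} holds for all $\alpha\ge 0$, divide by $\alpha$ and let $\alpha\to\infty$; this forces $\pi_kF^j(\delta_\lambda\mathbf a)=0$ for $k\ge 2$. So $F$ is $\RR^m$-valued on $T(\RR^m)\cap\cT^{(p)}_{\mathbf 1}(\RR^m)$, and hence everywhere by density of truncations and continuity. Run your fixed point in $C([0,\tau],L(\RR^d,\RR^m))$; then $\|X(g)_t\|_{1,\lambda}\le e^{c\lambda t}$ and Steps 1--2 go through. This is a genuinely different route from the paper's. The paper solves the Galerkin truncations \eqref{eq_truncated_system_scaling}, proves $N$-uniform bounds (Lemma \ref{lem_aux_scaling_arg_2}), and extracts a limit via Arzel\`a--Ascoli and Lemma \ref{lem_compact_emb_limit_spaces}. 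It then gets uniqueness and flow continuity from a separate weighted Gr\"onwall estimate. Your reduction $S=S_0\otimes X(g)$ with finite-dimensional $g=F(S)$ sidesteps the missing Banach-algebra structure and gives local uniqueness and Lipschitz dependence on $S_0$ at once.

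Step 3 is, in outline, the paper's Lemma \ref{lem_aux_scaling_arg_2}, but the closure you leave open is its heart. It does not follow from $\mu(S_u)$ being dominated by $\|S_u\|$. With $J(t)=\int_0^t(1+\|S_u\|_{(p)})\,\dd u$ and $\Psi(y)=Z_p^{-1}\sum_k e^{-k^p}\|S_0\|_{1,\lambda_k}(e^{\lambda_k y}-1)$, that route only gives $J'\le 1+\|S_0\|_{(p)}+\Psi(cJ)$. Since $\Psi$ is super-exponential, this controls $J$ only on a short time interval. What you need instead is a lower bound on the admissible dilation. Use the gauge \eqref{eq_scaling_gauge}, set $R'=R-\|S_0-\mathbf 1\|_{(p)}>0$ and $I(t)=C\int_0^t(1+\lambda(S_u)^{-1})\,\dd u$. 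The dilated Gr\"onwall bound gives $\|\delta_\ell S_t-\mathbf 1\|_{(p)}\le\|S_0-\mathbf 1\|_{(p)}+\Psi(\ell I(t))$ for $\ell\le 1$. Choosing $\ell=1\wedge\Psi^{-1}(R')/I(t)$ turns this into the linear inequality $\lambda(S_t)^{-1}\le 1+I(t)/\Psi^{-1}(R')$, which Gr\"onwall closes.

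Finally, $F$ is only locally Lipschitz on an infinite-dimensional space, so a bound in $\cT^{(p)}$ does not give a uniform local existence time. Use the resulting bound in some $\cT^{(h)}$ with $h<p$ to confine the trajectory to a compact subset of $\cT^{(p)}(\RR^m)$ (Lemma \ref{lem_compact_emb_limit_spaces}) before you invoke the continuation argument.
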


This problem is less straightforward than it may first appear. As noted in Section \ref{subsec_limits_of_tensor_spaces}, the space $\cT^{(p)}(\RR^m)$ is in general not a Banach algebra for $p>1$, so directly estimating the $\cT^{(p)}(\RR^m)-$norm of the tensor product in \eqref{eq_lifted_sig_cdes_integral_form} is not feasible. In particular, a Lipschitz condition on $F$ in $\cT^{(p)}(\RR^m)$ does not yield a Lipschitz condition in $\cT^{(p)}(\RR^m)$ for the full vector field $S_u\otimes F(S_u)$.

Additionally, the modified homogeneous linear growth condition \eqref{eq_generalized_hlg} allows $F$ to be unbounded, which lies outside the usual scope of classical CDE theory. Indeed, standard approaches work in a Banach algebra $\mathcal A$, where global existence follows from a linear growth condition $\| S_u\otimes F(S_u)\|_{\cA} \lesssim 1 + \| S_u \|_{\cA}$. Such an estimate is in turn guaranteed by assuming $F$ to be bounded, a restriction that our setting deliberately avoids.

\begin{remark}
    We note that \eqref{eq_generalized_hlg} is a variation of the homogeneous linear growth condition \eqref{eq_homogeneous_lin_growth}. If $F^j$ takes values in $\RR^m$, then
    \[
    \| \delta_{\alpha (1 + \lambda)^{-1}} F^j(\delta_\lambda \mathbf{a}) \|_1 = \alpha (1 + \lambda)^{-1} |F^j(\delta_\lambda \mathbf{a})|,
    \]
    so the resulting bound has the same form as \eqref{eq_homogeneous_lin_growth}. The key difference lies in the class of tensors considered: in \eqref{eq_generalized_hlg} the condition is imposed on a $\cT^{(p)}(\RR^m)-$neighborhood in the tensor algebra, whereas \eqref{eq_homogeneous_lin_growth} only concerns a neighborhood of signatures.
\end{remark}

As outlined below, the main difficulty in proving Theorem \ref{thm_scaling_argument} lies in establishing global existence, since local well-posedness follows readily from a truncation argument. To obtain global existence, we instead employ a \emph{scaling argument}. Briefly put, the dilation operator $\delta_\lambda$ (Section \ref{subsec_path_signatures_tensor_algebras}) is used to rescale the solution path into a bounded set. Intuitively, if the solution were to escape every bounded subset of $\cT^{(p)}(\RR^m)$ in finite time, keeping the rescaled path within a fixed bounded set would require increasingly smaller scaling factors. The key idea then is to show that such indefinite rescaling does not occur, and hence the solution cannot blow up in finite time.

To formalize this idea, fix $R>0$ and $p > 1$, and define the \emph{gauge}
\begin{align}
\label{eq_scaling_gauge}
    \lambda(\mathbf{a}) \coloneqq 1 \land \sup \big\{ \lambda \geq 0 : \| \delta_\lambda \mathbf{a}  - \mathbf{1} \|_{(p)} \leq R \big\},
\end{align}
for $\mathbf{a} \in \cT^{(p)}_{\mathbf{1}}(\RR^m)$, as the largest factor $0 < \lambda \leq 1$ for which $\delta_\lambda \mathbf{a}$ lies inside the ball of radius $R$ about $\mathbf{1}$ in $\cT^{(p)}(\RR^m)$. One easily checks that $\| \delta_{\lambda (\mathbf{a})} \mathbf{a} - \mathbf{1} \|_{(p)} \leq R$ and that the map $[0,1] \ni \lambda \mapsto \| \delta_{\lambda} \mathbf{a} - \mathbf{1} \|_{(p)}$ is continuous and increasing.

\begin{remark}
\label{rem_continuity_scaling_gauge}
    Importantly, $\lambda(\,\cdot\,)$ is continuous on $\cT^{(p)}_{\mathbf{1}}(\RR^m)$. To see this, define $\Phi(\lambda,\mathbf a)\coloneqq \|\delta_\lambda\mathbf a-\mathbf1\|_{(p)}$, which is jointly continuous on $[0,1]\times\cT^{(p)}_{\mathbf1}(\RR^m)$. Let $\mathbf a_n\to\mathbf a$ in $\cT^{(p)}_{\mathbf1}(\RR^m)$. If $\lambda(\mathbf a)<1$, then $\Phi(\lambda(\mathbf a),\mathbf a)=R$ and, by monotonicity, $\Phi(\lambda(\mathbf a)+\varepsilon,\mathbf a)>R$ for every $\varepsilon>0$. Joint continuity of $\Phi$ then implies $\Phi(\lambda(\mathbf a)+\varepsilon,\mathbf a_n)>R$ for all sufficiently large $n$, and hence $\lambda(\mathbf a_n)\le \lambda(\mathbf a)+\varepsilon$. Similarly, since $\Phi(\lambda(\mathbf a)-\varepsilon,\mathbf a)<R$, we obtain $\lambda(\mathbf a_n)\ge \lambda(\mathbf a)-\varepsilon$ for large $n$. Thus $\lambda(\mathbf a_n)\to\lambda(\mathbf a)$. The case $\lambda(\mathbf a)=1$ is analogous, using that $\Phi(1-\varepsilon,\mathbf a)<R$ for every $\varepsilon>0$.
\end{remark}

We prove two auxiliary lemmas before turning to the main result.

\begin{lemma}
\label{lem_aux_scaling_arg_1}
    Suppose condition \eqref{eq_mixed_loc_lip} holds and $S_0 \in \mathbf{T}_{\mathbf{1}}(\RR^m)$. For $N \ge 1$ and $j \in \{1,\dots,d\}$, define $F^{N,j} : \mathbf{1} + \mathfrak{t}^N(\RR^m) \to \mathfrak{t}^N(\RR^m)$ by $F^{N,j}(\mathbf{a}) = \pi_{0,N} \big( \iota_M F^j \big(\iota_N(\mathbf{a})\big) \big)$ and consider the system
    \begin{align}
    \label{eq_truncated_system_scaling}
        S^{(N)}_t = \pi_{0,N}(S_0) + \sum_{j=1}^d \int_0^t S^{(N)}_u \otimes_N F^{N,j}\!\big(S^{(N)}_u\big) \dd x^j_u .
    \end{align}
    Then, for every $N \ge 1$, the system admits a unique local solution on a maximal interval of existence $[0,T_N)$ for some $T_N>0$. In particular, $ S^{(N)} \in C^{1\text{-Höl}} \big([0,T_N), \mathbf{1} + \mathfrak{t}^N(\RR^m)\big)$.
\end{lemma}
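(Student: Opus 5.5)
The plan is to reduce \eqref{eq_truncated_system_scaling} to a classical finite-dimensional CDE with locally Lipschitz vector fields and then invoke Picard--Lindelöf for Riemann--Stieltjes equations driven by a Lipschitz control. The state space $\mathbf{1}+\mathfrak{t}^N(\RR^m)$ is an affine subspace of the finite-dimensional space $T^N(\RR^m)$. For $\mathbf{a}\in\mathbf{1}+\mathfrak{t}^N(\RR^m)$ we have $\mathbf{a}\otimes_N\mathbf{b}\in\mathfrak{t}^N(\RR^m)$ whenever $\mathbf{b}\in\mathfrak{t}^N(\RR^m)$, since $\pi_0(\mathbf{b})=0$. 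Moreover $F^{N,j}$ takes values in $\mathfrak{t}^N(\RR^m)$ because $F^j$ maps into $\mathfrak{t}^M(\RR^m)$ and truncation preserves a vanishing zeroth level. Hence the vector field $V_j(\mathbf{a})\coloneqq \mathbf{a}\otimes_N F^{N,j}(\mathbf{a})$ is tangent to the affine subspace, and any solution started at $\pi_{0,N}(S_0)$, which has zeroth level $1$ because $S_0\in\mathbf{T}_{\mathbf{1}}(\RR^m)$, stays in $\mathbf{1}+\mathfrak{t}^N(\RR^m)$.

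The key step is local Lipschitz continuity of $V_j$ on $T^N(\RR^m)$ (or on the affine slice) with respect to any norm there, since all norms are equivalent in finite dimensions. Write $F^{N,j}=\pi_{0,N}\circ\iota_M\circ F^j\circ\iota_N$. The inclusion $\iota_N: \mathbf{1}+\mathfrak{t}^N(\RR^m)\to\cT^{(p)}_{\mathbf{1}}(\RR^m)$ is affine and bounded: $\|\mathbf{a}\|_{(p)}\le \max_{n\le N}\big(Z_p^{-1}\sum_k e^{-k^p}\lambda_k^n\big)\sum_{n\le N}|\mathbf{a}^{(n)}|$, which is finite since $\lambda_k=o(k^q)$ makes $\sum_k e^{-k^p}\lambda_k^n<\infty$. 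So $\iota_N$ is globally Lipschitz. The map $F^j$ is locally Lipschitz on $\cT^{(p)}(\RR^m)$ by \eqref{eq_mixed_loc_lip}. The map $\pi_{0,N}\circ\iota_M$ is linear between finite-dimensional spaces. Composing these, $F^{N,j}$ is locally Lipschitz. The truncated product $\otimes_N$ is a continuous bilinear map on $T^N(\RR^m)$, so $V_j$ is locally Lipschitz, being a product of locally Lipschitz maps. Local boundedness follows from continuity in finite dimensions.

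With this in hand, I would rewrite the equation as $S^{(N)}_t=S^{(N)}_0+\int_0^t V(S^{(N)}_u)\dd x_u$, where $V=(V_1,\dots,V_d)$. A Picard iteration in $C^0([0,\tau],\bar B(S^{(N)}_0,1))$ gives a unique local solution: the map is a contraction once $\tau\, L\,\|x\|_{1\text{-Höl};[0,T_1]}<1$ and $\tau\, \sup_{B}|V|\,\|x\|_{1\text{-Höl}}\le 1$, using $|\int_s^t g\,\dd x|\le\|g\|_\infty\|x\|_{1\text{-Höl}}|t-s|$. Alternatively, one can view this as a special case of Propositions \ref{prop_local_existence} and \ref{prop_uniqueness} with the Markovian functional $f(y|_{[0,t]})=V(y_t)$. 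Patching local solutions in the standard way yields a maximal interval $[0,T_N)$ with a unique solution. The same integral bound shows the solution is Lipschitz on every compact subinterval, i.e. $S^{(N)}\in C^{1\text{-Höl}}$ locally on $[0,T_N)$.

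The main, albeit mild, obstacle is the Lipschitz transfer through $\iota_N$: one must check that finite-level tensors genuinely lie in $\cT^{(p)}(\RR^m)$ with a norm comparable to the finite-dimensional norm. This is where the growth assumption on $(\lambda_k)$ enters. One must also make sure that the neighbourhood-wise Lipschitz condition \eqref{eq_mixed_loc_lip} gives Lipschitz bounds on small balls around each point of the slice, which is enough for Picard's argument. No global-in-time claim is needed here; that is deferred to the scaling argument.
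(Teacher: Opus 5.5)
Your proposal is correct and takes the same route as the paper: reduce to a finite-dimensional CDE, show that $\mathbf{a}\mapsto \mathbf{a}\otimes_N F^{N,j}(\mathbf{a})$ is locally Lipschitz, and invoke classical Picard/maximal-solution theory. You also supply details the paper's two-line proof leaves implicit: invariance of the affine slice $\mathbf{1}+\mathfrak{t}^N(\RR^m)$, and the fact that finite-level tensors lie in $\cT^{(p)}(\RR^m)$ with a comparable norm, which depends on the growth assumption on $(\lambda_k)$ from Theorem \ref{thm_scaling_argument}.
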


\begin{remark}
    To simplify notation, we often omit the zero-padding maps $\iota_N : T^N(\RR^m) \to T(\RR^m)$, as the ambient tensor space will typically be clear from the context.
\end{remark}

\begin{proof}
    Since $T^N(\RR^m)$ is finite-dimensional, the norm induced by $\mathcal{T}^{(p)}(\RR^m)$ is equivalent to the Euclidean norm on $T^N(\RR^m)$. Consequently, each vector field $G^j(\mathbf{a}) = \mathbf{a} \otimes_N F^{N,j}(\mathbf{a})$ is locally Lipschitz on $T^N(\RR^m)$. Classical CDE theory therefore yields existence and uniqueness of a local solution on a maximal interval $[0,T_N)$ for some $T_N>0$.
\end{proof}

\begin{lemma}
\label{lem_aux_scaling_arg_2}
    Suppose the assumptions of Theorem \ref{thm_scaling_argument} hold. Then, for every $T>0$ and every $N\ge 1$, the solution $S^{(N)}$ extends to $[0,T]$. Moreover, for every $h > 1$, there exist constants $K_S,K_F>0$ such that 
    \begin{align}
        \sup_{N\ge 1}\sup_{t\in[0,T]}
        \|S^{(N)}_t\|_{(h)}
        \le K_S,
        \qquad
        \sup_{N\ge 1}\sup_{t\in[0,T]}
        \|F^{N,j}(S^{(N)}_t)\|_{(h)}
        \le K_F .
    \end{align}
\end{lemma}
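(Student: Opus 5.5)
We will prove Lemma \ref{lem_aux_scaling_arg_2} with the scaling argument outlined above. The key point is that every estimate must depend only on $R$, $C_F$, $\|x\|_{1\text{-Höl};[0,T]}$, $M$ and $h$, and never on $N$.

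\emph{Step 1: evolution of the rescaled path.} Fix $h>1$ and work with the gauge $\lambda(\cdot)$ from \eqref{eq_scaling_gauge}, using the index $p$ replaced by some $p'\in(1,h)$. Set $\mu_t\coloneqq\lambda(S^{(N)}_t)\in(0,1]$ and $A_t\coloneqq\delta_{\mu_t}S^{(N)}_t$. By construction $\|A_t-\mathbf 1\|_{(p')}\le R$. Because dilations are algebra morphisms, $\delta_\mu(\mathbf a\otimes\mathbf b)=\delta_\mu\mathbf a\otimes\delta_\mu\mathbf b$. Hence, on any interval where $\mu$ is frozen at a value $\mu$,
\[
\dd(\delta_\mu S^{(N)}_u)=\delta_\mu S^{(N)}_u\otimes \delta_\mu F^{N,j}(\delta_{\mu^{-1}}\delta_\mu S^{(N)}_u)\,\dd x^j_u .
\]
Apply \eqref{eq_generalized_hlg} with $\mathbf a=A_u$ (a finite tensor in the $R$-ball), $\lambda=\mu^{-1}$ and $\alpha=\mu(1+\mu^{-1})=1+\mu\le 2$. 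This gives $\|\delta_\mu F^j(\delta_{\mu^{-1}}A_u)\|_1\le 2C_F$.

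\emph{Step 2: the driving term is bounded.} The vector field above is $A_u\otimes V_u$ with $V_u\in\mathfrak t^M$ and $\|V_u\|_1\le 2C_F$. Since $V_u$ has finitely many levels, the tensor product with $V_u$ is bounded on every $\mathcal T^{(q)}$. Its norm is at most $\sup_{n\le M,k}\lambda_k^{n}\cdot$(a shift in the $k$-weights). This is where $\lambda_k=o(k^q)$ enters: weights with $\exp(-k^{p'})$ absorb the factor $\lambda_k^M$ at the cost of passing from $p'$ to a slightly larger index $h$, much as in Lemma \ref{lem_continuity_tensor_product}. We therefore get $\|A\otimes V\|_{(h)}\lesssim_{M,h}C_F\|A\|_{(p')}$. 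Truncating by $\pi_{0,N}$ does not increase the norms.

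\emph{Step 3: the scale cannot degenerate.} This is the main obstacle: we must show $\mu_t$ stays bounded below on $[0,T]$ uniformly in $N$. Suppose $\mu$ is frozen at $\mu_0$ from time $s$. Gr\"onwall with the bound of Step 2 keeps $\delta_{\mu_0}S^{(N)}$ within distance $R/2$ of $A_s$ for a time $\tau=\tau(R,C_F,x)>0$ independent of $N$ and $\mu_0$. During that time the gauge can drop at most by a fixed factor $c<1$: the map $\lambda\mapsto\|\delta_\lambda\mathbf a-\mathbf 1\|$ is increasing, and on the $2R$-ball the dilation by $c$ brings us back inside radius $R$.

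I would make this step rigorous first. The plan is a lower bound $\|\delta_c\mathbf b-\mathbf 1\|_{(p')}\le c\|\mathbf b-\mathbf 1\|_{(p')}$ for $c\le1$, which holds because $\pi_0(\mathbf b)=1$ and each higher level scales by $c^n\le c$. Taking $c=1/2$ then gives $\mu_{s+\tau}\ge\mu_s/2$. Iterating $\lceil T/\tau\rceil$ times yields $\mu_t\ge 2^{-\lceil T/\tau\rceil}\mu_0$, where $\mu_0=\lambda(\pi_{0,N}S_0)$ is bounded below uniformly in $N$ because $S_0\in\mathbf T_{\mathbf 1}$.

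\emph{Step 4: conclusion.} A lower bound $\mu_t\ge\underline\mu$ together with $\|\delta_{\mu_t}S^{(N)}_t-\mathbf 1\|_{(p')}\le R$ gives levelwise bounds $|\pi_nS^{(N)}_t|\le \underline\mu^{-n}\cdot(\text{$\|\cdot\|_{(p')}$ control})$. These translate into $\|S^{(N)}_t\|_{(h)}\le K_S$ provided the extra geometric factor $\underline\mu^{-n}$ is absorbed. This again uses $\lambda_k=o(k^q)$: a factor $\lambda_k\underline\mu^{-1}$ is dominated by $\lambda_{k'}$ for a shifted index $k'$, which is affordable when passing from $p'$ to $h$. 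Since $S^{(N)}$ stays bounded, it cannot blow up, and Lemma \ref{lem_aux_scaling_arg_1} extends it to $[0,T]$. The bound on $F^{N,j}(S^{(N)}_t)$ follows from \eqref{eq_generalized_hlg} with $\lambda=\mu_t^{-1}$, $\alpha=1+\lambda$, combined with $\mu_t\ge\underline\mu$ and the finiteness of the $M$ levels. This yields $K_F$.
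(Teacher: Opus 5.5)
There is a genuine gap, and it sits in Step 3, which carries the whole argument. Step 1 is fine; it is the same bound $\|\delta_\mu F^j(S^{(N)}_u)\|_1\le C_F(1+\mu)$ the paper uses. The Gr\"onwall step in Step 3, however, cannot be closed in $\|\cdot\|_{(p')}$.

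Your own Step 2 shows that $B\mapsto B\otimes V$ only maps $\cT^{(p')}$ into the larger space $\cT^{(h)}$. In fact it is \emph{not} bounded on any $\cT^{(q)}$ for $V\neq 0$, contrary to the first sentence of Step 2. With $\rho^{(q)}_n$ as in the proof of Lemma \ref{lem_compact_emb_limit_spaces}, one has $\rho^{(q)}_{n+1}/\rho^{(q)}_n\to\infty$ whenever $\lambda_k\to\infty$. So the increment is controlled only in a weaker norm than the one defining the gauge. A restart at time $s$ that knows only $\|A_s-\mathbf 1\|_{(p')}\le R$ therefore cannot yield a $\tau$ depending only on $R,C_F,x$, as Step 3 asserts.

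Concretely, $F\equiv e_1$ satisfies all hypotheses with $C_F=1$. Take $x_t=t$ and $S_0=\mathbf 1+(R/\rho^{(p')}_n)e_1^{\otimes n}$. Then $\lambda(S_0)=1$ and, for $N>n$,
\[
\|S^{(N)}_\tau-S_0\|_{(p')}\ \ge\ R\,\tau\,\rho^{(p')}_{n+1}/\rho^{(p')}_n ,
\]
so the admissible $\tau$ tends to $0$ as $n\to\infty$.

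Step 4 has a problem of the same kind. Absorbing $\underline\mu^{-n}$ by a shift $k\mapsto k'$ needs a polynomial lower growth rate of $\lambda_k$, which is not assumed; $\lambda_k=\log(k+2)$ is admissible. In that case, summability of $e^{-k^{p'}}\|\mathbf a\|_{1,\underline\mu\lambda_k}$ does not imply $\|\mathbf a\|_{(h)}<\infty$ in general.

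The missing idea is never to restart, and to pay the loss of regularity once, with the fixed datum $S_0\in\mathbf T_{\mathbf 1}$. The paper runs a single Gr\"onwall argument from time $0$ in each Banach-algebra norm $\|\delta_\ell\,\cdot\,\|_1$.

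\begin{itemize}
\item Apply \eqref{eq_generalized_hlg} with $\lambda=\lambda_{N,u}^{-1}$ and $\alpha=(1+\lambda)\ell$. This gives a growth rate linear in $\ell$, hence $\|\delta_\ell S^{(N)}_t\|_1\le\|\delta_\ell S_0\|_1\exp(\ell I_N(t))$ with $I_N(t)=C\int_0^t(1+\lambda_{N,u}^{-1})\dd u$.
\item Summing over the scales $\ell\lambda_k$ gives $\|\delta_\ell S^{(N)}_t-\mathbf 1\|_{(p)}\le\|S_0-\mathbf 1\|_{(p)}+\psi_p(\ell I_N(t))$. Here $\psi_p$ is finite exactly because $S_0\in\mathbf T_{\mathbf 1}$ and $\lambda_k=o(k^q)$ for all $q>1$.
\item Choosing $\ell=1\wedge\psi_p^{-1}(R')/I_N(t)$ turns this into $\lambda_{N,t}^{-1}\le 1+\frac{C}{\psi_p^{-1}(R')}\int_0^t(1+\lambda_{N,u}^{-1})\dd u$. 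A second Gr\"onwall closes this uniformly in $N$.
\item The same first estimate with $\ell=\lambda_k$ then gives the $(h)$-bounds. The loss is now a factor $e^{c\lambda_k}$, which is harmless under $\lambda_k=o(k^q)$, rather than $\underline\mu^{-n}$. This also repairs Step 4.
\end{itemize}

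A minor point: \eqref{eq_generalized_hlg} is assumed on $\|\cdot\|_{(p)}$-balls, so your auxiliary index must satisfy $p'\le p$.
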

\begin{proof}
    Fix $N\geq 1$ and write $\lambda_{N,t}\coloneqq \lambda(S^{(N)}_t)$ as in \eqref{eq_scaling_gauge}. For $t\in[0,T_N)$ and $\ell>0$, applying \eqref{eq_generalized_hlg} with $\lambda=1/\lambda_{N,t}$ and $\alpha = (1 + \lambda) \ell$ gives
    \begin{align}
        \label{eq_dilated_vector_field_control}
        \big \| \delta_\ell F^{N,j}\big(S^{(N)}_t \big) \big\|_1 \leq \big \| \delta_\ell F^j\big(S^{(N)}_t \big) \big\|_1 \leq C_F \ell \left( 1 + \frac{1}{\lambda_{N,t}}\right).
    \end{align}
    Next, applying $\delta_\ell$ to \eqref{eq_truncated_system_scaling} and using that $\delta_\ell$ is a homomorphism, we obtain
    \[
    \| \delta_\ell S^{(N)}_t \|_1 \lesssim \|\delta_\ell S_0 \|_1 + C_F \ell \int_0^t \|\delta_\ell S^{(N)}_u \|_1 \Big( 1 + \frac{1}{\lambda_{N,u}} \Big) |\dot{x}^j_u| \dd u,
    \]
    since $\big( T_1(\RR^m), \| \cdot \|_1 , \otimes \big)$ is a Banach algebra and truncation does not increase $\|\cdot\|_1$. Grönwall’s inequality then yields for $0\le t<T_N$
    \begin{align}
        \label{eq_first_gronwall_estimate}
        \| \delta_\ell S^{(N)}_t \|_1 \leq \|\delta_\ell S_0 \|_1 \exp \left( \ell C \int_0^t \Big( 1 + \frac{1}{\lambda_{N,u}} \Big) \dd u\right),
    \end{align}
    where $C = C_F \| x \|_{1\text{-Höl};[0,T]}$. Note that  $S^{(N)}$ is continuous on $[0,t]$ and $\lambda(\, \cdot \,)$ is continuous on $T^N(\RR^m)$, therefore the map $u \mapsto \lambda_{N,u}$ is continuous on $[0,t]$ and the integral above is well-defined. Set
    \[
    I_N(t)\coloneqq C\!\int_0^t\!\Big(1+\frac{1}{\lambda_{N,u}}\Big)\mathrm du,
    \qquad
    \psi_p(x) \coloneqq \frac{1}{Z_p} \sum_{k \geq 0} \exp(-k^p)  \|S_0\|_{1,\lambda_k} \big( \exp(\lambda_k x) - 1 \big).
    \]
    Observe that $\psi_p(x) < \infty$ for every $x\geq 0$ by the choice of $(\lambda_k)$ and the assumption on $S_0$. Indeed, up to a constant, $\psi_p(x)$ is bounded by $\sum_{k\geq 0} \exp(-k^p + k^r + \lambda_k x)$ for some $r<p$, which converges for all $x\geq 0$. Moreover, $\psi_p$ is continuous, strictly increasing, and $\psi_p(0) = 0$.

    Now, from the definition of $\| \cdot \|_{(p)}$ and \eqref{eq_first_gronwall_estimate} we obtain, for $0<\ell \leq 1$,
    \[
    \|\delta_\ell S^{(N)}_t-\mathbf1\|_{(p)} \leq \| S_0 - \mathbf{1} \|_{(p)} + \psi_p(\ell I_N(t))
    \]
    With this in mind, consider $R > \| S_0 - \mathbf{1} \|_{(p)}$ and take
    \[
    \ell = 1 \wedge \frac{\psi^{-1}_p\big(R -  \| S_0 - \mathbf{1} \|_{(p)} \big)}{I_N(t)}.
    \]
    Then $\|\delta_\ell S^{(N)}_t-\mathbf1\|_{(p)}\le R$, and so by definition of $\lambda_{N,t}$ we have $\lambda_{N,t}\ge\ell$. Consequently,
    \[
    \frac1{\lambda_{N,t}}
    \le 1 + \frac{C}{\psi^{-1}_p(R')}\int_0^t\Big(1+\frac{1}{\lambda_{N,u}}\Big)\dd u,
    \qquad t\in[0,T_N),
    \]
    with $R' \coloneqq R -  \| S_0 - \mathbf{1} \|_{(p)}$, and a second application of Grönwall’s inequality gives
    \[
    \frac{1}{\lambda_{N,t}}
    \leq \left( 1+ \frac{Ct}{\psi^{-1}_p(R')} \right) \exp \left(\frac{Ct}{\psi^{-1}_p(R')}\right),
    \qquad t\in[0,T_N),
    \]
    implying that $\lambda_{N,t}$ stays uniformly bounded away from $0$ on $[0,T]$, independently of $N$.
    
    Returning to \eqref{eq_first_gronwall_estimate}, letting $\ell = \lambda_k$ and summing with the weights $\exp(-k^h)$ gives finiteness of $\|S^{(N)}_t\|_{(h)}$ for any $h>1$ uniformly in $N$. In finite dimensions such \emph{a priori} bounds preclude blow-up, so the solution exists globally on $[0,T]$ for any $T>0$. Finally, the bound for $\|F^{N,j}(S^{(N)}_t)\|_{(h)}$ follows directly from \eqref{eq_dilated_vector_field_control} with $\ell=\lambda_k$ and the uniform control of $\lambda_{N,t}$.
\end{proof}

\begin{proof}[Proof of Theorem \ref{thm_scaling_argument}]
    Fix $r \in \mathbb{N}$, and set
    \[l_r \coloneqq 1 + \frac{1}{r}, \quad h_r \coloneqq 1 + \frac{1}{1+r},\]
    so that $1<h_r<l_r$. For each $N\geq 1$, let $S^{(N)}:[0,T] \to \mathcal{T}^{(h_r)}(\RR^m)$ be the global solution given by Lemmas \ref{lem_aux_scaling_arg_1} and \ref{lem_aux_scaling_arg_2}. We first show that $(S^{(N)})$ is relatively compact in $C^0\big([0,T],\mathcal{T}^{(l_r)}(\RR^m) \big)$ for every $r\geq 1$.

    For $0\leq s \leq t \leq T$, \eqref{eq_truncated_system_scaling} gives
    \[ 
    S^{(N)}_t - S^{(N)}_s = \sum_{j=1}^d \int_s^t \pi_{0,N} \big(S^{(N)}_u \otimes F^j(S^{(N)}_u) \big) \dd x^j_u. 
    \]
    Since $\|\pi_{0,N}(\mathbf a)\|_{(l_r)}\leq \|\mathbf a\|_{(l_r)}$ and the product 
    $\otimes:\mathcal{T}^{(h_r)}(\RR^m)\times\mathcal{T}^{(h_r)}(\RR^m)\to\mathcal{T}^{(l_r)}(\RR^m)$ is continuous, Lemma \ref{lem_aux_scaling_arg_2} yields up to a constant independent of $N$,
    \[
        \|S^{(N)}_t - S^{(N)}_s\|_{(l_r)} \lesssim \int_s^t \| S^{(N)}_u\|_{(h_r)} \| F^j(S^{(N)}_u)\|_{(h_r)} |\dot{x}^j_u| \dd u
        \lesssim K_SK_F \| x\|_{1\text{-Höl};[0,T]} |t-s|,
    \]
    Hence the family $\{S^{(N)}\}$ is uniformly Lipschitz in $\mathcal{T}^{(l_r)}(\RR^m)$, and in particular equicontinuous. Moreover, Lemma \ref{lem_aux_scaling_arg_2} shows that for each fixed $t\in[0,T]$ the set $\{S^{(N)}_t : N \geq 1\}$ is bounded in $\mathcal{T}^{(h_r)}(\RR^m)$, and therefore relatively compact in $\mathcal{T}^{(l_r)}(\RR^m)$ by Lemma \ref{lem_compact_emb_limit_spaces}. By Arzelà-Ascoli, $(S^{(N)})$ is relatively compact in $C^0\big([0,T],\mathcal{T}^{(l_r)}(\RR^m) \big)$ for every $r\geq 1$ \cite[Theorem 47.1]{munkres2000}.

    Then, applying a diagonal argument to the sequence of spaces $C^0\big([0,T],\mathcal{T}^{(l_r)}(\RR^m) \big)$, $r\geq 1$, we obtain a subsequence, denoted $(S^{(N_m)})$, and a path
    \[S^{[r]} \in C^0\big([0,T], \mathcal{T}^{(l_r)} (\mathbb{R}^m )\big),\]
    for each $r\geq 1$, such that
    \[\sup_{t\in [0,T]} \| S_t^{(N_m)} - S_t^{[r]} \|_{(l_r)} \longrightarrow 0 \quad \text{as} \ m\to \infty,\]
    for every $r\in \NN$. Crucially, these limits are compatible. Indeed, whenever $r' \geq r$, the canonical embedding $\cT^{(l_{r'})}(\RR^m) \hookrightarrow \cT^{(l_{r})}(\RR^m)$ is continuous, so the same subsequence converges in $C^0\big([0,T], \mathcal{T}^{(l_r)} (\mathbb{R}^m )\big)$ both to $S^{[r]}$ and to the image of $S^{[r']}$. By uniqueness of limit, these coincide. Therefore, by the universal property of the initial topology, there exists a unique 
    \[S \in C^0 \big( [0,T], \mathbf{T}_{\mathbf{1}}(\RR^m) \big)\]
    whose image in each $C^0\big([0,T], \mathcal{T}^{(l_r)} (\mathbb{R}^m )\big)$ is $S^{[r]}$. In particular, 
    \[\sup_{t\in [0,T]} \| S_t^{(N_m)} - S_t \|_{(l_r)} \longrightarrow 0 \quad \text{as} \ m\to \infty,\]
    for every $r \geq 1$. Now, fix arbitrary $l>1$ and choose $r$ sufficiently large so that $l_r < l$. As before, since the embedding $\cT^{(l_{r})}(\RR^m) \hookrightarrow \cT^{(l)}(\RR^m)$ is continuous, it follows that 
    \[\sup_{t\in [0,T]} \| S_t^{(N_m)} - S_t \|_{(l)} \longrightarrow 0.\]
    Thus, $(S^{(N_m)})$ converges to the same path $S$ in $C^0\big([0,T], \mathcal{T}^{(l)} (\mathbb{R}^m )\big)$ for arbitrary $l>1$.
    
    Fatou’s lemma with respect to the counting measure then implies that $S_t \in \mathcal{T}^{(h)}(\RR^m)$ for all $t$ and, in particular, $\sup_{t\in[0,T]}\|S_t\|_{(h)}\le K_S$ (with $K_S$ coming from Lemma \ref{lem_aux_scaling_arg_2}, dependent on $h$). It remains to show that $S$ is the unique solution to \eqref{eq_lifted_sig_cdes_integral_form}.

    Fix $n \in \NN$. For all $m$ sufficiently large so that $N_m\ge n$, applying $\pi_{0,n}$ to \eqref{eq_truncated_system_scaling} gives
    \begin{align}
        \label{eq_doubly_truncated_system}
        \pi_{0,n}(S^{(N_m)}_t) = \pi_{0,n}(S_0) + \sum_{j=1}^d \int_0^t \pi_{0,n}\big( S^{(N_m)}_u \otimes F^{j} \big(S^{(N_m)}_u\big) \big) \dd x^j_u.
    \end{align}
    Because $\pi_{0,n}: \mathcal T^{(l)}(\RR^m) \to T^n(\mathbb R^m)$ is continuous for any $l>1$, 
    $\pi_{0,n}(S^{(N_m)})\to\pi_{0,n}(S)$ uniformly on $[0,T]$. Furthermore, by \eqref{eq_mixed_loc_lip} and the uniform $\mathcal{T}^{(h)}(\RR^m)$–bounds on $S^{(N_m)}$ and $S$ from Lemma \ref{lem_aux_scaling_arg_2} (with $h<p$), each $F^j$ is Lipschitz on a common compact set containing all these paths. Hence, 
    \[ 
    \sup_{t \in [0,T]} | F^j(S^{(N_m)}_t) - F^j(S_t) | \lesssim \sup_{t \in [0,T]} \| S^{(N_m)}_t - S_t \|_{(p)} \longrightarrow 0,
    \]
    The integrands in \eqref{eq_doubly_truncated_system} thus converge uniformly in $C^0\big([0,T],\cT^{(l)}(\RR^m) \big)$ for $l>1$, and dominated convergence yields 
    \[
    \pi_{0,n}(S_t) = \pi_{0,n}(S_0) + \sum_{j=1}^d \int_0^t \pi_{0,n}\big( S_u \otimes F^{j} (S_u) \big) \dd x^j_u.
    \]
    Since $n$ was arbitrary, this identity holds in the full tensor algebra, and hence in $\mathcal T^{(l)}(\RR^m)$. Therefore, $S$ is a continuous $\mathcal T^{(l)}$–valued solution of \eqref{eq_lifted_sig_cdes_integral_form}.

     Regarding uniqueness, let $S,S'\in C^0\big([0,T],\mathcal{T}^{(l)}(\RR^m)\big)$ for all $l>1$ be solutions of \eqref{eq_lifted_sig_cdes_integral_form} with initial conditions $S_0,S'_0 \in \mathbf{T}_{\mathbf{1}}(\RR^m)$. By Remark \ref{rem_continuity_scaling_gauge} and compactness, it follows that 
     \[
     \sup_{t\in[0,T]}\left(\frac{1}{\lambda(S_t)}+\frac{1}{\lambda(S'_t)}\right)<\infty.
     \]
    Next, set $D_k(t) \coloneqq \|\delta_{\lambda_k}(S_t-S'_t)\|_1$. Subtracting the integral equations for $S$ and $S'$, applying $\delta_{\lambda_k}$, and using the Banach–algebra property of $T_{1,\lambda_k}(\RR^m)$ yields
    \[
    D_k(t)\leq D_k(0)
    +\sum_{j=1}^d \int_0^t \Big(
    D_k(u) \|F^j(S_u) \|_{1,\lambda_k}
    +\| S'_u\|_{1,\lambda_k} \| F^j(S_u)-F^j(S'_u) \|_{1,\lambda_k}
    \Big)|\dot x_u^j| \dd u.
    \]
    From \eqref{eq_dilated_vector_field_control} and the uniform bound on $\lambda(S_u)^{-1}$ we obtain
    \begin{align}
    \label{eq_uniqueness_aux_1}
        \| F^j(S_u)\|_{1,\lambda_k}\leq C_F \lambda_k \Big(1+\lambda(S_u)^{-1}\Big)\leq c_1 \lambda_k,
    \end{align}
    for some $c_1>0$, and similarly for $S'$. Moreover, combining \eqref{eq_uniqueness_aux_1} with the same Grönwall argument used in \eqref{eq_first_gronwall_estimate} gives
    \begin{align}
    \label{eq_uniqueness_aux_2}
        \sup_{u\in[0,T]} \| S'_u\|_{1,\lambda_k} \leq \| S_0'\|_{1,\lambda_k} \exp(c_2 \lambda_k)
    \end{align}
    for a constant $c_2 >0$ independent of $k$. Finally, since $K\coloneqq S([0,T])\cup S'([0,T])$ is compact in $\mathcal T^{(p)}(\RR^m)$, condition \eqref{eq_mixed_loc_lip} implies the existence of $L_K>0$ such that
    \begin{align}
    \label{eq_uniqueness_aux_3}
       \|F^j(S_u)-F^j(S'_u)\|_{1,\lambda_k} \lesssim (1 + \lambda_k^M) |F^j(S_u)-F^j(S'_u)| \leq L_K \|S_u-S'_u\|_{(p)}.
    \end{align}
    Inserting \eqref{eq_uniqueness_aux_1}–\eqref{eq_uniqueness_aux_3} into the inequality for $D_k$ yields
    \[
    D_k(t) \lesssim D_k(0) + \sum_{j=1}^d \int_0^t c_1 \lambda_k D_k(u) + \|  S_0' \|_{1,\lambda_k} \exp(c_2\lambda_k)  (1 + \lambda_k^M) \| S_u - S_u'\|_{(p)} |\dot{x}^j_u| \dd u,
    \]
    which, by Grönwall's inequality, implies
    \[
    D_k(t) \lesssim  \exp(c_1 \lambda_k t) D_k(0) + \int_0^t \exp( (c_1 t + c_2 + M) \lambda_k ) \| S_0'\|_{1,\lambda_k}   \| S_u - S_u'\|_{(p)} \dd u
    \]
    Now, summing over $k$ with weights $\exp(-k^p)/Z_p$ and using that $S_0,S_0' \in \cT^{(r)}(\RR^m)$ for some $r<p$ gives
    \begin{align}
        \label{eq_final_gronwall_scaling}
        \| S_t - S'_t \|_{(p)} \lesssim C_0 \| S_0 - S'_0 \|_{(r)} + \int_0^t C_1 \| S_u - S'_u \|_{(p)}  \dd u
    \end{align}
    where $C_0,C_1<\infty$ because the series
    \[
    \sum_{k\geq 0} \exp\big(-k^p + k^r + c_1 \lambda_k t\big), \qquad
    \sum_{k\geq 0} \exp\big(-k^p + k^r + (c_1 t + c_2 + M)\lambda_k \big)
    \]
    are finite. If $S_0=S'_0$, Grönwall’s inequality applied to the last estimate yields $S=S'$ on $[0,T]$, proving uniqueness.

    Finally, the flow property on $\mathbf{T}_{\mathbf{1}}(\RR^m)$ follows from the continuity of the inclusions $\mathbf{T}_{\mathbf{1}}(\RR^m) \hookrightarrow \mathcal{T}_{\mathbf{1}}^{(l_r)}(\RR^m)$ and \eqref{eq_final_gronwall_scaling}. Indeed, for each $r \ge 1$, the map
    \[
    \mathbf{T}_{\mathbf{1}}(\RR^m) \times [0,T] \ni (S_0,t) \mapsto S_t \in \mathcal{T}^{(l_r)}(\RR^m)
    \]
    is continuous: for $l_r \geq p$ continuity follows directly from \eqref{eq_final_gronwall_scaling}, and for $1 < l_r < p$ continuity follows by repeating the last application of Grönwall with $\|\cdot \|_{(l_r)}$ instead of $\| \cdot \|_{(p)}$, together with our assumption on the $(\lambda_k)$. Since Lemma \ref{lem_aux_scaling_arg_2} ensures that $S_t \in \mathbf{T}_{\mathbf{1}}(\RR^m)$ for all $t \in [0,T]$, the desired continuity with values in $\mathbf{T}_{\mathbf{1}}(\RR^m)$ follows from the universal property of the initial topology.   
\end{proof}

\subsection{Sig-CDEs on Step\texorpdfstring{$-N$}{N} Groups}
\label{subsec_not_so_esoteric}

In Section \ref{subsec_scaling_argument}, we reformulated Sig-CDEs as $\mathcal{T}^{(p)}(\RR^m)$–valued CDEs, thereby removing the path dependence of \eqref{eq_sig_cde_integral_form}. The corresponding well-posedness conditions were seen to agree with those of Proposition \ref{prop_well_posedness_SigCDEs}. This formulation, however, relies on a vector space and does not yet exploit the intrinsic group structure of signatures. We now adopt a genuinely group-theoretic perspective, viewing truncated Sig-CDEs as
$(\mathbf{1}+\mathfrak{t}^N(\RR^m))-$valued equations, and revisit existence and uniqueness in this intrinsic setting.

\begin{remark}
    By \emph{intrinsic} we mean we will study well-posedness using the metric structure of the group itself — specifically via \emph{homogeneous gauges} (Section \ref{subsec_path_signatures_tensor_algebras}) — rather than norms on an ambient linear space. Although we could equivalently work on $G^N(\RR^m)$ or, more generally, on any homogeneous group, we adopt the model $\mathbf{1}+\mathfrak{t}^N(\RR^m)$ to stay aligned with the tensor coordinates standard in the signature literature and consistent with our earlier tensor space formulation.
\end{remark}

Concretely, we now consider $(\mathbf{1} + \mathfrak{t}^N(\RR^m))$–valued CDEs of the form
\begin{align}
\label{eq_generalized_lifted_Sig_CDE}
\dd S_t = \sum_{j=1}^d S_t \otimes_N F^j(S_t) \dd x^j_t, \quad S_0 \in \mathbf{1} + \mathfrak{t}^N(\RR^m),
\end{align}
where $F^j : \mathbf{1} + \mathfrak{t}^N(\RR^m) \to \mathfrak{t}^N(\RR^m)$ takes values in the Lie algebra $\mathfrak{t}^N(\RR^m)$ and, as before, $x : [0,T] \to \RR^d$ is a Lipschitz control. Similarly to \eqref{eq_lifted_sig_cdes_integral_form},  this provides a natural extension of a truncated lifted Sig-CDE, in which the image of the vector fields $F^j$ was restricted to the first level of $\mathfrak{t}^N(\RR^m)$, corresponding to $\RR^m$.

\begin{remark}
One may ask whether it remains appropriate to refer to
\eqref{eq_generalized_lifted_Sig_CDE} as a Sig-CDE, since in general a solution $S$ need not coincide with the signature of an $m-$dimensional path. Nevertheless, following \cite{bellingeri2022smooth}, it is now standard terminology to regard differentiable paths with values in $G^N(\RR^m)$ as \emph{smooth rough paths}, and their minimal extensions as the corresponding signatures. This covers the case in which $F$ takes values in $\mathfrak{g}^N(\RR^m)$, and we present a recent application of such dynamics in \Cref{exmpl:efm} below. If instead $F$ takes values in $\mathfrak{t}^N(\RR^m)$ outside the free Lie algebra, then the resulting object no longer satisfies the shuffle relations and hence is not a signature in the usual sense. Even so, such \emph{free developments} remain natural objects and appear, for example, in the dynamics of expected signatures associated with certain stochastic processes \cite{friz2025expected}.
\end{remark}

\begin{example}\label{exmpl:efm}
    In \cite{jaber2025exponentially}, the authors consider modified signatures with \emph{exponentially fading memory}.
    For a given path $z:[0,T]\to\RR^m$ (here deterministic and Lipschitz), these are defined as the solution to
    \begin{equation}
    \label{eq:EFM}
        \dd S_t = -\Lambda(S_t) \dd t + S_t \otimes \dd z_t,
        \qquad
        S_0 = 1 \in \mathbf{1}+\mathfrak{t}^N(\RR^m),
    \end{equation}
    (see \cite[Proposition 3.6]{jaber2025exponentially}), where
    \[
        \Lambda: \mathbf{1}+\mathfrak{t}^N(\RR^m) \to \mathfrak{t}^N(\RR^m),
        \qquad
        \mathbf{g} \mapsto \frac{\dd}{\dd t}\,\delta_{e^{\lambda t}} \mathbf{g}\Big\vert_{t=0}.
    \]
    This fits into our current setting by noting that
    \[
        F^0(\mathbf{g})
        \coloneqq
        \mathbf{g}^{-1}\Lambda(\mathbf{g})
        =
        \frac{\dd}{\dd h}\Bigl(\mathbf{g}^{-1}\delta_{e^{\lambda h}} \mathbf{g} \Bigr)\Big\vert_{h=0},
        \qquad
        \mathbf{g} \in \mathbf{1}+\mathfrak{t}^N(\RR^m),
    \]
    defines a smooth map $F^0:\mathbf{1}+\mathfrak{t}^N(\RR^m)\to\mathfrak{t}^N(\RR^m)$. Indeed, $h \mapsto \mathbf{g}^{-1} \delta_{e^{\lambda h}} \mathbf{g}$ is a smooth curve in $\mathbf{1}+\mathfrak{t}^N(\RR^m)$ starting at the identity, so its derivative at $h=0$ belongs to the tangent Lie algebra. In particular, if $\mathbf{g} \in G^N(\RR^m)$, then $F^0(\mathbf{g}) \in\mathfrak{g}^N(\RR^m)$. Setting $x_t = (t,z_t)$, and $F^i \equiv e_i$ for $i=1,\hdots,m$, equation \eqref{eq:EFM} takes the form \eqref{eq_generalized_lifted_Sig_CDE}, and by Proposition \ref{prop_local_wellposedness_Sig_CDEs} below, its unique solution remains in $G^N(\RR^m)$.
\end{example}

We observe that \eqref{eq_generalized_lifted_Sig_CDE} defines a CDE on the smooth manifold $\mathbf{1} + \mathfrak{t}^N(\RR^m)$; see Remark \ref{rem_geometric_form_Sig_CDEs} below. Thus, classical results on CDEs on manifolds ensure that such equations are well-posed. In particular, if each $F^j$ is smooth, then a unique local solution exists, and global existence follows under standard growth conditions — see \cite[Theorem D.6]{lee2003smooth} and \cite[Theorem 2.15]{agrachev2019comprehensive}.

That said, by exploiting the global diffeomorphism $\log_{\otimes_N} : \mathbf{1} + \mathfrak{t}^N(\RR^m) \to \mathfrak{t}^N(\RR^m)$, one may equivalently retain the control-theoretic formulation of \eqref{eq_generalized_lifted_Sig_CDE}, which is standard in the literature on signatures. Indeed, setting $\Omega_t \coloneqq \log_{\otimes_N}(S_t)$, it follows from \cite[Lemma 7.23]{Friz2010} that
\begin{align}
\label{eq_log_transform_Sig_CDE}
\dd \Omega_t
= \left( \sum_{k=0}^{N-1} \frac{(-1)^k B_k}{k!} \operatorname{ad}_{\Omega_t}^k \right)
F(\exp_{\otimes_N}(\Omega_t)) \dd x_t,
\quad
\Omega_0 = \log_{\otimes_N}(S_0),
\end{align}
where $B_k \in \{1, - 1/2, \hdots\}$ are the Bernoulli numbers and $\operatorname{ad}_{\Omega_t}: \mathfrak{t}^N(\RR^m) \to \mathfrak{t}^N(\RR^m)$ denotes the usual adjoint operator. Based on this representation, we say that $S$ solves \eqref{eq_generalized_lifted_Sig_CDE} if and only if $\Omega$ solves the above equation, allowing us to study well-posedness entirely within the classical framework of CDEs in finite-dimensional normed spaces. 

\begin{remark}
    Since the approximating equations \eqref{eq_approximating_sig_cdes} already involve truncated signatures, this finite-dimensional perspective arises naturally. Moreover, as noted in \cite{le2021space} and \cite[Section 8.3]{schmeding2022introduction}, the group of full (non-truncated) signatures may fail to possess a Lie or even topological group structure, which further justifies focusing on the truncated case for this intrinsic setting.
\end{remark}

\begin{proposition} 
\label{prop_local_wellposedness_Sig_CDEs}
    Fix $T>0$ and equip $\mathbf{1} + \mathfrak{t}^N(\RR^m)$ with the Euclidean norm on $T^N(\RR^m)$. Consider $F : \mathbf{1}+\mathfrak{t}^N(\RR^m) \to L(\RR^d,\mathfrak{t}^N(\RR^m))$. If $F$ is continuous, then there exists $t_{\max}\in(0,T]$ and at least one solution $S:[0,t_{\max}) \to \mathbf{1}+\mathfrak{t}^N(\RR^m)$ to \eqref{eq_generalized_lifted_Sig_CDE}. If $F$ is locally Lipschitz, this solution is unique; and if $F$ is smooth, then the solution map $S_0 \mapsto S$ is also smooth. Moreover, if each $F^j$ takes values in $\mathfrak{g}^N(\RR^m)$, then the solution $S$ takes values in $G^N(\RR^m)$ provided it starts in $G^N(\RR^m)$. 
\end{proposition}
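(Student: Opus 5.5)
The plan is to work directly in the affine coordinates of $\mathbf{1}+\mathfrak{t}^N(\RR^m)$. This is an affine subspace of the finite-dimensional space $T^N(\RR^m)$, so \eqref{eq_generalized_lifted_Sig_CDE} is a classical finite-dimensional CDE $\dd S_t = V(S_t)\dd x_t$ with vector fields $V^j(\mathbf{a}) \coloneqq \mathbf{a}\otimes_N F^j(\mathbf{a})$. Since $F^j$ takes values in $\mathfrak{t}^N(\RR^m)$, each $V^j(\mathbf{a})$ has zero scalar component. The level-zero part of $S$ therefore stays equal to $1$, and $S$ remains in $\mathbf{1}+\mathfrak{t}^N(\RR^m)$ whenever $S_0$ does. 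This would equally follow from \eqref{eq_log_transform_Sig_CDE}, because the ODE for $\Omega$ lives in $\mathfrak{t}^N(\RR^m)$ and $\exp_{\otimes_N}$ is a global diffeomorphism. Any solution of one formulation gives a solution of the other.

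First I would settle existence. The truncated product $\otimes_N$ is a polynomial, hence smooth, bilinear map, so $V^j$ inherits the regularity of $F^j$: continuous, locally Lipschitz, or smooth respectively. Because $x$ is Lipschitz, the equation reads $\dot S_t = \sum_j V^j(S_t)\dot x^j_t$ with $\dot x\in L^\infty$, i.e. a Carathéodory ODE. For continuous $F$, I would apply Peano's theorem in its Carathéodory form, or Schauder's fixed-point theorem as in Proposition \ref{prop_local_existence} on a small ball where $V$ is bounded. This gives a local solution, which I then extend by Zorn's lemma or iteration to a maximal interval $[0,t_{\max})$. For uniqueness under local Lipschitz continuity I would use the standard Grönwall argument on a compact neighbourhood of the trajectory, as in Proposition \ref{prop_uniqueness}. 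For smooth $F$, differentiability of the solution map $S_0\mapsto S$ follows from the classical theorem on differentiable dependence on initial data for Carathéodory ODEs with smooth-in-space, $L^\infty$-in-time coefficients. Concretely, the variational equation $\dd J_t = DV(S_t)J_t\,\dd x_t$ is linear, and iterating it gives all higher derivatives. Alternatively, one can cite \cite[Theorem 2.15]{agrachev2019comprehensive}.

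The main point is the group-invariance claim: $F^j\in\mathfrak{g}^N(\RR^m)$ and $S_0\in G^N(\RR^m)$ should imply $S_t\in G^N(\RR^m)$. I would argue in the logarithmic coordinates \eqref{eq_log_transform_Sig_CDE}. Since $\mathfrak{g}^N(\RR^m)$ is a Lie subalgebra, $\operatorname{ad}_\Omega$ preserves it when $\Omega\in\mathfrak{g}^N$. Hence the right-hand side of \eqref{eq_log_transform_Sig_CDE} is tangent to the linear subspace $\mathfrak{g}^N(\RR^m)$ at every $\Omega\in\mathfrak{g}^N(\RR^m)$, recalling that $G^N=\exp_{\otimes_N}(\mathfrak{g}^N)$.

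Invariance of a linear subspace under such an ODE needs care when $F$ is merely continuous, since solutions may be non-unique. I would handle it by construction. I would solve the restricted equation on $\mathfrak{g}^N(\RR^m)$, where the vector field is continuous and $\mathfrak{g}^N$-valued, which gives a $\mathfrak{g}^N$-valued solution on some interval. In the locally Lipschitz case this is \emph{the} solution by uniqueness. In the merely continuous case it provides at least one solution staying in $G^N(\RR^m)$, which is what the statement asserts. An alternative I would keep in reserve is to check the shuffle identity $\langle \mathbf{b}_1,S_t\rangle\langle\mathbf{b}_2,S_t\rangle=\langle\mathbf{b}_1\shuffle\mathbf{b}_2,S_t\rangle$ directly along the flow, using the derivation property of primitive elements with respect to the shuffle product.
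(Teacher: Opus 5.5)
Your proposal is correct and follows essentially the paper's route. The paper likewise reduces \eqref{eq_generalized_lifted_Sig_CDE} to a classical finite-dimensional CDE, via the logarithmic form \eqref{eq_log_transform_Sig_CDE}, whose coefficients are smooth because $\exp_{\otimes_N}$ and $\operatorname{ad}$ are polynomial in tensor coordinates. It then cites Peano-type existence, Lipschitz uniqueness and smooth dependence of the flow from \cite[Theorem 3.4, Corollary 3.9, Theorem 4.4]{Friz2010}.

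You go beyond the paper in the $G^N(\RR^m)$-invariance claim, which its proof leaves implicit. Your restriction-to-$\mathfrak{g}^N(\RR^m)$ argument is sound. It only needs $F^j(G^N(\RR^m))\subset\mathfrak{g}^N(\RR^m)$, which is exactly what Example~\ref{exmpl:efm} uses.

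If $F^j$ is $\mathfrak{g}^N(\RR^m)$-valued everywhere, you can also drop the ``at least one solution'' caveat for merely continuous $F$. Set $u_t=\sum_j F^j(S_t)\dot x^j_t$ along any given solution. Then that solution is the unique solution of the linear equation $\dot S_t=S_t\otimes_N u_t$ with $\mathfrak{g}^N(\RR^m)$-valued coefficient, and this solution stays in $G^N(\RR^m)$.
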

\begin{proof}
    The maps $\exp_{\otimes_N}$ and $\operatorname{ad}$ are smooth, since both are polynomials in tensor coordinates.\footnote{Equivalently, the coordinate representation of the exponential map becomes the identity if we use exponential coordinates, i.e. if we identify $\mathbf{1} + \mathfrak{t}^N(\RR^m)$ with its Lie algebra $\mathfrak{t}^N(\RR^m)$ via $\exp_{\otimes N}$.} Consequently, local existence and uniqueness follow directly from \cite[Theorem 3.4]{Friz2010} and \cite[Corollary 3.9]{Friz2010}. The smoothness of the solution map follows from \cite[Theorem 4.4]{Friz2010}.
\end{proof}

\begin{remark}
\label{rem_geometric_form_Sig_CDEs}
    For completeness, we note that \eqref{eq_generalized_lifted_Sig_CDE} can also be written in a more geometric form. Let $L_{\mathbf{a}} : \mathbf{1} + \mathfrak{t}^N(\RR^m) \to \mathbf{1} + \mathfrak{t}^N(\RR^m)$ denote left-multiplication by $\mathbf{a}$, i.e. $\mathbf{b} \mapsto \mathbf{a} \otimes_N \mathbf{b}$. Then \eqref{eq_generalized_lifted_Sig_CDE} can be expressed as 
    \begin{align}
    \label{eq_geometric_formulation_Sig_CDEs}
        \dot{S}_t = (L_{S_t})_* \big(F(S_t) \dot{x}_t\big) 
    = \sum_{j=1}^d (L_{S_t})_* \big(F^j(S_t)\big) \, \dot{x}^j_t,
    \end{align}
    where $(L_{S_t})_* : \mathfrak{t}^N(\RR^m) \to T_{S_t} (\mathbf{1} + \mathfrak{t}^N(\RR^m))$ denotes the differential of $L_{S_t}$, mapping elements in the Lie algebra into the tangent space $T_{S_t} (\mathbf{1} + \mathfrak{t}^N(\RR^m))$. Recall that the derivative $\dot{x}$ exists almost everywhere since $x \in C^{1\text{-Höl}}([0,T_1],\RR^d)$ (see \cite[Proposition 1.32]{Friz2010}). A closer inspection of the aforementioned results for ODEs on manifolds reveals that they impose the same local well-posedness conditions on $F$ as those stated in Proposition \ref{prop_local_wellposedness_Sig_CDEs}.
\end{remark}

\begin{remark}
    Smoothness of the solution map $S_0 \mapsto S$ becomes especially relevant in view of Theorem \ref{prop_dynamic_universality_activations}. Indeed, Proposition \ref{prop_local_wellposedness_Sig_CDEs} implies that the solution map associated with the (finite-dimensional) lift of \eqref{eq_approximating_sig_cdes} depends smoothly on the parameters $c_0, c_1$, and on each $\bell_n^{ij}$ for all $i \in \{1,\hdots,m\}$ and $j \in \{1,\hdots,d\}$. 
\end{remark}

We now examine well-posedness under \emph{intrinsic} conditions. The ball–box estimate \cite[Proposition 7.49]{Friz2010} implies that the gauge $\|\cdot\|_h$ (Section \ref{subsec_path_signatures_tensor_algebras}) induces the same topology as the standard manifold structure on $\mathbf{1}+\mathfrak{t}^N(\RR^m)$.\footnote{That is, the Euclidean topology induced through $\exp_{\otimes N}$, or, equivalently, by restricting the ambient tensor space metric (cf. \cite[Remark 7.31]{Friz2010}).} Consequently, continuity of
$F : \mathbf{1}+\mathfrak{t}^N(\RR^m)\to L(\RR^d,\mathfrak{t}^N(\RR^m))$
still guarantees local existence, exactly as in Proposition \ref{prop_local_wellposedness_Sig_CDEs}.

Intrinsic regularity, however, is considerably weaker. Assume
\[
|F(\mathbf{a})-F(\mathbf{b})| \lesssim \|\mathbf{a}-\mathbf{b}\|_h
\quad \text{and} \quad
|F(\mathbf{a})|\lesssim 1+\|\mathbf{a}\|_h.
\]
Because the metric induced by $\|\cdot\|_h$ is only $1/N-$Hölder equivalent to the Euclidean distance, such an intrinsic Lipschitz condition translates into substantially lower regularity from the Euclidean perspective. Classical uniqueness results can therefore not be applied in their usual form. Similarly, since locally $|\mathbf{a}| \lesssim \|\mathbf{a}\|_h$ the growth bound above is weaker than the standard Euclidean linear growth assumption, and so the usual criteria for global existence no longer apply directly.

To the best of our knowledge, \cite{magnani2018lipschitz} is the only work that studies ODE well-posedness on homogeneous groups under explicitly intrinsic assumptions. Notably, the authors exhibit a counterexample to local uniqueness under conditions of the type above — an issue to which we return later. With this in mind, we then turn our attention to global existence.

\begin{remark}
    There is good initial reason to expect workable intrinsic conditions for global existence. A naive application of classical global existence results requires strong assumptions on $F$ to compensate for the polynomial growth of the adjoint operator in \eqref{eq_log_transform_Sig_CDE}. By contrast, Proposition \ref{prop_homogeneous_linear_growth} shows that, at least for maps $F$ depending only on the first $N$ signature levels, mere linear growth in the Carnot–Carathéodory norm $\|\cdot\|_{cc}$ (see \cite[Theorem 7.32]{Friz2010}) already suffices. Indeed, if \eqref{eq_homogeneous_lin_growth} holds, then for every $\mathbf{g} \in G^N(\RR^m)$ we may set $\lambda = \|\mathbf{g}\|_{cc}/R$, where $R>0$ depends only on $r$ and $T$, and choose $R$ so that $\delta_{\lambda^{-1}} \mathbf{g} \in S(B^\beta(r))$. It follows that
    \begin{align}
        \label{eq_hlg_equivalent_form}
        |F(\mathbf{g})| = |F(\delta_\lambda\delta_{\lambda^{-1}}\mathbf{g})| \leq C_F\big(1+ \|\mathbf{g}\|_{cc}\, R^{-1}\big)
        \lesssim 1+\|\mathbf{g}\|_{cc},
    \end{align}
    establishing linear growth with respect to the Carnot-Carathéodory norm. Conversely, suppose \eqref{eq_hlg_equivalent_form} holds and fix $r > 0$, together with $\mathbf{g}_0 \in S(B^\beta(r))$. If $F$ depends only on the first $N$ levels, then for every $\lambda > 0$, 
    \[|F(\delta_\lambda \mathbf{g}_0)| = |F(\pi_{0,N} \delta_\lambda \mathbf{g}_0)| \lesssim 1 + \|\pi_{0,N} \delta_\lambda \mathbf{g}_0\|_{cc} \lesssim 1 + \lambda,\]
    where the last estimate follows from the equivalence of homogeneous gauges \cite[Theorem 7.44]{Friz2010}, and the factorial decay estimate \eqref{eq_factorial_decay_signatures}. In particular, the implicit constant depends on $r, N$ and $\beta$. This shows that condition \eqref{eq_homogeneous_lin_growth} is satisfied, suggesting that global existence can be recovered from the corresponding path-level result.
\end{remark}

For $n \in \{1,\hdots, N\}$ and $\mathbf{a} \in \mathbf{1} + \mathfrak{t}^N(\RR^m)$, let $F_{[n]}(\mathbf{a})$ denote the level$-n$ block of $F(\mathbf{a})$; that is, the submatrix consisting of the rows indexed by words of length $n$, i.e. elements of $\cW_n$ (see Section \ref{subsec_path_signatures_tensor_algebras}). As before, we write $F_{[n]}^j$ for the $j-$th column of $F_{[n]}$, and $F^{ij}_{[n]}$ for its $(i,j)-$entry. 

We call $F : \mathbf{1} + \mathfrak{t}^N(\RR^m) \to L(\RR^d,\mathfrak{t}^N(\RR^m))$ \emph{level-wise triangular} if, for every $j \in \{1,\hdots, d\}$ and $n \in \{1,\hdots,N\}$, the map $F_{[n]}^j : \mathbf{1} + \mathfrak{t}^N(\RR^m) \to \mathfrak{t}^N(\RR^m)$ depends only on $\pi_{1,n}(\mathbf{a})$, i.e. $F_{[n]}^j(\mathbf{a}) \equiv F_{[n]}^j\big(\mathbf{a}^{(1)},\hdots, \mathbf{a}^{(n)}\big)$.

\begin{theorem}
\label{prop_intrinsic_global_existence}
    Fix $T>0$ and let $F: \mathbf{1} + \mathfrak{t}^N(\RR^m) \to L(\RR^d,\mathfrak{t}^N(\RR^m))$ be continuous. If $F$ satisfies the homogeneous linear growth condition
    \begin{align}
    \label{eq_strong_homogeneous_linear_growth}
        \| F^{j}(\mathbf{a}) \|_h \leq C_F \big(1 + \| \mathbf{a} \|_h \big), \quad \text{for all} \ j \in \{1,\hdots,d\},
    \end{align}
    and some constant $C_F > 0$, then the CDE \eqref{eq_generalized_lifted_Sig_CDE} admits a global solution. Alternatively, if $F$ is level-wise triangular and there exists $C_F > 0$ such that
    \begin{align}
    \label{eq_weak_homogeneous_linear_growth}
        |F_{[n]}^{j}(\mathbf{a})| \leq C_F \big(1 + \| \pi_{0,n}(\mathbf{a}) \|_h \big), \quad \text{for all} \ j \in \{1,\hdots,d\},
    \end{align}
    and $n \in \{1,\hdots, N\}$, then \eqref{eq_generalized_lifted_Sig_CDE} again admits a global solution. 
\end{theorem}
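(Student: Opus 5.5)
The plan is to combine local existence with an a priori bound that prevents finite-time blow-up. By Proposition \ref{prop_local_wellposedness_Sig_CDEs}, continuity of $F$ gives a solution on a maximal interval $[0,t_{\max})$. Since $\mathbf{1}+\mathfrak{t}^N(\RR^m)$ is finite-dimensional, the standard extension argument (the finite-dimensional analogue of Proposition \ref{prop_global_solution}) says that if $t_{\max}<T$, then $S_t$ must leave every compact set as $t\uparrow t_{\max}$. By the ball–box estimate, gauge-bounded sets are relatively compact, so it suffices to show that $\|S_t\|_h$ stays bounded on $[0,t_{\max})$ by a constant depending only on $T$, $\|x\|_{1\text{-Höl}}$, $C_F$ and $S_0$. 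All homogeneous gauges are equivalent \cite[Theorem 7.44]{Friz2010}, so I am free to use a convenient one. I will take the smooth homogeneous function
\[
\Phi(\mathbf{a}) \coloneqq \sum_{n=1}^N |\mathbf{a}^{(n)}|^{2N!/n}.
\]
It is $C^1$ (every exponent is $\ge 2$), homogeneous of degree $2N!$ under $\delta_\lambda$, and satisfies $\Phi(\mathbf{a})^{1/(2N!)}\asymp \|\mathbf{a}\|_h$.

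\textbf{First case, \eqref{eq_strong_homogeneous_linear_growth}.} Writing \eqref{eq_generalized_lifted_Sig_CDE} levelwise, and using that $F^j$ is $\mathfrak{t}^N$-valued (so it has no level-$0$ component), gives
\[
\dot S^{(n)}_t=\sum_j\sum_{k=1}^n S^{(n-k)}_t\otimes F^j_{(k)}(S_t)\,\dot x^j_t .
\]
I then bound each factor using the gauge, namely $|S^{(n-k)}_t|\lesssim \|S_t\|_h^{\,n-k}$ and $|F^j_{(k)}(S_t)|\lesssim \|F^j(S_t)\|_h^{\,k}\le C(1+\|S_t\|_h)^k$. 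This yields $|\dot S^{(n)}_t|\lesssim (1+\|S_t\|_h)^n|\dot x_t|$. Differentiating $\Phi$ along the solution then gives
\[
\Big|\tfrac{\dd}{\dd t}\Phi(S_t)\Big|\lesssim \sum_n |S^{(n)}_t|^{2N!/n-1}(1+\|S_t\|_h)^n|\dot x_t| \lesssim (1+\|S_t\|_h)^{2N!}|\dot x_t| \lesssim (1+\Phi(S_t))\,|\dot x_t|.
\]
The middle step uses $|S^{(n)}|^{2N!/n-1}\lesssim\|S\|_h^{2N!-n}$. Grönwall's inequality then bounds $\Phi(S_t)$, and hence $\|S_t\|_h$, on $[0,t_{\max})$, which gives global existence.

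\textbf{Second case, \eqref{eq_weak_homogeneous_linear_growth} with level-wise triangularity.} Here I argue by induction on the level $n$, proving that $\sup_{t<t_{\max}}\|\pi_{0,n}(S_t)\|_h<\infty$ for each $n$. Triangularity is exactly what makes the levels $\le n$ of the equation a closed subsystem. In the level-$n$ equation, the terms with $1\le k<n$ involve only $S^{(n-k)}$ and $F_{[k]}(\pi_{0,k}S)$, which are bounded by the induction hypothesis. The remaining term is $k=n$, namely $S^{(0)}\otimes F_{[n]}=F_{[n]}$, and it satisfies
\[
|F_{[n]}|\le C\big(1+\|\pi_{0,n}S\|_h\big)\lesssim 1+\|\pi_{0,n-1}S\|_h+|S^{(n)}|^{1/n}\lesssim 1+|S^{(n)}|.
\]
This gives the linear differential inequality $|\dot S^{(n)}_t|\lesssim (1+|S^{(n)}_t|)|\dot x_t|$, and Grönwall bounds level $n$, closing the induction.

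The main obstacle I expect is the first case. The gauge is not smooth and is only $1/N$-Hölder equivalent to Euclidean distance, so naive Euclidean estimates lose powers. The key point to check carefully is that the mixed products $|S^{(n)}|^{2N!/n-1}(1+\|S\|_h)^n$ indeed total homogeneous degree $2N!$, so that the growth closes linearly in $\Phi$. I would verify this by tracking homogeneous degrees and using Young's inequality to absorb the lower-order ``$1+$'' terms.
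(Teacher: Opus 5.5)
Your proof is correct, but it takes a different route from the paper's.

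\textbf{The paper's route.} The paper passes to exponential coordinates $\Omega_t=\log_{\otimes N}S_t$ and writes the equation as $\dd\Omega_t=H(\Omega_t)F(\exp_{\otimes N}\Omega_t)\dd x_t$, with $H$ the Bernoulli--adjoint operator. It then uses the Folland--Stein structure lemma: the entries of $H$ are lower-triangular polynomials with $|H^{ik}(\Omega)|\lesssim\|\exp_{\otimes N}\Omega\|_h^{d_i-d_k}$. The triangular case is an induction over levels, closed with Bihari--LaSalle.

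\textbf{Your route.} You stay in tensor coordinates, where the convolution $\dot S^{(n)}=\sum_{k\ge 1}S^{(n-k)}\otimes F_{(k)}(S)\,\dot x$ shows the triangular, degree-$n$ structure directly. No BCH/Bernoulli machinery is needed. In the second case, the bound $|S^{(n)}|^{1/n}\le 1+|S^{(n)}|$ lets ordinary Gr\"onwall replace Bihari--LaSalle.

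\textbf{The first case.} Here the difference matters more. The paper collapses all levels into $|\Omega_t|\lesssim|\Omega_0|+\int_0^t(1+\|\Omega_u\|_h^N)\,|\dd x_u|$ and appeals to Gr\"onwall plus ball--box. That integrand can grow like $|\Omega^{(1)}_u|^N$. The linear bound available on bounded sets has a constant that depends on the very bound being sought. Your degree-$2N!$ homogeneous Lyapunov function $\Phi$ pairs the level-$n$ growth $(1+\|S\|_h)^n$ with the weight $|S^{(n)}|^{2N!/n-1}$. That pairing is exactly what makes the differential inequality linear in $\Phi$, so your argument makes explicit a step the paper leaves implicit.

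\textbf{What the paper's route buys.} Exponential coordinates and the Folland--Stein lemma apply verbatim to any homogeneous group, as claimed in Remark~\ref{rem_hml_special_case}. Your $\Phi$-weighting transfers to that setting unchanged, using the weight $|\Omega_i|^{2N!/d_i-1}$.

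\textbf{Triangularity is not needed.} Your second-case induction never uses level-wise triangularity; it only uses the growth bound on the lower blocks. In fact \eqref{eq_weak_homogeneous_linear_growth} implies \eqref{eq_strong_homogeneous_linear_growth} up to constants, because $|F^j_{[n]}(\mathbf{a})|^{1/n}\lesssim 1+\|\pi_{0,n}(\mathbf{a})\|_h\lesssim 1+\|\mathbf{a}\|_h$. So your first argument already covers the second case.
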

\begin{proof}
    Since all homogeneous gauges are equivalent \cite[Theorem 7.44]{Friz2010}, we follow \cite{magnani2018lipschitz} and fix
    $$\| \ba \|_h \coloneqq \| \log_{\otimes N} \ba \|_h \coloneqq \left( \sum_{i=1}^\mu (\log_{\otimes N} \ba)_i^{(2N)!/d_i} \right)^{1/(2N)!},$$
    where $\mu \in \NN$ is chosen so that $\mathfrak{t}^N(\RR^m) \cong \RR^\mu$; the index $i \in \{1,\hdots,\mu\}$ labels the graded (word) basis $\big\{ \cW_n : n = 1,\hdots,N \big\}$; and $d_i \in \{1,\hdots,N\}$ denotes the word length of the basis element corresponding to $i$.\footnote{For instance, consider the word basis $\{1,2,11,12,21,22\}$ of $\mathfrak{t}^2(\RR^2)$. Assigning the labels $\{1,2,3,4,5,6\}$ to these basis elements in this order, we have $d_1=d_2=1$, while $d_i=2$ for all $i\geq 3$.} Because \eqref{eq_generalized_lifted_Sig_CDE} admits a global solution if and only if its linearized form \eqref{eq_log_transform_Sig_CDE} does, we prove the result for the latter. For $\Omega \in \mathfrak{t}^N(\RR^m)$, define
    $$H(\Omega) \coloneqq \sum_{k=0}^{N-1} \frac{(-1)^k B_k}{k!} \operatorname{ad}_{\Omega_t}^k(\,\cdot \,) \quad \text{and} \quad \mathcal{H}(\Omega) \coloneqq H(\Omega) \circ F\big(\exp(\Omega)\big).$$
    The map $\mathcal{H}(\Omega)$ is thus a composition of linear operators and hence belongs to  $L(\RR^d,\mathfrak{t}^N(\RR^m))$. More importantly, by \cite[Proposition 1.26]{gb28hardy}, for all $i,k \in \{1,\cdots,\mu\}$ there exists a polynomial $p^i_k : \mathfrak{t}^N(\RR^m) \to \RR$ such that
    \begin{align}
    \label{eq_adjoint_matrix_entries}
       p^i_k (\Omega) =  H^{ik}(\Omega) \quad \text{and} \quad |p^i_k(\Omega)| \lesssim \| \exp_{\otimes N}(\Omega) \|_h^{d_i - d_k},
    \end{align}
    for all $\Omega \in \mathfrak{t}^N(\RR^m)$. In particular, $p^i_k = \delta^i_k$ if $d_i \leq d_k$. Put differently, the $(i,k)-$entry of $H(\Omega)$ is a polynomial in the coordinates of $\Omega$ satisfying the growth bound above, and it vanishes whenever the degree of $k$ exceeds that of $i$. Consequently, the matrix $H(\Omega)$ is lower triangular, making $H$ level-wise triangular in our terminology.

    Let $\Omega: [0,t_{\max}) \to \mathfrak{t}^N(\RR^m)$ be a solution to \eqref{eq_log_transform_Sig_CDE}. Then, for all $t\in [0,t_{\max})$,
    \begin{align*}
        |\Omega_t| \leq |\Omega_0| + \int_0^t |\mathcal{H}(\Omega_u)|\, |\dd x_u| \lesssim |\Omega_0| + \int_0^t \sum_{i=1}^\mu \sum_{k=1}^d |\mathcal{H}^{ik}(\Omega_u)| \, |\dd x_u|.
    \end{align*}
    Furthermore, using \eqref{eq_adjoint_matrix_entries} and \eqref{eq_strong_homogeneous_linear_growth}, 
    \begin{align*}
        |\mathcal{H}^{ik}(\Omega_u)| &= \Big| \sum_{l=1}^\mu H^{il}(\Omega_u) F^{lk}\big(\exp_{\otimes N}(\Omega_u) \big) \Big| \lesssim \sum_{l=1}^\mu \| \exp_{\otimes N}(\Omega_u) \|_h^{d_i - d_l} |F^{lk}\big(\exp_{\otimes N}(\Omega_u) \big)| \\
        &\leq \sum_{l=1}^\mu \| \exp_{\otimes N}(\Omega_u) \|_h^{d_i - d_l} \| F^k\big(\exp_{\otimes N}(\Omega_u)\big) \|_h^{d_l}  \lesssim \sum_{l=1}^\mu \| \Omega_u \|_h^{d_i - d_l} \big(1 + \| \Omega_u \|_h \big)^{d_l}.
    \end{align*}
    Therefore, up to a universal constant dependent on $N$,
    \begin{align*}
        |\Omega_t| \leq |\Omega_0| + \int_0^t \sum_{i=1}^\mu \sum_{k=1}^d \big(1 + \| \Omega_u \|_h^{d_i} \big) \, |\dd x_u| \lesssim |\Omega_0| + \int_0^t \big(1 + \| \Omega_u \|_h^N \big) \, |\dd x_u|,
    \end{align*}
    and the first claim follows from Grönwall's inequality \cite[Lemma 3.2]{Friz2010} together with the ball-box estimate \cite[Proposition 7.49]{Friz2010} — note that $[0,t] \ni u \mapsto \exp(\Omega_u)$ is a bounded set in $\mathbf{1} + \mathfrak{t}^N(\RR^m)$.

    To prove the second part, assume that $F$ is level-wise triangular and that \eqref{eq_weak_homogeneous_linear_growth} holds in place of \eqref{eq_strong_homogeneous_linear_growth}. We proceed by induction on the levels of $\Omega$. For each $n \in \{1,\hdots, N\}$,
    \begin{align*}
        \dd \Omega_t^{(n)} = \mathcal{H}_{[n]}(\Omega_t) \dd x_t, \quad \Omega_0^{(n)} = \pi_{1,n}(\Omega_0)
    \end{align*}
    As such, for $n = 1$, we have
    \begin{align*}
        |\Omega_t^{(1)}| \leq |\Omega_0^{(1)}| + \int_0^t |\mathcal{H}_{[1]}(\Omega_u)| \, | \dd x_u | = |\Omega_0^{(1)}| + \int_0^t |H_{[1]}(\Omega_u) \, F(\exp_{\otimes N}(\Omega_u))| \, | \dd x_u |. 
    \end{align*}
    Since $d_i = 1$ for all $i \in \{1,\hdots,m\}$, it follows that $H^{ik}(\Omega_u) = \delta^i_k$ for all $k \in \{1,\hdots,\mu\}$, and thus
    \begin{align*}
        H_{[1]}(\Omega_u) \, F(\exp_{\otimes N}(\Omega_u)) = F_{[1]}(\exp_{\otimes N}(\Omega_u)).
    \end{align*}
    Hence, by our hypothesis \eqref{eq_weak_homogeneous_linear_growth},
    \begin{align*}
        |\Omega_t^{(1)}| \leq |\Omega_0^{(1)}| + \int_0^t | F_{[1]}(\exp_{\otimes N}(\Omega_u))| \, | \dd x_u | \lesssim |\Omega_0^{(1)}| + \int_0^t \big( 1 + |\Omega_u^{(1)}| \big) \, |\dd x_u|,
    \end{align*}
    where we used the fact that $\| \pi_{1} \exp_{\otimes N}(\Omega_u)\|_h$ coincides with the Euclidean $(2N)!-$norm of $\Omega^{(1)}_u$, and so, up to a constant depending on $N$, $\|\pi_1 \exp_{\otimes N}(\Omega_u)\|_h \lesssim |\Omega_u^{(1)}|$. By Grönwall’s inequality, $\Omega^{(1)}$ is globally defined, establishing the base case.

    Next, assume $\Omega^{(k)}$ exists globally for all $k < n < N$. We show that $\Omega^{(n)}$ does as well. Consider the matrix $H_{[n]}(\Omega_u) F(\exp_{\otimes N}(\Omega_u))$, whose $(i,j)$-entry is given by
    \begin{align*}
        \sum_{k=1}^\mu H^{ik}(\Omega_u) F^{kj}(\exp_{\otimes N}(\Omega_u)) = \sum_{k=1}^{\mu_{n-1}} p^i_k(\Omega_u) F^{kj}(\exp_{\otimes N}(\Omega_u)) + F^{ij}(\exp_{\otimes N}(\Omega_u)),
    \end{align*}
    where $\mu_{n-1}$ denotes the dimension of $\mathfrak{t}^{n-1}(\RR^m)$. Because both $F$ and $H$ are level-wise triangular, the summation term on the right depends only on $\big(\Omega_u^{(1)}, \hdots, \Omega_u^{(n-1)}\big)$, which are globally defined by the induction hypothesis. We may therefore write
    \begin{align*}
        H_{[n]}(\Omega_u) F(\exp_{\otimes N}(\Omega_u)) = Q_u + F_{[n]}(\exp_{\otimes N}(\Omega_u)),
    \end{align*}
    where $Q_u$ depends solely on $\big(\Omega_u^{(1)},\hdots,\Omega_u^{(n-1)}\big)$. Consequently, by \eqref{eq_weak_homogeneous_linear_growth},
    \begin{align*}
        |\Omega_t^{(n)}| \lesssim |\Omega_0^{(n)}| + \sup_{u \in [0,T]}|Q_u| \int_0^t |\dd x_u| +  \int_0^t \big(1 + \| \pi_{0,n} \exp_{\otimes N}(\Omega_u) \|_h \big) \, |\dd x_u|.
    \end{align*}
    Since $\| \pi_{0,n} \exp_{\otimes N}(\Omega_u) \|_h \lesssim |\Omega_u|^{1/n}$ (on bounded sets), the claim follows from the Bihari–LaSalle inequality (non-linear Grönwall lemma) \cite{lasalle1949uniqueness,bihari1956generalization} with modulus $\omega(u) = u^{1/n}$. 
\end{proof}

Several observations are now in order. 

\begin{remark}
\label{rem_hml_special_case}
    To begin with, we emphasize that Theorem \ref{prop_intrinsic_global_existence} extends to any homogeneous group in the sense of Section \ref{subsec_path_signatures_tensor_algebras} (see also \cite[Section 2.1]{magnani2018lipschitz}). In particular, it applies to $G^N(\RR^m)$, making \eqref{eq_homogeneous_lin_growth} a special case of \eqref{eq_strong_homogeneous_linear_growth} when considering maps $F$ that depend only on the first $N$ levels. Indeed, as observed earlier, \eqref{eq_homogeneous_lin_growth} is equivalent to \eqref{eq_hlg_equivalent_form}, and when each $F^j$ takes values in $\RR^m$, the homogeneous gauge $\|F^j(\mathbf{g})\|_h$ reduces to the standard Euclidean norm. This translates the homogeneous linear growth condition \eqref{eq_homogeneous_lin_growth} from the path level to the present setting.
\end{remark}

\begin{remark}
    Additionally, Theorem \ref{prop_intrinsic_global_existence} may be viewed as a generalization of \cite[Theorem 4.1]{magnani2018lipschitz}. From \eqref{eq_geometric_formulation_Sig_CDEs} we have
    \[\dot{S}_t = \sum_{i=1}^\mu a_i(t,S_t) X_i(S_t), \quad \text{where} \quad a_i(t,S_t) \coloneqq \sum_{j=1}^d F^{ij}(S_t) \dd x_t^j\]
    and $X_i(S_t) \coloneqq (L_{S_t})_*(e_i)$ denotes the unique left-invariant vector field satisfying $X_i(\mathbf{1}) = e_i$. It follows immediately that the assumptions in \cite{magnani2018lipschitz} imply our condition \eqref{eq_strong_homogeneous_linear_growth}. The presence of $(1+\|\mathbf{g}\|_h)$ instead of $\|\mathbf{g}\|_h$ is immaterial, and replacing a constant $C_F$ by an integrable function does not affect the argument. Finally, the uniqueness of equilibrium points established in \cite{magnani2018lipschitz} appears simply as a corollary of the underlying \textit{a priori} estimate ensuring global existence. 
\end{remark}

\begin{remark}
    Beyond uniqueness at equilibrium points, \cite{magnani2018lipschitz} establishes uniqueness when the vector field lies in the module generated by an involutive subalgebra; see \cite[Theorem 4.3]{magnani2018lipschitz}. For the homogeneous groups relevant here, namely $\mathbf{1}+\mathfrak{t}^N(\mathbb{R}^m)$ and $G^N(\mathbb{R}^m)$, this condition is extremely restrictive, as the only involutive subalgebras of $\mathbb{R}^m$ are one-dimensional subspaces. In effect, the vector field must point in a single fixed direction. More importantly, however, \cite{magnani2018lipschitz} shows that local uniqueness generally fails under Lipschitz assumptions with respect to a homogeneous gauge, even for \emph{horizontal vector fields} (i.e.~taking values in $L(\mathbb{R}^d,\mathbb{R}^m)$, rather than $L(\mathbb{R}^d,\mathfrak{t}^N(\mathbb{R}^m))$ as in our setting).
\end{remark}

Taken together, the negative uniqueness result of \cite{magnani2018lipschitz} and our Theorem \ref{prop_intrinsic_global_existence} indicate the following distinction. The path-level linear growth condition has a natural \emph{intrinsic} analogue on the group (Proposition \ref{prop_homogeneous_linear_growth} and Remark \ref{rem_hml_special_case}), whereas the continuity condition required for well-posedness (Proposition \ref{prop_well_posedness_SigCDEs}) does not: intrinsic Lipschitz regularity is too weak.

We therefore conclude by showing that, in the finite-dimensional setting considered here, local Lipschitz continuity on paths is equivalent to \emph{extrinsic} Lipschitz continuity on the group, i.e. with respect to a Euclidean metric via tensor coordinates.

\begin{proposition}
    Fix $1/2 < \beta < \alpha \leq 1$. Consider a map $F: T^N(\RR^m) \to L(\RR^d,\RR^m)$. Then $F \circ S^N : \Lambda^{\alpha\text{-Höl}}_\beta([0,T],\RR^m) \to L(\RR^d,\RR^m)$ is locally Lipschitz with respect to $d_{\beta\text{-Höl}}^\Lambda$ if and only if $F|_{G^N(\RR^m)}$ is locally Lipschitz with respect to the Euclidean metric $|\cdot|$ on $T^N(\RR^m)$.
\end{proposition}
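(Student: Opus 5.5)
The plan is to prove the two implications separately. The ``if'' direction follows from local Lipschitz continuity of the truncated signature map. The ``only if'' direction requires a locally Lipschitz right inverse of $S^N$ on $G^N(\RR^m)$, and constructing it is the main difficulty.

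\emph{($\Leftarrow$).} Assume $F|_{G^N(\RR^m)}$ is locally Lipschitz for $|\cdot|$. The first step is to show that $F|_{G^N(\RR^m)}$ is Lipschitz on bounded subsets of $G^N(\RR^m)$. Since $G^N(\RR^m)$ is closed in $T^N(\RR^m)$, bounded subsets have compact closure in $G^N(\RR^m)$. On such a compact set, local Lipschitz continuity and boundedness give a global Lipschitz constant by the usual Lebesgue-number argument: nearby pairs use the local constants, and distant pairs use $2\sup|F|/\delta$.

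Next, fix a stopped path $(t_0,x_{\cdot\land t_0})$ and a $d^\Lambda_{\beta\text{-Höl}}$-ball around it. Recall that $S^N(y)_{0,s}=S^N(y_{\cdot\land s})_{0,T}$, and that $d^\Lambda_{\beta\text{-Höl}}$ dominates $\|x_{\cdot\land t}-y_{\cdot\land s}\|_{\beta\text{-Höl};[0,T]}$. Proposition~\ref{lem_continuity_sig_map} and Corollary~\ref{cor_signature_loc_lip}, applied to the finitely many levels $1,\dots,N$, then show two things. First, $S^N$ maps this ball into a bounded subset of $G^N(\RR^m)$. Second, $S^N$ is Lipschitz there with respect to $d^\Lambda_{\beta\text{-Höl}}$. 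Composing these Lipschitz maps gives local Lipschitz continuity of $F\circ S^N$.

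\emph{($\Rightarrow$).} Assume $f:=F\circ S^N$ is locally Lipschitz, and fix $\mathbf{g}_0\in G^N(\RR^m)$. The goal is a map $\mathbf{g}\mapsto x_{\mathbf{g}}\in C^{\alpha\text{-Höl}}([0,T],\RR^m)$, defined on a neighbourhood $U$ of $\mathbf{g}_0$ in $G^N(\RR^m)$, with the following properties:
\begin{itemize}
\item $x_{\mathbf{g}}(0)=0$;
\item $S^N(x_{\mathbf{g}})_{0,T}=\mathbf{g}$;
\item $\|x_{\mathbf{g}_1}-x_{\mathbf{g}_2}\|_{\beta\text{-Höl}}\lesssim|\mathbf{g}_1-\mathbf{g}_2|$ for $\mathbf{g}_1,\mathbf{g}_2\in U$.
\end{itemize}
Given such a map, and since the time coordinate is fixed at $T$, we get $|F(\mathbf{g}_1)-F(\mathbf{g}_2)|=|f(x_{\mathbf{g}_1})-f(x_{\mathbf{g}_2})|\lesssim|\mathbf{g}_1-\mathbf{g}_2|$. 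The map also lands in a bounded Hölder set, so a single local Lipschitz constant of $f$ applies.

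The map cannot come from Chow--Rashevskii geodesics. These only give $\|x_{\mathbf{g}_1}-x_{\mathbf{g}_2}\|\lesssim d_{cc}$, and $d_{cc}$ is merely $|\cdot|^{1/N}$-controlled. Instead I would use a finite-dimensional family of paths.

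\begin{enumerate}
\item Fix $K$ and, for $\theta=(\theta_1,\dots,\theta_K)\in(\RR^m)^K$, let $x^\theta$ be the piecewise linear path with increments $\theta_i$ on fixed equal subintervals of $[0,T]$. Then $\theta\mapsto x^\theta$ is linear and bounded into $C^{1\text{-Höl}}\subset C^{\alpha\text{-Höl}}$.
\item The map $P(\theta):=S^N(x^\theta)_{0,T}=\exp(\theta_1)\otimes_N\cdots\otimes_N\exp(\theta_K)$ is polynomial. By Chow's theorem it is onto $G^N(\RR^m)$ for $K$ large, since every element of $G^N(\RR^m)$ is the truncated signature of a piecewise linear path.
\item By Sard's theorem, $P$ then has a regular point $\theta^*$, with $D P(\theta^*)$ surjective onto the tangent space of $G^N(\RR^m)$ at $\mathbf{h}^*:=P(\theta^*)$. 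The implicit function theorem gives a smooth local section $\Sigma$ of $P$ near $\mathbf{h}^*$ with $\Sigma(\mathbf{h}^*)=\theta^*$.
\item To move from $\mathbf{h}^*$ to $\mathbf{g}_0$, choose a piecewise linear path $z$ with $S^N(z)=\mathbf{g}_0\otimes_N(\mathbf{h}^*)^{-1}$. For $\mathbf{g}$ near $\mathbf{g}_0$, define $x_{\mathbf{g}}$ as the concatenation of $z$ with $x^{\Sigma(\mathbf{h}^*\otimes_N\mathbf{g}_0^{-1}\otimes_N\mathbf{g})}$, reparametrised onto $[0,T]$ with fixed breakpoints. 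By Chen's identity, $S^N(x_{\mathbf{g}})_{0,T}=\mathbf{g}$.
\end{enumerate}

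It remains to check the Lipschitz bound on $\mathbf{g}\mapsto x_{\mathbf{g}}$. Left multiplication is polynomial, and $\Sigma$ is smooth, so $\mathbf{g}\mapsto\Sigma(\mathbf{h}^*\otimes_N\mathbf{g}_0^{-1}\otimes_N\mathbf{g})$ is Euclidean-Lipschitz near $\mathbf{g}_0$. The map from parameters to paths is linear and bounded. Hence $\mathbf{g}\mapsto x_{\mathbf{g}}$ is Lipschitz into $C^{\beta\text{-Höl}}$, which completes the argument.

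The step I expect to be the main obstacle is item~3. One must justify carefully that $P$ attains full rank onto the manifold $G^N(\RR^m)$, not merely onto $T^N(\RR^m)$. I would argue this from surjectivity together with Sard's theorem, viewing $P$ as a smooth map into $G^N(\RR^m)$. A regular value exists because the image of $P$ is all of $G^N(\RR^m)$ and so has positive measure there.
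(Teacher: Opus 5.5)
Your proof is correct, and its skeleton matches the paper's. The `if' direction comes from the local Lipschitz property of the truncated signature map (Proposition~\ref{lem_continuity_sig_map}). The `only if' direction rests on a locally Euclidean-Lipschitz right inverse $\mathbf{g}\mapsto x_{\mathbf{g}}$ of $S^N$ into Lipschitz paths.

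The real difference is how that lift is obtained. The paper simply cites \cite[Proposition 7.64]{Friz2010}, rephrased from $1$-variation to the $1$-H\"older seminorm. You construct the lift yourself, using the polynomial map $P(\theta)=\exp(\theta_1)\otimes_N\cdots\otimes_N\exp(\theta_K)$, Sard's theorem, the submersion theorem, and a left translation via Chen's identity. What your route buys:
\begin{itemize}
\item it is self-contained;
\item it produces a smooth, not merely Lipschitz, local section;
\item it gives the $1$-H\"older bound directly, since $\theta\mapsto x^\theta$ is linear;
\item it makes explicit why Carnot--Carath\'eodory geodesics would only give $|\cdot|^{1/N}$-control.
\end{itemize}
The paper's route buys brevity. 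In the `if' direction you are also more careful than the paper about the time variable, through $S^N(y)_{0,s}=S^N(y_{\cdot\wedge s})_{0,T}$ and the domination of $\|x_{\cdot\wedge t}-y_{\cdot\wedge s}\|_{\beta\text{-H\"ol}}$ by $d^\Lambda_{\beta\text{-H\"ol}}$.

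Two small justifications should be tightened.
\begin{itemize}
\item ``Onto for $K$ large'' does not follow just from every element being the signature of some piecewise linear path, since the number of pieces could depend on $\mathbf{g}$. You do not need surjectivity for a fixed $K$. Since $G^N(\RR^m)=\bigcup_K P_K((\RR^m)^K)$ is a countable union of $\sigma$-compact images, some $P_K$ has image of positive measure, and Sard gives a regular point for that $K$. Alternatively, inverse-closedness and dilation invariance of these images give a uniform $K$.
\item On the non-locally-compact path space, local Lipschitz continuity of $F\circ S^N$ does not yield a single constant on a bounded H\"older set. Instead, shrink $U$ so that, by continuity of $\mathbf{g}\mapsto x_{\mathbf{g}}$, every $(T,x_{\mathbf{g}})$ lies in the neighbourhood of $(T,x_{\mathbf{g}_0})$ where $F\circ S^N$ is Lipschitz. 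Equivalently, use compactness of $\{x_{\mathbf{g}}:\mathbf{g}\in\overline{U}\}$.
\end{itemize}
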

\begin{proof}
    If $F|_{G^N(\RR^m)}$ is locally Euclidean-Lipschitz, then Proposition \ref{lem_continuity_sig_map} applied to the continuity of the signature map immediately yields that $F \circ S^N$ is locally Lipschitz with respect to $d^\Lambda_{\beta\text{-Höl}}$. 
    
    For the converse, fix $\mathbf{g} \in G^N(\mathbb{R}^m)$ and choose $\mathbf{g}_1,\mathbf{g}_2 \in G^N(\mathbb{R}^m)$ in a sufficiently small neighbourhood of $\mathbf{g}$. By \cite[Proposition 7.64]{Friz2010}, and using that the argument of this result can be rephrased in terms of the $1-$Hölder seminorm (rather than $1-$variation), there exist Lipschitz paths
    \[y_1,y_2\in C^{1\text{-Höl}}([0,T],\mathbb{R}^m)\subset \Lambda^{\alpha\text{-Höl}}_\beta([0,T],\mathbb{R}^m)\]
    such that $S^N(y_i)_{0,T} = \mathbf{g}_i$ and\footnote{Both paths $y_1$ and $y_2$ can be chosen to start at $0$.}
    \[ \|y_1-y_2\|_{1\text{-Höl};[0,T]} \leq C_{\mathbf{g}} \, |\mathbf{g}_1-\mathbf{g}_2| \]
    for some constant $C_{\mathbf{g}}>0$. By the assumed local Lipschitz continuity of $F\circ S^N$,
    \[|F(\mathbf{g}_1)-F(\mathbf{g}_2)| = \big|F(S^N(y_1)_{0,T}) - F(S^N(y_2)_{0,T})\big|
    \lesssim \|y_1-y_2\|_{\beta\text{-Höl};[0,T]},\]
    where we suppress the dependence on $y$ such that $\mathbf{g} = S(y)_{0,T}$. Since $\beta<1$, we have
    \[\|y_1-y_2\|_{\beta\text{-Höl};[0,T]}
    \lesssim T^{1-\beta} \|y_1-y_2\|_{1\text{-Höl};[0,T]} \lesssim C_{\mathbf{g}} \, |\mathbf{g}_1-\mathbf{g}_2|.\]
    Thus $F|_{G^N(\mathbb{R}^m)}$ is locally Euclidean–Lipschitz at $\mathbf{g}$, completing the proof. 
\end{proof}

\newpage
\appendix

\section{Auxiliary Results}
\label{Apendix}

\subsection{Section \ref{sec_preliminaries}}
\label{Appendix_2}

\begin{proof}[Proof of Lemma \ref{lem_characterising_B_psi}]
    We adapt the argument in \cite[Theorem 2.7]{dorsek2010semigroup} to the current Banach-valued setting. Fix an arbitrary $\varepsilon > 0$. Using \eqref{eq_vanishing_tail_condition}, choose $R>0$ such that 
    \[
    \sup_{x \notin K_R} \frac{\|f(x)\|_Y}{\psi(x)} < \varepsilon.
    \]
    Since $f$ is continuous over $X$ by assumption, $M_R \coloneqq \sup_{x \in K_R} \|f(x)\|_Y < \infty$. Choose $N \geq M_R$ and define 
    \[
    T_N(y) = 
    \begin{cases}
        y, \quad &\| y \|_Y \leq N, \\
        N y/\|y\|_Y, \quad &\| y \|_Y >N,
    \end{cases} 
    \qquad g_N = T_N \circ f.
    \]
    Note that $T_N : Y \to Y$ is continuous, and hence $g_N$ is a continuous, bounded function over $X$. Moreover,  we have that $\|y - T_N(y) \|_Y = \max\{0,\|y \|_Y - N\}$. Consequently,
    \[
    \frac{\|f(x) - g_N(x)\|_Y}{\psi(x)} = \frac{\max\{0,\|f(x)\|_Y - N\}}{\psi(x)} \leq \frac{\|f(x)\|_Y}{\psi(x)},
    \]
    and since $g_N(x) = f(x)$ for every $x \in K_R$, 
    \[
    \|f - g_N \|_{\cB_\psi} =  
    \sup_{x\notin K_R} \frac{\|f(x) - g_N(x)\|_Y}{\psi(x)} \leq \sup_{x \notin K_R} \frac{\| f(x) \|_Y}{\psi(x)} < \varepsilon.
    \]
    This proves that $f \in \cB_\psi(X,Y)$. Finally, for the converse direction, note that the tail condition \eqref{eq_vanishing_tail_condition} follows immediately from $f \in \cB_\psi(X,Y)$; see \cite[Lemma 2.7 (i)]{cuchiero2023global}. If in addition $\psi$ is locally bounded (that is, for every $x\in X$ there is a neighborhood $U \ni x$ and $M<\infty$ with $\sup_{U}\psi\leq M)$), then every $f\in\mathcal B_\psi(X,Y)$ is continuous: approximate $f$ by continuous bounded functions $g_n$ in the $\cB_{\psi}$ norm, and on any neighborhood where $\psi \leq M$ observe that
    \[\|g_n-f\|_\infty\leq M\|g_n-f\|_{\mathcal B_\psi}\to 0.\]
    Thus $f$ is a uniform limit of continuous functions on that neighborhood, so continuous there. Since the point was arbitrary, $f$ is continuous on $X$.
\end{proof}

\begin{proof}[Proof of Proposition \ref{lem_projective_limit_topo}]
    Let $\tau'$ be a topology turning $\varprojlim T_{1,\lambda_k}(\RR^m)$ into a Fréchet space and making all inclusions $\iota_k$ continuous, and let $\tau$ denote the locally convex topology introduced above.

    By construction, $\tau$ is the coarsest topology under which the maps $\iota_k$ are continuous, so we immediately have $\tau \subseteq \tau'$. To establish the reverse inclusion, consider the identity map
    $$
        \text{id}: \big(\varprojlim T_{1,\lambda_k}(\RR^m), \tau \big) \to \big(\varprojlim T_{1,\lambda_k}(\RR^m), \tau' \big).
    $$
    If this map is continuous, the claim follows. Take a sequence $(x_n) \subset \varprojlim T_{1,\lambda_k}(\RR^m)$ such that 
    $$
        x_n \overset{\tau}{ \longrightarrow} x \quad \text{and} \quad x_n \overset{\tau'}{ \longrightarrow} y.
    $$
    
    Since $\tau'$ is finer than $\tau$, the convergence $x_n \to y$ in $\tau'$ implies convergence $x_n \to y$ in $\tau$. But $\big(\varprojlim T_{1,\lambda_k}(\RR^m), \tau\big)$ is Hausdorff (as it is completely metrizable), so necessarily $x = y$. This shows that the graph of $\text{id}$ is closed, and the conclusion follows from the closed graph theorem \cite[$\S$ 15, Theorem 12.3]{kothe1969topological}.
\end{proof}

\begin{proof}[Proof of Lemma \ref{lem_compact_emb_limit_spaces}]
    By Tonelli's theorem we may rewrite, for $r\in\{p,q\}$,
    \[
    \|\ba\|_{(r)}=\sum_{n\ge0}\rho^{(r)}_n|\ba^{(n)}|,
    \qquad 
    \rho^{(r)}_n:=\frac{1}{Z_r}\sum_{k\ge0} \exp(-k^r) \lambda_k^{n}.
    \]
    We claim that $\rho^{(q)}_n/\rho^{(p)}_n\to0$ as $n\to\infty$. Indeed, since $q>p$, for every $\varepsilon>0$ there exists $K\in\NN$ such that
    \[
    \exp(-k^q) \le \varepsilon \exp(-k^p),\qquad k \ge K.
    \]
    Let $L:=\max_{0\le k<K}\lambda_k$ and choose $k_0\ge K$ with $\lambda_{k_0}>L$. Then
    \[
    \rho^{(q)}_n
    \le \frac{1}{Z_q}\Big(C\,L^n+\varepsilon\sum_{k\ge K} \exp(-k^p) \lambda_k^{n} \Big),
    \qquad C \coloneqq \sum_{k=0}^{K-1} \exp(-k^q) < Z_q,
    \]
    while
    \[
    \rho^{(p)}_n \geq \frac{1}{Z_p}\sum_{k\ge K} \exp(-k^p) \lambda_k^{n} \geq \frac{1}{Z_p} \exp(-k_0^p) \lambda_{k_0}^{n}.
    \]
    Dividing the bounds yields
    \[
    \frac{\rho^{(q)}_n}{\rho^{(p)}_n}
    \leq \frac{Z_p}{Z_q}\Big(C \exp(k_0^p) \Big(\frac{L}{\lambda_{k_0}}\Big)^n+\varepsilon\Big),
    \]
    and since $L/\lambda_{k_0}<1$ we obtain 
    \[
    \limsup_{n\to\infty} \rho^{(q)}_n/\rho^{(p)}_n \leq (Z_p/Z_q) \varepsilon.
    \]
    As $\varepsilon>0$ was arbitrary, it follows that $\rho^{(q)}_n/\rho^{(p)}_n \to 0$. Now, let $\mathbf{a} \in \cT^{(p)}(\RR^m)$. Then, 
    \[
    \|\ba-\pi_{0,N}\ba\|_{(q)}=\sum_{n>N}\rho^{(q)}_n|\ba^{(n)}|
    \le \Big(\sup_{n>N} \rho^{(q)}_n/\rho^{(p)}_n\Big)\sum_{n>N}\rho^{(p)}_n|\ba^{(n)}|
    \le \Big(\sup_{n>N} \rho^{(q)}_n/\rho^{(p)}_n\Big) \|\ba\|_{(p)},
    \]
    and taking the supremum over $\|\ba\|_{(p)}\le R$ gives \eqref{eq_tail_estimate}. As for compactness, let $(\ba_m)_{m\geq 1}$ be a bounded sequence in $\cT^{(p)}(\RR^m)$ and fix $N \geq 1$. Then $(\pi_{0,N} \ba_m)$ is bounded in the finite-dimensional space $T^N(\RR^m)$, and therefore admits a convergent subsequence in $\cT^{(q)}(\RR^m)$. A diagonal argument yields a subsequence $(\ba_{m_k})$ such that $(\pi_{0,N} \ba_{m_k})$ converges in $\cT^{(q)}(\RR^m)$ for every $N \geq 1$. Using the tail estimate \eqref{eq_tail_estimate}, we then conclude that this subsequence is Cauchy in $\cT^{(q)}(\RR^m)$, and hence convergent.
\end{proof}

\begin{proof}[Proof of Lemma \ref{lem_continuity_tensor_product}]
    Consider any sequence of positive scalars $(\lambda_k)$ and fix $1<p<q$. Since each space $T_{1,\lambda_k}(\RR^m)$ is a Banach algebra, we have
    \[
    \|\mathbf a \otimes\mathbf b\|_{(q)}
    = \frac{1}{Z_q}\sum_{k\ge0} \exp(-k^q)\|\mathbf a\otimes\mathbf b\|_{1,\lambda_k}
    \le \frac{1}{Z_q}\sum_{k\ge0} \exp(-k^q) \|\mathbf a\|_{1,\lambda_k} \|\mathbf b\|_{1,\lambda_k}.
    \]
    Using the relation between the norms $\| \cdot \|_{1,\lambda_k}$ and $\| \cdot \|_{(p)}$, this yields
    \[
    \|\mathbf a\otimes\mathbf b\|_{(q)}
    \le \frac{Z_p^2}{Z_q}\sum_{k\ge0} \exp(-k^q+2k^p) \|\mathbf a\|_{(p)} \|\mathbf b\|_{(p)},
    \qquad \mathbf a,\mathbf b\in\cT^{(p)}(\RR^m).
    \]
    Since $q>p$, the series $\sum_{k\ge0} \exp(-k^q+2k^p)$ converges, which proves the claim.
\end{proof}

\subsection{Section \ref{sec_path_dependent_cdes}}
\label{Appendix_3}

\begin{lemma}
\label{lem_aux_hale}
    Consider $\alpha \in \left( \frac{1}{2},1\right]$ and $\beta < \alpha$. Let $f: \Lambda^{\alpha\text{-Höl}}_\beta([0,T_1],\RR^m) \to L(\RR^d,\RR^m)$ be a continuous, non-anticipative functional, and for $\delta>0$ let $W \subset [0,T_1-\delta] \times C_\beta^{\alpha\text{-Höl}}([0,T_1],\RR^m)$ be a compact set. Then there exists a neighborhood $V$ of $W$ such that the restriction $f|_{V/_\sim}$ is bounded. More precisely, there exists $M>0$ such that
    \begin{equation}
    \label{eq_lem_aux_hale_bound}
        |f(t,z_{\cdot \land t})| < M, \ \text{for all} \ (t,z_{\cdot \land t}) \in V/_\sim \subset \Lambda^{\alpha\text{-Höl}}_\beta([0,T_1],\RR^m). 
    \end{equation}
    Additionally, there exist constants $0 < \tau < \delta$ and $R>0$ such that, for any $(T_0, w_{\cdot \land T_0}) \in W/_\sim$, we have
    \[
        \left(T_0 + t, (w \sqcup z)_{\cdot \land (T_0 + t)} \right) \in V/_\sim, \ \text{for all} \ t \in [0,\tau] \ \text{and} \ z \in \mathcal{D}(\tau,R),
    \]
    where
    \[
        \mathcal{D}(\tau,R) := \left\{z \in C^{\alpha\text{-Höl}}([0,\tau],\RR^m) : z_0 = 0,\ \|z\|_{\alpha\text{-Höl};[0,\tau]} \leq R \right\}. 
    \]
\end{lemma}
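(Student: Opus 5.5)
The plan is to proceed as in Hale's classical argument, with two parts: first bound $f$ near the compact set $W$, then show that short continuations of initial histories in $W$ stay in that neighbourhood.

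\emph{Step 1 (boundedness near $W$).} Let $q$ be the continuous quotient map onto $\Lambda^{\alpha\text{-Höl}}_\beta$. Then $f\circ q$ is continuous on $[0,T_1]\times C^{\alpha\text{-Höl}}_\beta$, and $W$ is compact, so $f\circ q$ is bounded on $W$, say by $M_0$. By continuity, each $(t,z)\in W$ has an open neighbourhood $U_{(t,z)}$, which I take to be a product of an interval and a $\beta$-Hölder ball, on which $|f\circ q|<M_0+1$. Finitely many of these cover $W$. I would then extract a Lebesgue-number type radius: there is $\rho>0$ such that every point within product distance $\rho$ of $W$ lies in one of the finitely many $U_{(t,z)}$. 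This is the standard compactness argument in a metric space. I set $V$ to be this $\rho$-neighbourhood, possibly intersected with $[0,T_1]\times C^{\alpha\text{-Höl}}$, and $M=M_0+1$. This gives \eqref{eq_lem_aux_hale_bound}.

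\emph{Step 2 (short continuations stay in $V$).} Fix $(T_0,w)\in W$, $t\in[0,\tau]$ and $z\in\mathcal D(\tau,R)$. I would compare the candidate point $\big(T_0+t,(w\sqcup z)_{\cdot\land(T_0+t)}\big)$ with the point $(T_0,w_{\cdot\land T_0})$ of $W$. The time coordinates differ by $t\le\tau$. The paths differ by the function $u\mapsto z_{(u-T_0)\land t}$ for $u\ge T_0$, which vanishes on $[0,T_0]$. By the usual interpolation, its $\beta$-Hölder seminorm is at most $\|z\|_{\alpha\text{-Höl};[0,\tau]}\,\tau^{\alpha-\beta}\le R\tau^{\alpha-\beta}$; gluing with the zero path on $[0,T_0]$ at most doubles the constant. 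Both paths also start at the same value $w_0$. The total distance is therefore at most $\tau+2R\tau^{\alpha-\beta}$, and I choose $R=1$ and $\tau<\delta$ small enough that this is below $\rho$. Requiring $\tau<\delta$ also guarantees $T_0+t\le T_1$, so the concatenation is defined.

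\emph{Main obstacle.} The delicate point is that $V$ must be a uniform neighbourhood of $W$ in the metric, independent of the base point. Merely being an open set containing $W$ is not enough, because $\Lambda^{\alpha\text{-Höl}}_\beta$ is not locally compact. This uniformity is exactly what the Lebesgue-number argument in Step 1 provides. A secondary point is to check that the candidate paths genuinely lie in $C^{\alpha\text{-Höl}}$, which holds because $w\in C^{\alpha\text{-Höl}}$ and $\|z\|_{\alpha\text{-Höl}}\le R$, so that $f$ is defined at them.
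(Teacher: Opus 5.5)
Your strategy is sound, but Steps 1 and 2 work in different spaces. Once that is repaired, your route is cleaner than the paper's.

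\textbf{The inconsistency.} In Step 1 you take $V$ to be the $\rho$-neighbourhood of $W$ in the \emph{product} metric on $[0,T_1]\times C^{\alpha\text{-Höl}}_\beta([0,T_1],\RR^m)$. In Step 2, however, you measure distance to $(T_0,w_{\cdot\land T_0})$, which is a point of $W/_\sim$, not of $W$. In the product, the distance from $(T_0,w_{\cdot\land T_0})$ to $(T_0,w)\in W$ is $\|w-w_{\cdot\land T_0}\|_{\beta\text{-Höl};[0,T_1]}=\|w\|_{\beta\text{-Höl};[T_0,T_1]}$. This need not be small, so your estimate does not place the candidate point in your $V$.

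\textbf{The repair.} Run Step 1 in the metric space $(\Lambda^{\alpha\text{-Höl}}_\beta, d^\Lambda_{\beta\text{-Höl};[0,T_1]})$ instead. There $q(W)$ is compact, and $\{|f|<M_0+1\}$ is an open set containing it. Hence it contains $N_\rho\coloneqq\{\xi : d^\Lambda(\xi,q(W))<\rho\}$ for some $\rho>0$, and $V\coloneqq q^{-1}(N_\rho)$ is a genuine open neighbourhood of $W$. Your Step 2 computation is then exactly a $d^\Lambda$-estimate: $d^\Lambda\big((T_0+t,(w\sqcup z)_{\cdot\land(T_0+t)}),(T_0,w_{\cdot\land T_0})\big)\le\tau+R\tau^{\alpha-\beta}$. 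Note also that extracting a uniform $\rho$ only uses compactness of $q(W)$ in a metric space. Local compactness of the ambient space plays no role, so your ``main obstacle'' is not really an obstacle.

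\textbf{Comparison with the paper.} The paper builds $V$ explicitly as $\{(T_0+t,w+v):(T_0,w)\in W,\ t\in[0,\tilde\tau],\ v\in B_{\tilde R}\}$. These are forward time shifts of unstopped representatives plus small $\beta$-perturbations, with $\tilde\tau,\tilde R$ made uniform via a finite subcover. To land in this set, the paper must write $(w\sqcup z)_{\cdot\land(T_0+t)}=(w+\tilde z)_{\cdot\land(T_0+t)}$, with $\tilde z$ cancelling the increments of $w$ on $[T_0,T_0+\tau]$. This yields the bound $\tau^{\alpha-\beta}\big(R+\|w\|_{\alpha\text{-Höl};[T_0,T_0+\tau]}\big)$ and needs a further compactness argument to make $\tau$ uniform in $(T_0,w)\in W$.

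Your comparison with the stopped path uses that $w\sqcup z$ depends only on $w|_{[0,T_0]}$. The later increments of $w$ therefore never enter, and uniformity over $W$ comes for free. The paper's version only buys a more explicit description of $V$, which nothing downstream (Proposition~\ref{prop_local_existence}) actually needs.
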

\begin{proof}
    Since $W \subset [0,T_1-\delta] \times C_\beta^{\alpha\text{-Höl}}([0,T_1],\RR^m)$ is compact, the quotient $W/_\sim$ is compact in the topology induced by the metric $d^\Lambda_{\beta\text{-Höl};[0,T_1]}$. Moreover, the continuity of the non-anticipative functional $f: \Lambda^{\alpha\text{-Höl}}_\beta([0,T_1],\RR^m) \to L(\RR^d,\RR^m)$ implies the existence of a constant $M > 0$ such that
    \[
    \sup_{(T_0, w_{\cdot \land T_0}) \in W/_\sim} |f(T_0, w_{\cdot \land T_0})| < M,
    \]
    By continuity again, for each $(T_0, w) \in W$, there exist $\tilde{\tau}_{(T_0,w)} > 0$ and $\tilde{R}_{(T_0,w)} > 0$ such that
    \begin{align}
    \label{eq_aux_lemma_1}
        \big|f(T_0 + t, (w + v)_{\cdot \land (T_0 + t)})\big| < M, \ \text{for all} \ t \in [0,\tilde{\tau}_{(T_0,w)}] \ \text{and} \ v \in B_{\tilde{R}_{(T_0,w)}},
    \end{align}
    where
    \[
        B_{\tilde{R}_{(T_0,w)}} \coloneqq \left\{ v \in C^{\alpha\text{-Höl}}_\beta([0,T_1], \RR^m) : |v_0| + \|v\|_{\beta\text{-Höl};[0,T_1]} \leq \tilde{R}_{(T_0,w)} \right\}.
    \]
    Using the compactness of $W$, we may extract finitely many such neighborhoods and choose constants $\tilde{\tau} > 0$ and $\tilde{R} > 0$ for which inequality \eqref{eq_aux_lemma_1} holds uniformly over all $(T_0,w) \in W$. Define 
    $$
    V \coloneqq \Big\{ (T_0 + t, w + v) : (T_0, w) \in W,\ t \in [0,\tilde{\tau}],\ \text{and} \ v \in B_{\tilde{R}} \Big\}.
    $$
    Then $f|_{V/_\sim}$ is bounded by $M$, which establishes the first part of the lemma.

    We now prove the second claim. Specifically, this amounts to showing that for suitable choices of $\tau,R > 0$, we have the identity
    $$(w \sqcup z)_{\cdot \land (T_0 + t)} = (w + \tilde{z})_{\cdot \land (T_0 + t)}, \ \text{for} \ t \in [0,\tau]  \ \text{and} \ z \in \cD(\tau,R),$$
    where $\tilde{z} \in C^{\alpha\text{-Höl}}_\beta([0,T_1],\RR^m)$ satisfies $|\tilde{z}_0| + \| \tilde{z} \|_{\beta\text{-Höl};[0,T_1]} \leq \tilde{R}$. To verify this, take $R < \tilde{R}$ and $\tau < \tilde{\tau}$. Then, for each $z \in \cD(\tau,R)$, define $\tilde{z}$ by
    $$
        \tilde{z}_t = 
        \begin{cases}
            0, \quad &\text{for} \ t \in [0,T_0] \\
            z_{t - T_0} - (w_t - w_{T_0}), \quad &\text{for} \ t \in [T_0,T_0 + \tau] \\
            z_\tau - (w_{T_0 + \tau} - w_{T_0}), \quad &\text{for} \ t\in [T_0+\tau,T_1]
        \end{cases}.
    $$
    Clearly, $(w \sqcup z)_{\cdot \land (T_0 + t)} = (w + \tilde{z})_{\cdot \land (T_0 + t)}$ for $t\in [0,\tau]$. We now show that $|\tilde{z}_0| + \| \tilde{z} \|_{\beta\text{-Höl};[0,T_1]} \leq \tilde{R}$ for an appropriately chosen $\tau$. Indeed, observe that
    \begin{align*}
        \| \tilde{z} \|_{\beta\text{-Höl};[0,T_1]} = \| \tilde{z} \|_{\beta\text{-Höl};[T_0,T_0 + \tau]} \leq& \| z\|_{\beta\text{-Höl};[0,\tau]} + \| w \|_{\beta\text{-Höl};[T_0,T_0+\tau]} \\ 
        \leq& \tau^{\alpha-\beta} \Big( R +  \| w \|_{\alpha\text{-Höl};[T_0,T_0 + \tau]} \Big),
    \end{align*}
    which can be made strictly less than $\tilde{R}$ by choosing $\tau$ sufficiently small. Moreover, the choice of such a $\tau$ can be made uniformly over $(T_0,w) \in W$ by a compactness argument. Hence, $|\tilde{z}_0| + \| \tilde{z} \|_{\beta\text{-Höl};[0,T_1]} \leq \tilde{R}$, and thus $\big(T_0 +t, (w \sqcup z)_{\cdot \land (T_0 + t)} \big) \in V/_\sim$ for $t\in [0,\tau]$ and $z \in \cD(\tau,R)$.
\end{proof}

\begin{remark}
\label{rem_function_neighbourhood}
    In the spirit of \cite[Lemma 2.2]{hale2013introduction}, note that Lemma \ref{lem_aux_hale} in fact holds in greater generality: namely, the upper bound \eqref{eq_lem_aux_hale_bound} remains valid on a neighborhood $U \subset C^0_b(V/_\sim, L(\RR^d,\RR^m))$ of $f|_{V/_\sim}$. Indeed, by continuity of $f$, there exists $\varepsilon > 0$ such that 
    $$
      |f(t,z_{\cdot \land t})| < M - \varepsilon, \quad \text{for all} \ (t,z_{\cdot \land t}) \in V/_\sim.
    $$
    Consequently, for every $\tilde{f} \in C^0_b(V/_\sim, L(\RR^d,\RR^m))$ satisfying $\| f|_{V/_\sim} - \tilde{f} \|_\infty < \varepsilon$, the estimate \eqref{eq_lem_aux_hale_bound} continues to hold with $f$ replaced by $\tilde{f}$.
\end{remark}

\subsection{Section \ref{sec_sig_cdes}}
\label{Appendix_4}

\begin{lemma}
\label{lem_aux_time_augmentation}
    The time-augmentation map 
    $$\widehat{\cdot} : \Lambda^\suphol{\alpha}_\beta([0,T_1],\RR^m) \to \widehat{\Lambda}^\suphol{\alpha}_\beta([0,T_1],\RR^{m+1}), \quad z|_{[0,t]} \mapsto (\operatorname{id}|_{[0,t]}, z|_{[0,t]})$$  
    is a homeomorphism. Consequently, for any weight function $\hat{\psi}$ on $\widehat{\Lambda}^\suphol{\alpha}_\beta([0,T_1],\RR^{m+1})$, the function $\psi(z|_{[0,t]}) \coloneqq \hat{\psi}(\hat{z}|_{[0,t]})$ is an admissible function on $\Lambda^\suphol{\alpha}_\beta([0,T_1],\RR^m)$. Moreover, for every $f\in \cB_{\psi}(\Lambda^\suphol{\alpha}_\beta([0,T_1],\RR^m))$, the lifted function $\hat{f}(\hat{z}|_{[0,t]}) \coloneqq f(z|_{[0,t]})$ satisfies
    $$\| f \|_{\cB_{\psi}}  = \| \hat{f} \|_{\cB_{\hat{\psi}}}.$$
    That is, the lifting operator $f \mapsto \hat{f}$ is an isometry between $\cB_{\psi}$ and the subspace of $\cB_{\hat{\psi}}$ consisting of functionals that ignore the time component. 
\end{lemma}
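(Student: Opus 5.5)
The plan is to show that the time-augmentation map is a bijective isometry up to an additive term controlled by $|t-s|$, and then to transfer the weighted-space statements directly through this correspondence.

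\emph{Homeomorphism.} Bijectivity is immediate: the inverse $\hat z|_{[0,t]}\mapsto z|_{[0,t]}$ simply forgets the $0$-th coordinate, and $t$ is recovered as the length of the domain. For continuity, fix two stopped paths $(t,z_{\cdot\land t})$ and $(s,y_{\cdot\land s})$. The distance $d^\Lambda_{\beta\text{-Höl}}$ between their lifts equals $|t-s|$ plus the $\beta$-Hölder distance of the $\RR^{m+1}$-valued stopped paths. This Hölder seminorm is bounded above by the sum of the seminorms of the components and below by the $\RR^m$-component alone. The time component of the difference is $u\mapsto (u\land t)-(u\land s)$. It vanishes up to $t\land s$ and is constant after $t\lor s$. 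Its increments are therefore bounded by $|u-v|\land|t-s|$, so its $\beta$-Hölder seminorm is at most $|t-s|^{1-\beta}$. Hence
\[
d^\Lambda(z,y)\le d^\Lambda(\hat z,\hat y)\le d^\Lambda(z,y)+|t-s|^{1-\beta},
\]
and both the map and its inverse are continuous. This also shows that the time-augmented space sits as a closed subset of $\Lambda^{\alpha\text{-Höl}}_\beta([0,T_1],\RR^{m+1})$: the condition $\hat x^0_u=u\land t$ is preserved under uniform limits, and by the above the $\beta$-topology on it is the pulled-back one.

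\emph{Admissibility.} The function $\psi=\hat\psi\circ\widehat{\cdot}$ is positive. Its sublevel sets are $\{\psi\le R\}=\widehat{\cdot}^{\,-1}(\{\hat\psi\le R\})$. Since $\widehat{\cdot}$ is a homeomorphism onto $\widehat\Lambda^{\alpha\text{-Höl}}_\beta$, and $\hat\psi$ is admissible on that space (inherited by closedness), these preimages are compact. Hence $\psi$ is admissible.

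\emph{Isometry.} Since $\hat f\circ\widehat{\cdot}=f$ and $\hat\psi\circ\widehat{\cdot}=\psi$, and $\widehat{\cdot}$ is surjective, the two suprema defining $\|f\|_{B_\psi}$ and $\|\hat f\|_{B_{\hat\psi}}$ range over the same values and therefore coincide. It remains to check that $\hat f$ lies in the closure $\cB_{\hat\psi}$ and not merely in $B_{\hat\psi}$. Take $g_n\in C^0_b$ with $g_n\to f$ in the weighted norm. Then $\hat g_n=g_n\circ\widehat{\cdot}^{\,-1}$ is bounded and continuous, because the inverse is continuous, and $\|\hat g_n-\hat f\|_{\hat\psi}=\|g_n-f\|_\psi\to0$. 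The reverse inclusion follows symmetrically. Thus the lifting is a linear isometry onto the subspace of functionals that ignore the time component.

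The main obstacle is the Hölder estimate for the time component, in particular when the stopping times differ. I expect it to be routine, as it reduces to the case analysis of $(u\land t)-(u\land s)$ described above.
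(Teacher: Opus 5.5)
Your proof is correct and follows the paper's route. The two-sided comparison $d^\Lambda(z,y)\le d^\Lambda(\hat z,\hat y)\le d^\Lambda(z,y)+|t-s|^{1-\beta}$ is exactly the paper's key estimate, with constant $1$ in place of $C_m$. Your admissibility and isometry arguments spell out what the paper calls straightforward, including the check that $\hat f$ lies in the closure $\cB_{\hat\psi}$ and not merely in $B_{\hat\psi}$.
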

\begin{proof}
    The fact that $\ \widehat{\cdot} \ $ is a bijection with inverse $\hat{z}|_{[0,t]} \mapsto z|_{[0,t]}$ is immediate. To check that it is a homeomorphism, observe that there exists a constant $C_m > 0$ (depending only on $m$) such that 
    $$ \| \hat{x}|_{[0,t]} - \tilde{\hat{y}}|_{[0,t]} \|_{\beta\text{-Höl}} \leq C_m \Big( |t-s|^{1-\beta} + \| x|_{[0,t]} - \tilde{y}|_{[0,t]} \|_{\beta\text{-Höl}} \Big),$$
    for all $\hat{x}|_{[0,t]}, \hat{y}|_{[0,s]} \in \widehat{\Lambda}^\suphol{\alpha}_\beta([0,T_1],\RR^{m+1})$ with $s \leq t$. Additionally, one trivially has 
    $$\| x|_{[0,t]} - \tilde{y}|_{[0,t]} \|_{\beta\text{-Höl}} \leq  \| \hat{x}|_{[0,t]} - \tilde{\hat{y}}|_{[0,t]} \|_{\beta\text{-Höl}}.$$
    Hence, $d^\Lambda_{\beta\text{-Höl}} \big( x|_{[0,t]} , y|_{[0,s]} \big) \to 0$ if and only if $d^\Lambda_{\beta\text{-Höl}} \big( \hat{x}|_{[0,t]} , \hat{y}|_{[0,s]} \big) \to 0$. The rest of the claim follows straightforwardly.
\end{proof}

\begin{proof}[Proof of Proposition \ref{lem_weaker_continuity_controls}]
     Fix some $\eta \in (\max\{\alpha,1-\alpha+\beta\},1)$. Let $(x^{(n)}) \subset C^{1\text{-Höl}}_\eta([0,T_1],\RR^d)$ be a sequence such that $\|x - x^{(n)} \|_{\eta\text{-Höl};[0,T_1]}\to 0$. Denote by $y_x,(y_{x^{(n)}})$ the unique solutions of  \eqref{eq_target_path_dependent_cde} driven by $x$ and $(x^{(n)})$, respectively. We aim to prove that 
    \begin{equation*}
        \| y_x - y_{x^{(n)}}\|_{\beta \text{-Höl};[0,T_1]}\to 0.
    \end{equation*}
  
    Since $f$ is globally Lipschitz with respect to $d^\Lambda_{\beta\text{-Höl};[0,T_1]}$ (see Section \ref{sec_preliminaries}), we have
    \begin{align*}
        |f(y_x|_{[0,t]}) - f(y_x|_{[0,s]})| \lesssim   |t-s| + d_{\beta\text{-Höl}}(y_x|_{[0,t]}, \tilde{y}_x|_{[0,t]}) \leq |t-s| + \|y_x|_{[s,t]}\|_{\alpha\text{-Höl}}|t-s|^{\alpha-\beta},
    \end{align*}
    for all $0 \leq s \leq t \leq T_1$, showing that $u \mapsto f(y_x|_{[0,u]})$ is $(\alpha-\beta)-$Hölder continuous. In particular,
    \begin{equation}
    \label{eq_f_holder_bound}
        \| f(y_x|_{[0,\cdot]}) \|_{(\alpha-\beta)\text{-Höl};[s,t]} \lesssim 1 + \| y_x \|_{\alpha\text{-Höl};[s,t]},
    \end{equation}
    and the same holds true if we replace $y_x$ with $y_{x^{(n)}}$. Next, we derive a uniform $\alpha-$Hölder bound for all solutions. Because $\eta + \alpha - \beta > 1$, Young’s inequality \cite[Equation (4.3)]{friz2014course} gives
    \begin{align*}
        \big| (y_{x^{(n)}})_{s,t} \big|& \lesssim \big| f(y_{x^{(n)}}|_{[0,s]}) \big| \|x^{(n)}\|_{\eta\text{-Höl};[s,t]}|t-s|^\eta \\
        &+ \| f(y_{x^{(n)}}|_{[0,\cdot]}) \|_{(\alpha-\beta)\text{-Höl};[s,t]} \|x^{(n)}\|_{\eta\text{-Höl};[s,t]} |t-s|^{\eta+\alpha-\beta},
    \end{align*}
    up to a constant dependent on $\eta,\alpha,\beta$, for all $\sigma \leq s < t \leq \tau$ and $[\sigma,\tau] \subset [T_0,T_1]$. Moreover, by the linear growth of $f$ and estimate \eqref{eq_f_holder_bound}, we see that
    \[
        \big| (y_{x^{(n)}})_{s,t} \big| \lesssim  \big( 1 + \| y_{x^{(n)}} \|_{\alpha\text{-Höl};[0,s]} \big) \|x^{(n)}\|_{\eta\text{-Höl};[s,t]}|t-s|^\eta,
    \]
    and so, dividing by $|t-s|^\alpha$ on both sides and taking the supremum over $\sigma\leq s < t \leq \tau$, yields
    \[
        \| y_{x^{(n)}} \|_{\alpha\text{-Höl};[\sigma,\tau]} \lesssim  \big( 1 + \| y_{x^{(n)}} \|_{\alpha\text{-Höl};[0,\tau]} \big) \|x^{(n)}\|_{\eta\text{-Höl};[0,T_1]}|\tau-\sigma|^{\eta-\alpha}.
    \]
    Crucially, since $\sup_n \|x^{(n)}\|_{\eta\text{-Höl};[0,T_1]} < \infty$, the estimate above can be put into the form \eqref{eq_pasting_argument_lem_hyp} with constants $C_0,C_1$ independent of $n$, and hence Lemma \ref{lem_pasting_argument_bound} guarantees the existence of a constant $K > 0$ such that\footnote{In the present setting, we apply Lemma \ref{lem_pasting_argument_bound} with $\gamma=\alpha$ and we note that the exponent $1-\alpha$ can be replaced by $\eta - \alpha$ which is positive by assumption.}
    \[
    \sup_n \| y_{x^{(n)}} \|_{\alpha\text{-Höl};[0,T_1]} < K.
    \]

    Finally, by the compact embedding of Hölder spaces, every subsequence of $(y_{x^{(n)}})$ has a further subsequence, still denoted by $(y_{x^{(n)}})$, such that $\| y_{x^{(n)}} - \tilde{y} \|_{\beta\text{-Höl};[0,T_1]} \to 0$ for some $\alpha-$Hölder continuous path $\tilde y$. By the Lipschitz assumption on $f$, we have
    \[
    \| f(y_{x^{(n)}}|_{[0,\cdot]}) - f(\tilde y|_{[0,\cdot]}) \|_{\infty;[0,T_1]} \lesssim \| y_{x^{(n)}} - \tilde{y} \|_{\beta\text{-Höl};[0,T_1]} \to 0,
    \]
    and using \eqref{eq_f_holder_bound}, we see that $\sup_n \|f(y_{x^{(n)}}|_{[0,\cdot]}) \|_{(\alpha-\beta)\text{-Höl};[0,T_1]} \lesssim 1 + K < \infty$. Therefore, by interpolation of Hölder spaces
    \[
    \|f(y_{x^{(n)}}|_{[0,\cdot]}) - f(\tilde y|_{[0,\cdot]}) \|_{\mu\text{-Höl};[0,T_1]} \lesssim (1+K)^{\mu/(\alpha-\beta)}  \| f(y_{x^{(n)}}|_{[0,\cdot]}) - f(\tilde y|_{[0,\cdot]}) \|_{\infty;[0,T_1]}^{1-\mu/(\alpha-\beta)} \to 0
    \]
    for some $1 - \eta < \mu < \alpha - \beta$; note that $\eta + \alpha - \beta > 1$. By continuity of Young integration (see \cite[Proposition 6.11]{Friz2010}), and since $\mu + \eta > 1$, it then follows that
    \[
    \int_{T_0}^t f(y_{x^{(n)}}|_{[0,s]}) \dd x^{(n)}_s \longrightarrow \int_{T_0}^t f(\tilde{y}|_{[0,s]}) \dd x_s,
    \]
    for every $t \in [T_0,T_1]$. Proposition \ref{prop_uniqueness} implies that $\tilde{y} = y_x$, and given that every subsequence of $(y_{x^{(n)}})$ has a further subsequence converging to $y_x$ we conclude that $\|y_x - y_{x^{(n)}}\|_{\beta\text{-Höl};[0,T_1]} \to 0$.
\end{proof}

\end{document}